\renewcommand\tableofcontents{%
    \section*{\huge{Table of Contents}
        \@mkboth{%
           \MakeUppercase\contentsname}{\MakeUppercase\contentsname}}
    \@starttoc{toc}%
    } 
\newtheorem{thm}{Theorem}
\numberwithin{thm}{section}
\newtheorem{lem}[thm]{Lemma}
\newtheorem{prop}[thm]{Proposition}
\newtheorem{cor}[thm]{Corollary}
\newtheorem{defn}[thm]{Definition}
\newtheorem{rmk}{Remark}
\numberwithin{rmk}{section}
\numberwithin{thm}{section}
\numberwithin{equation}{section}
\numberwithin{figure}{section}
\newenvironment{customthm}[1]
  {\innercustomthm}
  {\endinnercustomthm}
\newcommand{\cA}{\mathcal{A}}
\newcommand{\cB}{\mathcal{B}}
\newcommand{\cE}{\mathcal{E}}
\newcommand{\cH}{\mathcal{H}}
\newcommand{\cI}{\mathcal{I}}
\newcommand{\cM}{\mathcal{M}}
\newcommand{\cO}{\mathcal{O}}
\newcommand{\cS}{\mathcal{S}}
\newcommand{\cX}{\mathcal{X}}
\newcommand{\cY}{\mathcal{Y}}
\newcommand{\bC}{\mathbb{C}}
\newcommand{\bN}{\mathbb{N}}
\newcommand{\bP}{\mathbb{P}}
\newcommand{\bR}{\mathbb{R}}
\newcommand{\bT}{\mathbb{T}}
\newcommand{\bZ}{\mathbb{Z}}
\author[D. Mendelson]{Dana Mendelson}
\address{Department of Mathematics \\ Massachusetts Institute of Technology}
\email{dana@math.mit.edu}
\thanks{The author was supported in part by the U.S. National Science Foundation grants DMS-1068815 and DMS-1362509 and by the NSERC Postgraduate Scholarships Program}
\begin{document}

\title[Symplectic non-squeezing for the NLKG]{Symplectic non-squeezing for the cubic nonlinear Klein-Gordon equation on $\bT^3$}

\begin{abstract}
We consider the periodic defocusing cubic nonlinear Klein-Gordon equation in three dimensions in the symplectic phase space $H^{\frac{1}{2}}(\bT^3) \times H^{-\frac{1}{2}}(\bT^3)$. This space is at the critical regularity for this equation, and in this setting there is no global well-posedness nor any uniform control on the local time of existence for arbitrary initial data. We prove a local-in-time non-squeezing result and a conditional global-in-time result which states that uniform bounds on the Strichartz norms of solutions imply global-in-time non-squeezing. As an application of the conditional result, we conclude non-squeezing for certain subsets of the phase space. The proofs rely on several approximation results for the flow, which we obtain using a combination of probabilistic and deterministic techniques.
\end{abstract}

\maketitle

\section{Introduction}

We consider the behavior of solutions to the Cauchy problem for the cubic defocusing nonlinear Klein-Gordon equation
\begin{equation}
\label{equ:cubic_nlkg}
\left\{\begin{split}
&u_{tt} - \Delta u + u + u^3 = 0, \quad u: \bR \times \bT^3 \to \bR  \\
&(u, \partial_tu)\big|_{t=0} = (u_0, u_1) \in H^{\frac{1}{2}}(\bT^3) \times H^{-\frac{1}{2}}(\bT^3) =: \cH^{1/2}(\bT^3),
\end{split} \right.
\end{equation}
where $H^{\frac{1}{2}}(\bT^3)$ is the usual inhomogeneous Sobolev space. Although there is no scaling symmetry for power-type nonlinear Klein-Gordon equations, the nonlinear wave equation
\begin{equation}
u_{tt} - \Delta u + u^p = 0
\end{equation}
has a scaling symmetry
\[
u(t,x) \mapsto u^\lambda(t,x) := \lambda^{2/(p-1)} u(\lambda t, \lambda x)
\]
and the scale invariant critical space for the nonlinear wave equation, which corresponds to the homogeneous Sobolev space at regularity $s_c := \frac{d}{2} - \frac{2}{p-1}$, can still be regarded as the critical space for \eqref{equ:cubic_nlkg}. This is because ill-posedness and blowup are normally associated with high frequencies or short time scales, and so the nonlinear term dominates the mass term. Note that when $d= 3$, and $p=3$, we have $s_c = \frac{1}{2}$, hence we are interested in the behavior of solutions of \eqref{equ:cubic_nlkg} in the critical space. 

\medskip
Local strong solutions to \eqref{equ:cubic_nlkg} can be constructed by adapting the arguments from \cite{Lindblad_Sogge} to the compact setting. Due to the critical nature of this problem, however, the local time of existence for solutions depends not only on the norm of the initial data but also on its profile. Moreover, as the critical regularity for this equation does not correspond to a conserved quantity, global well-posedness for this equation remains open. The best known results on Euclidean space are for subcritical regularities $s \geq 3/4$, which was proved by Miao, Zhang and Fang \cite{MZF} by working in Besov spaces and adapting Bourgain's high-low argument \cite{B98} and arguments from \cite{KPV}. Once again, these results can be adapted to the periodic setting. We recall that the $L_{t,x}^4$ Strichartz norm controls the global well-posedness for \eqref{equ:cubic_nlkg} and we have the standard finite time blow-up criterion, namely if $T_*$ denotes the maximal time of existence for a solution $u$ to \eqref{equ:cubic_nlkg} then
\begin{align}
\label{equ:blow_up}
T_* < \infty \quad \Longrightarrow \quad \|u\|_{L_{t,x}^4([0, T_*) \times \bT^3)} = + \infty.
\end{align}

\medskip
We will study the long-time qualitative behavior of solutions to \eqref{equ:cubic_nlkg} by investigating symplectic non-squeezing for the flow of this equation. The study of infinite dimensional symplectic capacities and non-squeezing for nonlinear Hamiltonian PDEs was initiated by Kuksin in \cite{Kuk}. There, he extended the definition of the Hofer-Zehnder capacity to infinite dimensional phase spaces and proved the invariance of this capacity under the flow of certain Hamiltonian equations with flow maps of the form
\begin{align}
\label{equ:compact}
\Phi(t) = \textup{linear operator} + \textup{compact smooth operator}.
\end{align}
This infinite dimensional symplectic capacity inherits the finite dimensional normalization
\begin{align}
\label{equ:normalize}
\textup{cap}(\textbf{B}_r(u_*)) = \textup{cap}(\textup{\textbf{C}}_{r}(z;k_0)) = \pi r^2,
\end{align}
where
\begin{align*}
\textbf{B}_r(u_*) := \left\{ u \in \cH^{1/2}(\bT^3) \,: \, \|u - u_*\|_{\cH^{1/2}} \leq r \right\},
\end{align*}
and for $z = (z_0, z_1) \in \bC$ and $k \in \bZ^3$,
\begin{align}
\label{equ:cylinder}
\textup{\textbf{C}}_{r}(z; k_0) := \left\{ (u_0, u_1) \in \cH^{1/2}(\bT^3) \,: \, \langle k \rangle | \widehat{u}_0(k) - z_0|^2  + \langle k \rangle^{-1} | \widehat{u}_1(k) - z_1|^2  \leq r^2 \right\}
\end{align}
is an infinite dimensional cylinder in $\cH^{1/2}(\bT^3)$. We will always work with real-valued functions and the $\widehat{u_i}(k)$ are taken to be real-valued Fourier coefficients. The proof of the normalization \eqref{equ:normalize} in infinite dimensions is an adaptation of the original proof by Hofer and Zehnder which can be found in \cite{HZ}, see \cite{Kuk} for details of the infinite dimensional argument. Consequently, if a flow map $\Phi$ preserves capacities, one can conclude that  squeezing is impossible, namely  
\[
\Phi(t) (\textbf{B}_R(u_*)) \not \subseteq \textup{\textbf{C}}_{r}(z; k)  \qquad \textup{if } R < r.
\]
Several examples of nonlinear Klein-Gordon equations with weak nonlinearities can readily be shown to be of the form \eqref{equ:compact}, see \cite{Kuk}. Symplectic non-squeezing was later proved for certain subcritical nonlinear Klein-Gordon equations in \cite{B95} using Kuksin's framework, see also \cite{Roumegoux}. Bourgain later extended these results to the cubic NLS in dimension one in \cite{B94C}, where the flow is not a compact perturbation of the linear flow. There, the argument follows from approximating the full equation by a finite dimensional flow and applying Gromov's finite dimensional non-squeezing result to this approximate flow. Symplectic non-squeezing was also proven for the KdV \cite{CKSTT05}. In this situation, there is a lack of smoothing estimates in the symplectic space which would allow the infinite dimensional KdV flow to be easily approximated by a finite-dimensional Hamiltonian flow. To resolve this issue, the authors of \cite{CKSTT05} invert the Miura transform to work on the level of the modified KdV equation, for which stronger estimates can be established.

\subsection{Statement of main results}
As we will see in Section \ref{sec:sym_hilb}, the symplectic phase space for any nonlinear Klein-Gordon equation is $\cH^{1/2}(\bT^d)$ for any dimension $d \geq 1$. For the cubic nonlinear Klein-Gordon equation in dimension three, the symplectic phase space is at the critical regularity, which presents some serious obstructions to using simple modifications of the existing arguments. Kuksin's approach requires some additional regularity in the compactness estimates. In light of ill-posedness results below the critical space, for instance \cite{CCT}, \cite{L} or \cite{IMM} adapted to \eqref{equ:cubic_nlkg}, there is no way to gain the additional regularity needed. Bourgain's argument in \cite{B94C} uses an iteration scheme in which one needs uniform control over time-steps of the iteration. Once again, this seems to be a genuine obstruction to applying this argument at the critical regularity, especially since this regularity is not controled by a conserved quantity. Finally, the arguments of \cite{CKSTT05} depend heavily on the structure of the KdV equation. Additionally, the global well-posedness of \eqref{equ:cubic_nlkg} is not know and there is no uniform control on the local time of existence.

\medskip
Ultimately, however, we are able to circumvent these difficulties, using a combination of probabilistic and deterministic techniques, which we combine to obtain several \textit{deterministic} non-squeezing results. Our first result is a local-in-time non-squeezing theorem.

\begin{thm}
\label{thm:non-squeezing}
Let $\Phi$ denote the flow of the cubic nonlinear Klein-Gordon equation \eqref{equ:cubic_nlkg}. Fix $R > 0$, $k_0 \in \bZ^3$, $z \in \bC$, and $u_* \in \cH^{1/2}(\bT^3)$. For all $0 < \eta < R$, there exists $N \equiv N(\eta, u_*, R, k_0)$ and $\sigma \equiv \sigma(\eta, N, u_*) > 0$ such that for all $0 \leq t \leq \sigma$,
\begin{equation}
\label{equ:nonsqueeze}
\Phi(t)\bigl(\Pi_{N}\textbf{B}_R(u_*)  \bigr) \not \subseteq \textup{\textbf{C}}_{r}(z; k_0) \qquad \textup{for } r < R - \eta.
\end{equation}
\end{thm}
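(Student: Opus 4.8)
The plan is to reduce the infinite-dimensional non-squeezing statement to Gromov's finite-dimensional theorem by working with a genuinely finite-dimensional Hamiltonian flow and controlling the error. First I would fix the data set $\Pi_N \textbf{B}_R(u_*)$; note that once $N$ is chosen, all relevant initial data lie in a ball of a fixed finite-dimensional symplectic subspace $E_N := \Pi_N \cH^{1/2}(\bT^3)$ (adjusted by the possibly-unprojected $u_*$, which we can absorb by translating, using that the cylinder and ball definitions are translation-covariant and that the flow's non-squeezing is unaffected by a fixed shift up to replacing $z$). The key device is to introduce the \emph{truncated} equation $v_{tt} - \Delta v + v + \Pi_N\bigl((\Pi_N v)^3\bigr) = 0$ with data in $E_N$; this is a finite-dimensional Hamiltonian ODE whose flow $\Phi_N(t)$ preserves the symplectic form on $E_N$, so Gromov's theorem applies directly: $\Phi_N(t)(\Pi_N\textbf{B}_R(u_*))$ cannot be squeezed into a cylinder of radius $< R$ in $E_N$, and by a standard argument (projecting the infinite cylinder $\textbf{C}_r(z;k_0)$ onto $E_N$, which contains the $k_0$-mode once $|k_0| \le N$) also not into the infinite cylinder.

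The heart of the matter is then the approximation step: I would show that on a short time interval $[0,\sigma]$ the true flow $\Phi(t)$ restricted to $\Pi_N\textbf{B}_R(u_*)$ is uniformly close in $\cH^{1/2}$ to $\Phi_N(t)$, with an error that can be made smaller than $\eta$ by taking $N$ large and $\sigma$ small. This is where I expect to invoke the approximation lemmas promised in the abstract. Concretely, the scheme is: (i) local well-posedness for \eqref{equ:cubic_nlkg} with a lifespan $\sigma = \sigma(\eta, N, u_*)$ that is uniform over the compact set of smooth data $\Pi_N\textbf{B}_R(u_*)$ — permissible because these data are bounded in every $H^s$, not just $H^{1/2}$, so subcritical LWP gives a uniform lifespan; (ii) a high-frequency estimate showing $\Pi_{>N}\Phi(t)u_0$ stays $O(\eta)$-small in $\cH^{1/2}$ on $[0,\sigma]$ (this is where the probabilistic input or a deterministic smoothing/energy argument enters, exploiting that the data have no high frequencies at $t=0$ and that the nonlinearity transfers little energy upward on short times); and (iii) a stability/perturbation estimate comparing $\Pi_{\le N}\Phi(t)u_0$ with $\Phi_N(t)u_0$, since the two equations differ only by $\Pi_{\le N}(u^3) - \Pi_{\le N}((\Pi_{\le N}u)^3)$, a term built entirely from high frequencies of $u$ and hence controlled by step (ii) via a Gronwall argument in $\cH^{1/2}$.

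Combining these, for any $\eta' < \eta$ we get $\|\Phi(t)u_0 - \Phi_N(t)u_0\|_{\cH^{1/2}} < \eta - \eta'$ for all $u_0 \in \Pi_N\textbf{B}_R(u_*)$ and all $t \in [0,\sigma]$. If $\Phi(t)(\Pi_N\textbf{B}_R(u_*))$ were contained in $\textbf{C}_r(z;k_0)$ with $r < R - \eta$, then $\Phi_N(t)(\Pi_N\textbf{B}_R(u_*))$ would be contained in the slightly fattened cylinder $\textbf{C}_{r + (\eta-\eta')}(z;k_0)$, whose radius is still $< R - \eta'< R$; projecting to $E_N$ contradicts Gromov's non-squeezing for the finite-dimensional flow $\Phi_N$ on the ball of radius $R$. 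The main obstacle, and the step requiring real work rather than bookkeeping, is (ii): establishing a quantitative high-frequency decay estimate at the critical regularity without any a priori global control — here the compactness of the data set and the short time scale are essential, and I anticipate needing either a probabilistic averaging argument over the ball or a carefully localized bilinear/energy estimate exploiting that $\Pi_{>N}u_0 = 0$ to beat the critical scaling on the interval $[0,\sigma]$.
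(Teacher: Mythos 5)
Your overall strategy matches the paper's: reduce to Gromov's theorem via a truncated finite-dimensional flow, then transfer back by a triangle inequality. But the route to the key approximation lemma (the paper's Theorem~\ref{thm:trunc_approx}) is different, and it is there that your sketch has a genuine gap. The paper never proves a direct high-frequency decay estimate for $\Phi(t)$. Instead, it picks a single reference datum $(v_0,v_1)\in\Sigma_\lambda\cap\textbf{B}_R(u_*)$ supplied by the Burq--Tzvetkov almost-sure global theory, uses the probabilistic bounds to get $L^4_{t,x}$ control on $\Phi(t)P_{N'}(v_0,v_1)$ and $\Phi_N(t)P_{N'}(v_0,v_1)$ together with the $N\to\infty$ convergence of Proposition~\ref{prop:conv}, and then transfers to arbitrary $(u_0,u_1)\in\textbf{B}_R(u_*)$ by the \emph{deterministic} Strichartz-space stability theory of Appendix~\ref{ap:pert} once $\sigma$ is taken small enough (via Bernstein and a factor $|I|^{1/4}$). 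You instead propose to directly bootstrap $\Pi_{>N}\Phi(t)u_0$ and then ``Gronwall in $\cH^{1/2}$.''

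Two points here do not hold up as written. The more serious one is the claimed Gronwall in $\cH^{1/2}$: the cubic NLKG on $\bT^3$ is \emph{critical} at $\cH^{1/2}$, and a Sobolev-norm Gronwall for the difference $\Pi_{\le N}u - u_N$ does not close --- the needed product estimate $\|v^3\|_{H^{-1/2}}\lesssim\|v\|_{H^{1/2}}^3$ fails, which is exactly the reason the paper develops the $L^4_{t,x}$ stability theory (Lemma~\ref{lem:pert_long}) in the first place. You must run the perturbation argument in a Strichartz norm where the contraction constant $\|u\|^2_{L^4_{t,x}}$ can be made small via Bernstein and the short interval, or alternatively at a genuinely \emph{subcritical} regularity $\cH^s$, $s>\tfrac12$, exploiting that $\Pi_N u_0$ has finite Fourier support (at the price of $N$-dependent constants, which is acceptable here since $\sigma$ is allowed to depend on $N$). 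The second, smaller, point is that your step (ii) is only anticipated, not established; it does follow from Duhamel, $\Pi_{>N}u_0=0$, and the subcritical $\cH^1$ bound of step (i), but you should note that the paper sidesteps this entirely by using the probabilistic seed, which is what lets the same machinery also yield the long-time Theorems~\ref{thm:non-squeezing_cond} and~\ref{thm:weaknon-squeezing}. (Also, using the sharp $\Pi_N$ rather than the smooth $P_N$ for the truncated nonlinearity is acceptable for the present finite-dimensional statement, but the ball multiplier $\Pi_N$ is not uniformly $L^p$-bounded for $p\neq 2$, which is why the paper uses $P_N$; this becomes relevant in the global results.)
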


\noindent  In the statement of this theorem, $\Pi_N$ is a sharp projection onto frequencies $|k| \leq N$.

\begin{rmk}
\label{rmk:nonsqueeze2}
The parameter $\eta$ which appears in Theorem \ref{thm:non-squeezing} corresponds to the control we can obtain over the radius of the cylinder. If we demand better control over the radius, this theorem only holds for shorter time scales. See also Remark \ref{rmk:ustar_dep} for a discussion of the dependence of the various constants on $u_*$.
\end{rmk}

\begin{rmk}
\label{rmk:nonsqueeze1}
To prove this theorem, we combine a probabilistic approximation argument with deterministic stability theory. As we have no control on the local time of existence in the critical space, \textit{a priori} we cannot ensure the flow map $\Phi(t)$ is well-defined for any positive time. We fix the projection in \eqref{equ:nonsqueeze} at frequency $N \in \bN$ in order to gain enough control to define the flow map locally in time.
\end{rmk}

\begin{rmk}
We pursue a probabilistic approach to Theorem \ref{thm:non-squeezing} for several reasons. First, we obtain intermediate probabilistic results which demonstrate the that flow for the cubic nonlinear Klein-Gordon equation satisfies Kuksin's compactness criterion in a probabilistic sense, see \eqref{equ:nonlin_smoothing}. Second, we suspect that probabilistic arguments may be combined with an alternative definition of an infinite dimensional symplectic capacity to obtain a more direct proof of infinite dimensional sympletic non-squeezing in this setting, see Remark \ref{rmk:capacity1} and Remark \ref{rmk:whynotsmoother}.
\end{rmk}

\medskip
In order to state our global-in-time results, we need to introduce the following nonlinear Klein-Gordon equation with truncated nonlinearity
\begin{equation}
\label{equ:cubic_nlkg_trun}
\left\{\begin{split}
&(u_N)_{tt} - \Delta u_N + u_N + P_N(P_N u_N)^3 = 0, \quad u: \bR \times \bT^3 \to \bR  \\
&(u_N, \partial_tu_N)\big|_{t=0} = (u_0, u_1) \in \cH^{1/2}(\bT^3),
\end{split} \right.
\end{equation}
where $P_N = P_{\leq N}$ denotes the smooth projection operator defined in \eqref{equ:trunc_op}.
We obtain the following global-in-time non-squeezing result.

\begin{thm}
\label{thm:non-squeezing_cond}
Fix $R, T > 0$, $k_0 \in \bZ^3$, $z \in \bC$, and $u_* \in \cH^{1/2}(\bT^3)$. Suppose there exists some $K > 0$ such that for all $(u_0, u_1) \in \textbf{B}_R(u_*)$, the corresponding solutions $u$  to \eqref{equ:cubic_nlkg} and $u_N$ to \eqref{equ:cubic_nlkg_trun} exist on $[0,T]$ and satisfy
\begin{align}
\label{equ:unif_l4}
\|u \|_{L_{t,x}^{4}([0,T] \times \bT^3)} \,\, + \,\,  \sup_N \|P_N u_N\|_{L_{t,x}^{4}([0,T] \times \bT^3)} \leq K.
\end{align}
Let $\Phi$ denote the flow of the cubic nonlinear Klein-Gordon equation \eqref{equ:cubic_nlkg}. Then 
\begin{equation}
\Phi(T)\bigl(\textbf{B}_R(u_*)  \bigr) \not \subseteq \textup{\textbf{C}}_{r}(z; k_0) \qquad \textup{for } r < R.
\end{equation}
In particular, for a fixed choice of $T$, if $\textbf{B}_R(u_*) \subset \textbf{B}_{\rho_0}$ for some sufficiently small $\rho_0(T) > 0$, then non-squeezing holds without any additional assumptions on the initial data.
\end{thm}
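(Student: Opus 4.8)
The plan is to deduce the global-in-time statement from the local-in-time Theorem \ref{thm:non-squeezing} by exploiting the quantitative hypothesis \eqref{equ:unif_l4} to run a bootstrap/continuity argument, together with the finite-dimensional Gromov non-squeezing theorem applied to the truncated flow of \eqref{equ:cubic_nlkg_trun}. First I would recall that the truncated equation \eqref{equ:cubic_nlkg_trun} restricted to the $\Pi_N$-frequency subspace is a genuine finite-dimensional Hamiltonian system, so Gromov's theorem gives non-squeezing for its flow $\Phi_N(t)$; since the symplectic form on $\cH^{1/2}$ restricts correctly and $P_N$, $\Pi_N$ are symplectically compatible, the high-frequency part evolves linearly (hence unitarily in $\cH^{1/2}$) and does not affect the relevant cylinder coordinate $k_0$ once $N \geq |k_0|$. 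The heart of the matter is then an \emph{approximation estimate}: under the uniform Strichartz bound \eqref{equ:unif_l4}, I would show that on $[0,T]$ the full flow $\Phi(t)$ is well-defined on $\textbf{B}_R(u_*)$ and that $\Phi_N(t) \to \Phi(t)$ in $\cH^{1/2}$, uniformly on $\textbf{B}_R(u_*) \times [0,T]$, as $N \to \infty$. This is where the probabilistic/deterministic approximation results invoked in the abstract and in Remark \ref{rmk:nonsqueeze1} enter: one controls the difference $u - u_N$ via a stability argument in Strichartz spaces, using \eqref{equ:unif_l4} to subdivide $[0,T]$ into finitely many subintervals on which the $L^4_{t,x}$ norm is small, closing a perturbation estimate on each.

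With the approximation in hand, the argument is the standard finite-dimensional reduction. Suppose for contradiction that $\Phi(T)(\textbf{B}_R(u_*)) \subseteq \textup{\textbf{C}}_r(z;k_0)$ for some $r < R$. Fix $\eta$ with $0 < \eta < R - r$. For $N$ large, the approximation estimate gives $\|\Phi_N(T) w - \Phi(T) w\|_{\cH^{1/2}} < \eta/2$ for all $w \in \textbf{B}_R(u_*)$; since the cylinder coordinate is a $1$-Lipschitz function of the $\cH^{1/2}$ position, this forces $\Phi_N(T)(\textbf{B}_R(u_*)) \subseteq \textup{\textbf{C}}_{r + \eta/2}(z;k_0)$, and in particular $\Phi_N(T)(\Pi_N \textbf{B}_R(u_*)) \subseteq \textup{\textbf{C}}_{r+\eta/2}(z;k_0)$ — but $\Pi_N \textbf{B}_R(u_*)$ contains a ball of radius $R$ (or very nearly $R$, after a further harmless reduction) in the finite-dimensional symplectic subspace, and $r + \eta/2 < R$, contradicting Gromov non-squeezing for the finite-dimensional symplectomorphism $\Phi_N(T)$. (One must take a small care that $\Pi_N$ of a ball centered at $u_*$ is a ball centered at $\Pi_N u_*$ of the same radius, which is immediate since $\Pi_N$ is an orthogonal projection; the high-frequency tail of $u_*$ is simply transported linearly and ignored.)

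The last sentence of the theorem follows by removing the hypothesis: I would show that there is $\rho_0 = \rho_0(T) > 0$ such that if $\textbf{B}_R(u_*) \subset \textbf{B}_{\rho_0}$ then \eqref{equ:unif_l4} holds automatically with some $K = K(T)$. This is a small-data global existence and Strichartz bound for both \eqref{equ:cubic_nlkg} and the truncated equation \eqref{equ:cubic_nlkg_trun}: for data of $\cH^{1/2}$-norm below a threshold, the standard contraction-mapping/continuity argument in the $L^4_{t,x}$-based Strichartz space closes globally (the defocusing cubic term is perturbative at small amplitude, and the mass term only helps), with bounds uniform in $N$ because $P_N$ is bounded on every relevant space with operator norm $\leq 1$. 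Then the main statement applies.

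The main obstacle is the uniform-in-$N$ approximation estimate $\Phi_N \to \Phi$ on $[0,T]$: because we are at critical regularity there is no unconditional stability theory, so one genuinely needs the hypothesis \eqref{equ:unif_l4} (for \emph{both} equations) to run a long-time stability argument, and one must track that the number of subintervals and the constants depend only on $K$, $R$, $T$ and not on $N$. I expect this to be exactly the content of the approximation lemmas established earlier in the paper, so in the proof I would reduce to invoking those; the finite-dimensional contradiction step and the small-data reduction are then routine.
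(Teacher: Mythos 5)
Your overall strategy --- approximate $\Phi(T)$ by the truncated finite-dimensional flow $\Phi_N(T)$, apply Gromov to $\Phi_N(T)$ on $\Pi_N\textbf{B}_R(u_*)$, pass to the limit, and reduce the small-data case to the conditional statement --- matches the paper's route, which uses Theorem \ref{thm:approx_conditional} together with the fact that $\Phi_N$ preserves symplectic capacities (or, equivalently, Gromov). The small-data reduction is exactly Lemma \ref{lem:trunc_bds_small}.

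However, there is a genuine gap in the approximation step, which is where all the work of the paper resides. You claim that under \eqref{equ:unif_l4} a Strichartz-space stability argument gives $\Phi_N(t) \to \Phi(t)$ in $\cH^{1/2}$ \emph{uniformly} on $\textbf{B}_R(u_*)\times[0,T]$. This is not true and the paper does not prove it. Writing the difference of nonlinearities as
\[
F(u) - P_N F(P_N u_N) = \bigl(F(u) - F(u_N)\bigr) + \bigl(F(u_N) - F(P_N u_N)\bigr) + (I - P_N)F(P_N u_N),
\]
the first term is controlled by stability, but the last term is only $O(1)$ in $H^{-1/2}$ under the mere $L^4$ bound \eqref{equ:unif_l4}; it decays in $N$ only if one has extra regularity (which is precisely what the probabilistic sets $\Sigma_\lambda$ provide for Proposition \ref{prop:conv}, and is unavailable here). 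So a naive subdivide-and-iterate stability argument does not close for the full $\cH^{1/2}$ difference. What the paper actually proves (Theorem \ref{thm:approx_conditional}, via Proposition \ref{prop:error_eqn} and Theorem \ref{thm:local_compare}) is only convergence of the \emph{low-frequency} part $P_{N'}(\Phi(t) - \Phi_N(t))$, obtained by showing that $u_{lo}=P_{\leq M}u$ solves a perturbed NLKG with error $O((\log(N_*/N'))^{-\theta})$; the decay is manufactured via frequency separation and the refined bilinear Strichartz estimates of Proposition \ref{prop:multilinear}, not via raw stability. Your contradiction argument only uses the $k_0$-th Fourier mode, so it can in fact be salvaged once you replace the (false) global $\cH^{1/2}$ convergence by the (true) low-frequency convergence with $N' > |k_0|$ --- but as written the key lemma you invoke is stronger than what is available and stronger than what can be proved from \eqref{equ:unif_l4} alone. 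You should also be more careful with the observation that $\Pi_N\textbf{B}_R(u_*)$ is not contained in $\textbf{B}_R(u_*)$ when $\Pi_{\geq N}u_*\neq 0$: the paper handles this either by shrinking the ball (taking $\Pi_{2N}\textbf{B}_{R-\varepsilon}(u_*)$) or by enlarging to $\textbf{B}_{R_1}$, and for the conditional theorem it invokes Theorem \ref{thm:local_compare} to pass to finite-dimensional initial data with the same low frequencies; your ``harmless reduction'' should be spelled out because it interacts with the hypothesis \eqref{equ:unif_l4}, which is only assumed on $\textbf{B}_R(u_*)$.
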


\begin{rmk}
\label{rmk:cond}
Rephrased, Theorem \ref{thm:non-squeezing_cond} says that if one can prove existence and uniform Strichartz bounds for solutions to both \eqref{equ:cubic_nlkg} and \eqref{equ:cubic_nlkg_trun} for initial data in bounded subsets, then one obtains the full, deterministic statement of non-squeezing for this equation. We believe that assumption \eqref{equ:unif_l4} is a natural criterion for a long-time non-squeezing result in the critical setting and although the global well-posedness of this equation is currently out of reach, Theorem \ref{thm:non-squeezing_cond} reduces an a priori stronger dynamical statement about the flow to one about long-time existence and uniform Strichartz bounds. 
\end{rmk}

\begin{rmk}
\label{rmk:small_data}
For a fixed $T > 0$, we are able to prove that assumption \eqref{equ:unif_l4} holds automatically for initial data which is sufficiently small, depending on this choice of time, by standard arguments, see also Lemma \ref{lem:trunc_bds_small}. This fact allows us to conclude the small-data component of Theorem \ref{thm:non-squeezing_cond}. Unlike the Euclidean setting, we do not obtain an automatic small data global theory for the critical problem in the periodic setting since one must localize the Strichartz estimates in time.
\end{rmk}

\begin{rmk}
We may also combine Theorem \ref{thm:non-squeezing_cond} with the almost sure global well-posedness and the deterministic critical stability theory for the cubic nonlinear Klein-Gordon equation \eqref{equ:cubic_nlkg} to obtain a probabilistic non-squeezing result on certain open subsets of the phase space with large measure that is, measure which is greater than $1-Ce^{-c/\delta}$ for some small $\delta > 0$. See Theorem \ref{thm:weaknon-squeezing} for a precise statement of this result, as well as Remark \eqref{rmk:capacity} for some discussion.
\end{rmk}

\subsection{Overview of Proof}
\label{sub:overview}
\subsubsection{Almost sure global well-posedness}
To prove Theorem \ref{thm:non-squeezing}, we rely on an adaptation of the almost sure global well-posedness result from \cite{BT4}. This enables us to work on a set of full measure, $\Sigma$, with respect to a suitable randomization of the initial data, on which the nonlinear Klein-Gordon equation is globally well-posed. We will show that for a certain nested sequence of subsets $\Sigma_\lambda \subset \Sigma$, the flow of this equation can be seen as a compact perturbation of a linear flow in the sense used by Kuksin \eqref{equ:compact}.

\medskip
We will now describe the randomization procedure for the initial data. Let $\{ (h_k, l_k) \}_{k \in \bZ^3}$  be a sequence of zero-mean, complex-valued Gaussian random variables on a probability space $(\Omega, {\mathcal A}, \bP)$ with the symmetry condition $h_{-k} = \overline{h_k}$ for all $k \in \bZ^3$ and similarly for the $l_k$. We assume $\{h_0, \textup{Re}(h_k), \textup{Im}(h_k)\}_{k \in \cI}$ are independent, zero-mean, real-valued Gaussian random variables, where $\cI$ is such that we have a \textit{disjoint} union $\bZ^3 = \cI \cup (-\cI) \cup \{0\}$, and similarly for the $l_k$. This set-up ensures that the randomization of real-valued functions is real-valued. 

\medskip
Fix $(f_0, f_1) \in \cH^s(\bT^3)$, and define a randomization map $\Omega \times \cH^s \to \cH^s$ by
 \begin{equation} \label{equ:bighsrandomization_chap5} 
\bigl(\omega, (f_0, f_1)\bigr) \longmapsto (f_0^{\omega}, f_1^{\omega}) := \biggl( \sum_{k \in \bZ^3} h_k(\omega) \widehat{\phi}_0(k)e^{i k \cdot x}, \sum_{k \in \bZ^3} l_k(\omega) \widehat{\phi}_1(k)e^{i k \cdot x} \biggr).
 \end{equation}
We could similarly take non-Gaussian random variables which satisfy suitable boundedness conditions on their distributions. For any $(f_0, f_1) \in \cH^s$, the map \eqref{equ:bighsrandomization_chap5} induces a probability measure on $\cH^s$, given by
\[
\mu_{(f_0, f_1)} (A) = \bP\bigl( \omega \in \Omega : (f_0^{\omega}, f_1^{\omega}) \in A\bigr).
\]
We denote by $\cM^s$ the set of such measures:
\[
\cM^s := \bigl\{ \mu_{(f_0, f_1)} \,:\, (f_0, f_1) \in \cH^s \bigr\}.
\]
\begin{rmk}
\label{rmk:random_props}
The support of any $\mu \in \cM^s$ is contained in $\cH^s$ for all $s \in \bR$. Furthermore, if for some $s_1 >  s$ we have that $(f_0, f_1) \not \in \cH^{s_1}$ then the induced measure satisfies $\mu_{(f_0, f_1)}(\cH^{s_1}) = 0$. In other words, this randomization procedure does not regularize at the level of Sobolev spaces in the sense that almost surely, the randomization of a given function is no more regular than it was orginally. Moreover, if all the Fourier coefficients of $(f_0, f_1)$ are nonzero, then the support of the corresponding measure $\mu_{(f_0, f_1)}$ is all of $\cH^s$, that is, $\mu_{(f_0, f_1)}$ charges every open set in $\cH^s$ with positive measure. As a consequence, for such a measure, sets of full $\mu$ measure are dense. See \cite{BT1} for details.
\end{rmk}

\medskip
The arguments used to prove the almost sure global well-posedness of the defocusing cubic nonlinear wave equation by Burq and Tzvetkov \cite[Theorem 2]{BT4},  apply to the defocusing cubic nonlinear Klein-Gordon equation, with the slight modification that one must consider the inhomogeneous energy functional
\begin{align}
\label{equ:inhomog_energy}
\cE(w) = \frac{1}{2} \int |\nabla w|^2 + w^2 + (w_t)^2 + \frac{1}{2} w^4.
\end{align}
We note that since the Hamiltonian for the nonlinear Klein-Gordon controls the $L_x^2$ norm of solutions, we no longer need the projection away from constants which appears in \cite{BT4}. We denote by $S(t)$ the free evolution for \eqref{equ:cubic_nlkg}, given by
\begin{align}
\label{equ:free}
S(t)(u_0, u_1) = \cos(t \langle \nabla \rangle) u_0 + \frac{\sin(t \langle \nabla \rangle)}{\langle \nabla \rangle} u_1,
\end{align}
and we state \cite[Theorem 2]{BT4} adapted to our situation.
\begin{thm}
\label{thm:bt_global}
Let $M = \bT^3$ with the flat metric and fix $\mu \in \cM^s$, $0 < s < 1$. Then there exists a full $\mu$ measure set $\Sigma \subset \cH^s(\bT^3)$ such that for every $(u_0, u_1) \in \Sigma$, there exists a unique global solution $u$ of the nonlinear Klein-Gordon equation
\begin{equation}
\label{equ:cubic_nlkg_thm}
\left\{\begin{split}
&u_{tt} - \Delta u + u + u^3 = 0, \quad u: \bR \times \bT^3 \to \bR  \\
&(u, \partial_tu) \Big|_{t=0} = (u_0, u_1)
\end{split} \right.
\end{equation}
satisfying
\[
(u(t),u_t(t)) \in (S(t)(u_0, u_1), \partial_t S(t) (u_0, u_1)) + C(\bR_t ; \cH^1_x(\bT^3)).
\]
Furthermore, if we denote by
\[
\Phi(t)(u_0,u_1) \equiv (u(t), \partial_t u(t))
\]
the flow thus defined, the set $\Sigma$ is invariant under the flow $\Phi(t)$:
\[
\Phi(t)(\Sigma) = \Sigma \quad \forall t \in \bR.
\]
Finally, for any $\varepsilon > 0$, there exist $C, c, \theta > 0$ such that for every $(u_0, u_1) \in \Sigma$ there exists $M = M(u_0, u_1) > 0$ such that the global solution of \eqref{equ:cubic_nlkg_thm} given by
\[
u(t) = S(t) (u_0, u_1) + w(t)
\]
satisfies
\begin{equation}
\label{equ:sob_bds}
\|(w(t), \partial_t w(t))\|_{\cH^1} \leq  C(M + |t|)^{\frac{1-s}{s} + \varepsilon},
\end{equation}
and furthermore, for $M$ as in \eqref{equ:sob_bds} and each $\lambda > 0$,
\[
\mu\bigl((u_0, u_1) \in \Sigma \,:\, M > \lambda \bigr) \leq C e^{- c\lambda^\theta}.
\]
\end{thm}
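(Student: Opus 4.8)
The plan is to follow the Burq--Tzvetkov scheme of \cite[Theorem 2]{BT4}, adapted to the Klein--Gordon features. Fix $\mu = \mu_{(f_0,f_1)} \in \cM^s$. The first step is to record the probabilistic Strichartz estimates for the free evolution. Writing $v(t) := S(t)(u_0,u_1)$, one shows that for $\mu$-almost every $(u_0,u_1)$ the function $v$ lies in $L^q_{\mathrm{loc}}\bigl(\bR_t; W^{s,r}_x(\bT^3)\bigr)$ for every finite $r$ (with a suitable admissible $q = q(r)$), together with bounds $\|v\|_{L^q([-T,T];W^{s,r}_x)} \leq C(T,r)\,\Lambda$, where $\Lambda = \Lambda(u_0,u_1)$ obeys a Gaussian-type tail $\bP(\Lambda > \lambda) \leq C e^{-c\lambda^\theta}$. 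These are proved exactly as in \cite{BT1,BT4}: unit-scale Bernstein on $\bT^3$, Khinchine (or hypercontractivity) for the Gaussian coefficients, a dyadic decomposition in frequency together with a countable net of times, and Borel--Cantelli. The only change from \cite{BT4} is that the half-wave propagator $e^{it|\nabla|}$ is replaced by the Klein--Gordon propagator $e^{it\langle\nabla\rangle}$, which modifies only the low frequencies and does not weaken any of these estimates.

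The next step is to fix such an $(u_0,u_1)$ and look for the solution in the form $u = v + w$, so that $w$ solves the forced equation $w_{tt} - \Delta w + w = -(v+w)^3$ with zero Cauchy data. Since $s_c = 1/2 < 1$ for the cubic Klein--Gordon equation on $\bT^3$, this equation is energy-subcritical, and it is locally well-posed in $\cH^1$ on a time interval whose length depends only on $\|(w,\partial_t w)\|_{\cH^1}$ and on finitely many of the norms $\|v\|_{L^q_TW^{s,r}_x}$ above. To pass to a global solution one controls the inhomogeneous energy \eqref{equ:inhomog_energy}, which is comparable to $\|(w,\partial_t w)\|_{\cH^1}^2 + \|w\|_{L^4_x}^4$; differentiating it along the flow of the $w$-equation and absorbing the $w^3\partial_t w$ contribution into $\tfrac{d}{dt}\cE$ gives
\[
\tfrac{d}{dt}\cE(w) = -\int_{\bT^3}\bigl(3 v w^2 + 3 v^2 w + v^3\bigr)\,\partial_t w .
\]
Each term on the right is estimated by H\"older, Sobolev embedding on $\bT^3$, and the inequality $\|w\|_{L^4_x} \lesssim \cE(w)^{1/4}$ built into the energy, producing a differential inequality $\tfrac{d}{dt}\cE(w) \lesssim g(t)\,\bigl(1 + \cE(w)\bigr)^{\alpha}$ for some exponent $\alpha$, with $g \in L^1_{\mathrm{loc}}$ controlled by the probabilistic norms of $v$. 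Iterating the local theory on consecutive short intervals and summing the energy increments yields the polynomial-in-time bound \eqref{equ:sob_bds}, with the exponent $\tfrac{1-s}{s}+\varepsilon$ emerging from the number of iteration steps needed to reach a given time. The resulting $\cH^1$ bound on $w$, together with the almost sure finiteness of $\|v\|_{L^4_{t,x}}$, controls $\|u\|_{L^4_{t,x}([0,T])}$ on every bounded interval, so the blow-up criterion \eqref{equ:blow_up} forces the solution to be global. Two points are genuinely specific to Klein--Gordon: the mass term makes $\langle\nabla\rangle^2$ rather than $-\Delta$ the relevant elliptic operator, which is why the energy must include the term $\int w^2$ as in \eqref{equ:inhomog_energy}; and the Hamiltonian of \eqref{equ:cubic_nlkg} already controls $\|u\|_{L^2_x}$, which removes the need for the projection away from constants used in \cite{BT4}.

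Finally, I would define $\Sigma$ to be the set of $(u_0,u_1) \in \cH^s(\bT^3)$ for which $v(t) = S(t)(u_0,u_1)$ satisfies the countable family of Strichartz bounds above on every interval $[-T,T]$. By the first step $\mu(\Sigma) = 1$, and since the constant $M(u_0,u_1)$ in \eqref{equ:sob_bds} is dominated by a finite combination of these norms, the tail bound $\mu\bigl(M > \lambda\bigr) \leq C e^{-c\lambda^\theta}$ follows from the bound on $\Lambda$. Uniqueness in the affine class $S(t)(u_0,u_1) + C(\bR;\cH^1)$ is standard from the subcritical theory. For the flow invariance $\Phi(t)(\Sigma) = \Sigma$, the key observation is that the property defining $\Sigma$ is stable under two operations: (i) replacing $(u_0,u_1)$ by $S(t_0)(u_0,u_1)$, because $S(t_0)$ commutes with $S(t)$ and this only translates the time interval; and (ii) adding any element of $\cH^1$, whose free evolution satisfies all the relevant space-time bounds deterministically, with room to spare since $\cH^1$ is subcritical for this equation. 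Since for $(u_0,u_1)\in\Sigma$ one has $(u(t_0),\partial_t u(t_0)) = S(t_0)(u_0,u_1) + (w(t_0),\partial_t w(t_0))$ with $(w(t_0),\partial_t w(t_0))\in\cH^1$, both observations give $\Phi(t_0)(\Sigma)\subseteq\Sigma$ for every $t_0 \in \bR$, and applying this to $t_0$ and $-t_0$ yields $\Phi(t_0)(\Sigma)=\Sigma$. I expect the main obstacle to be the globalization step: organizing the energy-increment iteration precisely enough to produce the sharp exponent $\tfrac{1-s}{s}+\varepsilon$ uniformly on bounded time intervals while keeping all constants measurable with the stated Gaussian tails. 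This is the technical heart of \cite{BT4}, and here it must be run with the inhomogeneous energy \eqref{equ:inhomog_energy} in place of the homogeneous one.
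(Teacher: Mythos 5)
Your sketch is correct and follows the same approach the paper takes, which is simply to invoke \cite[Theorem 2]{BT4} and note the two Klein--Gordon-specific modifications: use of the inhomogeneous energy \eqref{equ:inhomog_energy}, and the fact that the Hamiltonian already controls $\|u\|_{L^2_x}$ so the projection away from constants in \cite{BT4} is unnecessary. You correctly identify both of these points, and your computation
\[
\frac{d}{dt}\cE(w) = -\int_{\bT^3}\bigl(3 v w^2 + 3 v^2 w + v^3\bigr)\,\partial_t w
\]
(the $w^3\partial_t w$ term cancelling the $\tfrac14\int w^4$ contribution to $\cE$) is exactly what drives the Gronwall estimate. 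In effect you are filling in the content the paper outsources to \cite{BT4}; nothing you wrote conflicts with that scheme, and the flow-invariance argument via (i) commutation of $S(t_0)$ with the defining Strichartz conditions and (ii) absorption of $\cH^1$ perturbations is the standard one.
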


\medskip
\noindent Let us introduce precisely the subset $\Sigma$ of full measure we will work with. Let $0 < \gamma < \frac{1}{2}$ to be fixed later and define
\begin{equation}
\label{equ:sigma_def}
\begin{split}
\Theta_1 &:= \bigl\{(u_0, u_1) \in \cH^{1/2} \,:\, \|S(t)(1 - \Delta)^{\gamma / 2} (u_0, u_1) \|^6_{L_x^6(\bT^3)}  \in L_{\text{loc}}^1(\bR_t)  \bigr\}, \\
\Theta_2 &:= \bigl\{(u_0, u_1) \in \cH^{1/2} \,:\ \|S(t)(u_0, u_1) \|_{L_x^\infty(\bT^3)} \in L_{\text{loc}}^1(\bR_t)  \bigr\}.
\end{split}
\end{equation}
Set $\Theta := \Theta_1 \cap \Theta_2$ and let $\Sigma = \Theta + \cH^1$. Although the set specified in \cite[Theorem 2]{BT4} imposes an $L^3_{t, loc} L^6_x$ condition on the free evolution, it will be more convenient to work with the above definition and it is clear that the conditions in \eqref{equ:sigma_def} are stronger. This choice will not change any of the arguments from \cite{BT4}, and as we will see, $\Sigma$ also has full measure with respect to any $\mu \in \cM^s$. By Remark \ref{rmk:random_props}, $\Sigma$ is not comprised of initial data which are smoother at the level of Sobolev spaces. We will work on the nested subsets $\Sigma_\lambda \subset \Sigma$, which we define in a following section. These subsets have the property that their union is all of $\Sigma$, and we prove in Proposition \ref{prop:lambda_props} that there exists $C, c, \theta > 0$ so that for any $\lambda > 0$, they satisfy
\[
\mu(\Sigma_\lambda) \geq 1 - Ce^{-c\lambda^\theta}.
\]
Let $\widetilde{\Phi}$ denote the nonlinear component of the flow map for the cubic nonlinear Klein-Gordon equation, given by
 \begin{align}
\label{equ:nonlin_com}
\widetilde{\Phi}(t) (u_0,u_1) := \Phi(t) (u_0,u_1) - S(t) (u_0,u_1).
\end{align}
On these subsets, we are able to prove a probabilistic version of the criteria needed for Kuksin's argument from \cite{Kuk}, namely we prove bounds of the form
\begin{align}
\label{equ:nonlin_smoothing}
\|\widetilde{\Phi}(t) (u_0, u_1)\|_{L_t^\infty\cH_x^{s_2}([0,T] \times \bT^3)} \lesssim \|(u_0, u_1)\|_{\cH_x^{s_1}}, \qquad (u_0, u_1) \in \Sigma_\lambda
\end{align}
for some $s_1 < \frac{1}{2} < s_2$.

\begin{rmk}
In \cite{B95}, Bourgain proves the analogue of \eqref{equ:nonlin_smoothing} for subcritical nonlinear Klein-Gordon equations via estimates in local-in-time $X^{s,b}$ spaces, see \eqref{equ:xsb_def_chap3}. The reason Bourgain's estimates fail at the critical regularity is because Strichartz estimates are not available at regularities $s_1 < \frac{1}{2}$, which one would need in order to obtain the smoothing bound \eqref{equ:nonlin_smoothing}. Generally speaking, $X^{s,b}$ spaces are ill-suited for critical problems, resulting in logarithmic divergences in the nonlinear estimates, and problems due to failure of the endpoint Sobolev embedding. Nonetheless, we choose to prove the probabilistic convergence argument in these spaces, since they are simpler to work with than the $U^p$ and $V^p$ spaces, and we believe they exhibit the key aspects of the probabilistic argument more clearly. Ultimately the $U^p$ and $V^p$ spaces are indeed more suitable for the critical problem, however, and we require them to prove the conditional approximation result in Section \ref{sec:approx}.
\end{rmk}

\subsubsection{Probabilistic approximation of the flow map}
Once we have established the probabilistic bounds on the nonlinear component of the flow map, our argument has several key components. The first is an approximation estimate for the flow map on the subsets $\Sigma_\lambda \subset \Sigma$. We consider \eqref{equ:cubic_nlkg_trun} and we observe that this equation is defined on the whole space and it decouples into a nonlinear evolution on low-frequencies and a linear flow on high frequencies. This equation was used in \cite{BT5} and we will show in Section \ref{sec:well-posedness} that this flow is a symplectomorphism when restricted to the finite dimensional subspaces $\Pi_N \cH^{1/2}$. We will show that this flow approximates the flow of the full nonlinear Klein-Gordon equation well and consequently, we are able to show that the full equation preserves infinite dimensional capacities.

\begin{prop}
\label{prop:conv}
Let $\Phi$ and $\Phi_N$ denote the flows of the cubic nonlinear Klein-Gordon equation with full \eqref{equ:cubic_nlkg} and truncated nonlinearities \eqref{equ:cubic_nlkg_trun}, respectively. Then for any $T > 0$ and for every $(u_0, u_1) \in \Sigma_{\lambda} \cap \textbf{B}_R $,
\[
\sup_{t \in [0, T]} \| \Phi(t)(u_0, u_1) -  \Phi_N(t) (u_0, u_1)\|_{\cH_x^{1/2}(\bT^3)} \leq C(\lambda, T, R) \, \varepsilon_1(N)
\]
with $\varepsilon_1(N) \to 0 $ as $N \to \infty$. 
\end{prop}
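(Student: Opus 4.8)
The plan is to write $\Phi(t)(u_0,u_1) = S(t)(u_0,u_1) + \widetilde\Phi(t)(u_0,u_1)$ and analogously $\Phi_N(t)(u_0,u_1) = S(t)(u_0,u_1) + \widetilde\Phi_N(t)(u_0,u_1)$, where $\widetilde\Phi_N$ is the nonlinear component of the truncated flow. Since the free evolution $S(t)$ is common to both, the difference $\Phi(t)(u_0,u_1)-\Phi_N(t)(u_0,u_1)$ equals $\widetilde\Phi(t)(u_0,u_1)-\widetilde\Phi_N(t)(u_0,u_1)$, and we work at the level of the Duhamel formulations for $w = \widetilde\Phi(\cdot)(u_0,u_1)$ and $w_N = \widetilde\Phi_N(\cdot)(u_0,u_1)$. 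Writing $u = S(t)(u_0,u_1) + w =: f + w$, the equation for $w$ is $w_{tt} - \Delta w + w + (f+w)^3 = 0$ with zero data, and the equation for $w_N$ is $(w_N)_{tt} - \Delta w_N + w_N + P_N\bigl(P_N(f+w_N)\bigr)^3 = 0$ (using that the linear flow preserves frequency support, the truncated equation splits as advertised, and the high-frequency part of $u_N$ coincides with the high-frequency free evolution). Subtracting the two Duhamel formulas and setting $v = w - w_N$, we get $v = \int_0^t \frac{\sin((t-\tau)\langle\nabla\rangle)}{\langle\nabla\rangle}\Bigl[ (f+w)^3 - P_N\bigl(P_N(f+w_N)\bigr)^3 \Bigr]\, d\tau$.

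The key algebraic step is to split the nonlinear difference as
$(f+w)^3 - P_N\bigl(P_N(f+w_N)\bigr)^3
= \bigl[(f+w)^3 - P_N\bigl(P_N(f+w)\bigr)^3\bigr] + \bigl[P_N\bigl(P_N(f+w)\bigr)^3 - P_N\bigl(P_N(f+w_N)\bigr)^3\bigr]$.
The first bracket is a \emph{commutator/truncation error} which does not involve $v$: it is controlled by $\|(1-P_N)(f+w)\|$ in an appropriate norm times lower-order factors, and tends to $0$ as $N\to\infty$ because $f+w = u \in C([0,T];\cH^{1/2})$ has frequency tails going to zero (quantitatively, we use the extra regularity $w \in C([0,T];\cH^1)$ from Theorem \ref{thm:bt_global} and the probabilistic smoothing bounds on $\widetilde\Phi$ of the form \eqref{equ:nonlin_smoothing} on $\Sigma_\lambda$, plus the $L_x^6$/$L_x^\infty$ control on the free evolution built into $\Sigma$, to produce an explicit rate $\varepsilon_1(N)$). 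The second bracket is \emph{Lipschitz in $v$}: it is a difference of cubics with the same leading term $f$, so it is bounded by $\|v\|$ times a polynomial in the Strichartz/energy norms of $f$, $w$, $w_N$; we absorb it via Gronwall in time. Thus the scheme is: (i) run the local stability/perturbation theory (in $U^p$/$V^p$ spaces, as the remark anticipates, to handle the critical regularity cleanly) to get a priori bounds on $w$ and $w_N$ on $[0,T]$ depending only on $\lambda$, $T$, $R$ — this is where membership in $\Sigma_\lambda \cap \mathbf{B}_R$ and Theorem \ref{thm:bt_global} are used; (ii) estimate the truncation error term by $\varepsilon_1(N)$; (iii) close with a Gronwall argument on short subintervals whose length depends only on $\lambda, T, R$, then iterate finitely many times to cover $[0,T]$.

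The main obstacle is step (i) combined with (iii): because the problem is energy-critical and there is no uniform local existence time, one cannot simply run a contraction on a fixed time interval. The resolution is to use the probabilistic bounds on $\widetilde\Phi$ — which hold on all of $\Sigma_\lambda$ with constants depending only on $\lambda$ — to get that $w$ lies in a bounded set of a critical space on $[0,T]$, and similarly (via Lemma-type arguments for the truncated equation, whose nonlinearity is a genuine smooth cubic on the fixed finite-dimensional low-frequency space so its flow is unconditionally global there) to bound $w_N$ uniformly in $N$. Once both $w$ and $w_N$ live in a common bounded set of the iteration space, the stability space-time norm can be divided into $O_{\lambda,T,R}(1)$ subintervals on each of which the Lipschitz constant of the cubic is $\le 1/2$, and the difference estimate propagates with a constant that is exponential in the number of subintervals — hence the final bound $C(\lambda,T,R)\,\varepsilon_1(N)$. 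Carefully tracking that the only source of smallness is the high-frequency tail of the fixed solution $u$ (not of $v$), so that $\varepsilon_1(N)$ is genuinely $N$-dependent and independent of the data within $\Sigma_\lambda \cap \mathbf{B}_R$, is the delicate bookkeeping point.
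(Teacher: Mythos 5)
Your overall strategy is sound and matches the paper's in outline: write the difference at the level of Duhamel integrals, split the nonlinear difference into a truncation error (controlled by high-frequency tails and decaying in $N$) plus a Lipschitz-in-$v$ term, bound both using probabilistic a priori estimates valid on $\Sigma_\lambda$, and close with a Gronwall iteration over subintervals of length depending only on $\lambda$, $R$, $T$. However, you have misread the paper's stated framework: the remark you cite actually says the opposite of what you claim. The paper deliberately proves this probabilistic convergence proposition in local-in-time $X^{s,b,\delta}$ spaces (Section 7 develops the needed smoothing estimates of the form $\|F(u)\|_{X^{s_2-1,-\frac{1}{2}+,\delta}} \lesssim (1 + \|u\|_{X^{s_1,\frac{1}{2}+,\delta}}^{9/4})$ with $s_1 < \frac{1}{2} < s_2$, plus the continuity estimates of Propositions 7.5--7.6), and reserves the $U^p/V^p$ machinery for the deterministic conditional result of Theorem 1.7 where $L^4_{t,x}$-type control must be exploited more delicately. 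Your choice of $U^p/V^p$ is not mathematically wrong in principle, but it would require reproving the probabilistic smoothing multilinear estimates of Section 7 in the $U^p/V^p$ setting, which the paper does not do and which would be genuinely more work, precisely because the averaging effects ($L^6_x$, $L^\infty_x$ bounds on the free evolution) interact more awkwardly with the atomic structure than with Bourgain spaces. Your algebraic splitting also differs slightly from the paper's: you place the full truncation error $F(u) - P_N(P_N u)^3$ first and the Lipschitz difference $P_N[(P_N u)^3 - (P_N u_N)^3]$ second, whereas the paper uses $\Delta_1 = F(u) - F(u_N)$, $\Delta_2 = F(u_N) - F(P_N u_N)$, $\Delta_3 = (I - P_N)F(P_N u_N)$, i.e., the truncation error is expressed in terms of $u_N$ rather than $u$. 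Both decompositions are valid and yield the same structure of estimates; the paper's version keeps the genuine cubic $F$ in the Lipschitz term, which lines up directly with Proposition 7.5. If you want to follow your route, you should replace the appeal to $U^p/V^p$ local theory in step (i) with the $X^{s,b,\delta}$ a priori bounds from Proposition 7.2/7.4, which already depend only on $(\lambda, R, T)$ by construction of $\Sigma_\lambda$; this is where the hypothesis $(u_0,u_1) \in \Sigma_\lambda$ is actually consumed.
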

\noindent We remark that this is a global in time approximation result in the critical space, with no restriction on the size of the initial data.

\begin{rmk}
The dependence of the constant on $R$ in Proposition \ref{prop:conv} is somewhat artificial, and can be removed by proving the statements from Section \ref{sec:compactness} with bounds in terms of the nonlinear components of the solutions. Since both $\Phi$ and $\Phi_N$ have the same free evolution, these bounds would suffice to prove Proposition \ref{prop:conv}.
\end{rmk}

An important ingredient in the proof of Theorem \ref{thm:non-squeezing} is the following theorem which states that locally in time, if one restricts to initial data supported on finitely many frequencies, this approximation still holds uniformly.

\begin{thm}
\label{thm:trunc_approx}
Let $\Phi$ denote the flow of the cubic nonlinear Klein-Gordon equation \eqref{equ:cubic_nlkg} and $\Phi_N$ the flow of \eqref{equ:cubic_nlkg_trun}. Fix $R > 0$, $u_* \in \cH^{1/2}$ and $N', N \in \bN$ with $N'$ sufficiently large, depending on $u_*$. Then there exists a constant $\varepsilon_0(u_*, R) > 0$ such that for all $0 < \varepsilon \leq \varepsilon_0$, there exists $\sigma \equiv \sigma( R, \varepsilon, N')$ such that for any $(u_0, u_1) \in \textbf{B}_R(u_*)$,
\begin{align}
\label{equ:open_conv}
 \sup_{ t \in [0,\sigma]} \| \Phi(t) \Pi_{N'} (u_0, u_1) -  \Phi_N(t) \Pi_{N'} (u_0, u_1) \|_{\cH_x^{1/2}} \leq C(R, u_*) \left[ \, \varepsilon_1(N) + \varepsilon \,\right] \qquad
\end{align}
with $\lim_{N \to \infty} \varepsilon_1(N) = 0 $.
\end{thm}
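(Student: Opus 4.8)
The plan is to reduce the uniform local-in-time approximation on $\Pi_{N'}\textbf{B}_R(u_*)$ to two ingredients already available: the probabilistic global approximation result of Proposition \ref{prop:conv}, and a deterministic perturbative stability argument. The point is that $\Pi_{N'}(u_0,u_1)$ ranges over a \emph{compact} finite-dimensional ball (a translate of $\Pi_{N'}\textbf{B}_R(0)$ by $\Pi_{N'}u_*$), so we can hope to leverage density of the full-measure set $\Sigma$ (Remark \ref{rmk:random_props}, applied to a measure $\mu\in\cM^s$ all of whose Fourier coefficients are nonzero) together with a finite covering argument. Fix $\varepsilon>0$ and $N'$ large depending on $u_*$ (large enough that the tail $\|\Pi_{N'}^\perp u_*\|_{\cH^{1/2}}$ is negligible, which is how $N'$ enters). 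First I would choose, for each point $v\in\Pi_{N'}\textbf{B}_R(u_*)$, a nearby point $\tilde v\in\Sigma\cap\textbf{B}_{2R}(u_*)$ with $\|v-\tilde v\|_{\cH^{1/2}}<\delta$ for a small $\delta=\delta(\varepsilon)$ to be chosen; density of $\Sigma$ makes this possible. Covering the compact set $\Pi_{N'}\textbf{B}_R(u_*)$ by finitely many $\delta$-balls, we obtain a finite set $\tilde v_1,\dots,\tilde v_m\in\Sigma\cap\textbf{B}_{2R}(u_*)$, hence finitely many $\lambda_j$ with $\tilde v_j\in\Sigma_{\lambda_j}$; let $\lambda_*:=\max_j\lambda_j$, so all $\tilde v_j\in\Sigma_{\lambda_*}\cap\textbf{B}_{2R}(u_*)$.

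Next I would apply Proposition \ref{prop:conv} at each $\tilde v_j$: for $T=1$ (say),
\[
\sup_{t\in[0,1]}\|\Phi(t)\tilde v_j-\Phi_N(t)\tilde v_j\|_{\cH^{1/2}}\le C(\lambda_*,R)\,\varepsilon_1(N),
\]
which is the $\varepsilon_1(N)$ term in \eqref{equ:open_conv}. It then remains to control, uniformly for $v\in\Pi_{N'}\textbf{B}_R(u_*)$ and the nearest $\tilde v_j$,
\[
\sup_{t\in[0,\sigma]}\|\Phi(t)v-\Phi(t)\tilde v_j\|_{\cH^{1/2}}\quad\text{and}\quad \sup_{t\in[0,\sigma]}\|\Phi_N(t)v-\Phi_N(t)\tilde v_j\|_{\cH^{1/2}},
\]
each by $\lesssim\varepsilon$ on a possibly short common interval $[0,\sigma]$ with $\sigma=\sigma(R,\varepsilon,N')$. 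This is where the deterministic stability theory enters. The subtlety is that $v$ and $\tilde v_j$ are only close in the critical space $\cH^{1/2}$, so there is no unconditional well-posedness or uniform lifespan; however, $v=\Pi_{N'}v$ is band-limited and $\tilde v_j\in\Sigma_{\lambda_*}$ comes with the structural decomposition $\tilde v_j=(\text{free data})+\cH^1$ and the Strichartz/$U^p$–$V^p$ control that defines $\Sigma_{\lambda_*}$. I would run the stability argument in the $U^p$–$V^p$ framework of Section \ref{sec:approx}: write both solutions as free evolution plus Duhamel, subtract, and bound the difference in the critical solution space on $[0,\sigma]$ by the initial data difference $\delta$ plus a self-improving nonlinear term that is small because $\sigma$ is small (using the finite-frequency gain for $v$ and the a priori Strichartz bound for $\tilde v_j$ to absorb the cubic terms). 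Choosing $\sigma$ small compared to $\varepsilon$ and the relevant norms of the finitely many $\tilde v_j$, and then $\delta$ small compared to $\varepsilon$, closes the bootstrap and yields the $\varepsilon$ term. The same scheme applies verbatim to $\Phi_N$, since its nonlinearity is a truncation of the cubic one and satisfies the same multilinear estimates uniformly in $N$. Combining the three estimates via the triangle inequality gives \eqref{equ:open_conv}.

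The main obstacle is the stability step: controlling $\|\Phi(t)v-\Phi(t)\tilde v_j\|_{\cH^{1/2}}$ at the \emph{critical} regularity without a uniform lifespan. One cannot simply invoke continuous dependence, since that fails uniformly at criticality. The resolution is that we do not need a lifespan uniform over \emph{all} data of size $R$ — only over the fixed finite list $\tilde v_1,\dots,\tilde v_m$ (whose nonlinear profiles are under control via $\Sigma_{\lambda_*}$) together with their band-limited $\delta$-perturbations $v$. For the band-limited data, the cubic nonlinearity obeys an estimate with a positive power of $\sigma$ (no logarithmic loss, since we are away from the endpoint once the frequency is capped by $N'$, though with a constant depending on $N'$ — this is the source of the $N'$-dependence of $\sigma$); for the $\cH^1$ component of $\tilde v_j$ the extra derivative gives room; and the free-evolution pieces of $\tilde v_j$ are handled by its membership in $\Theta_1\cap\Theta_2$. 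Making these pieces fit together uniformly over the finite cover, with all implicit constants traced to depend only on $R,u_*$ (and $\sigma$ additionally on $\varepsilon,N'$), is the technical heart of the argument; everything else is bookkeeping with the triangle inequality and the compactness/density reduction.
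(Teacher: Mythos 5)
Your approach is structurally different from the paper's, and it has a genuine gap in the form of a circular dependence among the parameters. The paper fixes a \emph{single} reference point $(v_0,v_1)\in\Sigma_\lambda\cap\textbf{B}_R(u_*)$ (so $\lambda$ is fixed once and for all by $R,u_*$), and then compares $\Phi(t)\Pi_{N'}(u_0,u_1)$ to $\Phi(t)P_{N'}(v_0,v_1)$; the crucial observation is that $P_{N'}(v_0,v_1)\in\Sigma_\lambda$ still (because $\Sigma_\lambda$ is invariant under the smooth projections), and, since both $P_{N'}(v_0,v_1)$ and $\Pi_{N'}(u_0,u_1)$ are band-limited at frequency $N'$, the data \emph{difference} is band-limited. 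Hence $\|S(t)(P_{N'}(v_0,v_1)-\Pi_{N'}(u_0,u_1))\|_{L^4([0,\sigma])}\lesssim\sigma^{1/4}(N')^{1/2}R$ can be made $<\rho_1(K_1)$ by shrinking $\sigma$, \emph{even though the $\cH^{1/2}$ data difference is $O(R)$ rather than small}. The long-time stability lemma then applies with $R_1\sim R$ but small Strichartz free-evolution norm, and all constants depend on a single $K_1=K_1(\lambda)$. In your scheme you instead take a $\delta$-net of $\Pi_{N'}\textbf{B}_R(u_*)$ with centers $\tilde v_j\in\Sigma$ and set $\lambda_*=\max_j\lambda_j$. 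The resulting stability constants (call them $C(\lambda_*)$) depend on $\lambda_*$, and you need $C(\lambda_*)\delta<\varepsilon$ to conclude. But $\lambda_*$ depends on the covering, hence on $\delta$: as $\delta\to 0$ you need more net points, and there is no a priori upper bound on the minimal $\lambda$ needed to find a $\Sigma_\lambda$ point inside a small ball (the paper explicitly notes $\Sigma_\lambda$ has empty interior, and density of $\Sigma_{\lambda}$ for fixed $\lambda$ is not available). So the order of choices does not close: $\delta$ must be small compared to a quantity that grows with $\lambda_*(\delta)$, with no control on the growth. This circularity is precisely what the paper's single-reference-point plus projection trick avoids.

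A secondary issue: your centers $\tilde v_j\in\Sigma$ have full Fourier support, so the data difference $v-\tilde v_j$ is \emph{not} band-limited at $N'$. You rely on $\delta$-smallness in $\cH^{1/2}$ (rather than band-limiting of the difference) to get free-evolution smallness, which is self-consistent, but this is what forces you into the covering argument and the circularity above. Replacing $\tilde v_j$ by its projection $P_{N'}\tilde v_j$ (still in $\Sigma_{\lambda_j}$, still satisfying Proposition \ref{prop:conv}) lets you take a \emph{single} reference point and absorb an $O(R)$ data difference via the $\sigma^{1/4}(N')^{1/2}$ gain — then there is only one $\lambda$, one $K_1$, and no circularity. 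That projection step is the missing idea.
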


\begin{rmk}
\label{rmk:ustar_dep}
By using probabilistic techniques, we obtain a uniform statement at the critical regularity for all initial data in $\textbf{B}_R(u_*)$. The dependence on $u_*$ in this theorem arises both due to the minimal value $\lambda > 0$ such that the intersection $\Sigma_\lambda \cap \textbf{B}_R(u_*)$ is non-empty, and also from of the frequency $N$ required so that
\[
\|\Pi_{\geq N} u_*\|_{\cH^{1/2}} < R.
\]
We suspect that the dependence of the constants on the various parameters is non-optimal and some improvement might be possible.
\end{rmk}

\begin{rmk}
The projection operators in \eqref{equ:open_conv} allow us to gain the necessary control via the stability theory in order to obtain convergence on all of $\Pi_{N'} \textbf{B}_R$. It is the combination of Proposition \ref{prop:conv} with the stability theory which enables us to get a deterministic non-squeezing theorem, regardless of the fact that Proposition \ref{prop:conv} is a probabilistic result.
\end{rmk}

\begin{rmk}
\label{rmk:capacity1}
Although Proposition \ref{prop:conv} is a global in time convergence for the flow maps for initial data in $\Sigma_\lambda$, these sets are not open and have a rather complicated structure. This poses serious problems for the proof of the non-squeezing theorem for several reasons, among which is the fact that it is not at all clear how to compute the capacities $\textup{cap}(\Sigma_\lambda \cap \textbf{B}_R)$, or even ensure that they are positive since the subsets $\Sigma_\lambda$ have empty interior. In order to upgrade the approximation to open subsets, one must localize in time or obtain a conditional result. We have no prediction at the moment for an estimate of this capacity using Kuksin's definition. Ultimately, it seems likely that an alternative definition of an infinite dimensional symplectic capacity might be a more suitable approach.
\end{rmk}

\begin{rmk}
\label{rmk:whynotsmoother}
A posteriori, one may observe that smoother initial data satisfies the assumptions needed for our probabilistic smoothing results, however one still needs to perform a similar analysis to that of Section \ref{sec:compactness} in order to obtain \eqref{equ:nonlin_smoothing}, see Remark \ref{rmk:no_sobolev}. This observation, however, does not seem to be promising from the point of view of future work on non-squeezing for \eqref{equ:cubic_nlkg} since there are results which indicate that one does in fact have squeezing for smoother initial data, see \cite{Kuk2}. Moreover, in any reasonable probabilistic set-up for studying \eqref{equ:cubic_nlkg} smoother initial data will have zero measure. Consequently if we wish to find a notion of symplectic capacity which is compatible with our probabilistic perspective, it is not sufficient to consider only smooth initial data.
\end{rmk}

\begin{rmk}
As an easy consequence of the type of arguments used to prove Theorem \ref{thm:trunc_approx}, we obtain well-posedness for long times on open subsets, as well as topological genericity for the set of initial data for which this equation is globally well-posed. Namely, we can write the set of initial data for which we can prove global well-posedness as the intersection of dense, open sets. 
\end{rmk}

\subsubsection{Non-squeezing via Gromov's Theorem}

As in \cite{B94C} and \cite{CKSTT05}, we will prove non-squeezing for the full equation by using an approximation theorem combined with Gromov's finite dimensional non-squeezing theorem. We quote Gromov's theorem here for the symplectic space $(\bR^{2N}, \omega_0)$, where $\omega_0$ is the standard symplectic form on $\bR^{2N}$, see \cite{HZ} for details and generalizations.

\begin{thm}[Gromov's non-squeezing theorem, \cite{Gromov}]
\label{thm:gromov}
Let $R$ and $r > 0$, $z \in \bC$, $0 \leq k_0 \leq N$, and $x_* \in \bR^{2N}$. Let $\phi$ be a symplectomorphism defined on $\textbf{B}_R(x_*)\subset \bR^{2N}$, then
\[
\phi(\textbf{B}_R(x_*)) \not \subset \textbf{C}_r(z; k_0)
\]
for any $r < R$.
\end{thm}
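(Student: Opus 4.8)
The plan is to deduce the theorem from the existence of a \emph{symplectic capacity} on $(\bR^{2N},\omega_0)$: a rule $c$ assigning to open sets $U\subseteq\bR^{2N}$ a value $c(U)\in[0,+\infty]$ that is monotone under symplectic embeddings (if $U$ embeds symplectically into $V$ then $c(U)\le c(V)$), conformal ($c(\lambda U)=\lambda^2 c(U)$), and normalized so that the open unit ball $B_1$ and the open unit cylinder $Z_1:=\{x:x_1^2+x_2^2<1\}$ over a symplectic coordinate plane both satisfy $c(B_1)=c(Z_1)=\pi$. Granting such a $c$, the theorem is immediate. Suppose $\phi(\textbf{B}_R(x_*))\subseteq\textbf{C}_r(z;k_0)$ with $r<R$; since $\textbf{C}_r(z;k_0)$ is, up to a translation and a symplectic permutation of the coordinate planes, the rescaled cylinder $rZ_1$ (the reduction of the $\cH^{1/2}$-cylinder \eqref{equ:cylinder} to this standard form is the symplectic rescaling that absorbs the weights $\langle k_0\rangle^{\pm1}$), pick radii $r<r'<R'<R$. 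Then the open ball $B_{R'}(x_*)\subseteq\textbf{B}_R(x_*)$, so $\phi$ restricts to a symplectic embedding of $B_{R'}(x_*)$ into $\textbf{C}_r(z;k_0)\subseteq r'Z_1$, whence
\[
\pi (R')^2 \;\le\; c\bigl(B_{R'}(x_*)\bigr) \;\le\; c\bigl(\phi(B_{R'}(x_*))\bigr) \;\le\; c\bigl(r'Z_1\bigr) \;=\; \pi (r')^2,
\]
forcing $R'\le r'$, a contradiction.

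So the real content is to produce a capacity with the correct normalization, and I would use the Hofer--Zehnder capacity $c_{HZ}$ of \cite{HZ}. Recall its definition: a smooth $H\colon\bR^{2N}\to\bR$ is \emph{admissible for $U$} if $H\ge 0$, $H$ vanishes near $\partial U$ and outside a compact subset of $U$, $H\equiv\max H$ on some nonempty open set, and the Hamiltonian flow of $H$ has no nonconstant periodic orbit of period $\le 1$; set $c_{HZ}(U):=\sup\{\max H:H\text{ admissible for }U\}$. Monotonicity and conformality are built into the definition (an admissible $H$ for $U$ pushes forward to one for a symplectic image, and rescaling coordinates rescales periods). The lower bound $c_{HZ}(B_{R'})\ge\pi (R')^2$ is elementary: one writes down a radial Hamiltonian $H(x)=f(|x|^2)$ supported in $B_{R'}$ with $\max H$ arbitrarily close to $\pi (R')^2$ whose flow is a reparametrized rotation, hence has all nonconstant orbits of period $>1$. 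It remains to prove the cylinder upper bound $c_{HZ}(Z_1)\le\pi$ (equivalently $c_{HZ}(rZ_1)\le\pi r^2$), and this is the crux and the step I expect to be the main obstacle.

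For the cylinder bound, suppose toward a contradiction that $H$ is admissible for $Z_1$ with $\max H>\pi$. Since $H$ is compactly supported it is admissible for $B^2_1(0)\times B^{2N-2}_\rho(0)$ for some $\rho$, and one may further replace the bounded factor by a flat torus $\bT^{2N-2}$, a modification that only enlarges the admissible class. Consider the symplectic action functional $\cA_H$ on the loop space $H^{1/2}(S^1;\bR^{2N})$, whose nonconstant critical points are precisely the $1$-periodic orbits of $X_H$. Decomposing $H^{1/2}$ into the negative, zero, and positive eigenspaces $E^-,E^0,E^+$ of the quadratic part of $\cA_H$ --- a self-adjoint operator whose nonzero eigenvalues are the numbers $\pi m$, $m\in\bZ\setminus\{0\}$, so that the first positive eigenvalue is exactly $\pi$ --- one runs a Rabinowitz-type minimax/linking argument: the hypothesis $\max H>\pi$ makes a suitable sphere in $E^+\oplus E^0$ link a subspace of $E^-$ with a \emph{strictly positive} minimax value $c>0$, and a Palais--Smale-type compactness property for $\cA_H$, obtained via Galerkin truncation together with the compact support of $H$ (which confines the relevant loops), produces a critical point at level $c$. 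A critical point with positive action cannot be a constant loop --- constants have action $-\int_0^1 H\le 0$ --- so it is a nonconstant $1$-periodic orbit of $X_H$, contradicting admissibility. Establishing the linking geometry, and above all the compactness needed to extract the critical point, is the technically demanding part; it is carried out in detail in \cite{HZ}, and Kuksin's infinite-dimensional adaptation \cite{Kuk}, which underlies the normalization \eqref{equ:normalize} in $\cH^{1/2}(\bT^3)$, follows the same scheme.

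Alternatively, one can give Gromov's original pseudoholomorphic-curve argument: compactify the cylinder factor to a closed symplectic manifold, choose an $\omega$-compatible almost complex structure $J$ that is standard near the image of the ball, and use Gromov compactness together with a degree/intersection count to produce a nonconstant $J$-holomorphic sphere through the image of the ball's centre in the homology class of a fibre; its symplectic area is at most $\pi r^2$ on the cylinder side, while the monotonicity/isoperimetric inequality for $J$-holomorphic curves through the centre of $B_R$ forces the area to be at least $\pi R^2$, again yielding $R\le r$. I would nonetheless favour the capacity route here, since it is the framework already invoked in the paper and is exactly the ingredient \eqref{equ:normalize} that the rest of the argument builds on.
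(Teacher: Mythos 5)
The paper does not prove this theorem: it is quoted verbatim as Gromov's result, with a pointer to \cite{HZ} for ``details and generalizations,'' and the only role it plays in the paper is as a black box that is combined with the approximation results. Your proposal correctly recognises this and reconstructs the Hofer--Zehnder capacity proof, which is precisely the route the paper's citation points to: you reduce non-squeezing to the existence of a capacity $c$ with $c(B_1)=c(Z_1)=\pi$, give the trivial lower bound $c_{HZ}(B_{R'})\ge\pi(R')^2$ via radial Hamiltonians, correctly identify the cylinder bound $c_{HZ}(Z_1)\le\pi$ as the crux, and sketch the linking/minimax argument in $H^{1/2}(S^1;\bR^{2N})$ with the right spectral gap ($\pi$) and the right compactness mechanism (Galerkin truncation plus compact support). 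Your observation that the weights $\langle k_0\rangle^{\pm1}$ in the paper's cylinder \eqref{equ:cylinder} can be absorbed by the symplectic rescaling $(p,q)\mapsto(\langle k_0\rangle^{1/2}p,\langle k_0\rangle^{-1/2}q)$ is also correct and worth making explicit, since it is what reconciles the paper's weighted cylinder with the round one in the standard statement. The linking geometry and Palais--Smale verification are, as you say, the genuinely hard analytic steps and you are right to defer them to \cite{HZ} rather than attempt to compress them; the pseudoholomorphic-curve alternative you outline is likewise a correct account of Gromov's original argument. In short, your sketch matches, at the level of ideas, the proof that the paper implicitly invokes by reference.
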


We would like to take $N \to \infty$ in Theorem \ref{thm:gromov}. To do so, we use Theorem \ref{thm:trunc_approx} to obtain the necessary uniform local control. 
 
\medskip
The proof of Theorem \ref{thm:non-squeezing} follows easily once we have proven Theorem \ref{thm:trunc_approx}. Indeed, fix parameters $R > 0$,  $k_0 \in \bZ^3$, $z = (z_0, z_0) \in \bC$, $u_* \in \cH^{1/2}$ and $0 < \eta < R$. Let $\varepsilon_0(R, u_*)$ be as in the statement of Theorem \ref{thm:trunc_approx}, and we fix $N > |k_0|$ sufficiently large and $0< \varepsilon < \varepsilon_0$ sufficiently small so that for $\varepsilon_1(N)$ and $C = C(R, u_*)$ as in \eqref{equ:open_conv} we have
\[
C \varepsilon_1(N) < \frac{\eta}{4} \qquad \textup{and} \qquad C  \varepsilon < \frac{\eta}{4}.
\]
Let $\sigma > 0$ be such that the conclusions of Theorem \ref{thm:trunc_approx} hold with $N' = 2N$ and $\varepsilon_0$ as above. Note that this choice ensures that $\Phi_N$ is a true symplectomorphism, which does not hold for $N' \ll N$, see Proposition \ref{prop:preserve_capacity_trunc}. Then for any $(u_0, u_1) \in \textbf{B}_{R}(u_*)$,
\begin{align}
\label{equ:approx}
\sup_{ t \in [0,\sigma]} \|\Phi(t) \Pi_{2N} (u_0, u_1) -  \Phi_N(t) \Pi_{2N} (u_0, u_1) \|_{\cH_x^{1/2}} < \frac{\eta}{2}.
\end{align}
Let $r < R - \eta$ and define
\[
\varepsilon_2 := \frac{R - r}{2} > \frac{\eta}{2}.
\]
The proof of Proposition \ref{prop:preserve_capacity_trunc} demonstrates that $\Phi_N$ is a symplectomorphism on $\Pi_{2N} \cH^{1/2}$ with the symplectic structure which is compatible with that of the full flow on $\cH^{1/2}$. Then we can find initial data $(v_0, v_1) \in \Pi_{2N} \textbf{B}_{R}(u_*) \subset \textbf{B}_{R_1}$ such that
\begin{align}
\label{equ:nonsqueeze_fin}
\left( \langle k_0 \rangle | \widehat{\Phi_N(\sigma) (v_0, v_1) }(k_0) - z_0 |^2 + \langle k_0 \rangle^{-1}| \widehat{\partial_t \Phi_N(\sigma) (v_0, v_1) }(k_0) - z_1 |^2 \right)^{1/2} >  r + \varepsilon_2. \qquad
\end{align}
By triangle inequality, \eqref{equ:approx} and \eqref{equ:nonsqueeze_fin} we obtain
\[
\left(\langle k_0 \rangle | \widehat{\Phi (\sigma) (v_0, v_1) }(k_0) - z_0 |^2 + \langle k_0 \rangle^{-1}| \widehat{\partial_t \Phi(\sigma) (v_0, v_1) }(k_0) - z_1 |^2 \right)^{1/2} > r + \varepsilon_2 - \frac{\eta}{2} > r , 
\]
which concludes the proof.

\begin{rmk}
As can be seen in this proof, the parameter $\eta$ provides a lower bound for the accuracy needed in the approximation. This allows us to ensure that we can obtain some sort of uniform lower bound on the time in order to prove non-squeezing.
\end{rmk}

\begin{rmk}
If we wish to use Theorem \ref{thm:non-squeezing} to conclude non-squeezing for the flow in the case that a global flow is defined, there are several ways to go about this. For instance, we can perform the same argument taking $N$ sufficiently large \textit{depending on the profile of $u_*$} so as to guarantee that for some choice of $\varepsilon > 0$
\[
\| u_* - \Pi_{2N}u_*\|_{\cH^{1/2}} < \varepsilon,
\]
then we can find initial data $(u_0, u_1) \in \Pi_{2N} \textbf{B}_{R-\varepsilon}(u_*)$, and hence
\[
\| u_* -  (u_0, u_1) \|_{\cH^{1/2}} \leq R.
\]
We can also proceed as follows: there exists some $R_1 >0$ such that $\Pi_{2N} \textbf{B}_{R}(u_*) \subset \textbf{B}_{R_1}(u_*)$, so we conclude that the flow does not squeeze some larger ball of initial data into the cylinder.
\end{rmk}

\subsection{Conditional non-squeezing}
\label{subsec:cond}
In order to prove our conditional result, we prove an approximation result which demonstrates that solutions of \eqref{equ:cubic_nlkg} are stable at low frequencies under high-frequency perturbations to the initial data. While this type of argument is often seen at subcritical regularities, there are additional difficulties at the critical regularity since the decay one usually needs for such arguments is not available in this setting. Nonetheless, one can manufacture some decay using refined bilinear Strichartz estimates which, roughly speaking, show that for $M \lesssim N$, dyadic frequencies,
\[
\|e^{i t \langle \nabla \rangle} \phi_N \, e^{i t \langle \nabla \rangle} \psi_M \|_{L_{t,x}^2} \lesssim M \|\phi_N\|_{L_{x}^2} \|\psi_M \|_{L_{x}^2} \qquad \textup{for }P_N \phi_N = \phi_N, \,\, P_M \psi_M = \psi_M.
\] 
For $M \sim N$ this estunate corresponds to one half derivative loss on each function, but if one can control the frequency separation between functions in certain key multilinear estimates, it is possible to regain some decay even in the critical setting. Using this philosphy, we obtain the following quantitative approximation result.

\begin{thm}
\label{thm:approx_conditional}
Fix $T, R > 0$ and $u_* \in \cH^{1/2}(\bT^3)$. Suppose there exists some $K > 0$ such that for all $(u_0, u_1) \in \textbf{B}_R(u_*)$, the corresponding solutions $u$  to \eqref{equ:cubic_nlkg} and $u_N$ to \eqref{equ:cubic_nlkg} exist on $[0,T]$ and satisfy
\[
\|u \|_{L_{t,x}^{4}([0,T] \times \bT^3)} \,\, + \,\,  \sup_N \|P_N u_N\|_{L_{t,x}^{4}([0,T] \times \bT^3)} \leq K.
\]
Let $\Phi$ and $\Phi_N$ denote the flows of the cubic nonlinear Klein-Gordon equation with full \eqref{equ:cubic_nlkg} and truncated \eqref{equ:cubic_nlkg_trun} nonlinearities, respectively. Then for all $\varepsilon > 0$ and any $N' \in \bN$,
\[
\sup_{ t \in [0,T]} \|P_{N'} \left(\Phi(t)(u_0, u_1) -  \Phi_N(t) (u_0, u_1) \right)\|_{\cH_x^{1/2}} < \varepsilon
\]
for $N = N ( N', \varepsilon,  R, T, K)$ sufficiently large. 
\end{thm}

\begin{rmk}
The same result holds in the Euclidean setting with almost no modifications to the arguments. We restrict the statements to the periodic case, however, for simplicity of exposition. Moreover, in the Euclidean setting we can eliminate the dependence on time in the implicit constants.
\end{rmk}

This result follows in fact from the proof of a stability result for the nonlinear Klein-Gordon equation which quantifies the principle that perturbing initial data in high-frequencies does not affect the low frequencies of the corresponding solutions by too much. We obtain the following statement.

\begin{thm}
\label{thm:local_compare}
Let $\Phi$ denote the flow of the cubic nonlinear Klein-Gordon equation \eqref{equ:cubic_nlkg}. Let $T > 0$ and $1 \leq N' \ll N_*$.  Let $(u_0, u_1), (\widetilde{u}_0, \widetilde{u}_1) \in \textbf{B}_R \subset \cH^{1/2}(\bT^3)$ be such that $P_{\leq N_*} (u_0, u_1) = P_{\leq N_*} (\widetilde{u}_0, \widetilde{u}_1)$, and
suppose there exists some $K > 0$ such that corresponding solutions $u$ and $\widetilde{u}$ to \eqref{equ:cubic_nlkg} exist on $[0,T]$ and satisfy
\[
\| u \|_{L_{t,x}^4([0,T] \times \bT^3)} + \| \widetilde{u} \|_{L_{t,x}^4([0,T] \times \bT^3)} \leq K.
\]
Then for sufficiently large $N_*$ depending on $R, T$ and $K$,
\[
\|P_{\leq N'}\bigl( \Phi(t) (u_0, u_1) - \Phi(t) (\widetilde{u}_0, \widetilde{u}_1) \bigr) \|_{L_t^\infty \cH_x^{1/2}([0,T) \times \bT^3)} \lesssim \left( \log\frac{N_*}{N'} \right)^{-\theta},
\]
with implicit constant depending on $R, T$, $K$.
\end{thm}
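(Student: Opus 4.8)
The plan is to set up a bootstrap/continuity argument for the difference $v := u - \widetilde u$ at low frequencies, using the $U^p/V^p$ machinery (or $X^{s,b}$ substitutes) together with the refined bilinear Strichartz estimate quoted above to manufacture a small power of $\log(N_*/N')$. First I would write the Duhamel formula for $v$: since $v$ solves the equation with forcing $u^3 - \widetilde u^3 = (u^2 + u\widetilde u + \widetilde u^2)v$ and with initial data $P_{>N_*}(u_0,u_1) - P_{>N_*}(\widetilde u_0,\widetilde u_1)$ (which is purely supported at frequencies $\gtrsim N_*$), the low-frequency part $P_{\le N'}v$ receives \emph{no} contribution from the linear evolution of the data — that is already frequency-localized above $N_*$. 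Hence everything comes through the nonlinear Duhamel term, and the whole point is that to produce output at frequency $\lesssim N'$ from inputs in a trilinear expression, at least two of the three inputs must have comparable frequency $\gtrsim N_*$ (or there is a genuine high$\times$high$\to$low interaction), so one can always extract a gain.

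The key steps, in order: (i) Partition dyadically and split the cubic term into the "diagonal" piece where all three frequencies are $\lesssim 2N_*$ and the piece with at least one frequency $\gtrsim N_*$. For the first piece, I would like to set up a Gronwall-type estimate: on short time subintervals on which the Strichartz norms of $u$ and $\widetilde u$ are small (there are $\sim O_K(1)$ such intervals because of the hypothesis $\|u\|_{L^4_{t,x}} + \|\widetilde u\|_{L^4_{t,x}} \le K$), the map is a contraction and $\|P_{\le N'}v\|$ on $[0,T]$ is controlled by the high-frequency forcing term alone, with a constant that is at most exponential in $K$ and $T$ — this is the standard stability theory for \eqref{equ:cubic_nlkg} in the critical space. (ii) For the forcing piece with a high frequency, I would use the bilinear Strichartz estimate $\|e^{it\langle\nabla\rangle}\phi_N\, e^{it\langle\nabla\rangle}\psi_M\|_{L^2_{t,x}} \lesssim M\|\phi_N\|_{L^2}\|\psi_M\|_{L^2}$ for $M\lesssim N$, upgraded to $U^2$-functions, to pair the low-frequency output against one of the high-frequency factors. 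In the worst interaction (high $\times$ high $\to$ low, say frequencies $\sim L \ge N_*$ hitting and producing output $\lesssim N'$), the bilinear bound gives a factor $N'/L$ in $L^2$ versus the $1$ one would lose in $L^4\times L^4$; summing $\sum_{L \ge N_*} N'/L$ diverges logarithmically, but summing $\sum_{L\ge N_*}(N'/L)\cdot (\text{geometric in the other frequency})$ after inserting one more dyadic-summable factor from the $\cH^{1/2}$ regularity of the data gives $\sum_{L\ge N_*} (N'/L)^{\delta}$ for a small $\delta>0$, which is $\lesssim (N'/N_*)^\delta$. (iii) Here is the subtlety that forces the logarithmic rather than polynomial bound: because we are at the critical regularity, the summation over the dyadic scales of $v$ itself (the low-output scale can be anything $\le N'$) is only \emph{logarithmically} bounded — there is no room to spare a power of $N'/N_*$ across all $\sim\log N_*$ dyadic scales of $v$. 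So I would instead interpolate: prove an $L^\infty_t\cH^{1/2-\kappa}_x$-bound on $P_{\le N'}v$ of size $(N'/N_*)^{\delta}$ (which \emph{is} polynomial, at the cost of $\kappa$ derivatives), and an $L^\infty_t\cH^{1/2}_x$-bound of size $O_{K,T,R}(1)$ (from step (i)), then optimize $\kappa$ versus the number $\sim\log(N_*/N')$ of intermediate scales to land on $(\log(N_*/N'))^{-\theta}$.

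The main obstacle I expect is step (ii)–(iii): making the bilinear-Strichartz gain survive the iteration uniformly in the Duhamel recursion at the \emph{critical} regularity, where one cannot afford any logarithmic loss per iteration step and where the $X^{s,b}$ spaces famously fail — this is precisely why the $U^p/V^p$ function spaces are needed, and why the paper invokes them here rather than in the probabilistic part. Concretely, one must show that the nonlinear estimate for the trilinear term, localized to output frequency $\lesssim N'$ with one input frequency $\gtrsim N_*$, closes with a small power of $N'/N_*$ in the $V^2$-type norm while the other two inputs are merely bounded in the (scaling-)critical space; the frequency-separation bookkeeping in the resonant high$\times$high$\times$low$\to$low and high$\times$high$\times$high$\to$low cases is where all the work is, and the need to convert the polynomial gain into a logarithmic one via interpolation (because summing over the $\log N_*$ scales of $v$ is only barely convergent) is the reason the final rate is $(\log(N_*/N'))^{-\theta}$ and not better.
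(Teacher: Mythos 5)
Your outline correctly identifies the heuristics at play --- the bilinear Strichartz gain from frequency separation, the logarithmic obstruction at critical regularity, and the general strategy of running a stability argument for the low-frequency component --- but it is missing the one combinatorial idea that actually makes the paper's proof close: a pigeonhole selection of the low-frequency cutoff. The paper first proves Proposition \ref{prop:error_eqn}: for \emph{some} $M \in [N', N_*]$ depending on the solution, the low-frequency piece $u_{lo} = P_{\leq M} u$ satisfies a perturbed NLKG with error $\cO_{K,R,T}\bigl((\log(N_*/N'))^{-\theta}\bigr)$ in the $N_\pm(I)$ norm. The scale $M$ is produced by tiling $[N', N_*]$ by $O\bigl(\log(N_*/N')/\log\log(N_*/N')\bigr)$ frequency blocks of the form $I_\alpha = [N'(\log(N_*/N'))^\alpha, N'(\log(N_*/N'))^{\alpha+2}]$ and using the square-summability of $\|P_{I_\alpha} u\|_{\cY^{1/2}}$ across blocks (a consequence of the $L^4_{t,x}$ hypothesis via Proposition \ref{prop:xs_solN_1ds}) to pigeonhole a block with mass $O\bigl((\log(N_*/N'))^{-\theta}\bigr)$. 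The chosen $M$ creates a built-in separation by $(\log(N_*/N'))^2$ between $u_{lo}$ and $u_{hi}$, and that separation is precisely what makes the bilinear Strichartz estimates of Proposition \ref{prop:multilinear} give a summable gain in the error terms; the convolution constraint you mention kills the $lo\times lo\times hi$ term. Theorem \ref{thm:local_compare} then follows by applying the stability theory of Proposition \ref{prop:xs_stability} to $u_{lo}$ and $\widetilde u_{lo}$, whose initial data coincide by hypothesis once $M \leq N_*$.

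Your direct bootstrap on $v := u - \widetilde u$ cannot substitute for this. The forcing $(u^2 + u\widetilde u + \widetilde u^2)v$ has $v$ at \emph{intermediate} dyadic frequencies $L \in (N', N_*)$ as an input, and you have no control on $v$ there: the initial data of $v$ is $O(R)$ in $\cH^{1/2}$ concentrated above $N_*$, and energy cascades downward under the flow. In the high$\times$high$\to$low interaction at two frequencies $\sim L$ with output $\lesssim N'$, the bilinear estimate is scale-invariant under the $\cH^{1/2}$ normalization and gives no net gain, so the dyadic sum over $L \in (N', N_*)$ diverges logarithmically --- the very obstruction the pigeonhole is designed to bypass. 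The interpolation you propose in step (iii) does not fix this: interpolating an $\cH^{1/2-\kappa}$ bound against an $O(1)$ bound at the endpoint $\cH^{1/2}$ returns the trivial $O(1)$ bound there, no $\cH^{1/2+\kappa}$ endpoint is available since the data is only $\cH^{1/2}$, and establishing even the intermediate polynomial $\cH^{1/2-\kappa}$ bound on $P_{\leq N'}v$ runs into the same uncontrolled intermediate-frequency contributions of $v$ (note that $\cH^{1/2-\kappa}$ is \emph{supercritical} for the cubic NLKG in three dimensions, so lowering regularity buys you less room, not more). The pigeonhole has no substitute in your outline.
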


To prove Theorem \ref{thm:local_compare}, we demonstrate that under the above assumptions, the low frequency component of the solutions, defined by $u_{lo} = P_{\leq M} u$ for some $M \in \bN$, satisfies a perturbed cubic Klein-Gordon equation given by
\begin{align}
\label{equ:lov_err}
\square u_{lo} + u_{lo} = P_{\leq M} F(u_{lo}, u_{lo}, u_{lo}) + err.
\end{align}
where $err$ is an error term which we can control by the well-posedness theory. Such an expression allows us to prove Theorem \ref{thm:local_compare} by a stability type argument.  The ideas used in this proof are similar to those used in \cite{CKSTT05} however we mention again that in the current setting, the problem is at the critical regularity. This is also reflected in the fact that the constants in the above theorem depend on the Strichartz norm of the solutions and not just the Sobolev norm, as in \cite[Theorem 1.3]{CKSTT05}. The point of Theorem \ref{thm:approx_conditional} is that if one only needs to compare low frequencies, as is the case when proving non-squeezing, some decay can be regained even though we are in the critical setting.

\medskip
To prove these results, we work with the $U^p$ and $V^p$ function spaces.  These spaces have previously been used in the context of critical problems by Hadac, Herr and Koch \cite{HHK} for the KP-II equation, and by Herr, Tataru and Tzvetkov \cite{HTT} for the quintic nonlinear Schr\"odinger equation on $\bT^3$. See \cite{KTV} or \cite{HHK} and references therein for a more complete overview of these function spaces. We record the basic definitions and properties of these spaces in Appendix \ref{sec:up_vp}. 

\medskip
The key benefit of these function spaces is that they recover the endpoint embeddings which fail in $X^{s, \frac{1}{2}}$ but still enable us to exploit the same type of multilinear estimate machinery, and are thus are a suitable setting for critical problems. Unfortunately, however, these spaces miss out on a key property of the Strichartz spaces which is exploited in the stability theory, namely that if a space-time norm is finite on a time interval, then one can isolate subintervals where the norm is small. 

\medskip
To overcome this difficulty, we introduce a weaker norm \eqref{equ:zs_norm} which recovers the necessary properties in order to prove stability, but which still controls the well-posedness theory. This approach was used by Ionescu and Pausader in \cite{IP12} to prove global well-posedness of the energy critical nonlinear Schr\"odinger equation on $\bT^3$, although the norm we introduce is slightly different than the one used in that work.

\medskip
The proof of Theorem \ref{thm:non-squeezing_cond} follows from this approximation result using similar argument to those in the proof of Theorem \ref{thm:non-squeezing}, once one uses Theorem \ref{thm:local_compare} to restrict to a finite dimensional subspace of initial data. We refer the reader to the proof of Theorem 1.5 in \cite{CKSTT05} for a similar argument. Alternatively, to conclude non-squeezing from Theorem \ref{thm:approx_conditional}, we can use the fact that $\Phi_N$ preserves symplectic capacities, as we do for the proof of Theorem \ref{thm:weaknon-squeezing} in Section \ref{sec:non-squeeze_prob}

\subsection{Probabilistic non-squeezing}

\medskip
Finally, we present one last version of the non-squeezing theorem which is a direct application of Theorem \ref{thm:approx_conditional}. In contrast to Theorem \ref{thm:non-squeezing}, we do not need to consider the finite dimensional projection of the ball and the result we obtain is for large times. In exchange for this, however, we must restrict ourselves to initial data sufficiently close to elements of $\Sigma_\lambda$ and the control we obtain over the diameter of the cylinder is not as good.
\begin{thm}
\label{thm:weaknon-squeezing}
Fix $\mu \in \cM^{1/2}$ and let $\Phi$ denote the flow of the cubic nonlinear Klein-Gordon equation \eqref{equ:cubic_nlkg}. Let $R, T > 0$, $k_0 \in \bZ^3$, $z \in \bC$, and $u_* \in \cH^{1/2}(\bT^3)$. Then there exists $\theta > 0$ such that for every $\varepsilon > 0$ there exists an open set $U_\varepsilon$ with
\[
\mu(U_\varepsilon) \geq 1 - e^{ - 1/ \varepsilon^\theta}
\]
and such that
\begin{equation}
\Phi(T)\bigl(U_\varepsilon \cap \textup{\textbf{B}}_R(u_*) \bigr) \not \subseteq \textup{\textbf{C}}_{r}(z; k_0),
\end{equation}
for all $r > 0$ with $ \pi r^2 <  \textup{cap}(U_\varepsilon \cap \textup{\textbf{B}}_R(u_*))$.
\end{thm}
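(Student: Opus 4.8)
The plan is to combine the conditional approximation estimate of Theorem \ref{thm:approx_conditional} with Gromov's finite-dimensional non-squeezing theorem (Theorem \ref{thm:gromov}) and the fact that the truncated flow $\Phi_N$ is a genuine symplectomorphism on the finite-dimensional spaces $\Pi_{2N}\cH^{1/2}$ (Proposition \ref{prop:preserve_capacity_trunc}). The starting point is to build the open set $U_\varepsilon$. Recall the nested subsets $\Sigma_\lambda \subset \Sigma$ with $\mu(\Sigma_\lambda) \geq 1 - Ce^{-c\lambda^\theta}$ from Proposition \ref{prop:lambda_props}. On $\Sigma_\lambda \cap \textbf{B}_R(u_*)$ the solutions to \eqref{equ:cubic_nlkg} and \eqref{equ:cubic_nlkg_trun} obey uniform Strichartz bounds depending only on $\lambda, R, T$ (this is implicit in Proposition \ref{prop:conv} and the surrounding well-posedness theory), so the hypothesis \eqref{equ:unif_l4} of Theorem \ref{thm:approx_conditional} holds with a constant $K = K(\lambda, R, T)$. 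Choosing $\lambda = \lambda(\varepsilon)$ so that $Ce^{-c\lambda^\theta} \leq \tfrac{1}{2}e^{-1/\varepsilon^\theta}$, one then thickens $\Sigma_\lambda \cap \textbf{B}_R(u_*)$ slightly: by the stability theory (the same deterministic stability used in Theorem \ref{thm:trunc_approx}) one may enlarge to an open neighborhood $U_\varepsilon$ on which the Strichartz norms remain controlled, while keeping $\mu(U_\varepsilon) \geq 1 - e^{-1/\varepsilon^\theta}$; here one uses that $\mu$ charges open sets and Remark \ref{rmk:random_props}, together with the exponential tail on $M(u_0,u_1)$, to control the measure lost in the thickening.

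Next I would run the approximation-plus-Gromov argument exactly as in the proof of Theorem \ref{thm:non-squeezing}, but now globally in time on $U_\varepsilon$ rather than locally in time on a frequency-truncated ball. Fix $r > 0$ with $\pi r^2 < \textup{cap}(U_\varepsilon \cap \textbf{B}_R(u_*))$ and suppose for contradiction that $\Phi(T)(U_\varepsilon \cap \textbf{B}_R(u_*)) \subseteq \textbf{C}_r(z;k_0)$. Apply Theorem \ref{thm:approx_conditional} with $N' = 2N$ (so that, by Proposition \ref{prop:preserve_capacity_trunc}, $\Phi_N$ is a true symplectomorphism on $\Pi_{2N}\cH^{1/2}$): for any $\delta > 0$ we may pick $N = N(\delta, 2N, R, T, K)$ large enough that
\[
\sup_{t \in [0,T]} \| P_{2N}\bigl(\Phi(t)(u_0,u_1) - \Phi_N(t)(u_0,u_1)\bigr) \|_{\cH^{1/2}} < \delta
\]
for all $(u_0,u_1) \in U_\varepsilon \cap \textbf{B}_R(u_*)$. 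Since the cylinder $\textbf{C}_r(z;k_0)$ only constrains the $k_0$-th Fourier mode and $|k_0| < 2N$, the assumed inclusion forces $\Phi_N(T)$ to map $U_\varepsilon \cap \textbf{B}_R(u_*)$ into a slightly fattened cylinder $\textbf{C}_{r+C\delta}(z;k_0)$ of radius $r + C\delta$. Now restrict to the symplectic subspace $\Pi_{2N}\cH^{1/2}$: because $U_\varepsilon \cap \textbf{B}_R(u_*)$ is open and has positive capacity, by the normalization \eqref{equ:normalize} and the definition of $\textup{cap}$ it contains a ball $\textbf{B}_{R'}(x_*)$ inside $\Pi_{2N}\cH^{1/2}$ with $\pi (R')^2$ as close to $\textup{cap}(U_\varepsilon \cap \textbf{B}_R(u_*))$ as we like — in particular with $R' > r + C\delta$ once $\delta$ is small. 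Then $\Phi_N(T)\vert_{\Pi_{2N}\cH^{1/2}}$ is a symplectomorphism defined on $\textbf{B}_{R'}(x_*) \subset \bR^{2N}$ squeezing it into $\textbf{C}_{r+C\delta}(z;k_0)$ with $r + C\delta < R'$, contradicting Theorem \ref{thm:gromov}.

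The main obstacle I expect is the interface between the abstract capacity $\textup{cap}(U_\varepsilon \cap \textbf{B}_R(u_*))$ and the finite-dimensional picture: one must ensure that a set of positive infinite-dimensional capacity actually contains, inside one of the finite-dimensional slices $\Pi_{2N}\cH^{1/2}$ (with $N$ as large as the approximation step demands), a Euclidean ball of radius essentially $\sqrt{\textup{cap}/\pi}$. Since $N$ must be chosen after $\delta$, and $\delta$ must be chosen small relative to the gap between $\pi r^2$ and $\textup{cap}$, the quantifier order has to be managed carefully; one resolves this by first fixing $r$ (hence a fixed gap $\textup{cap} - \pi r^2 > 0$), extracting a finite-dimensional ball $\textbf{B}_{R'}(x_*)$ with $\pi(R')^2$ within $\tfrac{1}{4}$ of that gap of $\textup{cap}$ and with $R'$ realized in some fixed slice $\Pi_{2N_0}\cH^{1/2}$, then taking $\delta$ small and $N \geq N_0$ in the approximation step. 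A secondary technical point is the measure bookkeeping in the thickening of $\Sigma_\lambda$ to the open set $U_\varepsilon$; this is routine given the exponential estimate $\mu(M > \lambda) \leq Ce^{-c\lambda^\theta}$ and the density of full-measure sets, but the precise constant $\theta$ in the statement is inherited from that tail bound and should be tracked.
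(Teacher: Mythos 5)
The construction of $U_\varepsilon$ via the $\rho$-fattening $\Sigma_{\lambda,\rho}$ and the measure bookkeeping you describe match the paper, but the heart of your argument takes a genuinely different route that I think breaks down. You propose to pass to a finite-dimensional slice $\Pi_{2N}\cH^{1/2}$, find a ball $\textbf{B}_{R'}(x_*)$ inside $U_\varepsilon\cap\textbf{B}_R(u_*)$ with $\pi(R')^2$ arbitrarily close to $\textup{cap}(U_\varepsilon\cap\textbf{B}_R(u_*))$, and then apply Gromov's theorem (Theorem \ref{thm:gromov}) to $\Phi_N(T)$ on that ball. The gap is in the extraction step: the normalization $\textup{cap}(\textbf{B}_r)=\pi r^2$ together with monotonicity only gives the one-sided bound $\pi(R')^2\leq\textup{cap}(\cO)$ whenever $\textbf{B}_{R'}\subset\cO$; it does \emph{not} say that an open set of capacity $c$ contains a ball of radius close to $\sqrt{c/\pi}$. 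Kuksin's capacity is a Hofer--Zehnder-type capacity, which in general can strictly dominate the Gromov width (the supremum of $\pi r^2$ over inscribed balls), so your ``we may pick $R'$ with $\pi(R')^2$ within $\tfrac14$ of the gap of $\textup{cap}$'' is unjustified. You flag this as your main worry, but the resolution you offer just reasserts the claim rather than proving it. This is consistent with the paper's own remark after the theorem that no bound on $\textup{cap}(U_\varepsilon\cap\textbf{B}_R(u_*))$ beyond the trivial positivity is known.

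The paper's proof avoids the inscribed-ball step entirely. It uses Proposition \ref{prop:preserve_capacity_trunc} directly: $\Phi_N(T)$ preserves Kuksin's infinite-dimensional capacity, so
\[
\textup{cap}\bigl(\Phi_N(T)(U_\varepsilon\cap\textbf{B}_R(u_*))\bigr)=\textup{cap}\bigl(U_\varepsilon\cap\textbf{B}_R(u_*)\bigr)>\pi(r+\varepsilon)^2
\]
once $\varepsilon$ is chosen small relative to the fixed slack $\textup{cap}-\pi r^2$. Since $\textup{cap}(\textbf{C}_{r+\varepsilon}(z;k_0))=\pi(r+\varepsilon)^2$, monotonicity of the capacity gives $\Phi_N(T)(U_\varepsilon\cap\textbf{B}_R(u_*))\not\subseteq\textbf{C}_{r+\varepsilon}(z;k_0)$, producing an explicit escaping point whose $k_0$-th mode lies at distance $>r+\varepsilon$ from $z_0$. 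Feeding that point through the approximation estimate of Theorem \ref{thm:approx_prop} (with $N'>|k_0|$, so only the $k_0$-th mode matters) finishes the argument. So the only place the infinite-dimensional capacity is needed is exactly here, and invariance under $\Phi_N$ plus monotonicity plus the cylinder normalization are what replace your appeal to Gromov's theorem. If you wish to salvage a Gromov-based route you would have to prove an inscribed-ball estimate for Kuksin's capacity on sets of the form $\Sigma_{\lambda,\rho}\cap\textbf{B}_R(u_*)$, which is not available.
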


\begin{rmk}
\label{rmk:capacity}
The capacity $\textup{cap}(U_\varepsilon \cap \textup{\textbf{B}}_R(u_*))$ is positive since it is the capacity of an open set. In practice, the sets $U_\varepsilon$ will be constructed by taking the $\rho$-fattening of the subsets $\Sigma_\lambda$, see Section \ref{sec:conv} for details. At the moment, we do not have a better bound for this capacity than the trivial bound for open sets. In the case that the flow can be defined on the interval $[0,T]$ for all initial data in $\textbf{B}_R(u_*)$. The proof of Theorem \ref{thm:weaknon-squeezing} is the only place where we need to use the infinite dimensional symplectic capacity. As a result, the critical stability theory alone is insufficient because the trivial lower bound for $\textup{cap}(U_\varepsilon \cap \textup{\textbf{B}}_R(u_*))$ depends on $\varepsilon$ which is the parameter which yields the control for long-time approximations in the stability theory.
\end{rmk}

\subsubsection*{Notation} We write $X \lesssim Y$ to denote $X \leq C Y$ for some $C > 0$ which depends only on fixed parameters. Occasionally we will also use the notation $\cH^s := H^s \times H^{s-1}$, where this product space is endowed with the obvious norm. We let $\textbf{B}_R(u_*)$ denote the ball of radius $R$ centered at $u_*$ and occasionally we will use the shorthand $\textbf{B}_R := \textbf{B}_R(0)$. We let $\textbf{C}_r(z;k)$ denote the cylinder centered at $z \in \bC$ of radius $r$ in the $k$-th frequency defined in \eqref{equ:cylinder}. We define the Fourier projection $\Pi_K$ on $\cH^{s}$ as the projection onto frequencies $|k| \leq K$ and we define $\Pi_{\geq K}= I - \Pi_K$. We define the smooth projection operator
\begin{align}
\label{equ:trunc_op}
P_{\leq N}(u)(x) \equiv \psi( -N^{-2} \Delta)(u)(x) = \widehat{u}(0) + \sum_{n \in \bZ_*^3} \psi\left( \frac{|n|^2}{N^2}\right) \widehat{u}(n) e^{i (n \cdot x)}, 
\end{align}
for $\psi$ a smooth cut-off function

\subsubsection*{Organization of Paper}
In Section \ref{sec:sym_hilb}, we provide some background on symplectic Hilbert spaces and the relation of non-squeezing to the energy transition problem and we introduce the capacity we will work with. In Section \ref{sec:prelim}, we collect some deterministic and probabilistic facts. In Section \ref{sec:well-posedness}, we prove local and global properties of solutions to the full equation \eqref{equ:cubic_nlkg} and similarly for the equation with truncated nonlinearity \eqref{equ:cubic_nlkg_trun}. In Section \ref{sec:compactness}, we prove the boundedness assumptions on the flow maps of these equations. In Section \ref{sec:decomp}, we prove Proposition \ref{prop:conv}. Finally in Section \ref{sec:conv}, we prove Theorem \ref{thm:trunc_approx} and Theorem \ref{thm:weaknon-squeezing}. 

\subsubsection*{Acknowledgements}
This work is part of the author's thesis and she would like to thank her advisor, Gigliola Staffilani, for suggesting this line of work and for all her help and patience throughout the project. The author would also like to thank Larry Guth and Emmy Murphy for several discussions on symplectic capacities and Jonas L\"uhrmann for helpful comments on an earlier draft.

\section{Symplectic Hilbert spaces}
\label{sec:sym_hilb}

We begin with some background on symplectic Hilbert spaces. We follow the exposition in \cite{Kuk, CKSTT05}. Consider a Hilbert space $\cH$ with scalar product $\langle \cdot , \cdot \rangle$ and a symplectic form $\omega_0$ on $\cH$. Let $J$ be an almost complex structure on $\cH$ which is compatible with the Hilbert space inner product, that is, a bounded self-adjoint operator with $J^2 = -1$ such that for all $\textup{\bf{u}},\textup{\bf{v}} \in \cH$, $\omega_0(\textup{\bf{u}},\textup{\bf{v}}) = \langle \textup{\bf{u}}, J\textup{\bf{v}} \rangle.$

\begin{defn}
\label{defn:symplectic}
We say the pair $(\cH, \omega_0)$ is the symplectic phase space for a PDE with Hamiltonian $H[\textup{\bf{u}}(t)]$ if the PDE can be written as $\dot{\textup{\bf{u}}}(t) = J \nabla H[\textup{\bf{u}}(t)]$.
\end{defn}
\noindent Here, $\nabla$ is the usual gradient with respect to the Hilbert space inner product, defined by
\[
\langle \textup{\bf{v}}, \nabla H[\textup{\bf{u}}] \rangle \equiv \frac{d}{d\varepsilon} H[\textup{\bf{u}} + \varepsilon \textup{\bf{v}}] \Big|_{\varepsilon = 0}.
\]
Definition \ref{defn:symplectic} is equivalent to the condition
\begin{align}
\label{equ:symp_pde}
\omega_0 (\textup{\bf{v}}, \dot{\textup{\bf{u}}}(t) ) = \omega_0(\textup{\bf{v}}, J \nabla H[\textup{\bf{u}}(t)]) =  - \langle \textup{\bf{v}}, \nabla H[\textup{\bf{u}}] \rangle = - \frac{d}{d\varepsilon} H[\textup{\bf{u}} + \varepsilon \textup{\bf{v}}]  \Big|_{\varepsilon = 0}.
\end{align}
Let $\langle \nabla \rangle:=(1 - \Delta)^{1/2}$ and consider the Hilbert space $ \cH^{1/2}(\bT^3)$ with the usual scalar product
 \[
\langle (\textup{\bf{u}}_1, \textup{\bf{u}}_2), (\textup{\bf{v}}_1, \textup{\bf{v}}_2) \rangle_{\frac{1}{2}} :=  \int_{\bT^3} \textup{\bf{u}}_1 \cdot \langle \nabla \rangle \, \textup{\bf{v}}_1  + \int_{\bT^3}  \textup{\bf{u}}_2 \cdot  \langle \nabla \rangle^{-1} \, \textup{\bf{v}}_2 .
 \]
For $(u, u_t) = (\textup{\bf{u}}_1, \textup{\bf{u}}_2)$ we can rewrite \eqref{equ:cubic_nlkg} as the system of first order equations
\begin{equation} \label{equ:cubic_nlw_sys}
 \left\{ \begin{split}
(\textup{\bf{u}}_1)_t  &=  \textup{\bf{u}}_2 \\
(\textup{\bf{u}}_2)_t &=  - (1-  \Delta) \textup{\bf{u}}_1  -   (\textup{\bf{u}}_1)^3.
 \end{split} \right.
\end{equation}
Define the skew symmetric linear operator
 \begin{align*}
J :  \cH^{1/2}(\bT^3) \to  \cH^{1/2}(\bT^3), \quad
J = \begin{pmatrix}
0 & \langle \nabla \rangle^{-1} \\
 -\langle \nabla \rangle & 0
\end{pmatrix},
 \end{align*}
then $J$ is an almost complex structure on $\cH^{1/2}(\bT^3)$ compatible with the symplectic form
\begin{align*}
\omega_{\frac{1}{2}}\bigl(\textup{\bf{u}}, \textup{\bf{v}}\bigr) :=  \int_{\bT^3}  \textup{\bf{u}}_1 \cdot \textup{\bf{v}}_2  - \int_{\bT^3}  \textup{\bf{u}}_2 \cdot  \textup{\bf{v}}_1,
\end{align*}
that is, setting $\textup{\bf{u}}:=(\textup{\bf{u}}_1, \textup{\bf{u}}_2)$ and $\textup{\bf{v}}= (\textup{\bf{v}}_1, \textup{\bf{v}}_2)$, we have $\omega_{\frac{1}{2}} \bigl(\textup{\bf{u}},\textup{\bf{v}}\bigr)  = \langle \textup{\bf{u}}, J \textup{\bf{v}} \rangle_{\frac{1}{2}}$.
Then we can write $\dot{\textup{\bf{u}}}   = J \nabla H(\textup{\bf{u}})$ for the Hamiltonian
\[
H(\textup{\bf{u}}) = \frac{1}{2} \int |\nabla \textup{\bf{u}}_1 |^2 + \frac{1}{2} \int | \textup{\bf{u}}_1|^2  + \frac{1}{2} \int |\textup{\bf{u}}_2|^2 +   \frac{1}{4} \int |\textup{\bf{u}}_1|^4 .
\]
Up to modifying the Hamiltonian, roughly the same computations hold for the nonlinear Klein-Gordon equation with any nonlinearity and in any dimension.

\subsection{An infinite dimensional symplectic capacity}
\label{susec:sympl_cap}
Kuksin's construction of a symplectic capacity for an open set $\cO$ is based on finite dimensional approximations of this set. It is an infinite dimensional analogue of the Hofer-Zehnder capacity \cite{HZ}. Before defining this capacity, we first recall the definition of a symplectic capacity on a symplectic phase space $(\cH, \omega)$.
\begin{defn}
\label{def:cap}
A symplectic capacity on $(\cH, \omega)$ is a function $\textup{cap}$ defined on open subsets $\cO \subset \cH$ which takes values in $[0, \infty]$ and has the following properties:
\begin{enumerate}
\item[1)] Translational invariant: $\textup{cap}(\cO) = \textup{cap}(\cO + \xi)$ for $\xi \in \cH$.
\item[2)] Monotonicity: $\textup{cap}(\cO_1) \geq \textup{cap}(\cO_2)$ if $\cO_1 \supseteq \cO_2$.
\item[3)] 2-homogeneity: $\textup{cap}(\tau \cO) = \tau^2 \textup{cap}(\cO)$.
\item[4)] Non-triviality: $0 < \textup{cap}(\cO) < \infty$ if $\cO  \neq \emptyset$ is bounded.
\end{enumerate}
\end{defn}
In finite dimensions, the symplectic capacity is an important symplectic invariant, namely for a given symplectomorphism, $\varphi$, and an open subset, $\cO \subset \cH$, we have that $\textup{cap}(\varphi(\cO)) = \textup{cap}(\cO)$. In \cite{Kuk}, Kuksin shows that the infinite dimensional symplectic capacity he constructs is invariant under the flow of certain nonlinear dispersive Hamiltonian equations. We will now define Kuksin's infinite dimensional capacity. For a given Darboux basis of $\cH$, let $\cH_N$ denote the span of the first $N$ basis vectors. Similarly, we use the notation $\cO_N$ for any subset $\cO$ projected onto these basis vectors. We collect a few definitions.
\begin{defn}[Admissible function]
Consider a smooth function $f \in C^\infty(\cO)$ and let $m > 0$. The function $f$ is called $m$-admissible if
\begin{enumerate}
\item[i)] $0 \leq f \leq m$ everywhere.
\item[ii)] $f \equiv 0$ in a nonempty subdomain of $\cO$.
\item[iii)] $f |_{\partial \cO} \equiv m$ and the set $\{ f < m\}$ is bounded and the distance from this set to $\partial \cO$ is $d(f) > 0$.
\end{enumerate}
\end{defn}

\begin{defn}[Fast function] Let $f_N := f |_{\cO_N}$ and consider the corresponding Hamiltonian vector field $U_{f_N}$, that is, for $z, v \in \cH_N$, we have
\[
\omega(U_{f_N}(z), v) = \nabla f_N(z) v.
\]
A periodic trajectory of $U_{f_N}$ is called fast if it is not a stationary point and its period $T$ satisfies $T \leq 1$. An admissible function $f$ is called fast if there exists $N_0(f)$ such that for all $N \geq N_0$, the vector field $U_{f_N}$ has a fast trajectory.
\end{defn}

\begin{rmk}
In light of the fact that $J^2 = -I$, we also have the representation
\[
U_{f_N}(z) = J \nabla f_N(z).
\]
\end{rmk}
With these definitions, we are now ready to state the definition of Kuksin's infinite dimensional capacity.
\begin{defn}
For an open, nonempty domain $\cO \subset \cH$, its capacity $\textup{cap}(\cO)$ equals
\[
\textup{cap}(\cO) = \inf \{m_* \,|\, \textup{each $m$-admissible function with $m > m_*$ is fast}\}.
\]
\end{defn}
In \cite{Kuk}, it is shown that this definition satisfies the axioms of a capacity, that is the criteria of Definition \ref{def:cap}, and while the construction of this capacity depends on the choice of Darboux basis, if one chooses another basis which is quadratically close to the first, then the capacity does not change.

\section{Preliminaries}
\label{sec:prelim}

\subsection{Deterministic preliminaries}

We recall the definition of the $X^{s,b}$ spaces with norm
\begin{align}
\label{equ:xsb_def_chap3}
\|u\|_{X^{s,b}(\bR \times \bT^3)} = \|\langle n \rangle^s \langle |\tau| - \langle n \rangle \rangle^b \widehat u(n,\tau) \|_{L^2_\tau \ell_n^2}.
\end{align}
We will also work with the local-in-time restriction spaces $X^{s,b,\delta}$, which are defined by the norm
\begin{align}
\|u\|_{X^{s,b,\delta}} = \inf \bigl\{\|\widetilde{u}\|_{X^{s,b}(\bR \times \bT^3)} : \widetilde{u} |_{[-\delta, \delta]} = u \bigr\}.
\end{align}
We refer the reader to Appendix \ref{sec:harmonic} for more details.

We will record some facts about the projection operator \eqref{equ:trunc_op}. In \cite{BT5}, these operators were used to define an approximating equation for the cubic nonlinear wave equation and we similarly use them to define the equation with truncated nonlinearity \eqref{equ:cubic_nlkg_trun}. We use this smoothed projection instead of the standard truncation because we will need to exploit the fact that this family of operators has uniform $L^p$ bounds.
\begin{lem}
\label{lem:lp_bds}
Let $M$ be a compact Riemannian manifold and let $\Delta$ be the Laplace Beltrami operator on $M$. Let $1 \leq p \leq \infty$. Then $P_N \equiv  \psi( -N^{-2} \Delta) : L^p(K_1) \to L^p(K_1)$ is continuous and there exists $C > 0$ such that for every $N \geq 1$, 
\[
\|P_N \|_{L^p \to L^p} \leq C.
\]
Moreover, for all $f \in L^p(K_1)$, $P_N f \to f$ in $L^p$ as $n \to \infty$.
\end{lem}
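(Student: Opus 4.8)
The plan is to prove Lemma \ref{lem:lp_bds} by combining the spectral multiplier theorem for the Laplace--Beltrami operator on a compact manifold with a density argument. First I would observe that since $\psi$ is a smooth, compactly supported cut-off function, the symbol $m(\lambda) := \psi(\lambda^2/N^2)$ (viewed as a function of $\lambda = \sqrt{\mu}$ for $\mu$ in the spectrum of $-\Delta$) satisfies Mikhlin--H\"ormander type conditions uniformly in $N$: one checks that $\sup_{\lambda \geq 0} |\lambda^j \partial_\lambda^j m(\lambda)| \leq C_j$ with $C_j$ \emph{independent of} $N$, by the scaling $\lambda \mapsto \lambda/N$. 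Then the spectral multiplier theorem of Seeger--Sogge (or the heat-kernel based version due to Duong--Ouhabaz--Sikora) for the operator $\sqrt{-\Delta}$ on a compact Riemannian manifold gives that $m(\sqrt{-\Delta}) = \psi(-N^{-2}\Delta)$ is bounded on $L^p$ for all $1 < p < \infty$ with operator norm controlled by the derivative bounds on $m$, hence uniformly bounded in $N$. The endpoints $p = 1$ and $p = \infty$ follow since the Schwartz kernel $K_N(x,y)$ of $P_N$ satisfies uniform-in-$N$ bounds $\int_M |K_N(x,y)|\,dy \leq C$ (and symmetrically), which can be extracted from Gaussian heat-kernel estimates on $M$ combined with the representation of $\psi(-N^{-2}\Delta)$ as an integral against the heat semigroup, $\psi(-N^{-2}\Delta) = \int_0^\infty \widehat{\Psi}(\cdot) \, e^{t N^{-2}\Delta}\, dt$ for an appropriate auxiliary function; alternatively, on $\bT^3$ one can argue directly via Poisson summation since the kernel is then the periodization of a fixed Schwartz function rescaled by $N$.

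For the convergence statement, I would first establish it on a dense subclass and then pass to the limit using the uniform bounds. On $L^2$, convergence $P_N f \to f$ is immediate from the spectral theorem and dominated convergence, since $\psi(|n|^2/N^2) \to 1$ pointwise for each fixed frequency $n$ (using $\psi(0) = 1$, after normalizing $\psi$ so that $\psi \equiv 1$ near the origin). For general $p$, pick $f \in L^p(K_1)$; by density choose $g$ smooth (e.g.\ a finite trigonometric polynomial on $\bT^3$, or more generally $g \in C^\infty(M)$) with $\|f - g\|_{L^p} < \varepsilon/(3C)$, where $C$ is the uniform operator-norm bound. For the smooth function $g$, since it has rapidly decaying Fourier data (on $\bT^3$) or lies in the domain of all powers of $\Delta$, one checks directly that $\|P_N g - g\|_{L^p} \to 0$; then the standard three-$\varepsilon$ estimate
\[
\|P_N f - f\|_{L^p} \leq \|P_N(f-g)\|_{L^p} + \|P_N g - g\|_{L^p} + \|g - f\|_{L^p} \leq C \cdot \frac{\varepsilon}{3C} + \|P_N g - g\|_{L^p} + \frac{\varepsilon}{3C}
\]
gives $\limsup_{N \to \infty} \|P_N f - f\|_{L^p} \lesssim \varepsilon$, and letting $\varepsilon \to 0$ concludes.

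The main obstacle is the uniform (in $N$) $L^p$ boundedness at the endpoints $p \in \{1, \infty\}$ and near them, since spectral multiplier theorems on manifolds generically lose the endpoints. On the torus $\bT^3$ this is not actually an issue: the kernel of $P_N$ is exactly $K_N(x) = N^3 \check{\psi}(N \cdot)$ periodized (with $\check{\psi}$ Schwartz), so $\|K_N\|_{L^1(\bT^3)}$ is bounded uniformly in $N$ by a Riemann-sum/dominated-convergence argument comparing to $\|\check{\psi}\|_{L^1(\bR^3)}$, and Young's inequality then gives the uniform bound on all of $L^p$, $1 \leq p \leq \infty$, simultaneously — this is the cleanest route and is what I would actually write, since the paper only needs $M = \bT^3$ despite the general statement. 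For a genuinely general compact $M$ one would instead invoke finite propagation speed for the wave equation together with the Seeger--Sogge construction to get the uniform kernel estimates; I would remark on this but carry out the torus computation in detail.
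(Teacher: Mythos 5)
The paper gives no actual argument here---its ``proof'' is a one-line citation to \cite[Theorem~2.1]{sogge}---so what you have written is considerably more substantive than what appears in the text, and it is essentially correct. Sogge's own proof proceeds somewhat differently from your main route: he establishes a single uniform-in-$N$ kernel bound $|K_N(x,y)|\le C_M\,N^n(1+Nd(x,y))^{-M}$ by writing $\psi(-N^{-2}\Delta)=\Psi(N^{-1}\sqrt{-\Delta})$ with $\Psi(\lambda)=\psi(\lambda^2)$, expressing this as $\frac{1}{2\pi}\int\widehat{\Psi}(t)\cos(tN^{-1}\sqrt{-\Delta})\,dt$, and then using finite propagation speed for the wave group to localize the kernel; the kernel bound then feeds Schur's test to give $L^p\to L^p$ boundedness for \emph{all} $1\le p\le\infty$ in one stroke, without invoking a Mikhlin--H\"ormander multiplier theorem (which, as you note, loses endpoints on a general manifold). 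Your route---multiplier theorem for $1<p<\infty$, then kernel estimates to recover the endpoints---reaches the same conclusion but with two separate mechanisms; your torus-specific Poisson-summation argument is the cleanest and most elementary for the case the paper actually uses, and your concluding remark correctly identifies the finite-propagation-speed argument as the tool needed on a general compact $M$. Both your argument and Sogge's deduce convergence from the uniform bounds plus convergence on a dense subclass.

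One small caveat worth flagging: your density step ``choose $g$ smooth with $\|f-g\|_{L^p}<\varepsilon/(3C)$'' fails at $p=\infty$, since $C^\infty(M)$ is not dense in $L^\infty(M)$. The convergence statement $P_Nf\to f$ in $L^p$ is in fact \emph{false} for general $f\in L^\infty$ (take $f$ with a jump discontinuity; convolution-type approximate identities cannot converge uniformly to it). So the lemma as stated in the paper, which says $1\le p\le\infty$ throughout, is slightly too strong on the convergence clause---the correct range there is $1\le p<\infty$, or else $f\in C(M)$ at $p=\infty$. Your argument is in fact precisely calibrated to the correct range; you should simply say so explicitly rather than silently inheriting the overclaim.
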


\begin{proof}
See \cite[Theorem 2.1]{sogge}.
\end{proof}

We need the following identity to prove the symplectic properties for the truncated nonlinear Klein-Gordon equation.

\begin{lem}
\label{lem:proj_identity}
Let $K$ be large enough so that $\Pi_K P_N = P_N$, then
\[
\int_{\bT^3} P_N\bigl[(P_Nu )^3\bigr] \,\Pi_K v = \int_{\bT^3} (P_Nu)^3 P_N  v.
\]
\end{lem}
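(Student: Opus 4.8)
The plan is to prove the identity by expanding both sides in Fourier series and using the self-adjointness of the operators $P_N$ and $\Pi_K$ on $L^2(\bT^3)$. The key observation is that both $P_N = \psi(-N^{-2}\Delta)$ and $\Pi_K$ are Fourier multipliers with real, even symbols, hence self-adjoint on $L^2(\bT^3)$ with respect to the real $L^2$ pairing $\langle f, g\rangle = \int_{\bT^3} f g$. Therefore, writing the left-hand side as $\langle P_N[(P_N u)^3], \Pi_K v\rangle$, we may move $P_N$ across the pairing to obtain $\langle (P_N u)^3, P_N \Pi_K v\rangle$.

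First I would record that, by definition of $P_N$ in \eqref{equ:trunc_op}, the Fourier support of $P_N u$ is contained in $\{|n|^2 \leq N^2\}$ more precisely in the support of $\psi(|\cdot|^2/N^2)$, and hence the Fourier support of $(P_N u)^3$ is contained in a ball of radius $3N$ (or whatever the support of $\psi$ dictates). Consequently, choosing $K$ large enough that $\Pi_K P_N = P_N$ — which is exactly the hypothesis of the lemma — one also has $\Pi_K$ acting as the identity on $(P_N u)^3$ if $K$ is taken at least as large as three times the relevant frequency cutoff; but in fact we only need the weaker statement. The second step is the commutation: since $P_N$ and $\Pi_K$ are both Fourier multipliers, they commute, so $P_N \Pi_K v = \Pi_K P_N v = P_N v$, where the last equality is again the hypothesis $\Pi_K P_N = P_N$ (equivalently $P_N \Pi_K = P_N$, by taking adjoints or by commutativity). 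Combining, the right-hand side $\langle (P_N u)^3, P_N \Pi_K v \rangle = \langle (P_N u)^3, P_N v\rangle = \int_{\bT^3} (P_N u)^3 P_N v$, which is precisely the claimed identity.

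To be careful about the one subtlety: moving $P_N$ from the first slot of the pairing to the second requires $P_N$ self-adjoint, i.e. $\int (P_N f) g = \int f (P_N g)$ for real-valued $f, g$. This holds because $\widehat{P_N f}(n) = \psi(|n|^2/N^2)\widehat f(n)$ and $\psi(|n|^2/N^2)$ is real, so by Parseval $\int (P_N f) g = \sum_n \psi(|n|^2/N^2)\widehat f(n)\overline{\widehat g(n)} = \sum_n \widehat f(n)\overline{\psi(|n|^2/N^2)\widehat g(n)} = \int f(P_N g)$ (using that $g$ real makes $\overline{\widehat g(n)} = \widehat g(-n)$ and the symbol is even). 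There is no real obstacle here; the only thing to get right is bookkeeping of which cutoff is large relative to which, and the statement is essentially a one-line Fourier computation once the self-adjointness and commutation of the multipliers are noted. The main ``obstacle'' — really just a point of care — is ensuring the hypothesis $\Pi_K P_N = P_N$ is used in the correct place, namely to absorb the stray $\Pi_K$ after the self-adjointness move, rather than needing any statement about $\Pi_K$ acting on the cube $(P_N u)^3$ directly.
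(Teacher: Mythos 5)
Your proof is correct and is precisely the elementary argument the authors evidently had in mind (the paper states the lemma without proof). Moving $P_N$ across the real $L^2$ pairing by self-adjointness of the Fourier multiplier $\psi(|n|^2/N^2)$, then using commutativity of Fourier multipliers together with the hypothesis to collapse $P_N\Pi_K v = \Pi_K P_N v = P_N v$, gives the identity in one line; and you are right to note that the digression about the Fourier support of $(P_N u)^3$ is unnecessary once the hypothesis is used to absorb the stray $\Pi_K$.
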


In the sequel, we let $\langle \nabla \rangle$ be the operator with symbol $\sqrt{1 + |\xi|^2}$ and we let $F(u) = u^3$. We rewrite \eqref{equ:cubic_nlkg} as a system of first order equations by factoring
\[
\partial_{tt} - \Delta + 1 = ( \langle \nabla \rangle + i\partial_t)( \langle \nabla \rangle - i\partial_t).
\]
For a sufficiently regular solution $u$ to the NLKG \eqref{equ:cubic_nlkg} we can define
\[
u^{\pm} = \frac{ ( \langle \nabla \rangle \mp i\partial_t) }{ 2 \langle \nabla \rangle } u,
\]
then $u = u^+ + u^-$ and the functions $u^\pm$ solve the equations
\begin{align}
\label{equ:soln_kg_sys}
 ( \langle \nabla \rangle \pm i\partial_t) u^\pm = - \frac{F(u^+ + u^-)}{2 \langle \nabla \rangle} , \qquad u^{\pm}(0) = \frac{1}{2}\left(u_0 \mp i \frac{u_1}{\langle \nabla \rangle} \right).
\end{align}
We set
\[
I^\pm(f) = \int_0^t e^{\pm i (t-s) \langle \nabla \rangle} \frac{f}{2 \langle \nabla \rangle} ds
\]
and we obtain a Duhamel's formula for solutions of \eqref{equ:soln_kg_sys} given by
\begin{align}
\label{equ:duhamel}
u^{\pm} = e^{\pm i t  \langle \nabla \rangle} u_0^{\pm} \pm i I^\pm (F(u)).
\end{align}
This formulation is equivalent to \eqref{equ:cubic_nlkg} and since $u =  u^+ + u^-$, we can reconstruct a solutions to \eqref{equ:cubic_nlkg} from this system and bounds for $u^\pm$ imply the same bounds for $u$. We will not be using the specific structure of $u^\pm$ so we will often drop the notation where it has no impact on the argument.

\medskip
Before proceeding, we recall the definition of the $U^p$ and $V^p$ spaces. Consider partitions given by a strictly increasing finite sequence $-\infty < t_0 < t_2 < \ldots t_{K} \leq \infty$. If $t_K = \infty$ we use the convention $v(t_{K}) := 0$ for all functions $v: \bR \to H$. We will usually be working on bounded intervals $I \subset \bR$. For some additional details about these function spaces, see Appendix \ref{sec:up_vp}. In what follows, $\cB$ will denote an arbitrary Banach space.

\begin{defn}[$U^p$ spaces]
Let $1 \leq p < \infty$. Consider a partition $\{t_0, \ldots, t_K\}$ and let $(\varphi_k)_{k=0}^{K-1} \subseteq \cB$ with $\sum_{ k =0}^{K-1} \|\varphi_k\|_{L^2}^p = 1$. We define a $U^p$ atom to be a function
\[
a := \sum_{k=1}^{K} \mathbbm{1}_{[t_{k-1}, t_{k})} \varphi_{k-1}
\]
and we define the atomic space $U^p(\bR, \cB)$ to be the set of all functions $u: \bR \to \cB$ such that
\[
 u = \sum_{j=1}^\infty \lambda_j a_j,
\]
for $a_j$ $U^p$ atoms, and $\{\lambda_j\} \in \ell^1(\bC)$, endowed with the norm
\[
\|u\|_{U^p} := \inf \left\{ \sum_{j=1}^\infty |\lambda_j|, \,u = \sum_{j=1}^\infty \lambda_j a_j \,: \, a_j \textup{ is a } U^p\textup{ atom} \right\}.
\]
\end{defn}

\begin{defn}[$V^p$ spaces]
Let $1 \leq p < \infty$. We define $V^p(\bR, \cB)$ as the space of all functions, $v : \bR \to \cB$, such that the norm 
\begin{align}
\label{equ:v_norm}
\|v\|_{V^p(\bR, \cB)} = \sup_{\textup{partitions }} \left( \sum_{i=1}^K \|v(t_{i}) - v(t_{i-1})\|^p_{\cB} \right)^{1/p}< \infty
\end{align}
with the convention $v(\infty) = 0$.  We let $V_- (\bR , \cB)$ denote the subspace of all functions satisfying $\lim_{t \to - \infty} v(t) = 0$ and $V_{rc}^p(\bR , \cB) $ denote the subspace of all right continuous functions in $V_- (\bR , \cB)$, both endowed with norm defined above.
\end{defn}

\medskip
We will establish a local well-posedness theory for \eqref{equ:soln_kg_sys} via a contraction mapping argument in the adapted functions space
\begin{align}
\label{equ:x_spaces}
\|u\|_{\cX_\pm^{s}} = \Bigl( \sum_{k} \langle k \rangle^{2s} \|\widehat{u^\pm}(k) \|^2_{U_\pm^2} \Bigr)^{1/2}, \qquad \|f\|_{U_\pm^2} = \| e^{\mp i t \langle \nabla \rangle } f\|_{U^2L^2(\mathbb{R} \times \bT^3)}.
\end{align}
Similarly we define
\begin{align}
\label{equ:y_spaces}
\|u\|_{\cY_\pm^{s}} = \Bigl( \sum_{k} \langle k \rangle^{2s} \|\widehat{u^\pm}(k) \|^2_{V_{rc,\pm}^2} \Bigr)^{1/2}, \qquad \|f\|_{V_{rc,\pm}^2} = \| e^{\mp i t \langle \nabla \rangle } f\|_{V_{rc}^2L^2(\mathbb{R} \times \bT^3)}.
\end{align}
We will simplify the notation and let $V_{\pm}^2 := V_{rc,\pm}^2$. We define
\[
\cX^s = \cX_+^s \times X_-^s \quad \textup{and} \quad \cY^s = Y_+^s \times Y_+^s
\]
endowed with the obvious norm. These spaces are at a slightly finer scale than the spaces used in \cite{HHK}, and one should view these as analogous to the spaces used by Herr, Tataru and Tzvetkov in \cite{HTT} for the quintic nonlinear Schr\"odinger equation on $\bT^3$. This choice of scale does not affect the multilinear estimates, as we only need the weaker Strichartz estimates at dyadic scales, however it allows us to make use of the following important orthogonality property.
\begin{cor}[Corollary 2.9, \cite{HTT}]
\label{cor:orthog}
Let $\bZ^3 = \cup \, C_k$ be a partition of $\bZ^3$ or $\bR^k$. Then
\begin{align}
\biggl( \sum_k  \|P_{C_k} u\|^2_{V_{\pm}^2 H^s(I \times \bT^3)} \biggr)^{1/2} \lesssim \|u\|_{\cY_\pm^s(I \times \bT^3)}.
\end{align}
\end{cor}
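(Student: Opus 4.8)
The plan is to reduce the statement to a soft, one-sided orthogonality property of the $V^2$-norm, taking advantage of the fact that $\cY^s_\pm$ is built from \emph{per-mode} $V^2$-norms; in fact this is \cite[Corollary 2.9]{HTT} and the same proof applies verbatim here, so I only sketch it. Write $v(t):=e^{\mp i t\langle\nabla\rangle}u(t)$ for the linear profile, an $H^s(\bT^3)$-valued function of $t$. Since $e^{\mp i t\langle\nabla\rangle}$ is a Fourier multiplier it commutes with $P_{C_k}$, so $\|P_{C_k}u\|_{V^2_\pm H^s}=\|P_{C_k}v\|_{V^2 H^s}$, the $V^2$ here taken with fibre $H^s(\bT^3)$; likewise $\widehat v(n)(t)=e^{\mp i t\langle n\rangle}\widehat{u^\pm}(n)(t)$, so $\|\widehat{u^\pm}(n)\|_{V^2_\pm}=\|\widehat v(n)\|_{V^2(\bR;\bC)}$ and therefore $\|u\|_{\cY^s_\pm}^2=\sum_{n\in\bZ^3}\langle n\rangle^{2s}\|\widehat v(n)\|_{V^2}^2$. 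Hence it suffices to prove: for any Hilbert space $H=\bigoplus_j H_j$ (orthogonal direct sum) and any $w\colon I\to H$, one has $\|w\|_{V^2 H}^2\le\sum_j\|P_{H_j}w\|_{V^2 H_j}^2$.

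To see this, fix a finite partition $t_0<t_1<\cdots<t_K$ of $I$. The Pythagorean identity in $H$ gives, for each $i$, $\|w(t_i)-w(t_{i-1})\|_H^2=\sum_j\|P_{H_j}w(t_i)-P_{H_j}w(t_{i-1})\|_{H_j}^2$. Summing in $i$ and interchanging the two nonnegative sums (one of them finite) by Tonelli, $\sum_{i=1}^K\|w(t_i)-w(t_{i-1})\|_H^2=\sum_j\sum_{i=1}^K\|P_{H_j}w(t_i)-P_{H_j}w(t_{i-1})\|_{H_j}^2\le\sum_j\|P_{H_j}w\|_{V^2 H_j}^2$, the last bound being the definition of the $V^2$-norm of each $P_{H_j}w$ (the chosen partition is one competitor in the defining supremum). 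Taking the supremum over partitions on the left gives the claimed inequality. Right-continuity and vanishing at $-\infty$ of $P_{H_j}w$ when $w\in V^2_{rc}$ are automatic since each $P_{H_j}$ is bounded on $H$, so the argument stays within the relevant spaces, and passing to the interval $I$ changes nothing (see Appendix \ref{sec:up_vp} for these routine facts about $V^p$).

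Now apply this with $H=H^s(\bT^3)$, the $H_j$ the (weighted) one-dimensional subspaces $\bC\,e^{i n\cdot x}$ with $n$ ranging over $C_k$, so that $\|P_{H_j}w\|_{H^s}=\langle n\rangle^s|\widehat w(n)|$, and with $w=P_{C_k}v$, whose Fourier support lies in $C_k$. This yields $\|P_{C_k}v\|_{V^2 H^s}^2\le\sum_{n\in C_k}\langle n\rangle^{2s}\|\widehat v(n)\|_{V^2}^2$. Summing over $k$ and using that $\{C_k\}$ partitions $\bZ^3$ (so each $n$ is counted exactly once), $\sum_k\|P_{C_k}u\|_{V^2_\pm H^s}^2=\sum_k\|P_{C_k}v\|_{V^2 H^s}^2\le\sum_{n\in\bZ^3}\langle n\rangle^{2s}\|\widehat v(n)\|_{V^2}^2=\|u\|_{\cY^s_\pm}^2$, which is the assertion, in fact with implicit constant $1$.

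The argument is entirely soft and there is no genuine analytic obstacle; the one point worth stressing is that although $V^2$ is not a Hilbert-space-valued $L^2$ and does not literally "split orthogonally," the square-function bound $\|\sum_j w_j\|_{V^2}^2\le\sum_j\|w_j\|_{V^2}^2$ for fibrewise-orthogonal pieces still holds in this one-sided form, because the Pythagorean identity is available increment-by-increment \emph{inside} the defining supremum. The space $\cY^s_\pm$ was defined using the per-mode $V^2$-norm precisely so that iterating this bound telescopes cleanly over an arbitrary frequency partition $\{C_k\}$; see \cite{HHK,HTT} for the general framework.
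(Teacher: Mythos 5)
Your proof is correct and is essentially the standard argument: the paper cites \cite[Corollary 2.9]{HTT} as a black box without reproducing it, and your argument — commuting the propagator past $P_{C_k}$ to reduce to the trivial linear profile, then applying the one-sided Pythagorean inequality $\|w\|_{V^2H}^2\le\sum_j\|P_{H_j}w\|_{V^2H_j}^2$ partition-by-partition, which composes with the per-mode structure of $\cY^s_\pm$ to give constant $1$ — is the one used in that reference.
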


\noindent We will work with the restriction spaces
\[
\cX^s(I) := \bigl\{ u \in C(I; H^s(\bT^3) \times H^s(\bT^3)) \,|\,\widetilde{u}(t) =  u(t) \textup{ for } t \in I, \widetilde{u} \in \cX^s \bigr\}
\]
endowed with the norm 
\[
\|u\|_{\cX^s(I)} = \inf \bigl \{\|\widetilde{u}\|_{\cX^s} \,|\, \widetilde{u} \,:\, \widetilde{u}(t) =  u(t) \textup{ for } t \in I \bigr\},
\]
and similarly for $\cY^s(I)$. In our estimates we will often implicitly multiply functions by a sharp time cut-off. When $I = [0,T)$ we will use the notation $\cX_T^s$. Perhaps most importantly, bounds for solutions of \eqref{equ:soln_kg_sys} in these spaces implies the same bounds in $L_t^\infty \cH_x^{s}$ for solutions of \eqref{equ:cubic_nlkg}, which was precisely the endpoint embedding we were missing in the $X^{s,\frac{1}{2}}$ spaces. Finally, for $I \subset J$, we have the embedding $\cX^s(I) \hookrightarrow \cX^s(J)$ which can be seen via extension by zero. 

\medskip
As mentioned above, we will need a weaker norm which controls the local well-posedness theory for the equation in order to prove the necessary stability theory. This norm is similar to the norm introduced in \cite[Section 2]{IP12} for the same purpose and should be thought of as the appropriate substitute for the $L_{t,x}^4$ Strichartz norm. We define the $Z(I)$ norm by
\begin{align}
\label{equ:zs_norm}
\|f\|_{Z(I)} 
& = \sup_{ J \subseteq I,\, |J| \leq 1} \left( \sum_N  \|P_N f\|^2_{L_{t, x}^4(J \times \bT^3)} \right)^{1/2}.
\end{align}
For $u = (u^+, u^-)$, we obtain as a consequence of Corollary \ref{cor:orthog} and the Strichartz estimates in Corollary \ref{cor:strichartz} below that
\[
\|u^\pm\|_{L_{t,x}^4(I \times \bT^3)} \lesssim \|u^\pm\|_{Z(I)} \lesssim \|u^\pm\|_{\cY_\pm^{1/2}(I)} \lesssim \|u^\pm\|_{\cX_\pm^{1/2}(I)},
\]
hence this indeed defines a weaker norm. This norm only plays a role in the stability theory and is not necessary to prove the low-frequency component satisfies the perturbed equation \eqref{equ:lov_err}.

\subsection{Strichartz estimates}
In the next sections we will repeatedly make use of Strichartz estimates.

\begin{defn}
A pair of real numbers $(q,r)$ is called wave-admissible provided $2 < q \leq + \infty$, and $2 \leq r < + \infty$, and it satisfies the scaling condition 
\[
\frac{1}{q} + \frac{1}{r} \leq \frac{1}{2}.
\]
\end{defn}

By finite speed of propagation, the Strichartz estimates for the torus are the same as those in Euclidean space, provided one localizes in time. These estimates are classical and due in parts to \cite{Strichartz}, \cite{Pecher}, \cite{GV2}, \cite{KeelTao}.

\begin{prop}[Strichartz estimates]
\label{prop:strichartz2}
Let $u$ be a solution to the equation
\begin{align}
u_{tt} - \Delta u + u = F(u), \qquad (u, \partial_t u)\big|_{t= 0} = (u_0, u_1)
\end{align}
on $I \subset [0, T]$ for $T$ fixed. Then
\begin{align}
\label{equ:strichartz_estimates}
\|(u, \partial_t u)\|_{L_t^\infty  \cH^{\gamma}(I \times \bT^3)} + \|u \|_{L_t^q L_x^r(I \times \bT^3)} &\lesssim \|(u_0, u_1)\|_{\cH^{\gamma}(\bT^3)}  +  \|F\|_{L^{\tilde{p}'}_t L^{\tilde{q}' }_x (I\times \bT^3)} ,
\end{align}
under the scaling assumption
\begin{align}
\label{equ:scaling}
\frac{1}{q} + \frac{3}{r} = \frac{3}{2} - \gamma
\end{align}
and the gap condition
\begin{align}
\label{equ:gap}
\frac{1}{\tilde{q}} + \frac{1}{\tilde{r}} - 2 = \frac{3}{2} - \gamma
\end{align}
where the implicit constant depends on the choice of $T$ but is uniform for any subinterval $I \subset[0,T]$.
\end{prop}
An exponent pair $(q,r)$ is called $\gamma$ admissible provided it satisfies the scaling assumptiong \eqref{equ:scaling} for some $0 < \gamma < \frac{3}{2}$. Similarly, we call an exponent pair $(\tilde{q}', \tilde{r}')$ conjugate admissible provided $\frac{1}{q } + \frac{1}{\tilde{q}'} = 1 = \frac{1}{\tilde{r}} + \frac{1}{\tilde{r}'}$ for some wave-admissible pair $(p,q)$ and it satisfies the gap condition \eqref{equ:gap} for some $0 < \gamma < \frac{3}{2}$.

\begin{rmk}
In fact, a larger range of exponents are admissible for the nonlinear Klein-Gordon equation than the range stated above, which only coincides with the admissible exponents for the nonlinear wave equation. Since the above estimates are all we will need in our arguments, we refrain from stating the full range of Strichartz exponents for simplicity. For a full formulation, see for instance \cite{NO}, and references therein.
\end{rmk}

We state the following interpolation estimate for the Strichartz estimates for the nonlinear Klein-Gordon equation which we will use to estimate frequency localized functions in $X^{s,b}$ spaces, see \cite{B95} for this formulation of the estimates.

\begin{prop}[Strichartz]
\label{prop:Strichartz}
Let $I \subset \bR$. There exists a constant $C \equiv C(I) > 0$ such that
\[
\left\| \sum_{|n| \sim N} a(n) e^{i (x \cdot n + t \langle n \rangle)} \right\|_{L^4(I \times \bT^3)} \leq C(I) N^{1/2} \left( \sum |a(n)|^2  \right)^{1/2}.
\]
By H\"older's inequality we have
\[
\left\|  \sum_{|n| \sim N} \int d\tau \frac{c(n, \tau)}{ \langle|\tau| - \langle n \rangle\rangle^{\frac{1}{2} +} } e^{ i (x \cdot n + t \tau )}  \right\|_{L^4(I \times \bT^3)} \leq C(I)  N^{\,1/2} \left( \int \sum |c(n, \tau)|^2  d\tau \right)^{1/2}
\]
and by interpolating with Parseval's identity we obtain that for $2 \leq r \leq 4$ 
\begin{align}
\label{equ:interp}
\left\|  \sum_{|n| \sim N} \int d\tau \frac{c(n, \tau)}{ \langle|\tau| - \langle n \rangle\rangle^{\frac{\theta}{2}  +} } e^{ i (x \cdot n + t\tau )}  \right\|_{L^r(I \times \bT^3)} \leq C(I)  N^{\,\theta/2} \left( \int \sum |c(n, \tau)|^2  d\tau \right)^{1/2}
\end{align}
where $\theta = 2 - \frac{4}{r}$. 
\end{prop}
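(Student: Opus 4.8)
The plan is to prove the three displays in order; only the first requires genuine harmonic analysis, and the other two are elementary consequences.

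First, the estimate $\|\sum_{|n|\sim N} a(n) e^{i(x\cdot n + t\langle n\rangle)}\|_{L^4(I\times\bT^3)} \lesssim N^{1/2}(\sum|a(n)|^2)^{1/2}$ is the frequency-localized $L^4_{t,x}$ Strichartz estimate for the free Klein--Gordon evolution. It is an immediate consequence of Proposition~\ref{prop:strichartz2} applied with $F\equiv 0$ to the $\tfrac12$-admissible pair $(q,r)=(4,4)$ (indeed $\tfrac1q+\tfrac3r=1=\tfrac32-\gamma$ forces $\gamma=\tfrac12$, and $\tfrac1q+\tfrac1r=\tfrac12$), once one notes that data with spatial Fourier support in $\{|n|\sim N\}$ satisfy $\|u_0\|_{H^{1/2}}\sim N^{1/2}\|u_0\|_{L^2}$. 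Alternatively one may argue directly: $\|f\|_{L^4(I\times\bT^3)}^2=\|f^2\|_{L^2(I\times\bT^3)}\le\|\chi f^2\|_{L^2(\bR\times\bT^3)}$ for a smooth cutoff $\chi\equiv 1$ on $I$; expanding $f^2$, applying Plancherel in $(x,t)$, and bounding the number of $n$ with $|n|,|m-n|\sim N$ and $|\langle n\rangle+\langle m-n\rangle-\mu|=O(1)$ by $O(N^2)$ for each fixed output frequency $m$ and modulation $\mu$ gives the claim. The latter count is where the factor $N^{1/2}$ is paid: this level set is a codimension-one hypersurface inside a ball of radius $\sim N$. I will use the first route, since Proposition~\ref{prop:strichartz2} is already at hand.

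Second, the $L^4$ estimate with the modulation weight $\langle|\tau|-\langle n\rangle\rangle^{-(1/2+)}$ follows from the first by a dyadic decomposition in the modulation $||\tau|-\langle n\rangle|$. On the piece $||\tau|-\langle n\rangle|\sim 2^j$ (and on $||\tau|-\langle n\rangle|\lesssim 1$) write $\tau=\pm(\langle n\rangle+\lambda)$ with $|\lambda|\sim 2^j$, freeze $\lambda$, and apply the first estimate to $\sum_{|n|\sim N}c\bigl(n,\pm(\langle n\rangle+\lambda)\bigr)\,e^{i(x\cdot n\pm t\langle n\rangle)}$; Minkowski's integral inequality in $\lambda$ followed by Cauchy--Schwarz in $\lambda$ produces a factor $2^{j/2}$, which against the weight $\sim 2^{-j(1/2+\varepsilon)}$ leaves $2^{-j\varepsilon}$. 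Summing in $j$ by one more Cauchy--Schwarz and undoing the substitution $\tau\mapsto\lambda$ yields $N^{1/2}\bigl(\int\sum_n|c(n,\tau)|^2\,d\tau\bigr)^{1/2}$. The positivity of the ``$+$'' is exactly what makes this geometric series converge; the bare exponent $\tfrac12$ would diverge logarithmically.

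Finally \eqref{equ:interp} is obtained by interpolating the second estimate against Parseval's identity. Regarding $c\mapsto\sum_n\int c(n,\tau)\langle|\tau|-\langle n\rangle\rangle^{-z(1/2+\varepsilon)}e^{i(x\cdot n+t\tau)}\,d\tau$ as an analytic family of operators, the line $\mathrm{Re}\,z=0$ gives an $L^2\to L^2$ bound $\lesssim 1$ (the multiplier has modulus $1$) and $\mathrm{Re}\,z=1$ gives an $L^2\to L^4$ bound $\lesssim N^{1/2}$ from the second estimate, so Stein interpolation at $z=\theta$ gives an $L^{r}$ bound with $\tfrac1r=\tfrac{1-\theta}{2}+\tfrac{\theta}{4}$, i.e.\ $\theta=2-\tfrac4r$, with constant $N^{\theta/2}$ and weight $\langle|\tau|-\langle n\rangle\rangle^{-\theta(1/2+\varepsilon)}$; since $\theta(1/2+\varepsilon)>\theta/2$ this is precisely \eqref{equ:interp}. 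The only nontrivial ingredient in this whole chain is the first estimate, which we import from Proposition~\ref{prop:strichartz2}; should one want a self-contained proof, the main obstacle is verifying the $O(N^2)$ lattice-point bound above, in particular checking that the degenerate locus where $n$ is nearly parallel to $m$ (an $O(N)$-element set) does not spoil it.
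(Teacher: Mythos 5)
Your proof is correct. Part~1 does indeed follow from Proposition~\ref{prop:strichartz2} with the $\tfrac12$-admissible pair $(4,4)$ once one uses $\|u_0\|_{H^{1/2}}\sim N^{1/2}\|u_0\|_{L^2}$ for data supported in $|n|\sim N$; your dyadic-in-modulation route for Part~2 is a slightly more elaborate version of the direct argument (substitute $|\tau|-\langle n\rangle=\lambda$, Minkowski, apply Part~1, one Cauchy--Schwarz in $\lambda$ against $\langle\lambda\rangle^{-1-}$) but it does close.

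One remark worth making, since the paper only gestures at ``interpolating with Parseval's identity'': your use of Stein (complex) interpolation for Part~3 is not merely a matter of taste --- it is actually required to obtain the claimed weight $\langle|\tau|-\langle n\rangle\rangle^{-(\theta/2+)}$ with arbitrarily small ``$+$''. The naive alternative --- first bound $\|g_\lambda\|_{L^r}$ by $\|g_\lambda\|_{L^2}^{1-\theta}\|g_\lambda\|_{L^4}^{\theta}$ pointwise in $\lambda$, then Minkowski and Cauchy--Schwarz in $\lambda$ --- forces $\int\langle\lambda\rangle^{-2(\theta/2+\delta)}\,d\lambda<\infty$, i.e.\ $\theta/2+\delta>1/2$, which fails for $r$ close to $2$. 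Since the paper later applies \eqref{equ:interp} to the dual variable $V_N$ with weight exponent $\theta_2/2+$ strictly below $1/2$ (see the manipulation following \eqref{equ:rho_cond}), the complex-interpolation form you give is exactly what the applications demand. Your observation that the paper's ``$\frac{\theta}{2}+$'' is realized as $\theta(\tfrac12+\varepsilon)$ under interpolation, hence reaches all exponents $>\theta/2$ for $\theta>0$, is the key point and is stated correctly.
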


We also obtain the following formulation of Strichartz estimates in $U^p$  and $V^p$ spaces. The reasoning is identical to the proof of Corollary 2.21 in \cite{HHK}.

\begin{cor}
\label{cor:strichartz}
 Let  $1 \leq p < 4$ and $2 \leq q \leq 4$. Let $T > 0$, and let $u_N = P_N u_N$. Then,
\begin{enumerate}
\item[(i)] $\| u_N \|_{L^4_{t,x}([0,T] \times \bT^3)} \lesssim C_T N^{1/2} \| u_N \|_{U_\pm^{4}}$
\item[(ii)] $\| u_N \|_{L^4_{t,x}([0,T] \times \bT^3)} \lesssim C_T N^{1/2} \| u_N \|_{V_{-, \pm}^{p}}$
\item[(iii)] $\| u_N \|_{L^q_{t,x}([0,T] \times \bT^3)} \lesssim C_T N^{\frac{q-2}{q}} \| u_N \|_{U_\pm^{q}}$ .
\end{enumerate}
\end{cor}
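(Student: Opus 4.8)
The plan is to deduce all three estimates from the corresponding linear dyadic Strichartz estimate for the free half-wave propagator together with the transference principle for $U^p$ spaces, exactly as in the proof of Corollary~2.21 in \cite{HHK}.

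First I would record the linear ingredient. Since $e^{\mp it\langle \nabla \rangle}$ has symbol $e^{\mp it\langle n \rangle}$, Proposition~\ref{prop:Strichartz} --- in particular the interpolated bound \eqref{equ:interp} with space-time exponent $r=q$ and $\theta = 2 - \tfrac{4}{q}$, so that $\tfrac{\theta}{2} = \tfrac{q-2}{q}$ --- gives, for any $\varphi \in L^2(\bT^3)$ with $P_N\varphi = \varphi$ and any interval $J \subseteq [0,T]$,
\[
\| e^{\mp it\langle \nabla \rangle}\varphi\|_{L^q_{t,x}(J \times \bT^3)} \leq C_T\, N^{\frac{q-2}{q}} \|\varphi\|_{L^2(\bT^3)}, \qquad 2 \leq q \leq 4,
\]
with $C_T$ uniform over subintervals of $[0,T]$; for $q=4$ this is the $N^{1/2}$ loss, and for $q=2$ it is just Plancherel. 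Note that $P_N$ commutes with the propagator, so frequency localization is preserved throughout.

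Second, the transference step. Write $v_N := e^{\mp it\langle \nabla \rangle}u_N \in U^q L^2$ for the profile of $u_N$, with $P_N u_N = u_N$ and hence $P_N v_N = v_N$. Decompose $v_N = \sum_j \lambda_j a_j$ into $U^q$ atoms $a_j = \sum_k \mathbbm{1}_{[t^j_{k-1}, t^j_k)}\varphi^j_{k-1}$ with $\sum_k \|\varphi^j_{k-1}\|_{L^2}^q = 1$ (one may take $P_N \varphi^j_{k-1} = \varphi^j_{k-1}$) and $\sum_j |\lambda_j| \leq 2\|v_N\|_{U^q L^2}$. On each interval $[t^j_{k-1}, t^j_k)$ the function $e^{\pm it\langle \nabla \rangle}a_j$ coincides with the free solution $e^{\pm it\langle \nabla \rangle}\varphi^j_{k-1}$; since these intervals are disjoint, the $L^q_{t,x}([0,T]\times\bT^3)$-norm is the $\ell^q$-sum of the pieces, and the linear estimate above together with $\sum_k \|\varphi^j_{k-1}\|_{L^2}^q = 1$ gives $\|e^{\pm it\langle \nabla \rangle}a_j\|_{L^q_{t,x}([0,T]\times\bT^3)} \lesssim C_T N^{\frac{q-2}{q}}$ for each atom. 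Applying the triangle inequality to $u_N = \sum_j \lambda_j e^{\pm it\langle \nabla \rangle}a_j$ and then passing to the infimum over atomic decompositions of $v_N$ yields
\[
\|u_N\|_{L^q_{t,x}([0,T]\times\bT^3)} \lesssim C_T\, N^{\frac{q-2}{q}} \|v_N\|_{U^q L^2} = C_T\, N^{\frac{q-2}{q}} \|u_N\|_{U^q_\pm},
\]
which is (iii); its $q=4$ case is exactly (i). For (ii), since $1 \leq p < 4$ we have the continuous embedding $V^p_{rc} \hookrightarrow U^4$ (see Appendix~\ref{sec:up_vp}, following \cite{HHK}), i.e. $\|u_N\|_{U^4_\pm} \lesssim \|u_N\|_{V^p_{-,\pm}}$; combining this with (i) gives the claim.

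I do not expect a genuine obstacle here --- everything reduces to the linear input plus soft properties of $U^p$/$V^p$. The only points requiring care are (a) verifying that the interpolation exponent coming out of \eqref{equ:interp} is precisely $\tfrac{q-2}{q}$, and (b) tracking that the constant $C_T$ remains uniform, which it does because every time interval encountered is a subinterval of the fixed interval $[0,T]$; both are already encoded in Proposition~\ref{prop:Strichartz}.
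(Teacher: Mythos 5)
Your proof is correct and follows essentially the same route as the paper's: Strichartz estimates plus the $U^p$ transfer principle for (i) and (iii), and the embedding $V^p_{rc}\hookrightarrow U^q$ for (ii); you have simply unpacked the atomic proof of the transfer principle (Proposition~\ref{prop:transfer}) rather than invoking it by name, and for (iii) you interpolate the frequency-localized linear Strichartz estimates before transferring rather than interpolating the resulting $L^2_{t,x}$ and $L^4_{t,x}$ bounds afterwards --- both are fine. One small point to make explicit in (ii): an element of $V^p_{-,\pm}$ need not be right-continuous, so before applying $V^p_{rc}\hookrightarrow U^4$ one should first replace $u_N$ by its right-continuous modification, which has $V^p$ norm bounded by that of $u_N$ and agrees with $u_N$ almost everywhere, hence the same $L^4_{t,x}$ norm; the paper's proof flags exactly this step, while your phrase ``i.e.\ $\|u_N\|_{U^4_\pm}\lesssim\|u_N\|_{V^p_{-,\pm}}$'' glosses over it.
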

\begin{proof}
The first claim follows from Proposition \ref{prop:strichartz2} and the transfer principle, Proposition \ref{prop:transfer}. The second claim follows from \eqref{equ:v_in_u} and the fact that $u$ agrees with its right-continuous variant almost everywhere. The final claim follows from interpolation with the trivial $L_{t,x}^2$ estimate, since we are on bounded time intervals.
\end{proof}

We have the following refined bilinear Strichartz estimate for the Klein-Gordon equation. Such estimates were treated in full for wave equations in \cite{FK}. For a proof of this statement for the Klein-Gordon equation on Euclidean space, see \cite[Appendix A]{Schottdorf} which adapts the geometric proofs from \cite{Selberg08} for the nonlinear wave equation, see also \cite{KSV}. One can also adapt the proof from \cite{Selberg08} to the compact setting to prove Proposition \ref{prop:refined_l2} directly, replacing the volume estimates with fairly straightforward lattice counting.

\begin{prop}
\label{prop:refined_l2}
Fix $T > 0$ and let  $O, M, N$ be dyadic numbers and $\phi_M, \psi_N$ functions in $L^2(\bT^3)$ localized at frequencies $M,N$ respectively. Define $u_M = e^{ \pm_1 it \langle \nabla \rangle } \phi_M$ and $v_M = e^{ \pm_2 it \langle \nabla \rangle } \psi_N$ Denote $L = \min(O,M,N)$, and $H = \max(O,M,N)$. Then
\begin{align}
\|P_O(u_M v_N) \|_{L_{t,x}^2} \lesssim C_T
\begin{cases}
L \|\phi_M \|_{L^2(\bT^3)} \|\psi_N \|_{L^2(\bT^3)}  & if \,\,\, M \ll N \\
H^{\frac{1}{2}} L^{\frac{1}{2}} \|\phi_M \|_{L^2(\bT^3)} \|\psi_N \|_{L^2(\bT^3)}  &if \,\,\, M \sim N .
\end{cases}
\end{align}
\end{prop}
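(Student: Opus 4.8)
The plan is to reduce the inequality to a count of integer points via Plancherel, and then to carry out the count by transplanting the geometric argument of \cite{Selberg08} (in the form of \cite[Appendix~A]{Schottdorf}) from $\bR^{1+3}$ to $\bR_t\times\bT^3$, with volumes of Euclidean regions replaced by lattice-point counts.

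\emph{Reduction.} By density it suffices to treat trigonometric polynomials $\phi_M,\psi_N$; write $\varphi_{m,n}:=\pm_1\langle m\rangle\pm_2\langle n\rangle$. Picking a Schwartz function $\eta$ with $\eta\equiv 1$ on $[0,T]$ and using $\|P_O(u_M v_N)\|_{L^2_{t,x}([0,T]\times\bT^3)}\le\|\eta\,P_O(u_M v_N)\|_{L^2_{t,x}(\bR\times\bT^3)}$, the space--time Fourier transform of the right side is, for $|k|\lesssim O$, the lattice sum $\sum_{m+n=k}\widehat{\phi_M}(m)\widehat{\psi_N}(n)\widehat\eta(\tau-\varphi_{m,n})$ over $|m|\sim M,\ |n|\sim N$, with $\widehat\eta$ rapidly decreasing; the constant $C_T$ enters only through $\|\widehat\eta\|_{L^1}$ here and, later, through Strichartz/Bernstein constants on $[0,T]$. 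After Plancherel and Cauchy--Schwarz in this sum one is reduced to bounding, for each output $(k,\tau)$,
\[
\cN(k,\tau):=\#\bigl\{m\in\bZ^3:\ |m|\sim M,\ |k-m|\sim N,\ |\varphi_{m,k-m}-\tau|\lesssim 1\bigr\},
\]
the output of the argument being $C_T\sup_{k,\tau}\cN(k,\tau)^{1/2}\,\|\phi_M\|_{L^2}\|\psi_N\|_{L^2}$. To obtain the \emph{sharp} constant in the resonant case this crude use of Cauchy--Schwarz must be refined: I would first split the output dyadically in its Klein--Gordon modulation $\bigl|\,|\tau|-\langle k\rangle\,\bigr|\sim d$, estimate each ($L^2$-orthogonal) piece with $\cN$ restricted to the corresponding resonance shell, and sum in $d$; the information carried by $d$ is what recovers the loss of the crude bound.

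\emph{The count.} The set in $\cN(k,\tau)$ is the intersection of the two thickened frequency annuli $\{|m|\sim M\}$, $\{|k-m|\sim N\}$ with a thickened level set of $m\mapsto\varphi_{m,k-m}$, i.e.\ a thickened cap of the characteristic hypersurface for the sum/difference of the two Klein--Gordon sheets of frequencies $M$ and $N$. In case (i), $M\ll N$: then $|k|\sim N$, so $L=M$, and $\nabla_m\varphi_{m,k-m}=\pm_1\frac{m}{\langle m\rangle}\mp_2\frac{k-m}{\langle k-m\rangle}$ has size $\sim 1$ off a thin tube around $\{m\parallel k\}$ (transversality, since $|m|\neq|k-m|$); the level set is then a slab of thickness $\sim 1$ contributing $\lesssim M^2=L^2$ points, with the tube contributing $\lesssim M$, which gives the bound with the factor $L$. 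In case (ii), $M\sim N=:H$: the subcase $O\sim H$ is the trivial estimate $\|u_M v_N\|_{L^2}\le\|u_M\|_{L^4}\|v_N\|_{L^4}\lesssim C_T H\,\|\phi_M\|_{L^2}\|\psi_N\|_{L^2}$; the subcase $O\lesssim H^{1/2}$ (so $L=O$) follows from Bernstein, $\|P_O(u_M v_N)\|_{L^2}\lesssim C_T O^{3/2}\,\|\phi_M\|_{L^2}\|\psi_N\|_{L^2}\le C_T H^{1/2}L^{1/2}\,\|\phi_M\|_{L^2}\|\psi_N\|_{L^2}$; the remaining range $H^{1/2}\ll O\ll H$ requires the real work below.

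\emph{Main obstacle.} The hard part is case (ii) with $H^{1/2}\ll O\ll H$, i.e.\ high--high$\,\to\,$low interactions with nearly parallel, nearly antipodal frequencies. Here transversality fails: $\varphi_{m,k-m}$ has very small gradient, its level sets are pieces of unbounded quadrics cut down to the annuli, and the crude $\sup_{k,\tau}\cN$ overshoots the sharp bound by a power of $H/O$. Recovering the correct power $H^{1/2}L^{1/2}$ will require combining the dyadic-in-output-modulation bookkeeping with a sharp lattice-point count on the (possibly degenerate) intersection of two thickened spheres on $\bT^3$ --- geometrically a circle, hence a thickened circle of radius $\sim H$ --- using that on $\bT^3$ the regions in play are unions of boundedly overlapping unit cubes up to acceptable boundary errors, so that counts are comparable to the corresponding volumes. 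The bilinear $L^2$ estimates for the wave equation in \cite{FK,KSV} provide a template for organising this count; the only genuinely new point is the lattice-point version of Selberg's Euclidean volume computation.
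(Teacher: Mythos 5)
The paper itself gives no proof of Proposition \ref{prop:refined_l2}; it points to \cite[Appendix A]{Schottdorf}, \cite{Selberg08}, and \cite{FK,KSV}, with the one-line indication that the Euclidean volume estimates should be replaced by lattice counting. Your proposal follows exactly that suggested route, and the scaffolding is correct: the Plancherel reduction to the count $\cN(k,\tau)$, the observation that $M\ll N$ forces $O\sim N$ so $L=M$, the slab-plus-degenerate-tube count giving $\cN\lesssim M^2$ in that regime, and the two endpoint subcases of $M\sim N$ ($O\sim H$ via $L^4\times L^4$ Strichartz, $O\lesssim H^{1/2}$ via Bernstein with $\|P_O f\|_{L^2_x}\lesssim O^{3/2}\|f\|_{L^1_x}$) are all handled cleanly and match the claimed bound.

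However, there is a genuine gap, and you flag it yourself: in the regime $M\sim N=H$ with $H^{1/2}\ll O\ll H$, the crude Schur/Cauchy--Schwarz bound $\sup_{k,\tau}\cN(k,\tau)^{1/2}$ cannot work, since the level sets of $\langle m\rangle\pm\langle k-m\rangle$ intersected with $\{|m|\sim H\}\cap\{|k-m|\sim H\}$ are unit-thickened nearly spherical quadrics with $\cN\lesssim H^2$, which only reproduces the trivial $H\,\|\phi_M\|_{L^2}\|\psi_N\|_{L^2}$ and loses $(H/O)^{1/2}$ against the claimed $(HL)^{1/2}$. Your proposed remedy (decompose the output dyadically in Klein--Gordon modulation $d$, count on each resonance shell, sum in $d$, in parallel with Selberg's coarea-style integration) is the right idea, but the proposal stops at announcing it: the sentence ``the only genuinely new point is the lattice-point version of Selberg's Euclidean volume computation'' is precisely the unproved heart of the proposition. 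Two concrete points that remain unjustified: (1) the assertion that lattice counts are comparable to volumes is not automatic for the thin, nearly degenerate sets arising here; one must verify that the relevant regions are unit neighbourhoods of surfaces with curvature bounded uniformly in $k,\tau$, so that the count is controlled by volume plus surface area; and (2) the interplay between the modulation decomposition and the lattice count has to be organised so that summing the per-shell counts over $d$ actually recovers the factor $O/H$ --- this is the content of Selberg's computation and cannot be waved past as ``a template.'' As written, the hardest regime of the estimate is identified but not proved.
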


By the transfer principle and an orthogonality argument and Corollary \ref{cor:strichartz}, we obtain the following version of the above estimates.

\begin{prop}[Proposition 7, \cite{Schottdorf}]
\label{prop:multilinear}
Fix $T > 0$ and let $L$, respectively $H$, denote the highest and lowest frequencies of $M, N, O$. Let $u_M \in U_{\pm_1}^2$, $u_N \in U^2_{\pm_2}$. Then
\begin{align}
\|P_O(u_M v_N) \|_{L_{t,x}^2} \lesssim C_T
\begin{cases}
L \|u_M \|_{U_{\pm_1}^2} \|v_N \|_{U_{\pm_2}^2}  &if \,\,\, M \ll N \\
H^{\frac{1}{2}} L^{\frac{1}{2}} \|u_M \|_{U_{\pm_1}^4} \|v_N \|_{U_{\pm_2}^4}  &if \,\,\, M \sim N .
\end{cases}
\end{align}
\end{prop}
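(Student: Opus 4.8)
The plan is to deduce Proposition~\ref{prop:multilinear} from the free-solution bilinear estimates of Proposition~\ref{prop:refined_l2} by the transfer principle (Proposition~\ref{prop:transfer}), combined with the atomic structure of the $U^p$ spaces and an almost-orthogonality argument. First I would record that $P_O$ is a Fourier multiplier bounded on $L^2_{t,x}$ uniformly in $O$, so it is harmless for the size of the estimate; it must nonetheless be carried along, since it is precisely the output-frequency cutoff which forces $P_O(u_M v_N)=0$ unless the output frequency is comparable to $\max(M,N)$, and hence is responsible for the gain $L=\min(O,M,N)$ (and not merely $M$) in the separated regime. Writing $u_M=e^{\pm_1 it\langle\nabla\rangle}g_M$ and $v_N=e^{\pm_2 it\langle\nabla\rangle}h_N$, with $g_M,h_N$ in the appropriate $U^p L^2(\bR\times\bT^3)$ and frequency-localized at $M$ and $N$ ($p=2$ in the separated case, $p=4$ in the resonant case), bilinearity and the triangle inequality reduce matters to bounding $\|P_O(e^{\pm_1 it\langle\nabla\rangle}a\cdot e^{\pm_2 it\langle\nabla\rangle}b)\|_{L^2_{t,x}([0,T]\times\bT^3)}$ for single $U^p$ atoms $a=\sum_k\mathbbm{1}_{I_k}\varphi_{k-1}$, $b=\sum_l\mathbbm{1}_{J_l}\chi_{l-1}$, and then summing the resulting bounds against the $\ell^1$ atomic coefficients and passing to the infimum over atomic decompositions.

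In the separated case $M\ll N$, I would pass to a common refinement $\{K_m\}$ of the partitions $\{I_k\}$ and $\{J_l\}$. On each $K_m$ the product $e^{\pm_1 it\langle\nabla\rangle}a\cdot e^{\pm_2 it\langle\nabla\rangle}b$ equals $e^{\pm_1 it\langle\nabla\rangle}\varphi_{k(m)}\cdot e^{\pm_2 it\langle\nabla\rangle}\chi_{l(m)}$, a genuine product of free evolutions, so the first case of Proposition~\ref{prop:refined_l2} applies on the interval $K_m\subset[0,T]$ with constant at most $C_T$, by the uniformity of that estimate over subintervals of $[0,T]$. Squaring, summing over $m$, and using that the map $m\mapsto(k(m),l(m))$ is injective, the double sum factors as $\bigl(\sum_k\|\varphi_{k-1}\|_{L^2}^2\bigr)\bigl(\sum_l\|\chi_{l-1}\|_{L^2}^2\bigr)=1$, which is exactly the $U^2$-atom normalization. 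This gives $\|P_O(e^{\pm_1 it\langle\nabla\rangle}a\cdot e^{\pm_2 it\langle\nabla\rangle}b)\|_{L^2_{t,x}}\lesssim C_T L$ for atoms, hence the first bound with the $U^2_\pm$ norms after summing over atoms.

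The resonant case $M\sim N$ I would treat by the same atomic/common-refinement scheme, now invoking the second case of Proposition~\ref{prop:refined_l2}; equivalently one may split according to the output frequency, noting that when $O$ is comparable to $N$ the estimate reduces to H\"older's inequality followed by two $L^4_{t,x}$ Strichartz bounds, which by Corollary~\ref{cor:strichartz}(i) are naturally expressed through the $U^4_\pm$ norm, while for $O\ll N$ the improvement over H\"older comes from the output cutoff together with the transversality encoded in Proposition~\ref{prop:refined_l2}. In either case one recovers the factor $H^{1/2}L^{1/2}$ against the $U^4_\pm$ norms.

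The step I expect to be the main obstacle is the resonant regime $M\sim N$: there the underlying free-solution bilinear bound is not elementary but rests on a curvature/transversality input (the geometric estimates in the spirit of \cite{Selberg08}, adapted to the torus by lattice counting), and one must verify that this orthogonality survives the transfer --- which it does, because the atomic decomposition only refines in time and respects the spatial frequency supports --- and that the constant $C_T$ stays uniform over the arbitrarily fine partitions produced by the atoms, which is inherited from the uniformity already present in Proposition~\ref{prop:refined_l2} and Proposition~\ref{prop:strichartz2}. A minor but worth-stating point is to check the output-frequency support constraint carefully, so that the separated-case gain is genuinely $L=\min(O,M,N)$ rather than just $M$.
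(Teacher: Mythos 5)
The separated case $M\ll N$ is handled correctly: the atomic/common-refinement argument is precisely the proof of the bilinear $U^2$ transfer principle, and the injectivity of $m\mapsto(k(m),l(m))$ combines with the $\ell^2$ atom normalization $\sum_k\|\varphi_k\|_{L^2}^2=1$ and the $\ell^2$-in-time additivity of $\|\cdot\|_{L^2_{t,x}}^2$ to close the sum. This part is sound.

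The resonant case $M\sim N$ contains a genuine gap. You assert that ``the same atomic/common-refinement scheme'' applies, now with $U^4$ atoms, but this cannot close: $U^4$ atoms are normalized in $\ell^4$, $\sum_k\|\varphi_k\|_{L^2}^4=1$, while the target norm $\|\cdot\|_{L^2_{t,x}}$ sums the per-interval contributions in $\ell^2$. Concretely, squaring and summing produces
\[
\sum_m \|\varphi_{k(m)}\|_{L^2}^2\,\|\chi_{l(m)}\|_{L^2}^2,
\]
and the best one can say from injectivity is that this is $\le\bigl(\sum_k\|\varphi_k\|^2\bigr)\bigl(\sum_l\|\chi_l\|^2\bigr)$, which is \emph{not} controlled by the $\ell^4$ normalization. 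Taking $a$ a $U^4$ atom with $K$ pieces each of $L^2$-norm $K^{-1/4}$ and $b$ a single-piece atom gives $\sum_m\|\varphi_{k(m)}\|^2\|\chi_{l(m)}\|^2 = K^{1/2}$, unbounded. The reason this mismatch does not arise in the separated case is that there the matching exponent $q=2$ appears on both sides; the general principle is that the atomic transfer to $U^q$ requires the output norm to be $L^q_t$-based, not $L^2_t$-based. When you later say the difficulty is ``that this orthogonality survives the transfer'' and attribute it to spatial frequency supports, you are looking at the wrong place: the obstruction is the exponent mismatch between the $\ell^4$ atom normalization and the $\ell^2$-in-time additivity, not the spatial supports.

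Your alternative sketch (``equivalently one may split according to the output frequency...'') identifies the right ingredients but does not supply the missing step. When $O\sim N$, H\"older plus two applications of Corollary~\ref{cor:strichartz}(i) indeed gives $H^{1/2}L^{1/2}\|u_M\|_{U^4}\|v_N\|_{U^4}$. But when $O\ll M\sim N$, which is the whole point of the refinement, you write only that ``the improvement over H\"older comes from the output cutoff together with the transversality encoded in Proposition~\ref{prop:refined_l2}''; this does not constitute an argument. The natural route is to tile $u_M$ and $v_N$ by Fourier cubes of side $O$, observe that the output cutoff pairs each $C_1$ with $O(1)$ choices of $C_2\approx -C_1$, and apply the sharpened $L^4$ Strichartz for small cube support (Proposition~\ref{prop:l4_strichartz}) to each factor. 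The delicate point, which must be addressed and which you do not mention, is how to resum over the $\sim(N/O)^3$ diagonal cube pairs against the $U^4$ norms: the diagonal contributions all have output Fourier support in the \emph{same} ball of radius $O$, so no $\ell^2$-in-$\eta$ orthogonality is available, and $U^4$ does not obey the square-function orthogonality $\bigl(\sum_C\|P_Cu\|_{U^4}^2\bigr)^{1/2}\lesssim\|u\|_{U^4}$ of $U^2$ or $V^2$ (Corollary~\ref{cor:orthog} is stated for $V^2$, not $U^4$). Resolving this summation is exactly the content of the resonant case, and it is absent from your proposal.
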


\begin{rmk}
One can convert these estimates to bounds in $V_{\pm}^2$ with a logarithmic loss in the first estimate, and no loss in the second estimate, see Proposition \ref{prop:log_loss}.
\end{rmk}

Finally, in order to prove the multilinear estimates required for the stability theory, we will make use of the following refined Strichartz estimate which is proved using Proposition \ref{prop:refined_l2}.

\begin{prop}[Proposition 10, \cite{Schottdorf}]
\label{prop:l4_strichartz}
Let $n \geq 3$ and let $M \lesssim N$ and let $u_{M,N}$ have Fourier support in a ball of radius $\sim M$ centered at frequency $N$. Then
\[
\|u_{M,N} \|_{L_{t,x}^4([0,T) \times \bT^3)} \lesssim C_T \, N^{1/4} M^{\frac{n-2}{4}} \|u_{M,N} \|_{U_{\pm}^4}.
\]
In particular if $u_N = P_N u_N$ then the result holds with $M= N$.
\end{prop}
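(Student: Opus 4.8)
The plan is to reduce the $L^4_{t,x}$ bound to the bilinear $L^2_{t,x}$ estimate of Proposition~\ref{prop:refined_l2} via the identity $\|v\|_{L^4_{t,x}}^2 = \| |v|^2 \|_{L^2_{t,x}}$, and then pass from free evolutions to the space $U_\pm^4$ by the transfer principle, Proposition~\ref{prop:transfer}. Since the torus here is three-dimensional we have $n = 3$ and the target exponent is $N^{1/4}M^{1/4}$; for general $n$ the argument is identical once Proposition~\ref{prop:refined_l2} is replaced by its $n$-dimensional analogue, so I restrict attention to $n = 3$.

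First I would establish the estimate for a free solution. Let $\phi \in L^2(\bT^3)$ have Fourier support in the ball $B$ of radius $\sim M$ about some $\xi_0$ with $|\xi_0| \sim N$, and set $v = e^{\pm i t \langle \nabla \rangle}\phi$. Since $\overline{v} = e^{\mp i t \langle \nabla \rangle}\overline{\phi}$, the product $|v|^2 = v\,\overline{v}$ is exactly of the form handled by Proposition~\ref{prop:refined_l2} (the two free evolutions may carry opposite signs), and its spatial Fourier transform is supported in $B - B$, a ball of radius $\lesssim M$ about the origin. Writing $|v|^2 = \sum_{O \lesssim M} P_O(|v|^2)$ with the sum over dyadic $O$ and the summands having almost disjoint Fourier supports, we obtain
\[
\|v\|_{L^4_{t,x}([0,T)\times\bT^3)}^4 = \bigl\| |v|^2 \bigr\|_{L^2_{t,x}}^2 \lesssim \sum_{O \lesssim M} \bigl\| P_O(|v|^2) \bigr\|_{L^2_{t,x}}^2 .
\]
Both $v$ and $\overline{v}$ are spatially localized at frequency $\sim N$ while the output frequency obeys $O \lesssim M \le N$, so Proposition~\ref{prop:refined_l2} applies in its $M \sim N$ regime with $H = N$, $L = O$, yielding $\|P_O(|v|^2)\|_{L^2_{t,x}} \lesssim C_T N^{1/2} O^{1/2}\|\phi\|_{L^2}^2$. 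The resulting sum is a geometric series in the dyadic parameter $O \lesssim M$, dominated by $O \sim M$, so
\[
\|v\|_{L^4_{t,x}([0,T)\times\bT^3)}^4 \lesssim C_T^2 \sum_{O \lesssim M} N\,O\,\|\phi\|_{L^2}^4 \lesssim C_T^2\, N M\,\|\phi\|_{L^2}^4 ,
\]
i.e. $\|v\|_{L^4_{t,x}} \lesssim C_T\, N^{1/4} M^{1/4}\|\phi\|_{L^2}$. This is the only point where the small-ball hypothesis enters: for a full dyadic annulus the output sum would run up to $O \sim N$ and recover merely the standard bound $\|v\|_{L^4_{t,x}} \lesssim C_T N^{1/2}\|\phi\|_{L^2}$ of Proposition~\ref{prop:Strichartz}.

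Next I would upgrade to $U_\pm^4$. Conjugating by the free flow and expanding an arbitrary $u_{M,N} \in U_\pm^4$ with Fourier support in $B$ as a superposition $\sum_j \lambda_j a_j$ of $U^4$ atoms, each atom restricts, on the subintervals of its defining partition, to free evolutions of functions supported in $B$; the atom normalization $\sum_k \|\varphi_k\|_{L^2}^4 = 1$ together with the free-solution bound just proved gives $\|a_j\|_{L^4_{t,x}([0,T)\times\bT^3)} \lesssim C_T N^{1/4} M^{1/4}$ uniformly in $j$. Summing in $j$ and taking the infimum over atomic decompositions yields $\|u_{M,N}\|_{L^4_{t,x}} \lesssim C_T N^{1/4} M^{1/4}\|u_{M,N}\|_{U_\pm^4}$; this is precisely the content of the transfer principle (Proposition~\ref{prop:transfer}) applied to the free-evolution bound, so I would simply invoke it. The ``in particular'' claim is the special case $B = \{|\xi| \sim N\}$, $M = N$, which reproduces Corollary~\ref{cor:strichartz}(i).

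The only real obstacle is the bookkeeping in the bilinear step: one must verify that $|v|^2$ genuinely lives at output frequencies $O \lesssim M$ (the sole use of the small-ball hypothesis, and the source of the gain $(M/N)^{1/4}$ over the standard Strichartz bound) and that Proposition~\ref{prop:refined_l2} is invoked in the correct regime with the correct identification of $H$ and $L$. The almost-orthogonal decomposition in $O$, the geometric summation, and the transfer to $U_\pm^4$ are all routine.
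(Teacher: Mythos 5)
Your proposal is correct, and since the paper merely quotes this from Schottdorf without reproducing the argument, what you have written is in substance the cited proof. The chain $\|v\|_{L^4_{t,x}}^2 = \||v|^2\|_{L^2_{t,x}}$, the observation that $v\,\overline{v}$ has spatial Fourier support in $B - B$ (a ball of radius $\lesssim M$ about the origin — this is the only place the small-ball hypothesis and the conjugate/opposite-sign structure enter), the almost-orthogonal split $\sum_{O\lesssim M}P_O(|v|^2)$, the invocation of Proposition~\ref{prop:refined_l2} in its $M\sim N$ branch with $H=N$, $L=O$, and the dyadic summation dominated by $O\sim M$, is exactly the standard route and gives $\|e^{\pm it\langle\nabla\rangle}\phi\|_{L^4_{t,x}}\lesssim C_T N^{1/4}M^{1/4}\|\phi\|_{L^2}$; the subsequent passage to $U_\pm^4$ via Proposition~\ref{prop:transfer} (cleanest with the linear operator $T_0 = P_B$, which sidesteps the issue of whether arbitrary atoms of $u_{M,N}$ inherit its Fourier support) is then immediate. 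The only cosmetic slips are that the constant emerges as $C_T^{1/2}$ rather than $C_T$ after taking fourth roots, and that for $u^2$ (rather than $u\overline{u}$) the output frequency would be $\sim N$ and the argument would collapse to the unrefined bound — which you implicitly avoid but is worth keeping in mind; neither affects the result.
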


\subsection{Probabilistic preliminaries}
\label{sec:prob_prelim}
We will record some of the basic probabilistic results about the randomization procedure. Most of these estimates are consequences of the classical estimates of Paley-Zygmund for random Fourier series on the torus. These estimates were used heavily in the works of Burq and Tzvetkov, see especially \cite{BT5} for proofs. We begin with the standard large deviation estimate.

\begin{prop}[Large deviation estimate; Lemma 3.1 in \cite{BT1}] \label{prop:large_deviation_estimate}
 Let $\{h_n\}_{n=1}^{\infty}$ be a sequence of complex-valued independent random variables with associated distributions $\{\mu_n\}_{n=1}^{\infty}$ on a probability space $(\Omega, {\cA}, \bP)$. Assume that the distributions satisfy the property that there exists $c > 0$ such that
 \begin{equation*}
  \bigl| \int_{-\infty}^{+\infty} e^{\gamma x} d\mu_n(x) \bigr| \leq e^{c \gamma^2} \text{ for  all } \gamma \in \bR \text{ and for all } n \in \mathbb{N}.
 \end{equation*}
 Then there exists $c > 0$ such that for every $\lambda > 0$ and every sequence $\{c_n\}_{n=1}^{\infty} \in \ell^2(\bN)$ of complex numbers, 
 \begin{equation*}
  \bP \Bigl( \omega : |\sum_{n=1}^{\infty} c_n h_n(w)| > \lambda \Bigr) \leq 2 e^{-\frac{c \lambda^2}{\sum_n |c_n|^2}}.
 \end{equation*}
 As a consequence there exists $C > 0$ such that for every $p \geq 2$ and every $\{c_n\}_{n=1}^{\infty} \in \ell^2(\bN)$,
 \begin{equation*}
  \bigl\| \sum_{n=1}^{\infty} c_n h_n(\omega) \bigr\|_{L^p(\Omega)} \leq C \sqrt{p} \bigl( \sum_{n=1}^{\infty} |c_n|^2 \bigr)^{1/2}.
 \end{equation*}
\end{prop}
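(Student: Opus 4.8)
The final statement to prove is Proposition~\ref{prop:large_deviation_estimate} (the large deviation estimate, Lemma~3.1 in \cite{BT1}).

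\textbf{Plan of proof.} The proof proceeds in two stages: first establish the exponential tail bound, then deduce the $L^p(\Omega)$ bound by integrating the tail. For the tail bound, I would use the exponential Chebyshev (Markov) inequality, which is the standard Bernstein-type argument. Fix $\lambda > 0$ and a sequence $\{c_n\} \in \ell^2(\bN)$. Without loss of generality, reduce to the case of real-valued random variables by splitting into real and imaginary parts (each satisfying the same moment hypothesis, up to adjusting $c$), and by homogeneity we may normalize $\sum_n |c_n|^2 = 1$. Set $S_N = \sum_{n=1}^N c_n h_n$. For any $t > 0$, by independence and the hypothesis $|\int e^{\gamma x}\,d\mu_n(x)| \le e^{c\gamma^2}$,
\[
\bE\bigl[e^{t S_N}\bigr] = \prod_{n=1}^N \bE\bigl[e^{t c_n h_n}\bigr] \le \prod_{n=1}^N e^{c t^2 c_n^2} = e^{c t^2 \sum_{n=1}^N c_n^2} \le e^{c t^2}.
\]
Then $\bP(S_N > \lambda) \le e^{-t\lambda}\,\bE[e^{t S_N}] \le e^{c t^2 - t \lambda}$, and optimizing over $t$ by choosing $t = \lambda/(2c)$ gives $\bP(S_N > \lambda) \le e^{-\lambda^2/(4c)}$. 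Applying the same bound to $-S_N$ and adding yields $\bP(|S_N| > \lambda) \le 2 e^{-\lambda^2/(4c)}$; since $\sum c_n^2 = 1$ was the normalization, undoing it gives the claimed $\bP(|S_N| > \lambda) \le 2 e^{-c'\lambda^2/\sum_n |c_n|^2}$ for a suitable constant $c' > 0$. Finally pass to the limit $N \to \infty$: $S_N \to \sum_n c_n h_n$ in $L^2(\Omega)$ hence (along a subsequence) almost surely, and Fatou's lemma preserves the tail bound.

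\textbf{Deducing the $L^p$ bound.} Given the tail estimate, write $X = \sum_n c_n h_n$ and $A^2 = \sum_n |c_n|^2$. Using the layer-cake formula,
\[
\bE\bigl[|X|^p\bigr] = \int_0^\infty p\,\lambda^{p-1}\,\bP(|X| > \lambda)\,d\lambda \le 2p \int_0^\infty \lambda^{p-1} e^{-c'\lambda^2/A^2}\,d\lambda.
\]
Substituting $u = c'\lambda^2/A^2$ turns the integral into a Gamma function: the right-hand side equals $2p \cdot (A^2/c')^{p/2} \cdot \tfrac12 \Gamma(p/2) = p\,(A^2/c')^{p/2}\,\Gamma(p/2)$. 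Using the crude bound $\Gamma(p/2) \le (p/2)^{p/2}$ (valid for $p \ge 2$) and $p^{1/p} \le 2$, one gets $\bE[|X|^p]^{1/p} \le C\sqrt{p}\,A$ for an absolute constant $C$, which is exactly the stated consequence $\|\sum_n c_n h_n\|_{L^p(\Omega)} \le C\sqrt{p}\,(\sum_n |c_n|^2)^{1/2}$.

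\textbf{Main obstacle.} There is no serious obstacle here — this is a classical result whose proof is entirely routine. The only points requiring minor care are: (i) handling complex-valued $h_n$ cleanly, by separating real and imaginary parts and noting the moment hypothesis on $\mu_n$ transfers (with a harmless change in the constant $c$); and (ii) justifying the passage from finite partial sums to the infinite series, for which $\ell^2$-convergence of $\sum c_n h_n$ in $L^2(\Omega)$ together with Fatou's lemma suffices. The Gamma-function estimate in the last step is the standard way to convert Gaussian tails into $\sqrt{p}$ growth of moments; one simply needs Stirling-type bounds on $\Gamma(p/2)$.
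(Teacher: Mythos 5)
Your proof is correct and reproduces the standard argument: exponential Chebyshev with the moment hypothesis, optimizing the parameter to get the Gaussian tail, then integrating the tail with a Gamma-function bound to extract the $\sqrt{p}$ growth. The paper does not prove this statement itself — it cites Lemma~3.1 of \cite{BT1} — and the proof given there is essentially the same as yours, up to the minor point that \cite{BT1} states it for real-valued random variables while the version here is complex-valued, which you handle correctly by splitting into real and imaginary parts.
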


\begin{rmk}
Gaussian iid or Bernoulli random variables satisfy the exponential moment assumption in the statement of Proposition \ref{prop:large_deviation_estimate}.
\end{rmk}

\medskip
This large deviation estimate is the key component in the proof of the following corollary, which states that the free evolution of randomized initial data satisfies almost surely better integrability properties. There is the minor modification in the following that we are dealing with complex random variables $\{h_{k}\}$ which satisfy a symmetry condition. Given that the functions we are randomizing are real-valued, and thus have Fourier coefficients which satisfy an analogous symmetry condition, the arguments for the following results go through unchanged. Recall in the sequel that $S(t)$ which we defined in \eqref{equ:free} denotes the free evolution for the Klein-Gordon equation.

\begin{cor}[Corollary A.5, \cite{BT4}]
\label{cor:averaging_effects}
Fix $\mu \in \cM^s$ and suppose $\mu$ is induced via the map \eqref{equ:bighsrandomization_chap5} for $(f_0, f_1) \in \cH^{s}$. Let $2 \leq p_1 < \infty$, $2 \leq p_2 < \infty$ and $\delta > 1 + \frac{1}{p_1}$ and $0 < \sigma \leq s$ Then there exist constants $C, c > 0$ such that for every $\lambda > 0$,
\begin{align}
\label{equ:cor_averaging}
\mu\Bigl( \{ (u_0, u_1) \in \cH^{s} \,: \, \|\langle t \rangle^{-\delta} S(t)(u_0, u_1)\|_{L_t^{p_1}L_x^{p_2}(\bR \times \bT^3)} > \lambda \} \Bigr) \leq C e^{ - c \lambda^2 / \| (f_0, f_1) \|_{\cH^{\sigma}}^2}.
\end{align}
\end{cor}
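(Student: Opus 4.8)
The plan is the standard two-step strategy for probabilistic integrability gains. First I would prove the moment bound
\[
\bigl\| \,\|\langle t\rangle^{-\delta} S(t)(u_0^\omega, u_1^\omega)\|_{L_t^{p_1}L_x^{p_2}(\bR\times\bT^3)}\, \bigr\|_{L^p(\Omega)} \,\lesssim\, \sqrt{p}\,\|(f_0,f_1)\|_{\cH^\sigma}, \qquad p \geq \max(p_1,p_2,2),
\]
with implicit constant independent of $p$, and then convert it into the exponential tail \eqref{equ:cor_averaging} by Chebyshev's inequality optimized in $p$.

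The core of the moment bound is a pointwise-in-$(t,x)$ application of the large deviation estimate. Fixing $p\geq\max(p_1,p_2,2)$ and expanding via \eqref{equ:free},
\[
S(t)(u_0^\omega,u_1^\omega)(x) = \sum_{k\in\bZ^3}\Bigl(\cos(t\langle k\rangle)\,\widehat{f_0}(k)\,h_k(\omega) + \tfrac{\sin(t\langle k\rangle)}{\langle k\rangle}\,\widehat{f_1}(k)\,l_k(\omega)\Bigr)e^{ik\cdot x},
\]
so that for each fixed $(t,x)$ this is a series in the independent zero-mean Gaussians $\{h_k,l_k\}$ --- more precisely, in their real and imaginary parts once the symmetry $h_{-k}=\overline{h_k}$ is used to rewrite conjugate pairs --- with square-summable coefficients. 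Hence Proposition \ref{prop:large_deviation_estimate} yields, \emph{uniformly in} $(t,x)$,
\[
\|S(t)(u_0^\omega,u_1^\omega)(x)\|_{L^p(\Omega)} \lesssim \sqrt{p}\,\Bigl(\sum_k |\widehat{f_0}(k)|^2 + \langle k\rangle^{-2}|\widehat{f_1}(k)|^2\Bigr)^{1/2} \leq \sqrt{p}\,\|(f_0,f_1)\|_{\cH^\sigma},
\]
where the last inequality uses $\sigma\geq 0$. The decisive feature is that this step uses only Plancherel and the elementary bounds $|\cos|\leq 1$, $\langle k\rangle^{-1}|\sin|\leq 1$ --- no dispersive or Sobolev input at all --- which is exactly why the conclusion survives at the critical regularity and holds for every finite $p_2$ with merely $L^2$-scale data.

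To assemble the moment bound I would push the $L^p(\Omega)$-norm inside: since $p\geq p_1$ and $p\geq p_2$, two applications of Minkowski's integral inequality give
\[
\bigl\| \|\langle t\rangle^{-\delta} S(t)(u_0^\omega,u_1^\omega)\|_{L_t^{p_1}L_x^{p_2}} \bigr\|_{L^p(\Omega)} \leq \Bigl\| \langle t\rangle^{-\delta}\, \bigl\| \|S(t)(u_0^\omega,u_1^\omega)(x)\|_{L^p(\Omega)} \bigr\|_{L_x^{p_2}(\bT^3)}\Bigr\|_{L_t^{p_1}(\bR)},
\]
and inserting the uniform pointwise bound leaves $\sqrt{p}\,\|(f_0,f_1)\|_{\cH^\sigma}\,|\bT^3|^{1/p_2}\,\|\langle t\rangle^{-\delta}\|_{L_t^{p_1}(\bR)}$, which is $\lesssim\sqrt{p}\,\|(f_0,f_1)\|_{\cH^\sigma}$ because $\bT^3$ has finite measure and $\langle t\rangle^{-\delta}\in L_t^{p_1}(\bR)$; the latter is where the hypothesis $\delta>1+\tfrac{1}{p_1}$ is invoked (in fact only $\delta p_1>1$ is truly needed for the free flow, so there is room to spare, and one may alternatively sum over unit-length time windows as in \cite{BT4}). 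This proves the $p$-independent moment bound.

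Finally, writing $X:=\|\langle t\rangle^{-\delta} S(t)(u_0^\omega,u_1^\omega)\|_{L_t^{p_1}L_x^{p_2}}$ and $A:=\|(f_0,f_1)\|_{\cH^\sigma}$, the moment bound reads $\|X\|_{L^p(\Omega)}\leq C_0\sqrt{p}\,A$ for all $p\geq\max(p_1,p_2,2)$; choosing $p\sim\lambda^2/A^2$ in $\mu(X>\lambda)\leq\lambda^{-p}\|X\|_{L^p(\Omega)}^p$ gives the Gaussian decay for $\lambda$ large, and the bounded range of $\lambda$ is absorbed by enlarging $C$ and shrinking $c$ in \eqref{equ:cor_averaging}. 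I do not anticipate a genuine obstacle here: the argument is completely insensitive to the critical/subcritical distinction precisely because it requires no dispersive estimate, and the only points needing care are the legitimacy of the norm interchanges --- handled by working with all large $p$ rather than a fixed exponent --- and the routine bookkeeping in passing from uniform $L^p(\Omega)$ control to the exponential tail.
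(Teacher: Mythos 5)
Your proposal is correct and follows essentially the same route as the cited proof in \cite{BT4}: apply the large deviation estimate pointwise in $(t,x)$ (using only $|\cos|\le 1$ and $\langle k\rangle^{-1}|\sin|\le 1$ so that the $\ell^2$ coefficient sum is controlled by $\|(f_0,f_1)\|_{\cH^0}\le\|(f_0,f_1)\|_{\cH^\sigma}$), push $L^p(\Omega)$ inside by Minkowski using $p\ge\max(p_1,p_2)$, integrate the weight $\langle t\rangle^{-\delta}\in L^{p_1}_t$, and convert the $p$-uniform moment bound to a Gaussian tail via Chebyshev optimized in $p$, absorbing the small-$\lambda$ regime into the constant $C$. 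The only variation you flag — carrying the weight $\langle t\rangle^{-\delta}$ globally in $L^{p_1}_t$ versus summing over unit-length time windows as in \cite{BT4} — is cosmetic for the free flow on $\bT^3$.
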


\begin{rmk}
We can include the endpoint $p_2 = \infty$ if we restrict to bounding $0 < \sigma < s$ in the statement above, or equivalently to bounding.
\[
\|\langle t \rangle^{-\delta} (1 - \Delta)^{\gamma/2} S(t)(u_0, u_1)\|_{L_t^{p_1}L_x^{p_2}}
\]
for any $0 < \gamma < s$.  By Sobolev embedding, we are also be able to include the endpoint $p_1 = \infty$ in this case. We use this in the proof of Proposition \ref{prop:unif_trunc}, see for instance \cite[Lemma 2.2]{BT5}.
\end{rmk}

Given these large deviation estimates, the following result, which enables us to construct the subset $\Sigma$ is a simple corollary.

\begin{cor}
\label{cor:asbdd}
Let $T > 0$ fixed, $\mu \in \cM^{s}$ and $2 \leq p < \infty$. Then there exists $C \equiv C(T)$ such that for any $0 \leq \gamma < \gamma_1 \leq 1/2$,
\[
\mu\Bigl( \{ (v_0, v_1) \in \cH^{s} \,: \, \| S(t)(u_0, u_1)\|_{L_t^{p_1} W_x^{\gamma, p_2}(\bR \times \bT^3)} > \lambda \} \Bigr) \leq C e^{ - c \lambda^2 / \| (f_0, f_1) \|_{\cH^{\gamma_1}}^2}.
\]
\end{cor}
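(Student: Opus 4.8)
The plan is to deduce Corollary~\ref{cor:asbdd} directly from Corollary~\ref{cor:averaging_effects} together with the boundedness of the smooth projections and a Sobolev-embedding argument at the level of spatial regularity. First I would observe that the statement is really about trading a genuine derivative gain over the free evolution for the extra integrability: fixing $0 \le \gamma < \gamma_1 \le 1/2$, write $\langle \nabla \rangle^{\gamma} S(t)(u_0,u_1) = S(t)\langle \nabla \rangle^{\gamma}(u_0,u_1)$, since $S(t)$ commutes with $\langle \nabla \rangle$ (as is immediate from the formula \eqref{equ:free}). Thus controlling $\|S(t)(u_0,u_1)\|_{L_t^{p_1}W_x^{\gamma,p_2}}$ is the same as controlling $\|\langle \nabla \rangle^{\gamma} S(t)(u_0,u_1)\|_{L_t^{p_1}L_x^{p_2}}$, which is the quantity appearing in the remark after Corollary~\ref{cor:averaging_effects} with the weight $(1-\Delta)^{\gamma/2}$ inserted.

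Next I would reduce the unweighted-in-time $L_t^{p_1}$ norm on $\bR$ to the weighted one. The subtlety is that Corollary~\ref{cor:averaging_effects} only controls $\|\langle t\rangle^{-\delta} S(t)(u_0,u_1)\|_{L_t^{p_1}L_x^{p_2}}$ over all of $\bR$, not the unweighted norm. But Corollary~\ref{cor:asbdd} is stated for $T>0$ fixed with $C \equiv C(T)$, so one only needs the norm over a bounded time interval $[0,T]$ (or $[-T,T]$); on such an interval $\langle t\rangle \sim_T 1$, so the weight is harmless and one gets $\|S(t)(u_0,u_1)\|_{L_t^{p_1}L_x^{p_2}([0,T]\times\bT^3)} \lesssim_T \|\langle t\rangle^{-\delta} S(t)(u_0,u_1)\|_{L_t^{p_1}L_x^{p_2}(\bR\times\bT^3)}$ after choosing $\delta > 1 + \frac{1}{p_1}$ as in the hypothesis of Corollary~\ref{cor:averaging_effects}. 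Applying that corollary with $\sigma = \gamma_1$ (permissible since $0 < \gamma_1 \le 1/2 = s$ for the relevant $\mu \in \cM^{1/2}$, or more generally $\gamma_1 \le s$), and with $(1-\Delta)^{\gamma/2}$ inserted and $\gamma < \gamma_1$ as in the remark, gives
\[
\mu\Bigl( \bigl\{ (u_0,u_1) \,:\, \|\langle\nabla\rangle^{\gamma} S(t)(u_0,u_1)\|_{L_t^{p_1}L_x^{p_2}([0,T]\times\bT^3)} > \lambda \bigr\}\Bigr) \le C e^{-c\lambda^2 / \|(f_0,f_1)\|_{\cH^{\gamma_1}}^2},
\]
which, by the identification of norms in the previous paragraph, is exactly the claimed bound. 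The endpoint cases $p_1 = \infty$ or $p_2 = \infty$ are covered by the same remark, using the strict inequality $\gamma < \gamma_1$ to absorb the Sobolev embedding $W_x^{\gamma_1,q} \hookrightarrow W_x^{\gamma,\infty}$ for suitable finite $q$.

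I do not anticipate a genuine obstacle here — this is a bookkeeping corollary whose entire content is packaging Corollary~\ref{cor:averaging_effects} and its remark with the trivial observations that $S(t)$ commutes with Fourier multipliers and that time weights are inert on compact intervals. The one point requiring a little care is matching the regularity indices: one must verify that the hypothesis $0 \le \gamma < \gamma_1 \le 1/2$ is compatible with the constraint $0 < \sigma \le s$ (and the strict inequality needed for the $L_x^\infty$ endpoint) in Corollary~\ref{cor:averaging_effects}, which is why the statement uses $\gamma_1$ strictly above $\gamma$ and why the exponential bound is phrased in terms of $\|(f_0,f_1)\|_{\cH^{\gamma_1}}$ rather than $\|(f_0,f_1)\|_{\cH^{1/2}}$. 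Everything else is routine.
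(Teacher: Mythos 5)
Your argument is correct and follows exactly the route the paper leaves implicit: Corollary~\ref{cor:asbdd} is stated without proof, immediately after Corollary~\ref{cor:averaging_effects} and its remark, and the intended deduction is precisely the bookkeeping you carry out — commute $\langle\nabla\rangle^\gamma$ past $S(t)$, absorb the time weight $\langle t\rangle^{-\delta}$ into the $T$-dependent constant on a bounded interval, and apply Corollary~\ref{cor:averaging_effects} (via its remark when $p_2$ is large) with the regularity index chosen so the exponential bound is measured in $\cH^{\gamma_1}$. One small point worth tightening: the $T$-dependence actually enters the exponent as $c/\langle T\rangle^{2\delta}$ rather than only the prefactor $C$, since replacing $\lambda$ by $\lambda/\langle T\rangle^\delta$ rescales the Gaussian tail; and when $\gamma_1>s$ the right-hand side is trivially $\geq\mu$ of anything because $\|(f_0,f_1)\|_{\cH^{\gamma_1}}=\infty$, so the restriction $\gamma_1\leq s$ you flag is not actually needed for the stated inequality.
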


Additionally, Corollaries \ref{cor:averaging_effects} and \ref{cor:asbdd} imply that the set of initial data which satisfy good local $L^p$ bounds has full $\mu$ measure.

\begin{cor}
\label{cor:asbdd2}
Fix $\mu \in \cM^{s}$ and let $2 \leq p < \infty$ and $0 < \gamma < s$. Then for a set of full $\mu$ measure,
\begin{align*}
\|(1 - \Delta)^{\gamma /2} S(t)(u_0, u_1)\|^p_{L_x^p(\bT^3)}  \in L^1_{\textup{loc}}(\bR_t),
 \qquad \| S(t)(u_0, u_1)\|_{L_x^\infty (\bT^3)}  \in L^1_{\textup{loc}}(\bR_t).
\end{align*}
Consequently, for $\Sigma$ as defined in \eqref{equ:sigma_def}, we have $\mu(\Sigma) = 1$.
\end{cor}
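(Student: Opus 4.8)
The statement to prove is Corollary~\ref{cor:asbdd2}, which asserts two almost-sure local $L^1$-in-time integrability properties of the free Klein--Gordon evolution of randomized data, and concludes $\mu(\Sigma)=1$.

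\medskip
\noindent\textbf{Plan of proof.} The plan is to deduce each of the two stated integrability properties from the large deviation estimates recorded in Corollaries~\ref{cor:averaging_effects} and~\ref{cor:asbdd}, and then combine them with the definitions \eqref{equ:sigma_def} of $\Theta_1, \Theta_2$ and $\Sigma = \Theta + \cH^1$.

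First I would treat the spatial norm $\|(1-\Delta)^{\gamma/2} S(t)(u_0,u_1)\|_{L_x^p(\bT^3)}^p$. Fix a large $T$ and work on $[-T,T]$; on this bounded interval the weight $\langle t\rangle^{-\delta}$ is comparable to $1$, so Corollary~\ref{cor:averaging_effects} (applied with $p_1 = p_2 = p$, using the endpoint remark to absorb the $\gamma$ derivatives, which is permissible since $0 < \gamma < s$) gives
\[
\mu\Bigl( (u_0,u_1) : \|(1-\Delta)^{\gamma/2} S(t)(u_0,u_1)\|_{L_t^{p}L_x^{p}([-T,T]\times\bT^3)} > \lambda \Bigr) \leq C e^{-c\lambda^2/\|(f_0,f_1)\|_{\cH^{\gamma_1}}^2},
\]
for any $\gamma < \gamma_1 \le s$. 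Sending $\lambda \to \infty$ shows that $\|(1-\Delta)^{\gamma/2}S(t)(u_0,u_1)\|_{L_t^pL_x^p([-T,T]\times\bT^3)}$ is finite $\mu$-a.s., and by Fubini's theorem (or directly, since $L_t^pL_x^p = L_{t,x}^p$) this is exactly the statement that $\|(1-\Delta)^{\gamma/2}S(t)(u_0,u_1)\|_{L_x^p(\bT^3)}^p \in L^1([-T,T])$ a.s. Taking a countable union over $T \in \bN$ gives the $L^1_{\mathrm{loc}}(\bR_t)$ conclusion on a set of full measure. The second property, $\|S(t)(u_0,u_1)\|_{L_x^\infty(\bT^3)} \in L^1_{\mathrm{loc}}(\bR_t)$, is handled identically using the endpoint $p_2 = \infty$ version of Corollary~\ref{cor:averaging_effects} noted in the remark following it (valid for $0 < \sigma < s$, which is available since we only need some $\gamma < s$ to run the $L_x^\infty$ bound through Sobolev embedding), taking e.g. $p_1$ large or using the Sobolev-embedding endpoint; again send $\lambda \to \infty$ and intersect over $T \in \bN$.

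\medskip
\noindent\textbf{From the two properties to $\mu(\Sigma) = 1$.} With $0 < \gamma < \frac12$ fixed as in \eqref{equ:sigma_def}, the first property (applied with $p = 6$, noting $\gamma < s = \frac12$) shows $\mu(\Theta_1) = 1$, and the second shows $\mu(\Theta_2) = 1$, hence $\mu(\Theta) = \mu(\Theta_1 \cap \Theta_2) = 1$. Since $\Sigma = \Theta + \cH^1 \supseteq \Theta$, monotonicity of measure gives $\mu(\Sigma) \geq \mu(\Theta) = 1$, i.e. $\mu(\Sigma) = 1$. (One should note that $\Theta$, and hence $\Sigma$, is a Borel set: the norms appearing are lower semicontinuous in $(u_0,u_1) \in \cH^s$ after integrating in $t$, so the defining conditions are Borel; alternatively one restricts to the full-measure sets produced above, which are automatically measurable.)

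\medskip
\noindent\textbf{Main obstacle.} The substantive content is entirely contained in the cited large deviation corollaries, so there is no real analytic difficulty here; the proof is essentially bookkeeping. The one point requiring care is the invocation of the \emph{endpoint} cases $p_2 = \infty$ (for the $L_x^\infty$ statement) and the absorption of the $\gamma$ derivatives, which forces the strict inequality $0 < \gamma < s$ rather than $\gamma \le s$ --- this is exactly why the definition \eqref{equ:sigma_def} uses $0 < \gamma < \frac12$ strictly, and why the remark after Corollary~\ref{cor:averaging_effects} is phrased as it is. Beyond that, one must be slightly careful that the randomization is of a fixed $(f_0,f_1) \in \cH^s$ and that the bound involves $\|(f_0,f_1)\|_{\cH^{\gamma_1}}$ for some $\gamma_1$ with $\gamma < \gamma_1 \le s$, which is finite; and that "full $\mu$ measure" is preserved under countable intersections over $T \in \bN$, which is immediate.
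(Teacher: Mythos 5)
Your proof is correct and follows the same route the paper takes: the paper itself offers no written proof, merely observing that the corollary follows from Corollaries~\ref{cor:averaging_effects} and~\ref{cor:asbdd}, and your argument fills in exactly the bookkeeping the paper elides (finiteness of the weighted space-time norm on a set of measure $1$ via sending $\lambda\to\infty$ in the large-deviation bound, restriction to bounded time intervals, countable intersection over $T\in\bN$, and the inclusion $\Theta\subset\Sigma$). The only place deserving a slightly more explicit word is the $L_x^\infty$ case, where the relevant remark after Corollary~\ref{cor:averaging_effects} only asserts the endpoint for the weighted/derivative-shifted quantity and one needs to observe that any $p_1\ge 1$ suffices on a compact interval to land in $L^1_{\mathrm{loc}}$; you do flag this, so the argument is complete.
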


\section{Well-posedness theory}
\label{sec:well-posedness}

We record some global bounds on the solution to the cubic nonlinear Klein-Gordon \eqref{equ:cubic_nlkg}. The only new component in this statement is the bounds on the $L^4$ norm of the solution, which follows by a similar argument to the proof of \cite[Proposition 4.1]{BT4}. We include the proofs of the following statement for completeness but we omit the proof for the truncated equation as it follows in precisely the same manner.

\begin{prop}
\label{prop:unif_global2}
Let $0 < s < 1$ and let $\mu \in \cM^s$. Then for any $\varepsilon > 0$, there exist $C, c, \theta > 0$ such that for every $(u_0, u_1) \in \Sigma$, there exists $M= M(u_0, u_1) > 0$ such that the global solution $u$ to the cubic nonlinear Klein-Gordon equation\eqref{equ:cubic_nlkg} satisfies
\begin{align}
u(t) &= S(t)(u_0, u_1) + w(t) \\
\|(w(t), \partial_t w(t)) \|_{\cH^1} &\leq C\bigl(M + |t|\bigr)^{1 + \varepsilon}\label{equ:h1_bds} \\
\|u(t)\|_{L^4(\bT^3)} &\leq C \bigl(M + |t|\bigr)^{\frac{1}{2} + \varepsilon}\label{equ:L4_bds} 
\end{align}
and furthermore $\mu\bigl((u_0, u_1) \in \Sigma \,:\, M > \lambda\bigr) \leq C e^{- c\lambda^\theta}$.
\end{prop}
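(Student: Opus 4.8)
The plan is to take the first two displayed bounds and the tail estimate on $M$ essentially verbatim from Theorem~\ref{thm:bt_global} — it supplies the decomposition $u(t)=S(t)(u_0,u_1)+w(t)$ with $(w(0),\partial_t w(0))=(0,0)$, the growth bound $\|(w(t),\partial_t w(t))\|_{\cH^1}\le C(M+|t|)^{\frac{1-s}{s}+\varepsilon}$ (which is $\le C(M+|t|)^{1+\varepsilon}$ for $s\ge\tfrac12$, the relevant range, in particular at the critical regularity $s=\tfrac12$), and the estimate $\mu(M>\lambda)\le Ce^{-c\lambda^\theta}$ — so the only genuinely new ingredient is the pointwise $L^4_x$ bound \eqref{equ:L4_bds}, which I would obtain by reading off an extra consequence of the energy argument already underlying Theorem~\ref{thm:bt_global}.

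Writing $v(t):=S(t)(u_0,u_1)$, the remainder $w=u-v$ solves $\square w+w=-(v+w)^3$ with zero data, and I would track the Klein--Gordon energy
\[
\cE(w)(t)=\frac12\int_{\bT^3}\bigl(|\nabla w|^2+w^2+(\partial_t w)^2\bigr)\,dx+\frac14\int_{\bT^3}w^4\,dx,
\]
which is finite for each $t$ (since $w\in C(\bR;\cH^1)$ and $H^1(\bT^3)\hookrightarrow L^4(\bT^3)$) and vanishes at $t=0$. Differentiating and using the equation gives
\[
\frac{d}{dt}\cE(w)=-\int_{\bT^3}\partial_t w\,\bigl(3vw^2+3v^2w+v^3\bigr),
\]
and Hölder's inequality, $H^1\hookrightarrow L^6$, and the bounds $\|w\|_{L^4}^2\le 2\cE(w)^{1/2}$, $\|\partial_t w\|_{L^2}\le\sqrt2\,\cE(w)^{1/2}$ yield
\[
\Bigl|\frac{d}{dt}\cE(w)(t)\Bigr|\lesssim\bigl(\|v(t)\|_{L^\infty_x}+\|v(t)\|_{L^6_x}^2\bigr)\,\cE(w)(t)+\|v(t)\|_{L^6_x}^3\,\cE(w)(t)^{1/2},
\]
where $\|v\|_{L^\infty_x}\in L^1_{\mathrm{loc}}(\bR_t)$ because $(u_0,u_1)\in\Theta_2$, and $\|v\|_{L^6_x}^2,\|v\|_{L^6_x}^3\in L^1_{\mathrm{loc}}(\bR_t)$ because $(u_0,u_1)\in\Theta_1$ (the factor $(1-\Delta)^{\gamma/2}$ in the definition of $\Theta_1$ can be dropped since $(1-\Delta)^{-\gamma/2}$ is bounded on $L^6(\bT^3)$).

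Feeding in the $\cH^1$ bound on $(w,\partial_t w)$ already provided by Theorem~\ref{thm:bt_global} and running the same unit-interval local iteration in the adapted spaces of Section~\ref{sec:well-posedness} (equivalently the $X^{s,b}$ scheme of \cite{BT4}), which controls the growth of $\cE(w)$ over each interval $[n,n+1]$ by a sublinear power of its current size times local norms of $v$ and hence yields polynomial growth upon summation, I would obtain — after enlarging $M$ so that it dominates the relevant weighted space-time norms of $v$ coming from $\Theta_1,\Theta_2$, which are almost surely finite with Gaussian tails by Corollaries~\ref{cor:averaging_effects} and~\ref{cor:asbdd} — the bound $\cE(w)(t)\le C(M+|t|)^{2+\varepsilon}$; since $\|w(t)\|_{L^4_x}^4\le 4\cE(w)(t)$, this gives $\|w(t)\|_{L^4_x}\le C(M+|t|)^{\frac12+\varepsilon}$. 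It then remains to control $\|v(t)\|_{L^4_x}$: writing $\Sigma=\Theta+\cH^1$ and $(u_0,u_1)=(\phi_0,\phi_1)+(\psi_0,\psi_1)$ with $(\psi_0,\psi_1)\in\cH^1$, the piece $S(t)(\psi_0,\psi_1)$ is bounded in $L^4_x$ by $\|(\psi_0,\psi_1)\|_{\cH^1}$ via conservation of the linear energy and Sobolev embedding, while $\|S(t)(\phi_0,\phi_1)\|_{L^4_x}$ is controlled — for a.e. $t$, up to a fixed power of $\langle t\rangle$, and with Gaussian tails — by a quantity from Corollary~\ref{cor:averaging_effects} (taken with $p_2=4$) that I would again absorb into $M$. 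Combining the three contributions and checking that the enlarged $M$ still satisfies $\mu(M>\lambda)\le Ce^{-c\lambda^\theta}$ (by combining the tail bounds of Theorem~\ref{thm:bt_global} and Corollaries~\ref{cor:averaging_effects}--\ref{cor:asbdd}) yields \eqref{equ:L4_bds}.

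The main obstacle is this last step. Since $\bT^3$ is compact there is no dispersive decay for the free Klein--Gordon propagator, so for $(u_0,u_1)$ merely in $\Sigma$ the quantity $\|S(t)(u_0,u_1)\|_{L^4_x}$ cannot be bounded pointwise in $t$ by deterministic means; the polynomial-in-time estimate genuinely rests on the probabilistic smoothing of the randomized data and on absorbing the resulting almost-surely-finite weighted norms into $M$, exactly as in \cite[Proposition~4.1]{BT4}. By contrast, the energy identity for $w$ and its integration against the local well-posedness theory are routine once Theorem~\ref{thm:bt_global} is in hand.
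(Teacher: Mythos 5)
Your reading of which parts are new is right: \eqref{equ:h1_bds} and the tail bound on $M$ are inherited from Theorem \ref{thm:bt_global}, and the genuinely new content is \eqref{equ:L4_bds}, which must come from the quartic term of the inhomogeneous energy $\cE$ (so that $\|w\|_{L^4}\lesssim \cE(w)^{1/4}$, rather than the lossy route through $\cH^1\hookrightarrow L^4$, which would only give $(M+|t|)^{1+\varepsilon}$). But the way you propose to establish $\cE(w)(t)\le C(M+|t|)^{2+\varepsilon}$ does not work with your decomposition. Peeling off the \emph{entire} free evolution, your differential inequality reads $\frac{d}{dt}\cE^{1/2}\lesssim(\|v\|_{L^\infty}+\|v\|_{L^6}^2)\,\cE^{1/2}+\|v\|_{L^6}^3$, and Gronwall produces the factor $\exp\bigl(C\int_0^t\|v\|_{L^\infty}+\|v\|_{L^6}^2\bigr)$. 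The memberships $(u_0,u_1)\in\Theta_1\cap\Theta_2$ are purely qualitative (local integrability), and even the quantitative sets of Section \ref{sec:sigma_props} only give $\int_0^T\|v\|_{L^6}^2\lesssim_T\lambda^2$ with at least linear growth in $T$; so this route yields \emph{exponential}, not polynomial, growth in $t$. Your claim that the growth of $\cE$ over $[n,n+1]$ is ``a sublinear power of its current size'' is precisely what fails: the coefficient multiplying $\cE$ in the Gronwall inequality is $O(1)$ per unit interval, not $o(1)$, and nothing in $\Theta_1,\Theta_2$ makes it small.

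The paper's proof avoids this by keeping the Burq--Tzvetkov low/high splitting: it defines the quantitative sets $E_M=U_M\cap G_M\cap H_M\cap K_M\cap R_M$ and evolves only the smooth low-frequency data $\Pi_M(v_0,v_1)$ nonlinearly, so that $u=w_M+S(t)\Pi_{\geq M}(v_0,v_1)$ and the source terms in the energy inequality for $\cE(w_M)$ involve only the \emph{high-frequency} free evolution, whose relevant space-time norms are $O(M^{-s+\varepsilon})$ on $E_M$. That smallness is what turns the Gronwall factor into polynomial growth after choosing $M$ as a function of $t$, exactly as in \cite[Proposition 4.1]{BT4}, giving $\cE(w_M(t))^{1/2}\le CM^{1-s+\varepsilon}$ and hence $\|u\|_{L^4}\le\cE(w_M)^{1/4}+\|S(t)\Pi_{\geq M}(v_0,v_1)\|_{L^4}\le CM^{\frac{1-s+\varepsilon}{2}}$, where the second term is handled by the set $R_M$ rather than by a bound on the full free evolution. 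Your final step has the same structural defect: you need $\|S(t)(u_0,u_1)\|_{L^4_x}\lesssim\langle t\rangle^{1/2+\varepsilon}$ pointwise in $t$ for the \emph{full} free evolution, and Corollary \ref{cor:averaging_effects} with $p_1=\infty$ as stated carries a weight $\langle t\rangle^{-\delta}$ with $\delta>1$, which only yields growth $\langle t\rangle^{1+}$ --- too fast for \eqref{equ:L4_bds}. Both problems disappear once you adopt the frequency-truncated decomposition; with it, your observation about the quartic term of $\cE$ is exactly the paper's one-line conclusion.
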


\begin{proof}
Fix $\mu \in \cM^s$. Following the proofs of Proposition 4.1 in \cite{BT4} and Lemma 2.2 from \cite{BT5}, fix $\varepsilon > 0$, $\rho > \frac{1}{2}$, $\tilde{\rho} > 1/3$, and $\check{\rho} > 0$ and define
\begin{align*}
U_M &:= \bigl\{ (v_0, v_1) \in \Sigma \,: \, \|\Pi_M(v_0, v_1)\|_{\cH^1} \leq N^{1 - s + \varepsilon} \bigr\}\\
G_M &:= \bigl\{ (v_0, v_1) \in \Sigma \,: \, \|\Pi_Mv_0\|_{L^4} \leq N^{ \varepsilon}\bigr\}\\
H_M &:= \bigl\{ (v_0, v_1) \in \Sigma \,: \, \|\langle t \rangle^{-\rho} \, \Pi_{\geq N}(v_0, v_1)\|_{L_t^2L^\infty_x(\bR \times \bT^3)} \leq N^{ \varepsilon - s}\bigr\}\\
K_M &:= \bigl\{ (v_0, v_1) \in \Sigma \,: \, \|\langle t \rangle^{-\tilde{\rho}} \, \Pi_{\geq N}(v_0, v_1)\|_{L_t^3L^6_x(\bR \times \bT^3)} \leq N^{ \varepsilon- s}\bigr\}\\
R_M &:= \bigl\{(v_0, v_1) \in \Sigma \,:\, \|\langle t \rangle^{-\check{\rho}}\, \Pi_{\geq N} S(t)(v_0, v_1)\|_{L_t^\infty L^4_{x}(\bR_t \times \bT^3)} \leq N^{\varepsilon - s} \bigr\}.
\end{align*}
We let
 
\begin{align}
\label{equ:en_def}
E_M = U_M \cap G_M \cap H_M \cap K_M \cap R_M.
\end{align}
The bounds on the measure of $E_M$ follow from Proposition 4.1 in \cite{BT4} and Lemma 2.2 in \cite{BT5}. We consider the inhomogeneous energy functional \eqref{equ:inhomog_energy}
\[
\cE(w(t)) = \frac{1}{2} \int |\nabla w|^2 + \frac{1}{2}\int |w|^2 + \frac{1}{2} \int |w_t|^2 + \frac{1}{4} \int (w)^4.
\]
Fix $(v_0, v_1) \in E_M$ and let $w_M$ denote the solution to
\[
(w_M)_{tt} - \Delta w_M + w_M + (w_M + S(t) \Pi_{\geq M} (v_0, v_1))^3 = 0, \quad (w_M ,\partial_t w_M) \big|_{t= 0} = \Pi_M(v_0, v_1)
\]
then
 
\[
\cE(w_M(t))^{1/2} \leq C M^{1 - s + \varepsilon}.
\]
Since $\cE$ controls the $\cH^1$ norm, we no longer need to project away from constants to obtain \eqref{equ:h1_bds}. To prove \eqref{equ:L4_bds}, note that by the definition of $R_M$, we have
\begin{align*}
\|u_M(t)\|_{L^4(\bT^3)} &\leq \|w_M(t)\|_{L^4(\bT^3)} + \|S(t) \Pi_{ \geq M} (v_0, v_1)\|_{L^4(\bT^3)} \\
& \leq \cE(w_M(t))^{1/4} + CM^{-s + \varepsilon}\\
& \leq CM^{\frac{1- s + \varepsilon}{2}}.
\end{align*}
The conclusion then follows as in Proposition 4.1 in \cite{BT4}.
\end{proof}

We now turn to studying the global well-posedness and symplectic properties of the approximating equation \eqref{equ:cubic_nlkg_trun}. Let $P_N$ be the smooth projection operator defined in \eqref{equ:trunc_op}. For $K$ sufficiently large so that $\Pi_K P_N = P_N$, this equation is equivalent to the uncoupled system
\begin{equation}
\label{equ:hamiltonian_wave_fin_dim}
\left\{\begin{split}
&  \partial_{tt} \Pi_K u_N + (1 - \Delta) \Pi_K u_N + P_N [ (P_N u_N)^3 ]  = 0, \quad (t,x) \in \bR \times \bT^3 \\
   & \Pi_K (u_N, \partial_t  u_N) \big|_{t=0} = \Pi_K ( u_0, u_1) \\
  & (1 - \Pi_K) (u_N) = S(t)(1 - \Pi_K)(u_0,  u_1)
\end{split} \right.
\end{equation}
which we can write as a first order system as in \eqref{equ:cubic_nlw_sys}.
\begin{rmk}
As remarked in the introduction, \eqref{equ:hamiltonian_wave_fin_dim} is a nonlinear flow for low frequencies and a decoupled linear evolution for high frequencies. In particular, the solution $u_N$ is supported on all frequencies. We will nonetheless call this the truncated flow for simplicity even though this defines a flow on the whole space.
\end{rmk}

Global well-posedness for \eqref{equ:hamiltonian_wave_fin_dim} follows from local well-posedness by observing that  the linear evolution is globally defined and the energy functional
\begin{align}
\label{equ:fin_dim_hamil}
\qquad  H_N(\Pi_K (u_N, (u_N)_t ) )= \frac{1}{2} \int_{\bT^3} |\nabla_x  \Pi_K u_N|^2 + ( \Pi_K u_N)^2 + ( \Pi_K (u_N)_t)^2  + \frac{1}{4} \int_{\bT^3} (P_N (u_N))^4 
\end{align}
is well defined and conserved under the flow of \eqref{equ:hamiltonian_wave_fin_dim} for bounded frequency components. Note that the bounds on the solution depend on the energy of the initial data and consequently are not uniform in the truncation parameter. Nonetheless, Burq-Tzvetkov \cite{BT5} proved that if one restricts to initial data $(u_0, u_1) \in \Sigma$, then the nonlinear components of the solutions to the cubic nonlinear wave equation satisfy uniform bounds. As was the case in Theorem \ref{thm:bt_global}, the proof of these uniform bounds follow for the nonlinear Klein-Gordon equation from the arguments in \cite{BT5} with only minor modifications. 

\begin{prop}[Proposition 3.1, \cite{BT5}]
\label{prop:unif_trunc}
Let $0 < s < 1$ and let $\mu \in \cM^s$. Then for any $\varepsilon > 0$, there exist $C, c, \theta > 0$ such that for every $(u_0, u_1) \in \Sigma$, there exists $M = M(u_0, u_1)> 0$ such that the family of global solutions $(u_N)_{N \in \bN}$ to \eqref{equ:hamiltonian_wave_fin_dim} satisfies
\begin{align*}
u_N(t) &= S(t)(u_0, u_1) + w_N(t) \\
\|(w_N(t), \partial_t w_N(t)) \|_{\cH^1} &\leq C\bigl(M + |t|\bigr)^{1 + \varepsilon}\\
\|P_N u_N(t)\|_{L^4(\bT^3)} &\leq C \bigl(M + |t|\bigr)^{\frac{1}{2} + \varepsilon}
\end{align*}
and furthermore $\mu((u_0, u_1) \in \Sigma \,:\, M > \lambda) \leq C e^{- c\lambda^\theta}$.
\end{prop}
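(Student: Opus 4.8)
The plan is to mirror the proof of Proposition \ref{prop:unif_global2} verbatim, tracking only the places where the truncated nonlinearity $P_N[(P_N u_N)^3]$ enters in place of $u^3$. The key observation is that the energy $H_N$ from \eqref{equ:fin_dim_hamil} is conserved along \eqref{equ:hamiltonian_wave_fin_dim}, and that the Burq--Tzvetkov decomposition $u_N = S(t)(u_0,u_1) + w_N$ into a free evolution plus a more regular nonlinear remainder goes through with constants \emph{uniform in} $N$, which is the whole point of \cite[Proposition 3.1]{BT5}. So first I would fix $\mu \in \cM^s$ and $\varepsilon > 0$, and introduce the same exceptional-set decomposition as in the proof of Proposition \ref{prop:unif_global2}: for each dyadic $M$, set $E_M = U_M \cap G_M \cap H_M \cap K_M \cap R_M$ with the sets defined exactly as there (replacing $\Pi_M$ by the relevant sharp projection and keeping the free-evolution conditions unchanged, since the free evolution is the same for both equations). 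The measure bound $\mu(\Sigma \setminus E_M) \leq C e^{-c M^\theta}$, hence $\mu((u_0,u_1) \in \Sigma : M(u_0,u_1) > \lambda) \leq Ce^{-c\lambda^\theta}$ after taking $M(u_0,u_1)$ to be the least dyadic $M$ with $(u_0,u_1) \in E_M$, is quoted directly from \cite[Proposition 4.1]{BT4} and \cite[Lemma 2.2]{BT5}.

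Next, for fixed $(u_0,u_1) \in E_M$, I would let $w_N$ solve the Duhamel equation obtained by subtracting $S(t)(u_0,u_1)$ from $u_N$; concretely $w_N$ satisfies a forced Klein--Gordon equation
\[
(w_N)_{tt} - \Delta w_N + w_N + P_N\bigl[(P_N(w_N + S(t)\Pi_{\geq M}(u_0,u_1) + S(t)\Pi_M(u_0,u_1)))^3\bigr] = 0
\]
with $\cH^1$ data $\Pi_M(u_0,u_1)$ — up to the usual care with where $\Pi_K$ and $P_N$ sit relative to one another, handled by Lemma \ref{lem:proj_identity}. The energy-growth argument then runs as in \cite[Proposition 4.1]{BT4}: differentiate $\cE(w_N(t))$ in time, and the only terms that survive are those where the forcing hits $\partial_t w_N$; one estimates these using the $L^p$ bounds on the free evolution encoded by membership in $H_M$, $K_M$, $R_M$, together with the uniform $L^p$ boundedness of $P_N$ from Lemma \ref{lem:lp_bds} — this is precisely the reason the smooth projector $P_N$ is used rather than the sharp one. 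Grönwall then gives $\cE(w_N(t))^{1/2} \leq C(M + |t|)^{1-s+\varepsilon}$ with $C$ independent of $N$, which is \eqref{equ:h1_bds} for $w_N$ since $\cE$ controls $\|(w_N,\partial_t w_N)\|_{\cH^1}$ (no projection away from constants needed, as the Klein--Gordon energy already controls $\|w_N\|_{L^2}$). For the $L^4$ bound I would then write, exactly as in the proof of Proposition \ref{prop:unif_global2},
\[
\|P_N u_N(t)\|_{L^4} \leq \|P_N w_N(t)\|_{L^4} + \|P_N S(t)\Pi_{\geq M}(u_0,u_1)\|_{L^4} \leq \cE(w_N(t))^{1/4} + C M^{-s+\varepsilon} \leq C M^{\frac{1-s+\varepsilon}{2}},
\]
using again the uniform $L^4$ boundedness of $P_N$ and membership in $R_M$ for the second summand, then replacing $M$ by $M + |t|$.

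The main obstacle is purely bookkeeping rather than conceptual: one must check that inserting $P_N$ around the cubic nonlinearity does not spoil any of the estimates used in \cite[Proposition 4.1]{BT4}, and that all constants can genuinely be taken uniform in $N$. Both points are exactly what \cite[Proposition 3.1]{BT5} asserts for the nonlinear wave equation, and the passage to Klein--Gordon only introduces the harmless extra mass term $+w_N$ in the energy (which, if anything, helps, since $\cE$ now controls the full $\cH^1$ norm without a low-frequency caveat). So I would state the proof briefly, emphasizing that the argument is identical to that of Proposition \ref{prop:unif_global2} with the substitution $u^3 \rightsquigarrow P_N[(P_N u_N)^3]$ and with Lemma \ref{lem:lp_bds} invoked wherever the cube of the nonlinearity must be estimated in an $L^p$ norm, and then refer to \cite[Proposition 3.1]{BT5} for the details of the uniform-in-$N$ energy estimate.
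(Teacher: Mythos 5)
Your proposal is correct and follows exactly the route the paper intends: the paper explicitly states before Proposition \ref{prop:unif_global2} that the proof for the truncated equation is omitted ``as it follows in precisely the same manner,'' and your argument faithfully carries out that substitution, invoking the conserved truncated Hamiltonian \eqref{equ:fin_dim_hamil}, the uniform $L^p$ bounds of $P_N$ from Lemma \ref{lem:lp_bds}, and the $E_M$-decomposition and $\Pi_{\geq M}$/$\Pi_M$ splitting from the proof of Proposition \ref{prop:unif_global2}, with BT5 Proposition 3.1 cited for the uniform-in-$N$ energy estimate. The only slip is a small bookkeeping inconsistency in the forced equation for $w_N$ (the argument of the cube already contains $S(t)\Pi_{\geq M}+S(t)\Pi_M$ while the initial data is $\Pi_M(u_0,u_1)$, double-counting the low-frequency linear part), but your subsequent estimates use the correct convention $u_N = w_N + S(t)\Pi_{\geq M}(u_0,u_1)$, so the argument goes through as intended.
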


The truncated flow maps also preserve symplectic capacities. This is an easy consequence of the fact that, when restricted to bounded frequencies, these maps are finite dimensional symplectomorphisms.

\begin{prop}
\label{prop:preserve_capacity_trunc}
The flow maps $\Phi_N(t)$ preserve symplectic capacities $\textup{cap}(\cO)$ for any domain $\cO \subset \cH^{1/2}(\bT^3)$.
\end{prop}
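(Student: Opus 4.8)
The plan is to reduce the statement to the finite-dimensional Gromov setting by using the decoupled structure of \eqref{equ:hamiltonian_wave_fin_dim}. First I would fix a Darboux basis for $(\cH^{1/2}(\bT^3), \omega_{1/2})$ adapted to the frequency decomposition: for each $k \in \bZ^3$ the two real degrees of freedom $(\langle k \rangle^{1/2}\widehat{u}_0(k), \langle k \rangle^{-1/2}\widehat{u}_1(k))$ give a symplectic plane, and $\cH_K$ (the span of the first $K$ of these) is exactly $\Pi_K \cH^{1/2}$. The key observation, already recorded in the excerpt, is that for $K$ large enough that $\Pi_K P_N = P_N$, the equation \eqref{equ:cubic_nlkg_trun} is equivalent to the uncoupled system \eqref{equ:hamiltonian_wave_fin_dim}: on the finite-dimensional space $\Pi_K\cH^{1/2}$ the flow is the Hamiltonian flow of $H_N$ from \eqref{equ:fin_dim_hamil}, and on $(1-\Pi_K)\cH^{1/2}$ it is the free linear flow $S(t)$.

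Next I would check that each of these two pieces is a symplectomorphism. On $\Pi_K\cH^{1/2}$: $H_N$ is a smooth (polynomial) Hamiltonian on a finite-dimensional symplectic vector space, its flow is globally defined by the conservation of $H_N$ together with the coercivity of the quadratic part over the $L^4$ term (exactly the global well-posedness argument already given after \eqref{equ:fin_dim_hamil}), and the time-$t$ flow of any smooth Hamiltonian vector field is a symplectomorphism — this is the classical fact that $\mathcal{L}_{X_H}\omega = d(\iota_{X_H}\omega) = d(dH) = 0$. Here one should invoke Lemma \ref{lem:proj_identity} to confirm that the vector field of \eqref{equ:hamiltonian_wave_fin_dim} restricted to $\Pi_K\cH^{1/2}$ really is $J\nabla H_N$ with respect to the restricted symplectic form, i.e. that the smoothed projections $P_N$ interact with the pairing correctly. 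On $(1-\Pi_K)\cH^{1/2}$: $S(t)$ is the linear flow of the quadratic Hamiltonian $\tfrac12\int(|\nabla \mathbf{u}_1|^2 + |\mathbf{u}_1|^2 + |\mathbf{u}_2|^2)$, hence again a (linear) symplectomorphism; alternatively one checks directly that the matrix $\bigl(\begin{smallmatrix}\cos(t\langle\nabla\rangle) & \langle\nabla\rangle^{-1}\sin(t\langle\nabla\rangle)\\ -\langle\nabla\rangle\sin(t\langle\nabla\rangle) & \cos(t\langle\nabla\rangle)\end{smallmatrix}\bigr)$ preserves $\omega_{1/2}$. Since the two subspaces are $\omega_{1/2}$-orthogonal complements and $\Phi_N(t)$ respects this splitting, $\Phi_N(t)$ is a symplectomorphism of $\cH^{1/2}(\bT^3)$.

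Finally I would derive the capacity statement. Kuksin's capacity is defined via finite-dimensional reductions $\cO_N$ onto the span of the first $N$ Darboux basis vectors and fast trajectories of the finite-dimensional Hamiltonian vector fields $U_{f_N}$. Choosing the Darboux basis compatible with the frequency decomposition above, for $N$ large enough the reduction $\cH_N$ contains $\Pi_K\cH^{1/2}$ and $\Phi_N(t)$ maps $\cH_N$ to itself (because it preserves the frequency splitting); on each such $\cH_N$ it is a finite-dimensional symplectomorphism by the previous paragraph. The usual argument — that a symplectomorphism preserving the filtration sends $m$-admissible functions to $m$-admissible functions and fast functions to fast functions, because the Hamiltonian vector fields are conjugate and periods are preserved — then gives $\textup{cap}(\Phi_N(t)(\cO)) = \textup{cap}(\cO)$ for every open $\cO$. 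The main obstacle I anticipate is the bookkeeping around the smoothed projector $P_N$ versus the sharp $\Pi_K$: one must be careful that \eqref{equ:hamiltonian_wave_fin_dim} is genuinely Hamiltonian on the \emph{restricted} symplectic space (not merely that the full equation is Hamiltonian), which is exactly what Lemma \ref{lem:proj_identity} is designed to handle, and that the choice of Darboux basis is compatible with all the frequency truncations appearing so that $\Phi_N(t)$ respects the filtration used to define the capacity.
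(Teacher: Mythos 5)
Your computation that $\Psi_N := \Phi_N(t)\big|_{\Pi_K\cH^{1/2}}$ is a finite-dimensional symplectomorphism is essentially the paper's, and you correctly identify Lemma~\ref{lem:proj_identity} as the reason the smoothed projector $P_N$ plays nicely with the restricted pairing; the paper carries this out by a direct variational computation rather than via Cartan's formula $\mathcal{L}_{X_H}\omega=0$, but the content is identical. Where the two diverge is the final step. The paper writes
\[
\Phi_N(u) = e^{tJA}\bigl(e^{-tJA}\Psi_N(\Pi_K u) + \Pi_{\geq K}u\bigr),
\]
so that the inner map is literally the identity on $\Pi_{\geq K}\cH^{1/2}$ and a finite-dimensional symplectomorphism on $\Pi_K\cH^{1/2}$; this is precisely the shape to which Kuksin's Lemma~5 in \cite{Kuk} applies, and the outer factor $e^{tJA}$ is handled separately as an isometry (isometries trivially preserve admissible and fast functions). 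You instead re-derive the invariance directly from the definition of Kuksin's capacity, i.e.\ you re-prove the essence of Kuksin's Lemma~5 rather than cite it. That is conceptually fine, but it silently leans on points that the citation would absorb: admissibility condition (iii) requires that $\{f<m\}$ is bounded with positive distance to $\partial\cO$, and for $f\mapsto f\circ\Phi_N(t)^{-1}$ to preserve these in an infinite-dimensional Hilbert space you need $\Phi_N(t)$ and its inverse to be locally Lipschitz and to send bounded sets to bounded sets (bounded sets are not relatively compact here, so continuity alone is not enough). You also need the Darboux basis to be the one with respect to which the capacity was defined, or at least quadratically close to it, so that $\Phi_N(t)(\cH_M)=\cH_M$ for all $M\geq K$; you gesture at this but do not pin it down. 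The paper's factorization sidesteps these verifications by making the high-frequency part exactly the identity, which is the cleanest way to enter Kuksin's framework.
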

\begin{proof} 
Let $K$ be large enough so that $\Pi_K P_N = P_N$ and consider the Hamiltonian \eqref{equ:fin_dim_hamil}. We abuse notation slightly and we also use $\Pi_K \textbf{u}_j$ to denote the Fourier projection for the first or second coordinate. For $(\textup{\bf{v}}_1, \textup{\bf{v}}_2) \in \Pi_K \cH^{1/2}(\bT^3)$ and $(\textup{\bf{u}}_1, \textup{\bf{u}}_2)$ which solve \eqref{equ:hamiltonian_wave_fin_dim}, we have
\begin{align*}
& \frac{d}{d\varepsilon} H_N(\Pi_K(\textup{\bf{u}}_1, \textup{\bf{u}}_2) + \varepsilon (\textup{\bf{v}}_1, \textup{\bf{v}}_2))\,\, \Big|_{\varepsilon = 0} \\
& = \frac{d}{d\varepsilon}  \Big[ \frac{1}{2} \int_{\bT^3} ( \Pi_K\textup{\bf{u}}_1 + \varepsilon \textup{\bf{v}}_1)^2 + |\nabla_x  (\Pi_K\textup{\bf{u}}_1 + \varepsilon \textup{\bf{v}}_1)|^2 + ( \Pi_K\textup{\bf{u}}_2 + \varepsilon \textup{\bf{v}}_2)^2 + \frac{1}{4} \int_{\bT^3} (P_N (\textup{\bf{u}}_1 + \varepsilon \textup{\bf{v}}_1))^4 \Big]\Big|_{\varepsilon = 0} \\
& =  \int_{\bT^3} (\Pi_K\textup{\bf{u}}_1) \textup{\bf{v}}_1  + \nabla_x  (\Pi_K\textup{\bf{u}}_1) \nabla_x \textup{\bf{v}}_1 + (\Pi_K\textup{\bf{u}}_2) \textup{\bf{v}}_2+  (P_N \textup{\bf{u}}_1)^3 P_N(\textup{\bf{v}}_1)  
\end{align*}
thus by Lemma \ref{lem:proj_identity} we obtain
\begin{align*}
& \int_{\bT^3} (\Pi_K\textup{\bf{u}}_1) \textup{\bf{v}}_1  + \nabla_x  (\Pi_K\textup{\bf{u}}_1) \nabla_x \textup{\bf{v}}_1 +   (\Pi_K\textup{\bf{u}}_2) \textup{\bf{v}}_2+ P_N (P_N \textup{\bf{u}}_1)^3 \Pi_K \textup{\bf{v}}_1   \\
& =  \int_{\bT^3} \,  - \:\partial_t (\Pi_K\textup{\bf{u}}_2)\, \textup{\bf{v}}_1 +  \partial_t (\Pi_K\textup{\bf{u}}_1) \,\textup{\bf{v}}_2   \\
& = - \omega_{\frac{1}{2}} \Bigl( (\textup{\bf{v}}_1, \textup{\bf{v}}_2), \bigl(\Pi_K (\textup{\bf{u}}_1,   \textup{\bf{u}}_2 )\bigr)_t \Bigr).
\end{align*}
Thus the maps $\Phi_N $ are symplectomorphisms on $\Pi_K \cH^{1/2}$, denote this restricted map by $\Psi_N$. Since the flow decouples low and high frequencies, we can write
\[
\Phi_N(u) = \Psi_N(\Pi_K u) + e^{tJA} \Pi_{\geq K} u = e^{tJA} \bigl( e^{-tJA} \Psi_N(\Pi_K u) +  \Pi_{\geq K} u \bigr)
\]
where $A$ denotes the free evolution of the Klein-Gordon equation on $\cH^{1/2}$. 

\medskip
The invariance of the symplectic capacity under the flow follows from \cite[Lemma 5]{Kuk}, noting that $e^{-tJA} \Psi_N$ is also a symplectomorphism and that $e^{tJA}$ preserves admissible and fast functions, see also \cite[Theorem 3]{Kuk}.
\end{proof}

\medskip

\subsection{Definition and properties of $\Sigma_\lambda$}
\label{sec:sigma_props}
We recall the definition of $\Sigma$. We let $0 < \gamma < \frac{1}{2}$ to be fixed later and define
\begin{equation}
\begin{split}
\Theta_1 &:= \bigl\{(u_0, u_1) \in \cH^{1/2} \,:\, \|S(t)(1 - \Delta)^{\gamma / 2} (u_0, u_1) \|^6_{L_x^6(\bT^3)}  \in L_{\text{loc}}^1(\bR_t)  \bigr\} \\
\Theta_2 &:= \bigl\{(u_0, u_1) \in \cH^{1/2} \,:\ \|S(t)(u_0, u_1) \|_{L_x^\infty(\bT^3)} \in L_{\text{loc}}^1(\bR_t)  \bigr\}.
\end{split}
\end{equation}
Set $\Theta := \Theta_1 \cap \Theta_2$ and let $\Sigma = \Theta + \cH^1$.
\medskip
For $(u_0, u_1) \in \Sigma$ we denote by $u(t) = S(t)(u_0, u_1) + w(t)$ the global solution to \eqref{equ:cubic_nlkg}. Fix $\varepsilon > 0$ and let $C > 0$ be as in the statements of Proposition \ref{prop:unif_global2} and Proposition \ref{prop:unif_trunc}. Define the subsets
\begin{align*}
E_\lambda &:= \bigl\{(u_0, u_1) \in \Sigma \,:\, \|(w(t), \partial_t w(t)) \|_{\cH^1} \leq C\bigl(\lambda + |t|\bigr)^{1 + \varepsilon}\bigr\},\\
H_\lambda &:= \bigl\{(u_0, u_1) \in \Sigma \,:\, \|(w_N(t), \partial_t w_N(t)) \|_{\cH^1} \leq C\bigl(\lambda + |t|\bigr)^{1 + \varepsilon}\bigr \}, \\
J_\lambda &:= \bigl\{(u_0, u_1) \in \Sigma \,:\, \|u \|_{L_x^4} \leq C\bigl(\lambda + |t|\bigr)^{1 + \varepsilon}\bigr\}, \\
K_\lambda &:=\bigl\{(u_0, u_1) \in \Sigma \,:\, \|u_N \|_{L_x^4}\leq C\bigl(\lambda + |t|\bigr)^{1 + \varepsilon}\bigr\},
\end{align*}
and let $M_\lambda$ be the set of $(u_0, u_1) \in \Sigma$ such that for $\gamma$ as above we can find $C = C(T)$ as in Corollary \ref{cor:asbdd} such that
 \begin{align}
\label{equ:lp_bds}
\|(1 - \Delta)^{\gamma /2 }S(t)(u_0, u_1)\|_{L_{x,t}^6([0, T] \times \bT^3)} \leq   C \lambda.
\end{align}
Setting
 \begin{align}
\label{equ:sigma_lambda}
\Sigma_\lambda := E_\lambda \cap H_\lambda \cap J_\lambda \cap K_\lambda \cap M_\lambda
\end{align}
we have the following bounds for the measure of this set. The choice of $\gamma > 0$ does not affect the following proposition.

\begin{prop}
\label{prop:lambda_props}
Fix $\mu \in \cM^{1/2}$ and let $\Sigma_\lambda$ be as defined in \eqref{equ:sigma_lambda}. Then there exists $C, c, \theta > 0$ such that for all $\lambda > 0$ we have
 \begin{align}
\label{equ:lambda_meas}
\mu(\Sigma_\lambda) \geq 1 - C e^{ -c \lambda^\theta}.
\end{align}
\end{prop}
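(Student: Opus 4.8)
The plan is to express $\Sigma_\lambda$ as an intersection of five sets, each of which has $\mu$-measure at least $1 - Ce^{-c\lambda^\theta}$ for appropriate constants, and then combine these estimates via a union bound. Since there are finitely many (five) sets in the intersection, and each has complement of measure $\le C_i e^{-c_i \lambda^{\theta_i}}$, we obtain
\[
\mu(\Sigma_\lambda^c) \le \sum_{i=1}^5 C_i e^{-c_i \lambda^{\theta_i}} \le C e^{-c\lambda^\theta},
\]
where $\theta := \min_i \theta_i$, $c := \frac{1}{2}\min_i c_i$, and $C$ is chosen large enough to absorb the sub-optimal powers (using that $e^{-c_i\lambda^{\theta_i}} \le e^{-c\lambda^\theta}$ once $\lambda$ is large, and adjusting $C$ to handle small $\lambda$ where the bound is trivial since measures are $\le 1$). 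So the real content is the five individual estimates.

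First I would handle $E_\lambda$ and $J_\lambda$: by Proposition \ref{prop:unif_global2}, for every $(u_0,u_1) \in \Sigma$ there is $M = M(u_0,u_1)$ with $\|(w(t),\partial_t w(t))\|_{\cH^1} \le C(M+|t|)^{1+\varepsilon}$ and $\|u(t)\|_{L^4} \le C(M+|t|)^{1/2+\varepsilon} \le C(M+|t|)^{1+\varepsilon}$, and $\mu(M > \lambda) \le Ce^{-c\lambda^\theta}$. Hence $\{M \le \lambda\} \subseteq E_\lambda \cap J_\lambda$, giving the desired bound for these two sets. Similarly, $H_\lambda$ and $K_\lambda$ follow from Proposition \ref{prop:unif_trunc} applied to the truncated flow, using the same exceptional-set bound on its own $M(u_0,u_1)$ (one may take the maximum of the two $M$'s, which still has a Gaussian-type tail). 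Finally, $M_\lambda$ is controlled directly by Corollary \ref{cor:asbdd}: with $s = \tfrac12$, $\gamma_1 \in (\gamma, \tfrac12]$, $(p_1,p_2) = (6,6)$ on the fixed interval $[0,T]$, one gets $\mu\bigl(\|(1-\Delta)^{\gamma/2} S(t)(u_0,u_1)\|_{L^6_{t,x}([0,T]\times\bT^3)} > C\lambda\bigr) \le Ce^{-c\lambda^2/\|(f_0,f_1)\|_{\cH^{\gamma_1}}^2}$, which is of the required form with $\theta = 2$ (the underlying function $(f_0,f_1)$ that induces $\mu$ being fixed).

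The main subtlety — not really an obstacle, but the point requiring care — is that Propositions \ref{prop:unif_global2} and \ref{prop:unif_trunc} produce \emph{a priori} distinct functions $M(u_0,u_1)$, and the definition of $\Sigma_\lambda$ implicitly uses a single threshold $\lambda$ in all five conditions with a \emph{common} constant $C$. One must check that the constant $C$ appearing in the definitions of $E_\lambda, H_\lambda, J_\lambda, K_\lambda$ can indeed be taken to be the same as (or larger than) the constants furnished by the two propositions — which is why the definition in \eqref{equ:sigma_lambda} fixes $\varepsilon > 0$ and takes $C$ to be the max of the constants from both propositions. Granting that, the inclusion $\{M_{\mathrm{full}} \le \lambda\} \cap \{M_{\mathrm{trunc}} \le \lambda\} \cap \{(\ast)\} \subseteq \Sigma_\lambda$, where $(\ast)$ is the event in \eqref{equ:lp_bds}, is immediate, and a three-term union bound finishes the proof. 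One should also note that $\mu(\Sigma) = 1$ by Corollary \ref{cor:asbdd2}, so all these conditional statements make sense $\mu$-a.e. on the full-measure set $\Sigma$.
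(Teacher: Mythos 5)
Your proof is correct and follows essentially the same route as the paper's: invoke Propositions \ref{prop:unif_global2} and \ref{prop:unif_trunc} to bound $\mu(E_\lambda \cap H_\lambda \cap J_\lambda \cap K_\lambda)$, invoke Corollaries \ref{cor:averaging_effects} and \ref{cor:asbdd} for $\mu(M_\lambda)$, and combine via a union bound, relying on the fact that all complement bounds are exponential in $\lambda$. Your extra remarks (the inclusion $\{M \le \lambda\} \subseteq E_\lambda \cap J_\lambda$, the harmless exponent mismatch between $\tfrac{1}{2}+\varepsilon$ and $1+\varepsilon$, and the need to take a common constant $C$ large enough to dominate those from both propositions) are exactly the points the paper leaves implicit, as it acknowledges in the remark following the proof.
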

\begin{proof}
Suppose that $\mu$ is induced by the randomization of $(f_0, f_1) \in \cH^{1/2}$. By Proposition \ref{prop:unif_global2} and Proposition \ref{prop:unif_trunc}, there exists $C, c > 0$ such that
\[
\mu(E_\lambda \cap H_\lambda \cap J_\lambda \cap K_\lambda ) \geq 1 - C e^{-c \lambda^\theta},
\]
and by Corollaries \ref{cor:averaging_effects} and \ref{cor:asbdd},
\begin{align*}
\mu(M_\lambda ) \geq 1- C e^{ - c \lambda^2 / \| (f_0, f_1) \|_{\cH^{s}}^2}.
\end{align*}
Taking intersections and using that these bounds are exponential yields \eqref{equ:lambda_meas}.
\end{proof}

\begin{rmk}
In the above proof we are implicitly taking advantage of the fact that we have exponential bounds on the measure of the sets in question.
\end{rmk}

\section{Multilinear estimates in adapted spaces}
\label{sec:multi}

Multilinear estimates for nonlinear Klein-Gordon equations in the form we need for well-posedness were proven in \cite{Schottdorf},  where more general non-linearities were treated, but we state only the estimates we require. The statements are slightly modified for our setting and we include the proofs for completeness. Ultimately, however, we will need slightly stronger estimates for the stability theory, which we prove in Proposition \ref{prop:multi_refine}. We begin this section with a few preliminary estimates.

\medskip
The following lemma is an immediate consequence of the atomic structure of $U^2$, see, for instance \cite[(17)]{Schottdorf} for the computation.

\begin{lem}[Linear solutions lie in $\cX^s$]
\label{lem:lin_est}
Let $s \geq 0$, $0 < T \leq \infty$ and $u_0^{\pm} \in H^s(\mathbb{T}^3)$. Then
\[
\|e^{\pm i t  \langle \nabla \rangle} u_0^{\pm} \|_{\cX_\pm^s([0,T))} \leq \| u_0^{\pm} \|_{H^s(\bT^3)}.
\]
\end{lem}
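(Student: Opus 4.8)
The statement to prove is Lemma \ref{lem:lin_est}: that the free evolution $e^{\pm i t \langle \nabla \rangle} u_0^\pm$ lies in $\cX_\pm^s([0,T))$ with norm controlled by $\|u_0^\pm\|_{H^s(\bT^3)}$. The plan is to unwind the definitions \eqref{equ:x_spaces} of the $\cX_\pm^s$ norm and reduce to a statement about the $U^2$ norm of a constant-in-time function. Writing $u(t) = e^{\pm i t \langle \nabla \rangle} u_0^\pm$, the definition gives
\[
\|u\|_{\cX_\pm^s([0,T))}^2 = \sum_k \langle k \rangle^{2s} \bigl\| \widehat{u^\pm}(k) \bigr\|_{U_\pm^2}^2, \qquad \|f\|_{U_\pm^2} = \|e^{\mp i t \langle \nabla \rangle} f\|_{U^2 L^2(\bR \times \bT^3)}.
\]
First I would apply the "modulation" $e^{\mp i t \langle \nabla \rangle}$ that appears in the definition of $\|\cdot\|_{U_\pm^2}$: for the frequency-$k$ piece, $e^{\mp i t \langle \nabla \rangle}$ applied to $\widehat{u^\pm}(k) e^{ikx} = e^{\pm i t \langle k\rangle} \widehat{u_0^\pm}(k) e^{ikx}$ cancels the oscillation and produces the time-independent function $\widehat{u_0^\pm}(k) e^{ikx}$. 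Thus the computation reduces to bounding $\|g\|_{U^2 L^2}$ for a constant-in-time $g \in L^2$.

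The key step is the observation that a constant function equals a single $U^2$ atom (up to the normalizing constant), using the partition consisting of the single interval $[0,\infty)$ (or $(-\infty,\infty)$, with the convention $v(t_K) := 0$ at $t_K = \infty$). Concretely, with $K=1$ and $\varphi_0 = g$ normalized so that $\|\varphi_0\|_{L^2}^2 = 1$, the function $\mathbbm{1}_{[0,\infty)} \varphi_0$ is a $U^2$ atom, so the time-independent function $g$ satisfies $\|g\|_{U^2 L^2} \leq \|g\|_{L^2}$. This is precisely the content cited as \cite[(17)]{Schottdorf}. Applying this with $g = \widehat{u_0^\pm}(k) e^{ikx}$ (whose $L^2(\bT^3)$ norm is $|\widehat{u_0^\pm}(k)|$ up to normalization of the torus) gives $\|\widehat{u^\pm}(k)\|_{U_\pm^2} \leq \|\widehat{u_0^\pm}(k)\|_{L^2(\bT^3)}$. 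Summing,
\[
\|u\|_{\cX_\pm^s([0,T))}^2 = \sum_k \langle k \rangle^{2s} \|\widehat{u^\pm}(k)\|_{U_\pm^2}^2 \leq \sum_k \langle k \rangle^{2s} \|\widehat{u_0^\pm}(k)\|_{L^2(\bT^3)}^2 = \|u_0^\pm\|_{H^s(\bT^3)}^2,
\]
which is the claimed bound. (One passes to the restriction space $\cX_\pm^s([0,T))$ either by noting the atom can be taken on a partition adapted to $[0,T)$, or simply by restricting the global extension, which only decreases the norm.)

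There is essentially no obstacle here — the lemma is a soft structural fact. The only point requiring a little care is making sure the endpoint convention (the value $v(t_K) := 0$ at $t_K = \infty$) and the definition of an atom are compatible with a genuinely time-independent function; this is handled exactly by taking the trivial one-interval partition and is standard in the $U^p$/$V^p$ literature. I would state the reduction explicitly and then cite \cite[(17)]{Schottdorf} for the one-line atomic computation rather than reproving it.
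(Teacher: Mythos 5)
Your argument is correct and is essentially the paper's (implicit) proof: the paper itself simply remarks that the lemma "is an immediate consequence of the atomic structure of $U^2$" and cites \cite[(17)]{Schottdorf} rather than spelling out the computation, which is exactly the one-atom reduction you give. The only minor point worth tightening in a write-up is that the function $e^{\mp it\langle\nabla\rangle}$ applied to the free solution, extended by zero to $t<0$, is the atom $\mathbbm{1}_{[0,\infty)}u_0^\pm$ (not literally time-independent on all of $\bR$, since $U^2$ functions must vanish at $-\infty$), but your closing parenthetical already addresses this.
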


We also have the following duality estimate. The proof of this result is a straightforward adaptation of the proof of  Proposition 2.11 in \cite{HTT}.

\begin{prop}
\label{prop:dual}
Let $s \geq 0$ and $T > 0$. For $f \in L_t^1 H^s([0,T) \times \bT^3)$ we have
\[
\|I^{\pm}(f) \|_{\cX_\pm^{s}} \leq \sup_{w \in Y_\mp^{1- s} \,: \, \|w\|_{Y_\mp^{1- s}} = 1} \left| \int_0^T \int_{\bT^3} f(t,x) \overline{v(t,x)} dt dx \right|.
\]
\end{prop}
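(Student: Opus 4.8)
The plan is to mimic the proof of Proposition 2.11 in \cite{HTT}, adapted to the Klein-Gordon half-wave setting. The key structural point is the duality pairing between $U^p$ and $V^{p'}$: by the $(U^p)^* = V^{p'}$ duality (recorded in Appendix \ref{sec:up_vp}), for a function of the form $u = I^\pm(f)$ one has
\[
\|I^{\pm}(f)\|_{\cX_\pm^s} = \Bigl(\sum_k \langle k\rangle^{2s}\|\widehat{I^\pm(f)^\pm}(k)\|_{U_\pm^2}^2\Bigr)^{1/2},
\]
and each frequency piece can be tested against $V^2$ functions. The first step is therefore to unwind the definition of $\cX_\pm^s$ and of $U_\pm^2$: writing $e^{\mp it\langle\nabla\rangle}I^\pm(f) = \int_0^t e^{\mp is\langle\nabla\rangle}\frac{f}{2\langle\nabla\rangle}\,ds$, this is (a truncation of) a function which is the time-primitive of an $L^1_t L^2_x$-valued map, hence lies in $U^2 L^2$ with the expected bound. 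The second step is to dualize frequency-by-frequency: since $(U^2_\pm)^* = V^2_\pm$ isometrically, one has
\[
\|I^\pm(f)\|_{\cX^s_\pm} = \sup\Bigl|\sum_k \langle k\rangle^s \big\langle \widehat{I^\pm(f)^\pm}(k), \widehat{g^\pm}(k)\big\rangle\Bigr|,
\]
where the supremum runs over $g$ with $\bigl(\sum_k \langle k\rangle^{-2s+2}\|\widehat{g^\pm}(k)\|_{V^2_\pm}^2\bigr)^{1/2}\le 1$, i.e. over $w\in Y^{1-s}_\mp$ of norm one (after relabelling the conjugate phase, which accounts for the $\mp$ in the statement; the weight $\langle k\rangle^{1-s}$ is dictated by the pairing $\langle k\rangle^s \cdot \langle k\rangle^{1-s}=\langle k\rangle$ matching the factor $\langle\nabla\rangle^{-1}$ in $I^\pm$).

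The third, and most technical, step is to identify the resulting bilinear pairing of the Duhamel term against $w$ with the spacetime integral $\int_0^T\int_{\bT^3} f\,\overline{v}$. This is where one invokes the fundamental theorem of calculus / integration by parts in the $U^2$–$V^2$ pairing: for $v$ a $V^2$ function and $a$ an atom supported on a partition interval, the pairing $\langle \int_0^t \varphi(s)\,ds, v\rangle$ telescopes against the jumps of $v$, producing $\int_0^T \langle \varphi(s), v(s)\rangle\,ds$ up to boundary terms that vanish by the convention $v(\infty)=0$ and $v(0)$ acting on $I^\pm(f)(0)=0$. Summing in $k$ and recombining $v = w^+ + w^-$ (undoing the $\pm$ decomposition and the factor $\frac{1}{2\langle\nabla\rangle}$) turns the frequency-localized pairing into $\int_0^T\int_{\bT^3} f(t,x)\overline{v(t,x)}\,dt\,dx$, with $v$ ranging over the unit ball of $Y^{1-s}_\mp$. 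The bookkeeping of the $e^{\pm it\langle\nabla\rangle}$ phases and the $\langle\nabla\rangle^{-1}$ weights is the only place one must be careful, and is precisely the routine adaptation of \cite[Prop. 2.11]{HTT}; I would state these phase/weight manipulations but not belabour them.

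The main obstacle I anticipate is purely notational rather than conceptual: keeping consistent track of which half-wave phase ($+$ or $-$) and which Sobolev weight attaches to the test function, so that the dual space comes out as $Y^{1-s}_\mp$ rather than, say, $Y^{1-s}_\pm$ or with the wrong exponent. Once the $U^2$–$V^2$ duality and the telescoping-sum lemma from Appendix \ref{sec:up_vp} are in hand, the estimate is immediate. I would also remark that the inequality is in fact an identity up to the supremum being attained, but only the stated one-sided bound is needed in the sequel.
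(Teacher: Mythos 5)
Your proposal follows exactly the paper's route: dualize the $\cX^s_\pm$ norm frequency-by-frequency using the isometric duality $(U^2_\pm)^* = V^2_\pm$ together with the $\ell^2$ pairing over $n$, invoke the integration-by-parts identity (Proposition \ref{prop:duality2}) to convert the Stieltjes pairing of the Duhamel primitive against $V^2_{rc}$ testers into a Lebesgue spacetime integral, and then assemble the resulting $V^2$ testers, weighted by $\langle n\rangle^{s-1}$ to absorb the $\frac{1}{2\langle\nabla\rangle}$ factor, into a single $\cY^{1-s}_\mp$ function of unit norm; the paper's proof merely makes the $\ell^2$ duality explicit via an auxiliary sequence $(a_n)$ and an $\varepsilon$-approximation before passing to Plancherel and dominated convergence. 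Your bookkeeping of the sign flip and the weight shift is correct, so this is essentially the same proof.
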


\begin{proof} 
We let $(a_n)_{n \in \bZ^3} \in \ell^2(\bZ^3)$ be with $\|(a_n)\|_{\ell^2} = 1$ such that
\[
\|I^\pm(f)\|_{\cX_\pm^{s}} \leq \sum_{n \in \bZ^3} a_n\, \langle n \rangle^{s} \left\| \int_0^t \chi_{[0,T)}  \, e^{\pm i (t -s) \langle n \rangle} \frac{f(t,x)}{2 \langle \nabla \rangle} dt \right\|_{U_{\pm}^2} + \varepsilon.
\]
We use the definition of $U^2_\pm$, and we use duality from Theorem \ref{thm:duality} and Proposition \ref{prop:duality2} to estimate each piece by
\[
\left\| \int_0^t \chi_{[0,T)}  \, e^{\mp i s \langle n \rangle} \frac{f(t,x)}{2 \langle \nabla \rangle} dt \right\|_{U^2} \leq \left| \int_0^T \widehat{f(s)}(n) \, \overline{ \frac{ e^{\pm i s \langle n \rangle} v_n(s)}{2 \langle n \rangle} }ds \right| + 2^{-|n|^2} \varepsilon
\]
for a sequence $v_n \in V_{rc}^2$ with $\|v_n\|_{V_{rc}^2} = 1$ supported on $[0, T)$. We then define
\[
v(t,x) \simeq \sum_{n \in \bZ^3} \Bigl( a_n \langle n \rangle^{s-1} e^{\pm it \langle n \rangle} \, v_n(t) \Bigr) e^{i x \cdot n },
\]
and we observe that $v \in Y_{\mp}^{1-s}([0,T))$ with $\|v\|_{Y_{\mp}^{1-s}} \leq 1$. Hence
\[
\|I^{\pm}(f) \|_{\cX_\pm^{s}} \leq \sum_{n \in \bZ^3} \left| \int_0^T \widehat{f(t)}(n) \overline{\widehat{v(t)}(n)} \right| + c \,\varepsilon,
\]
and the claim follows by dominated convergence theorem and Plancherel.
\end{proof}

The following proposition demonstrates that a priori bounds on the Strichartz norm control the norm of solutions in the adapted function spaces.

\begin{prop}
\label{prop:xs_solN_1ds}
Let $u$ be a solution to the cubic nonlinear Klein-Gordon equation for initial data $(u_0, u_1) \in \textbf{B}_R \subset \cH^{1/2}$ which satisfies
\begin{align}
\label{equ:uniform_l4}
\|u\|_{L_{t,x}^4(([0,T) \times \bT^3)} \leq K.
\end{align}
Then $\|u\|_{\cX_T^{1/2}} \lesssim C(K, R)$.
\end{prop}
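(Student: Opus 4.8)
## Proof Proposal for Proposition \ref{prop:xs_solN_1ds}

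The plan is to run a bootstrap/continuity argument on the $\cX^{1/2}$ norm over short subintervals whose length is dictated by the $L^4_{t,x}$ mass, then sum the contributions over a finite partition of $[0,T)$. First I would partition $[0,T)$ into $J = J(K)$ consecutive subintervals $I_1, \dots, I_J$ so that $\|u\|_{L^4_{t,x}(I_j \times \bT^3)} \leq \delta$ for a small absolute constant $\delta > 0$ to be chosen; this is possible precisely because $\|u\|_{L^4_{t,x}([0,T)\times\bT^3)} \leq K$ is finite, and the number of intervals is controlled by $K$ and $\delta$. On each such interval I would write the Duhamel formula \eqref{equ:duhamel} for $u^\pm$ and estimate in $\cX^{1/2}_\pm(I_j)$ using Lemma \ref{lem:lin_est} for the linear part and the multilinear estimates in adapted spaces (the $U^p$/$V^p$ trilinear estimate alluded to in Section \ref{sec:multi}, together with Proposition \ref{prop:dual}) for the Duhamel term $I^\pm(F(u))$.

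The key point is that the trilinear estimate for $F(u) = u^3$ should be set up so that at least one factor is measured in a norm that can be made small — namely, interpolating the full $\cX^{1/2}$ bound against the $L^4_{t,x}$ Strichartz norm (recall the chain $\|u^\pm\|_{L^4_{t,x}(I)} \lesssim \|u^\pm\|_{Z(I)} \lesssim \|u^\pm\|_{\cY^{1/2}_\pm(I)} \lesssim \|u^\pm\|_{\cX^{1/2}_\pm(I)}$ from the preliminaries). Concretely, I would aim for an estimate of the schematic form
\[
\|I^\pm(F(u))\|_{\cX^{1/2}_\pm(I_j)} \lesssim \|u\|_{L^4_{t,x}(I_j)}^{\alpha}\, \|u\|_{\cX^{1/2}(I_j)}^{3-\alpha}
\]
for some $\alpha > 0$; feeding in $\|u\|_{L^4_{t,x}(I_j)} \leq \delta$ and choosing $\delta$ small enough absorbs the nonlinear term and yields, via the standard continuity argument (the $\cX^{1/2}(I_j)$ norm is continuous in the right endpoint and the map $t \mapsto \|u\|_{\cX^{1/2}([a_j,t])}$ starts near $\|u(a_j)\|_{\cH^{1/2}}$), the bound
\[
\|u\|_{\cX^{1/2}_\pm(I_j)} \lesssim \|u(a_j)\|_{\cH^{1/2}},
\]
where $a_j$ is the left endpoint of $I_j$. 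Since bounds in $\cX^{1/2}(I_j)$ control $\|u\|_{L^\infty_t \cH^{1/2}_x(I_j)}$, this propagates the $\cH^{1/2}$ size of the data at $a_{j}$ to the data at $a_{j+1}$, up to a multiplicative constant. Iterating over the $J(K)$ intervals and using $\|u(a_1)\|_{\cH^{1/2}} = \|(u_0,u_1)\|_{\cH^{1/2}} \leq R$, one gets $\|u\|_{\cX^{1/2}([0,T))} \lesssim C(K)^{J(K)} R$, which is a bound of the form $C(K,R)$ as claimed; finally one passes from the $u^\pm$ bounds back to $u = u^+ + u^-$.

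The main obstacle I anticipate is arranging the trilinear estimate so that a genuinely \emph{positive} power of the small $L^4_{t,x}$ norm appears with no compensating loss — this is delicate at the critical regularity, where naive applications of the multilinear bounds from Section \ref{sec:multi} produce logarithmic divergences or require the full $\cX^{1/2}$ norm on all three factors. Overcoming this is exactly the reason the paper works in the $U^p$/$V^p$ framework rather than in $X^{s,b}$: one wants to exploit the Strichartz estimates in Corollary \ref{cor:strichartz} (in particular the $L^4$ bound at a positive power of $\|\cdot\|_{U^4_\pm}$ or $\|\cdot\|_{V^p_{-,\pm}}$) and the bilinear refinement in Proposition \ref{prop:multilinear} to distribute derivatives favorably and peel off a factor controlled purely by the Strichartz norm. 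A secondary technical point is ensuring the continuity argument is legitimate — i.e.\ that $\|u\|_{\cX^{1/2}(I_j)} < \infty$ a priori on a small enough subinterval so that the bootstrap has something to bootstrap from — which follows from the local well-posedness theory in these spaces established (implicitly) earlier via contraction in $\cX^{1/2}$.
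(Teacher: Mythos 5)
Your strategy (partition $[0,T)$ into intervals of small $L^4_{t,x}$ mass and run a continuity argument on each, then propagate across the $J(K)$ pieces) is the standard long-time stability pattern, but it is a genuinely different route from the one the paper takes, and it is substantially more complicated than necessary for this particular proposition. The paper's proof requires \emph{no} time partition and \emph{no} bootstrap: by the duality bound in Proposition~\ref{prop:dual}, together with Plancherel, H\"older's inequality (pairing $P_N F$ in $L^{4/3}_{t,x}$ against $P_N v$ in $L^4_{t,x}$), the dual square-function estimate, Corollary~\ref{cor:strichartz}~(ii) and the orthogonality bound of Corollary~\ref{cor:orthog}, one gets directly
\[
\|I^\pm(F(u))\|_{\cX^{1/2}_\pm([0,T))} \lesssim \|u^3\|_{L^{4/3}_{t,x}([0,T)\times\bT^3)} \leq \|u\|^3_{L^4_{t,x}([0,T)\times\bT^3)} \leq K^3,
\]
so that $\|u\|_{\cX^{1/2}_T}\lesssim R + K^3$ in one shot. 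In your schematic trilinear estimate this corresponds to $\alpha = 3$: the $\cX^{1/2}$ norm does not appear on the right at all, so the bootstrap degenerates. The obstacle you anticipate at the critical regularity --- logarithmic losses or needing the full $\cX^{1/2}$ norm on all three factors --- is a real concern for the multilinear estimates in Section~\ref{sec:multi} (which underlie the well-posedness theory), but it does not arise here because the Duhamel term is estimated by pure H\"older/Strichartz duality without invoking those bilinear refinements.

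One further caution about your iteration as written: if you only manage to prove the schematic estimate with $0 < \alpha < 2$, the absorption step requires $\delta^\alpha\,\|u\|^{2-\alpha}_{\cX^{1/2}(I_j)} \ll 1$, so $\delta$ must shrink as the $\cX^{1/2}$ size of the data grows from interval to interval; but the number of intervals $J$ grows as $\delta$ shrinks, and the worst-case data bound after $J$ steps grows with $J$, which is circular. To make the bootstrap close cleanly one needs $\alpha \geq 2$, and to see that this is available one essentially has to rederive the paper's duality estimate anyway. Also note that the $L^4_{t,x}$ norm does not control the $Z$ norm (only the reverse inequality holds), so "interpolating against the $L^4_{t,x}$ Strichartz norm" via Proposition~\ref{prop:multi_refine}, which is phrased in terms of $Z'$, does not directly give what you want; again the direct $L^{4/3}$--$L^4$ pairing is the cleaner path.
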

\begin{proof}
Let $u$ solve the cubic nonlinear Klein-Gordon equation \eqref{equ:soln_kg_sys} and suppose that $u$ satisfies the uniform bound \eqref{equ:uniform_l4}. Fix $T > 0$ then by Lemma \ref{lem:lin_est} we estimate
\[
\|u\|_{\cX_T^{1/2}} \lesssim \|(u_0, u_1)\|_{\cH_x^{1/2}} + \|I(F) \|_{\cX_T^{1/2}}, \qquad F(u) = u^3 .
\]
We expand the nonlinear term and we deal with $I^+$ as the other term is handled analogously. By Proposition \ref{prop:dual} and H\"older's inequality
\begin{align}
\| I^+(F) \|_{\cX_+^{1/2}}  &\leq  \sup_{v \in \cY_-^{1/2} \,: \, \|v\|_{\cY_-^{1/2}} = 1} \left| \int_0^T \int_{\bT^3} F(t,x) \overline{v(t,x)} dt dx \right|\\
& =  \sup_{v \in \cY_-^{1/2} \,: \, \|v\|_{\cY_-^{1/2}} = 1} \sum_{N} \left| \int_0^T \int_{\bT^3} P_N F\,\cdot \, \overline{P_N v(t,x)} dt dx \right|\\
& \leq \sup_{v \in \cY_-^{1/2} \,: \, \|v\|_{\cY_-^{1/2}} = 1} \sum_N  \| P_N F \|_{L_{t,x}^{4/3}} \, \| P_N v(t,x) \|_{L_{t,x}^4}.
\end{align}
By complex interpolation, we have the dual square function type inequality
\[
\biggl( \sum_{N}  \|P_{N} F \|^2_{L_{t,x}^{4/3}} \biggr)^{1/2} \lesssim \|F\|_{L_{t,x}^{4/3}}.
\]
Applying Cauchy-Schwarz, and noting that Remark \ref{rmk:duality_improv} applies to $I^+$, we use Corollary \ref{cor:strichartz} part $(ii)$ and Corollary \ref{cor:orthog}, to obtain
\begin{align}
&\sup_{v \in \cY_-^{1/2} \,: \, \|v\|_{\cY_-^{1/2}} = 1} \biggl( \sum_{N}  \|P_{N} f \|^2_{L_{t,x}^{4/3}} \biggr)^{1/2} \|v\|_{\cY_-^{1/2}} \lesssim \| f \|_{L_{t,x}^{4/3}}  \leq \| u \|^3_{L_{t,x}^{4}},
\end{align}
which yields the result.
\end{proof}

We are now ready to state our first multilinear estimate.

\begin{thm}[Theorem 3, \cite{Schottdorf}]
\label{thm:multilinear_ests}
Suppose that the signs $\pm_i$ $(i=0, 1,2,3)$ are arbitrary and $H \sim H'$. Then
\begin{align}
\label{equ:multi1}
\frac{1}{H} \Bigl| \sum_{L_i \lesssim H} \iint  \prod_{i=1}^2  u_{L_i} u_{H'} u_{H} dx dt \Bigr| \lesssim \prod_{i=1}^2 \Biggl( \sum_{L_i\lesssim H} L_i \|u_{L_i}\|_{V_{\pm_i}^2}^2 \Biggr)^{1/2} \|u_{H'}\|_{V_{\pm_3}^2} \|v_{H}\|_{V_{\pm_{0}}^2}
\end{align}
and 
\begin{align}
\label{equ:multi2}
& \sup_{\|w \|_{\cY_\pm^{1/2}} = 1} \sum_{L \lesssim H} \Bigl| \sum_{L_1 \lesssim H} \iint u_{L_1} u_{H'} u_{H} v_{L} dx dt \Bigr| \lesssim   (H H')^{1/2} \|u_{1}\|_{\cY_{\pm_1}^{1/2}} \|u_{H'}\|_{V_{\pm_2}^2} \|u_{H}\|_{V_{\pm_{3}}^2}. \qquad
\end{align}

\end{thm}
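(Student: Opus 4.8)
\emph{The plan} is to prove both \eqref{equ:multi1} and \eqref{equ:multi2} by the standard scheme for critical multilinear bounds: split each space--time integral by Cauchy--Schwarz into a product of two $L^2_{t,x}$ norms, choosing the \emph{low--high} pairing of the factors so as to maximize the frequency separation, feed each pair into the refined bilinear Strichartz estimates (Proposition~\ref{prop:refined_l2}, Proposition~\ref{prop:multilinear}, and their $V^2$ versions in Proposition~\ref{prop:log_loss}), and close the resulting dyadic sums by Cauchy--Schwarz and the orthogonality statement Corollary~\ref{cor:orthog}. Throughout I would use that $\|u_L\|_{V_\pm^2 H^{1/2}}\sim L^{1/2}\|u_L\|_{V_\pm^2}$ for $u_L$ Fourier--localized at frequency $L$, and that $V_\pm^2\hookrightarrow U_\pm^4$, so that the $U^4$-based branch of the bilinear estimate can be read off $V^2$ norms with no loss.

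\emph{For \eqref{equ:multi1}:} with $H\sim H'$ and $L_1,L_2\lesssim H$, write $\iint u_{L_1}u_{L_2}u_{H'}u_H = \iint(u_{L_1}u_H)(u_{L_2}u_{H'})$ and Cauchy--Schwarz to bound it by $\|u_{L_1}u_H\|_{L^2_{t,x}}\|u_{L_2}u_{H'}\|_{L^2_{t,x}}$. Decomposing each product into $L^2_x$-orthogonal output blocks $P_O(\cdot)$ (supported on $O\sim H$ when $L_i\ll H$) and applying the bilinear estimate in the regime $L_i\ll H$ (factor $\min=L_i$, with a $\log\langle H/L_i\rangle$ loss in passing from $U_\pm^2$ to $V_\pm^2$) and in the regime $L_i\sim H$ (factor $(HL_i)^{1/2}\sim L_i$, no loss) gives $\|u_{L_i}u_H\|_{L^2_{t,x}}\lesssim L_i\log\langle H/L_i\rangle\,\|u_{L_i}\|_{V_{\pm_i}^2}\|u_H\|_{V_{\pm_0}^2}$, and similarly with $H'$. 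Summing the dyadic $L_i$ and using Cauchy--Schwarz, $\sum_{L_i\lesssim H}L_i\log\langle H/L_i\rangle\|u_{L_i}\|_{V_\pm^2}\lesssim (\sum_{L_i\lesssim H}L_i\log^2\langle H/L_i\rangle)^{1/2}(\sum_{L_i}L_i\|u_{L_i}\|_{V_\pm^2}^2)^{1/2}\lesssim H^{1/2}(\sum_{L_i}L_i\|u_{L_i}\|_{V_\pm^2}^2)^{1/2}$, since the geometric weight beats the logarithm; multiplying the two sums, the high--frequency $V^2$ norms, and the prefactor $1/H$ yields \eqref{equ:multi1}.

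\emph{For \eqref{equ:multi2}:} fix $w$ with $\|w\|_{\cY_\pm^{1/2}}=1$, set $v_L=P_L w$ and $u_1:=\sum_{L_1\lesssim H}u_{L_1}$, pull the $L_1$-sum inside, and Cauchy--Schwarz with the pairing $(u_1 u_{H'})$, $(u_H v_L)$ to get $\sum_{L\lesssim H}|\iint u_1 u_{H'}u_H v_L|\leq \|u_1 u_{H'}\|_{L^2_{t,x}}\sum_{L\lesssim H}\|u_H v_L\|_{L^2_{t,x}}$. For the $L$-sum, $\|u_H v_L\|_{L^2_{t,x}}\lesssim L\|u_H\|_{V_\pm^2}\|v_L\|_{V_\pm^2}=L^{1/2}\|u_H\|_{V_\pm^2}\|v_L\|_{V_\pm^2 H^{1/2}}$ (up to the usual logarithm), so Cauchy--Schwarz in $L$, $\sum_{L\lesssim H}L\lesssim H$, and Corollary~\ref{cor:orthog} give $\sum_{L\lesssim H}\|u_H v_L\|_{L^2_{t,x}}\lesssim H^{1/2}\|u_H\|_{V_\pm^2}$; for the first factor, decompose $u_1$ dyadically, use $\|u_{L_1}u_{H'}\|_{L^2_{t,x}}\lesssim L_1^{1/2}\|u_{L_1}\|_{V_\pm^2 H^{1/2}}\|u_{H'}\|_{V_\pm^2}$ (valid both for $L_1\ll H'$ and $L_1\sim H'$), and close with the triangle inequality in $L_1$, Cauchy--Schwarz, and Corollary~\ref{cor:orthog} to obtain $\|u_1 u_{H'}\|_{L^2_{t,x}}\lesssim H^{1/2}\|u_{H'}\|_{V_\pm^2}\|u_1\|_{\cY_{\pm_1}^{1/2}}$. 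Combining and using $H\sim(HH')^{1/2}$ gives \eqref{equ:multi2}.

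\emph{The main obstacle} is bookkeeping rather than any new idea, concentrated in two places: the passage from the $U_\pm^2$-based bilinear estimates of Proposition~\ref{prop:multilinear} to $V_\pm^2$-based ones, which costs a logarithm in the separated regime $L\ll H$ (absorbed since the geometric weight dominates $\log^2$) and forces a case distinction at $L\sim H$ where one must switch to $U_\pm^4$-norms and invoke $V_\pm^2\hookrightarrow U_\pm^4$; and the careful tracking of the $\langle k\rangle^{1/2}$ weights, so that the powers $L_i^{1/2}$ produced by the bilinear estimates reassemble into the $\cY_\pm^{1/2}$ (respectively $\sum_{L_i}L_i\|u_{L_i}\|_{V_\pm^2}^2$) norms through the orthogonality Corollary~\ref{cor:orthog}.
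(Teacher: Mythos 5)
Your proposal is correct and follows essentially the same route as the paper's proof: pair each low factor with a high factor, Cauchy--Schwarz into two bilinear $L^2_{t,x}$ norms, apply Proposition~\ref{prop:multilinear} together with the $V^2$-to-$U^2$ conversion loss, and close the dyadic sums by Cauchy--Schwarz and Corollary~\ref{cor:orthog}. The only (cosmetic) difference is in the bookkeeping of the loss from Proposition~\ref{prop:log_loss}: you carry the explicit $\log\langle H/L\rangle$ and absorb $\log^2$ into the geometric sum $\sum_{L\lesssim H}L\log^2\langle H/L\rangle\lesssim H$, whereas the paper packages the same loss as a small power $(H^2/(L_1L_2))^\delta$ via interpolation, which slightly streamlines the summation but is mathematically equivalent.
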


\begin{proof}
We estimate
\begin{align}
\frac{1}{H} \Bigl| \iint \prod_{i=1}^2 u_{L_i} u_{H'} v_{H} dx dt \Bigr| &\lesssim \frac{1}{H} \|u_{L_1} u_{H'} \|_{L_{t,x}^2} \|u_{L_2} v_{H} \|_{L_{t,x}^2} \\
& \lesssim \frac{1}{H} \left( \frac{H^2}{L_1 L_2} \right)^\delta L_1 L_2 \,\|u_{L_1} \|_{V_{\pm_1}^2}  \|u_{L_2} \|_{V_{\pm_2}^2} \|u_{H'} \|_{V_{\pm_3}^2}  \|v_{H} \|_{V_{\pm_0}^2}  
\end{align}
where we used the improved bilinear estimates from Proposition \ref{prop:multilinear} and Remark \ref{rmk:log_loss} in each term. Summing over $L_i \lesssim H$, we use Cauchy-Schwarz on the terms
\[
 L_i^{1-\delta} \|u_{L_i} \|_{V_{\pm_i}^2} = L_i^{\frac{1}{2} - \delta}  L_i^{\frac{1}{2}} \|u_{L_i}\|_{V_{\pm_i}^2}
\]
which yield the $i=1,2$ factors in \eqref{equ:multi1}. Ultimately, it suffices to bound
\begin{align}
H^{-1 + 2 \delta} \Bigl( \sum_{L_1 \leq H} \sum_{L_2 \leq L_1} L_2^{1 - 2\delta} L_1^{1 - 2\delta}  \Bigr)^{1/2} \lesssim H^{-1 + 2 \delta} \Bigl( \sum_{L_1 \leq H} L_1^{2-4\delta} \Bigr)^{1/2} \lesssim 1,
\end{align}
and since $1-2\delta > 0$ for $\delta$ sufficiently small, this yields \eqref{equ:multi1}. 

\medskip
The second estimate is treated similarly, with the roles of $u_{L_2}$ and $v_L$ swapped. That is, we bound
\begin{align}
\Bigl|  \iint u_{L_1} u_{H'} u_{H} v_{L} dx dt \Bigr| &\lesssim  \|u_{L_1} u_{H} \|_{L_{t,x}^2} \|u_{H'} v_{L} \|_{L_{t,x}^2} \\
& \lesssim  \left( \frac{H^2}{L_1 L} \right)^\delta L_1 L\, \|u_{L_1} \|_{V_{\pm_1}^2}   \|u_{H} \|_{V_{\pm_2}^2} \|u_{H'} \|_{V_{\pm_3}^2} \|v_{L} \|_{V_{\pm_0}^2}.
\end{align}
Collecting terms, summing in $L_1, L \lesssim H$, and applying Cauchy-Schwarz yields
\begin{align}
\eqref{equ:multi2} &\lesssim H^{2 \delta} \Bigl( \sum_{L \lesssim H}  \sum_{L_1 \lesssim H} L^{1- 2 \delta} L_1^{1 -  2\delta} \Bigr)^{1/2} \|u_1\|_{\cY_{\pm_1}^{1/2}}  \|u_{H} \|_{V_{\pm_2}^2} \|u_{H'} \|_{V_{\pm_3}^2} \\
& \lesssim (H H')^\frac{1}{2}   \|u_1\|_{\cY_{\pm_1}^{1/2}} \|u_{H} \|_{V_{\pm_2}^2} \|u_{H'} \|_{V_{\pm_3}^2},
\end{align}
as required.
\end{proof}

We will now briefly review the well-posedness theory for \eqref{equ:soln_kg_sys} in the adapted function spaces. We need to estimate the cubic nonlinearity $F(u) = u^3$ for $u = u^+ + u^-$. Hence we can decompose
\[
F(u) = \sum_{\{i, j, k\} \in \{1,2,3\}} u^{(i)}\,u^{(j)}\,u^{(k)}
\]
for $u^{(i)} = u^\pm$. Thus, it suffices to estimate these eight cubic terms, which we do in the sequel. Due to finite speed of propagation, the arguments from \cite{Schottdorf} apply if one allows implicit constants to depend on the time interval.

\begin{thm}[Theorem 4, \cite{Schottdorf}]
\label{thm:nonlinear_est}
Fix $T > 0$. There exists a constant $C$ depending only on $T > 0$ such that
\begin{align}
\label{equ:ys_est}
\|I(u^{(1)}, u^{(2)}, u^{(3)})\|_{\cX_T^{1/2}} \leq C \prod_{i=1}^3 \|u^{(i)}\|_{\cY_T^{1/2}}.
\end{align}
Since $\cX^s \hookrightarrow \cY^s$, this implies
\[
\|I(u^{(1)}, u^{(2)}, u^{(3)})\|_{\cX_T^{1/2}} \leq C \prod_{i=1}^3 \|u^{(i)}\|_{\cX_T^{1/2}}.
\]
\end{thm}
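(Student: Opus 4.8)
The plan is to follow the scheme of Schottdorf~[Theorem 4], reducing the estimate to the multilinear building blocks already established in Theorem~\ref{thm:multilinear_ests}. Write $F = u^{(1)}u^{(2)}u^{(3)}$ with each $u^{(i)} \in \{u^+, u^-\}$, and recall that $I = (I^+, I^-)$ with $I^\pm$ carrying a factor $\langle \nabla \rangle^{-1}$. I would first invoke Proposition~\ref{prop:dual} for each of $I^\pm$: since $s = 1/2$ gives $1-s = 1/2$, and the shift coming from $\langle\nabla\rangle^{-1}$ is already absorbed in the duality statement, it suffices to bound
\[
\Bigl| \int_0^T \int_{\bT^3} u^{(1)} u^{(2)} u^{(3)} \,\overline{v}\,dx\,dt \Bigr| \lesssim_T \|v\|_{\cY^{1/2}} \prod_{i=1}^3 \|u^{(i)}\|_{\cY^{1/2}},
\]
uniformly over $v$ in the appropriate restriction space $\cY^{1/2}([0,T))$ of norm one. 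All eight sign patterns on the factors are handled identically, since the refined bilinear estimate Proposition~\ref{prop:multilinear} (hence \eqref{equ:multi1}--\eqref{equ:multi2}) holds for arbitrary $\pm$, so I fix the signs and suppress them.

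Next I would Littlewood--Paley decompose the four functions, $u^{(i)} = \sum_{N_i} u^{(i)}_{N_i}$ and $v = \sum_{N_0} v_{N_0}$. The spatial integral vanishes unless the two largest among $\{N_0,N_1,N_2,N_3\}$ are comparable; denote them $H \sim H'$ and the other two frequencies by $L_1, L_2 \lesssim H$. Up to relabeling there are two configurations. If both high frequencies belong to the factors $u^{(j)}$ (so that $v$ sits at a low frequency), then pairing the two low factors and the two high factors into $L^2 \times L^2$ products and invoking the refined bilinear estimates puts us exactly in the setting of \eqref{equ:multi2}. If instead one high frequency is carried by $v$, then after pairing $v$ with the matching high factor via a bilinear $L^2$ estimate we are in the setting of \eqref{equ:multi1}. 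In either case Theorem~\ref{thm:multilinear_ests} yields a bound of the schematic shape
\[
\Bigl(\sum_{L_1} L_1 \|u^{(\cdot)}_{L_1}\|_{V_\pm^2}^2\Bigr)^{1/2}\Bigl(\sum_{L_2} L_2 \|u^{(\cdot)}_{L_2}\|_{V_\pm^2}^2\Bigr)^{1/2}\langle H\rangle^{1/2}\|(\cdot)_H\|_{V_\pm^2}\,\langle H'\rangle^{1/2}\|(\cdot)_{H'}\|_{V_\pm^2},
\]
where the two $\langle n\rangle^{1/2}$ weights on the top blocks are restored using $H \sim H'$ together with the $\langle\nabla\rangle^{-1}$ smoothing of $I^\pm$.

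Finally I would carry out the dyadic summation. The two factors of the form $(\sum_{L_i} L_i \|u^{(\cdot)}_{L_i}\|_{V_\pm^2}^2)^{1/2}$ reassemble, via the orthogonality Corollary~\ref{cor:orthog} (this is where the finer dyadic scale of $\cX^s,\cY^s$ is used), into the full $\cY^{1/2}$ norms of the corresponding two inputs; the remaining sums over $H \sim H'$ are controlled by Cauchy--Schwarz against the $\ell^2$-summability in $\cY^{1/2}$ of $\langle H\rangle^{1/2}\|(\cdot)_H\|_{V_\pm^2}$, the near-diagonal localization $H\sim H'$ supplying the geometric series. The $\cX^s$ version of \eqref{equ:ys_est} is then immediate from $\cX^s \hookrightarrow \cY^s$.

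The main obstacle is the criticality: the bilinear Strichartz estimate at comparable frequencies $M \sim N$ loses half a derivative on each of the two functions, which is precisely the regularity available, so no absolute power of $H$ is left over for the frequency sums. The entire argument rests on extracting a small off-diagonal gain --- the factor $(H^2/L_1L_2)^\delta L_1 L_2$ in Proposition~\ref{prop:multilinear}, equivalently the logarithmic loss of Proposition~\ref{prop:log_loss} that Cauchy--Schwarz can absorb --- and on organizing the three input sums so that exactly two of them reconstitute $\cY^{1/2}$ norms while $H\sim H'$ handles the third. Tracking which factor plays the role of the dual test function in \eqref{equ:multi2} versus which is a genuine input across the various frequency cases is the bookkeeping that needs care.
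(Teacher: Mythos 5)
Your proposal follows the paper's proof exactly: reduce via the duality Proposition \ref{prop:dual} to bounding a quadrilinear spacetime integral against a $\cY_\mp^{1/2}$ test function, Littlewood--Paley decompose, observe the convolution constraint forces the two highest frequencies to be comparable, split into the two configurations $N_0 \sim N_1$ and $N_2 \sim N_1$, invoke \eqref{equ:multi1} and \eqref{equ:multi2} respectively, and close the dyadic sums with Corollary \ref{cor:orthog} and Cauchy--Schwarz on the near-diagonal. One small imprecision worth noting: in the $N_2 \sim N_1$ case you describe the mechanism inside \eqref{equ:multi2} as ``pairing the two low factors and the two high factors,'' but the proof of \eqref{equ:multi2} actually pairs low-with-high twice ($\|u_{L_1}u_H\|_{L^2}\|u_{H'}v_L\|_{L^2}$), since high-high and low-low pairings would forfeit the crucial $L$-gain from Proposition \ref{prop:multilinear}; since you apply the estimate as a black box this does not affect the validity of the argument.
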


\begin{rmk}
The proof works similarly for any $s \geq 1/2$ with the obvious modifications, however we omit this generalization for simplicity of presentation. The wellposedness of \eqref{equ:soln_kg_sys} follows from these estimates via a straightforward contraction mapping argument.
\end{rmk}

\begin{proof}
We only treat $I^+$ as $I^-$ follows analogously, and $T > 0$ will be fixed throughout. In the usual manner, we take extensions of the $u^{(i)}$ to $\bR$, which we still denote by $u^{(i)}$, and \eqref{equ:ys_est} follows by taking infimums over all such extensions. We also suppress the notation $\pm_i$ on each function. We do not repeat these considerations. By Proposition \ref{prop:dual}, we estimate
\begin{align}
\left\| I^+(u^{(1)}, u^{(2)}, u^{(3)})  \right\|_{\cX_+^{1/2}} \leq  \sup_{\|w\|_{\cY_-^{1/2}} = 1} \sum_{N_0} \, \Biggl| \sum_{N_3+ N_2 + N_1 = N_0} \iint u_{N_3} u_{N_2} u_{N_1} v_{N_0} dx dt \,\, \Biggr|.
\end{align}
The convolution requirement, implies that the right-hand side above vanishes unless $N_i \sim N_j$ for some $i \neq j$, and hence we may assume without loss of generality that
\[
N_3 \lesssim N_2 \lesssim N_1, \qquad N_1 \sim \max\{N_0, N_2\}.
\]
In the case that $N_0 \sim N_1$, we use \eqref{equ:multi1}, and we bound
\begin{align}
&\Biggl| \sum_{N_3+ N_2 + N_1 = N_0} \iint u_{N_3} u_{N_2} u_{N_1} v_{N_0} dx dt \,\, \Biggr| \\
& \lesssim \sum_{N_0 \sim N_1} \prod_{i=2}^3 \Biggl( \sum_{N_i\lesssim N_0} N_i \|u_{N_i}\|_{V_{\pm_i}^2}^2 \Biggl)^{1/2} N_0\,  \|u_{N_1}\|_{V_{\pm_1}^2} \|v_{N_0}\|_{V_{\pm_0}^2} .
\end{align}
By Corollary \ref{cor:orthog} and Cauchy-Schwarz in $N_0 \sim N_1$,
\begin{align}
\left\| I^+(u^{(1)}, u^{(2)}, u^{(3)})  \right\|_{\cX_+^{1/2}} 
& \lesssim \|u^{(1)}\|_{\cY_{\pm_1}^{1/2}} \|u^{(2)}\|_{\cY_{\pm_2}^{1/2}} \|u^{(3)}\|_{\cY_{\pm_3}^{1/2}}.
\end{align}
When $N_2 \sim N_1$, we use \eqref{equ:multi2} and we can similarly bound
\begin{align}
\left\| I^+(u^{(1)}, u^{(2)}, u^{(3)})  \right\|_{\cX_+^{1/2}}  &\leq  \sup_{\|w\|_{\cY_-^{1/2}} = 1} \,\,  \Biggl| 
\sum_{N_3+ N_2 + N_1 = N_0} \iint u_{N_3} u_{N_2} u_{N_1} v_{N_0} dx dt \,\, \Biggr| \\
& \lesssim \|u_{3}\|_{\cY_{\pm_3}^{1/2}} \sum_{N_1 \sim N_2}  (N_1 N_2)^{1/2}   \|u_{N_1}\|_{V_{\pm_{1}}^2} \|u_{N_2}\|_{V_{\pm_2}^2}.
\end{align}
Once again, we obtain the desired bound using Corollary \ref{cor:orthog} and Cauchy-Schwarz.
\end{proof}

Finally, we need to following refinement to Theorem \ref{thm:nonlinear_est} which says that we can replace some of the factors with the mixed norm 
\[
\|u^\pm\|_{Z_\pm'(I)} =  \|u^\pm\|^{3/4}_{Z(I)} \|u^\pm\|^{1/4}_{\cX_\pm^{1/2}(I)},
\]
for $Z$ the norm defined in \eqref{equ:zs_norm}, and we set $Z' = Z_+ \times Z_-$.

\medskip
As in the proof of Theorem \ref{thm:nonlinear_est}, we need to consider cubic expressions in $u^{+}$ and $u^-$. It will be clear from the proof that we could have, instead, relied only on bounding $u^+ + u^-$ in  the $Z(I)$ norm, which is more consistent with the $L_{t,x}^4$ Strichartz norm from the standard well-posedness theory. Ultimately, however, such considerations do not affect our arguments given Proposition \ref{prop:xs_solN_1ds} which allows us to control the $Z(I)$ norm of solutions to \eqref{equ:soln_kg_sys} via Strichartz bounds.

\begin{prop}
\label{prop:multi_refine}
Under the above assumptions,
\[
\left\| I(u^{(1)}, u^{(2)}, u^{(3)})  \right\|_{X^{1/2}(I)} \lesssim \sum_{\{i, j , k\}\, \in\, \{1,2,3\}} \|u^{(i)}\|_{\cY^{1/2}(I)} \|u^{(j)}\|_{Z'(I)} \|u^{(k)}\|_{Z'(I)}.
\]
\end{prop}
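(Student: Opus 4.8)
The plan is to revisit the proof of Theorem \ref{thm:nonlinear_est} and show that the crucial estimates \eqref{equ:multi1} and \eqref{equ:multi2} can be upgraded so that two of the three interior factors are measured in the weaker $Z'$ norm rather than in $\cY^{1/2}$. First I would recall that, after reducing to $I^+$, expanding into the eight terms $u^{(i)}u^{(j)}u^{(k)}$ with $u^{(i)}=u^{\pm}$, and dualizing via Proposition \ref{prop:dual}, the entire matter comes down to bounding
\[
\sup_{\|v\|_{\cY_-^{1/2}}=1}\sum_{N_0}\Bigl|\sum_{N_3+N_2+N_1=N_0}\iint u_{N_3}u_{N_2}u_{N_1}v_{N_0}\,dx\,dt\Bigr|,
\]
where as before the convolution constraint forces $N_3\lesssim N_2\lesssim N_1$ with $N_1\sim\max\{N_0,N_2\}$, so there are two regimes, $N_0\sim N_1$ (the "high-high-low" case, handled by \eqref{equ:multi1}) and $N_2\sim N_1$ (handled by \eqref{equ:multi2}).

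The key step is the bilinear input. In the proof of Theorem \ref{thm:multilinear_ests}, each dyadic block is controlled by applying Proposition \ref{prop:multilinear} to pair a low frequency with a high frequency; in the $M\sim N$ case the estimate requires only $U^4$ norms, and by Corollary \ref{cor:strichartz}(i) one has $\|u_N\|_{L_{t,x}^4}\lesssim C_T N^{1/2}\|u_N\|_{U^4_{\pm}}$, so the $L^4_{t,x}$ Strichartz norm — and hence, after summing dyadically and invoking the definition \eqref{equ:zs_norm} of the $Z$ norm together with the square-function estimate and Corollary \ref{cor:orthog} — already controls the relevant sums. Concretely, in the regime $N_0\sim N_1$ I would estimate $\|u_{N_1}u_{N_3}\|_{L^2_{t,x}}$ and $\|v_{N_0}u_{N_2}\|_{L^2_{t,x}}$ using Proposition \ref{prop:refined_l2}/\ref{prop:multilinear}, but choosing to put $u_{N_2}$ and $u_{N_3}$ (the two "interior" factors that we want in $Z'$) into $L^4_{t,x}$-type bounds and the other two factors into $U^4$ (equivalently $\cX^{1/2}$-controlled, or $\cY^{1/2}$-controlled) norms; the frequency weights $N_2^{1/2}, N_3^{1/2}$ coming out of Strichartz are exactly absorbed by the square-function sums $\bigl(\sum_{N_i}N_i\|P_{N_i}\cdot\|^2\bigr)^{1/2}$ which, by \eqref{equ:zs_norm} and Corollary \ref{cor:orthog}, are dominated by $\|u^{(i)}\|_{Z(I)}\lesssim \|u^{(i)}\|_{Z'(I)}$. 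Interpolating the endpoint $Z$-bound with the $\cX^{1/2}$-bound already proved in Theorem \ref{thm:nonlinear_est} — the $3/4$--$1/4$ split is precisely the definition of $Z'$ — yields a bound with the two interior factors in $Z'(I)$ and the remaining factor in $\cY^{1/2}(I)$. The regime $N_2\sim N_1$ is handled analogously, swapping the roles of $v_L$ and $u_{N_2}$ as in the proof of \eqref{equ:multi2}; one keeps the factor at frequency $N_1\sim N_2$ paired against $v$ in the $\cY^{1/2}$ slot and routes the genuinely low factor plus one of the high ones into the $Z'$ slots.

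The main obstacle I anticipate is bookkeeping the dyadic summation while simultaneously keeping two factors in $Z$-type norms: the $Z$ norm in \eqref{equ:zs_norm} is defined with a supremum over unit-length subintervals, so one must verify that the refined bilinear/Strichartz estimates can be localized to such subintervals (which is automatic since all the Strichartz constants $C_T$ are uniform over subintervals of $[0,T]$, by the remark after Proposition \ref{prop:strichartz2}) and that the square-function/$\ell^2$-orthogonality machinery (Corollary \ref{cor:orthog}, the dual square function inequality from complex interpolation) still closes when one of the summation indices is tied up in the $Z$-norm supremum. A secondary technical point is ensuring the small exponent $\delta>0$ from the logarithmically-refined bilinear estimate (Proposition \ref{prop:multilinear}, Remark \ref{rmk:log_loss}) can still be absorbed, exactly as in the summation $H^{-1+2\delta}\bigl(\sum_{L_1\le H}L_1^{2-4\delta}\bigr)^{1/2}\lesssim 1$ in the proof of Theorem \ref{thm:multilinear_ests}; since we lose nothing new in the frequency weights this goes through verbatim. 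Once these localization and orthogonality issues are dispatched, the estimate follows by exactly the structure of the proof of Theorem \ref{thm:nonlinear_est}, and I would simply indicate the modifications rather than rewriting the whole argument.
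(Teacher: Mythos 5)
Your high-level plan is the right one and matches the paper's: revisit the dual form of the nonlinear estimate, exploit the high-low bilinear structure so that two of the three input factors go into $Z$/$L^4_{t,x}$-type norms, and then use the $3/4$--$1/4$ composition that defines $Z'$. However, there are two genuine gaps in the proposed mechanism.

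First, you omit the cube decomposition of the high-frequency factor, which is the ingredient that actually makes the dyadic sums converge. If in the $N_0 \sim N_1$ regime you simply apply H\"older $\|u_{N_3} u_{N_1}\|_{L^2_{t,x}} \leq \|u_{N_3}\|_{L^4}\|u_{N_1}\|_{L^4}$ and Strichartz on the high frequency, you pick up a factor $N_1^{1/2}$ (absorbed by $\cY^{1/2}$) but no decay in $N_3$, so the sum $\sum_{N_3 \lesssim N_1}$ incurs a logarithmic loss. Neither Proposition \ref{prop:refined_l2} nor Proposition \ref{prop:multilinear} comes in a version with one input in $L^4_{t,x}$, so you cannot get the needed gain from them directly. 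What the paper does is tile $u_{N_1}$ over frequency cubes $C$ of side $\sim N_3$ centered near $|\xi|\sim N_1$, so that $\|u_{N_3} u_{N_1}\|_{L^2}^2 \lesssim \sum_C \|(P_C u_{N_1}) u_{N_3}\|_{L^2}^2$, and then apply Proposition \ref{prop:l4_strichartz} to the small pieces $P_C u_{N_1}$. This produces the $(N_3 N_1)^{1/4}$ factor, hence a decay $(N_3/N_1)^{1/4}$ after weighting by $\cY^{1/2}$, which is what lets Cauchy--Schwarz in $N_3$ close. Your claim that ``the frequency weights $N_2^{1/2}, N_3^{1/2}$ coming out of Strichartz are exactly absorbed by the square-function sums $\bigl(\sum_{N_i} N_i \|P_{N_i}\cdot\|^2\bigr)^{1/2}$ which \dots are dominated by $\|u^{(i)}\|_{Z(I)}$'' is incorrect: the $Z$ norm \eqref{equ:zs_norm} carries no frequency weight, so it cannot absorb $N_i^{1/2}$.

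Second, the $N_1 \sim N_2$, $N_0 \lesssim N_2$ regime is not handled analogously, and your proposed ``prove a pure-$Z$ endpoint, then interpolate with Theorem \ref{thm:nonlinear_est}'' does not apply there. In that regime the second bilinear pairing is $u_{N_2} v_{N_0}$ with the dual function $v$ sitting at the low frequency, and $v$ must remain in a $V^2$/$\cY^{1/2}$ slot; a pure-$Z$ bound for $u_{N_2}$ in this pairing is simply not available. The paper instead performs a $1/4$--$3/4$ H\"older split of $\|u_{N_2} v_{N_0}\|_{L^2}$ \emph{inside} the estimate, applying Proposition \ref{prop:multilinear} to the $1/4$-power to extract decay in $N_0$, and the refined Strichartz Proposition \ref{prop:l4_strichartz} plus H\"older to the $3/4$-power. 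This makes $u_{N_2}$ emerge directly as a $Z'$-type factor (a $1/4$-$\cY^{1/2}$, $3/4$-$Z$ mixture), rather than via interpolation of the final multilinear bounds. Your after-the-fact interpolation works for the $N_0\sim N_1$ case (and the paper uses exactly that there), but not for this case.
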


\begin{proof}
We fix $T > 0$ and we only treat the $I^+$ term. By Proposition \ref{prop:dual}, we estimate
\begin{align}
\left\| I^+(u^{(1)}, u^{(2)}, u^{(3)})  \right\|_{\cX_+^{1/2}} \leq  \sup_{\|w\|_{\cY_-^{1/2}} = 1} \Biggl| \sum_{N_3+ N_2 + N_1 = N_0} \iint u_{N_3} u_{N_2} u_{N_1} v_{N_0} dx dt \,\, \Biggr|.
\end{align}
The convolution requirement implies that the right-hand side above vanishes unless $N_i \sim N_j$ for some $i \neq j$, and hence we may without loss of generality assume that
\[
N_3 \lesssim N_2 \lesssim N_1, \qquad N_1 \sim \max\{N_0, N_2\}.
\]
In the case that $N_0 \sim N_1$, we apply H\"older's inequality and obtain
\begin{align}
&\Biggl| \sum_{N_3+ N_2 + N_1 = N_0} \iint u_{N_3} u_{N_2} u_{N_1} v_{N_0} dx dt \,\, \Biggr| \lesssim \sum_{N_3+ N_2 + N_1 = N_0} \|u_{N_3} u_{N_1} \|_{L_{t,x}^2} \|u_{N_2} v_{N_0} \|_{L_{t,x}^2} .
\end{align}
Let us consider the first term. Let $C$ be a cube of size $N_3$ centered in frequency space at $\xi_0 \in \bZ^3$ with $|\xi_0| \sim N_1$ and let $P_C$ denote the (sharp) Fourier projection onto this cube. Since the spatial Fourier support of $(P_C u_{N_1}) u_{N_3}$ is contained in a fixed dilate of $C$,
\[
 \|u_{N_3} u_{N_1} \|_{L_{t,x}^2} \lesssim \bigg( \sum_{C}  \| (P_{C} u_{N_1}) u_{N_3} \|_{L_{t,x}^2}^2\bigg)^{1/2}
\]
and hence by H\"older's inequality and Proposition \ref{prop:l4_strichartz} on the term with $P_{C} u_{N_1}$, we can bound
\begin{align}
 \|u_{N_3} u_{N_1} \|^2_{L_{t,x}^2} &\lesssim  (N_3 N_1)^{\frac{1}{2}}  \|u_{N_3} \|^2_{L_{t,x}^4(I \times \bT^3)}  \sum_{C}  \| P_{C} u_{N_1} \|^2_{V_{\pm_1}^2}.
\end{align}
Thus for fixed $N_1$, we obtain
\begin{align}
\sum_{N_3 \lesssim N_1} \|u_{N_3} u_{N_1} \|_{L_{t,x}^2} &\lesssim \sum_{N_3 \lesssim N_1} (N_3 N_1)^{\frac{1}{4}} \|u_{N_3} \|_{L_{t,x}^4(I \times \bT^3)}  \bigg(\sum_{C}  \| P_{C} u_{N_1} \|^2_{V_{\pm_1}^2} \bigg)^{1/2} \\
&\lesssim \sum_{N_3 \lesssim N_1}  \left(\frac{ N_3 }{ N_1 } \right)^{\frac{1}{4} } \|u_{N_3} \|_{L_{t,x}^4(I \times \bT^3)}    \bigg(\sum_{C} N_1 \| P_{C} u_{N_1} \|^2_{V_{\pm_3}^2} \bigg)^{1/2},
\end{align}
hence by resumming over $C$ and using Cauchy-Schwarz in $N_3$, we can bound
\begin{align}
\sum_{N_3 \lesssim N_1} \|u_{N_3} u_{N_1} \|_{L_{t,x}^2} &  \lesssim \sum_{N_3 \lesssim N_1}  \left(\frac{ N_3 }{ N_1 } \right)^{\frac{1}{4} } \|u_{N_3} \|_{L_{t,x}^4(I \times \bT^3)} \| u_{N_1} \|_{\cY_{\pm_1}^{1/2}} \\
&\lesssim \bigg( \sum_{N_3}   \|u_{N_3} \|^4_{L_{t,x}^2(I \times \bT^3)} \bigg)^{1/2}  \| u_{N_1} \|_{\cY_\pm^{1/2}},
\end{align}
which yields
\[
\sum_{N_3 \lesssim N_1} \|u_{N_3} u_{N_1} \|_{L_{t,x}^2} \lesssim \|u_1\|_{Z}\| u_{N_1} \|_{\cY_{\pm_1}^{1/2}}.
\]
We perform the same analysis on the second term. By Cauchy-Schwarz in $N_0 \sim N_1$ and symmetrizing we obtain 
\[
\left\| I(u^{(1)}, u^{(2)}, u^{(3)})  \right\|_{X^{1/2}(I)} \lesssim \sum_{\{i, j , k\}\, \in\, \{1,2,3\}} \|u^{(i)}\|_{X^{1/2}(I)} \|u^{(j)}\|_{Z(I)} \|u^{(k)}\|_{Z(I)}.
\]
Using the definition of the $Z'(I)$ norm, and combining this bound with the estimates from Theorem \ref{thm:nonlinear_est} yields the result in this case.

The case when $N_1 \sim N_2$ and $N_0 \lesssim N_2$ requires a bit more care. We estimate
\begin{align}
&\Biggl| \sum_{N_3+ N_2 + N_1 = N_0} \iint u_{N_3} u_{N_2} u_{N_1} v_{N_0} dx dt \,\, \Biggr| \lesssim \sum_{N_3+ N_2 + N_1 = N_0} \|u_{N_3} u_{N_1} \|_{L_{t,x}^2} \|u_{N_2} v_{N_0} \|_{L_{t,x}^2} .
\end{align}
As above, for the first term, we obtain
\begin{align}
 \sum_{N_3 \lesssim N_1 } \|u_{N_3} u_{N_1} \|_{L_{t,x}^2}  &\lesssim \sum_{N_3 \lesssim N_1}  \left(\frac{ N_3 }{ N_1 } \right)^{\frac{1}{4} } \|u_{N_3} \|_{L_{t,x}^4(I \times \bT^3)}    \biggl(\sum_{C_1} N_1 \| P_{C_1} u_{N_1} \|^2_{V_{\pm_1}^2} \biggr)^{1/2}\\
 &\lesssim \|  u_{N_1} \|_{\cY_{\pm_1}^{1/2}} \sum_{N_3 \lesssim N_1}  \left(\frac{ N_3 }{ N_1 } \right)^{\frac{1}{4} } \|u_{N_3} \|_{L_{t,x}^4(I \times \bT^3)}.
\end{align}
For the second term we use H\"older's inequality to bound
\begin{align}
 \sum_{N_0 \lesssim N_2 } \|u_{N_2} v_{N_0} \|_{L_{t,x}^2} & =  \sum_{N_0 \lesssim N_2 }  \|u_{N_2} v_{N_0} \|^{1/4}_{L_{t,x}^2} \|u_{N_2} v_{N_0} \|^{3/4}_{L_{t,x}^2} 
 \end{align}
 
\noindent and we use Propositions \ref{prop:multilinear} and \ref{prop:l4_strichartz} to bound this by
 \begin{align}
 & \sum_{N_0 \lesssim N_2 }  (N_0)^{1/4} \,  \left(\frac{N_2}{N_0} \right)^\delta    \|u_{N_2} \|^{1/4}_{V_{\pm_2}^2}\| v_{N_0} \|^{1/4}_{V_{\pm_0}^2 }\|u_{N_2}\|^{3/4}_{L_{t,x}^4}  N_0^{3/8} \| v_{N_0} \|^{3/4}_{V_{\pm_0}^2 } \\
 & \lesssim \sum_{N_0 \lesssim N_2 }   \,  \left(\frac{N_0}{N_2} \right)^{\frac{1}{8} - \delta}    (N_2)^{1/8} \|u_{N_2} \|^{1/4}_{V_{\pm_2}^2} \|u_{N_2}\|^{3/4}_{L_{t,x}^4}  \,N_0^{1/2} \| v_{N_0} \|_{V_{\pm_0}^2 }.
\end{align}
We have split this term using H\"older's inequality in order to gain some term which enables us to sum in $N_0$ without loss. Using that
\[
\sum_{N_0 \lesssim N_2} \left(\frac{N_0}{N_2} \right)^{\frac{1}{8} - \delta}N_0^{1/2} \| v_{N_0} \|_{V_{\pm}^2 } \lesssim \|w\|_{\cY_\pm^{1/2}}
\]
and
\[
\sum_{N_3 \lesssim N_1}  \left(\frac{ N_3 }{ N_1 } \right)^{\frac{1}{4} } \|u_{N_3} \|_{L_{t,x}^4(I \times \bT^3)} \lesssim \|u_3\|_{Z(I)},
\]
we are left with
\[
\|u_{N_1} \|_{\cY_{\pm_1}^{1/2}} \sum_{N_3 \sim N_2}   (N_2)^{1/8} \|u_{N_2} \|^{1/4}_{V_{\pm}^2} \,\|u_{N_2}\|^{3/4}_{L_{t,x}^4} \, ,
\]
and we once again conclude by Cauchy-Schwarz with $\frac{1}{8} + \frac{3}{8} + \frac{1}{2} = 1$ and Corollary \ref{cor:orthog}.
\end{proof}

\section{Stability theory in adapted function spaces}
\label{sec:stab}
In this section, we prove the necessary stability theory for the nonlinear Klein-Gordon equation in the adapted function spaces. As discussed in the introduction, the key difficulty in proving a satisfactory stability theory in this setting is that even if $X^{1/2}(I)$ is bounded, we cannot isolate a small interval on which the norm is small. Ultimately, however, using the intermediate $Z'(I)$ norm we are able to recover the desired stability theory.  We record the following results which are, for the most part, straightforward adaptations of the analogous results for the nonlinear Schr\"odinger equation from \cite[Section 3]{IP12}.

\begin{prop}
\label{equ:local_criteria}
Suppose that $R > 0$ is fixed and let $u_0 = (u_0^+, u_0^-)$ with $\|u^\pm_0 \|_{H^{1/2}(I \times \bT^3)} \leq R$. Then there exists $\delta_0 = \delta_0(R) > 0$ such that if 
\[
\|e^{\pm i t \langle \nabla \rangle} u^\pm_0 \|_{Z_\pm'(I)} < \delta
\]
for some $\delta \leq \delta_0$, on some interval $I$ with $0 \in I$ and $|I| \leq 1$, then there exists a strong solution to \eqref{equ:soln_kg_sys} in $X^{1/2}(I)$ with initial data $u(0) = u_0$ and
\begin{align}
\label{equ:nonliN_1ound}
\| u^\pm - e^{\pm i t \langle \nabla \rangle} u^\pm_0 \|_{\cX_\pm^{1/2}(I)} \leq \delta^{\frac{5}{3}}.
\end{align}
\end{prop}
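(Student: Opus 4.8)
The proof is a contraction-mapping argument in the restriction space $\cX^{1/2}(I)$; the one subtlety is that the smallness needed to close the iteration is required only in the weak norm $Z'$, and this is exactly what the hypothesis on the free evolution provides. Write $L^\pm := e^{\pm i t \langle \nabla \rangle} u_0^\pm$ and, for $v = (v^+, v^-)$, set $u^\pm := L^\pm + v^\pm$ and $u := u^+ + u^-$. By the Duhamel formula \eqref{equ:duhamel}, solving \eqref{equ:soln_kg_sys} with data $u_0$ and with $u^\pm - L^\pm \in \cX^{1/2}_\pm(I)$ is equivalent to finding a fixed point of
\[
\Gamma(v)^\pm := \pm i\, I^\pm\bigl(F(L^+ + L^- + v^+ + v^-)\bigr)
\]
in the ball $\cB := \{ v : \|v^\pm\|_{\cX^{1/2}_\pm(I)} \le \delta^{5/3}\}$. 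The ingredients are: Lemma \ref{lem:lin_est}, which gives $\|L^\pm\|_{\cX^{1/2}_\pm(I)} \le \|u_0^\pm\|_{H^{1/2}} \le R$ and hence also $\|L^\pm\|_{\cY^{1/2}_\pm(I)} \lesssim R$; the embeddings $\|\cdot\|_{Z'(I)} \lesssim \|\cdot\|_{\cX^{1/2}(I)}$ and $\|\cdot\|_{\cY^{1/2}(I)} \lesssim \|\cdot\|_{\cX^{1/2}(I)}$ recorded after \eqref{equ:zs_norm}; and the refined trilinear estimate Proposition \ref{prop:multi_refine}. Since $|I| \le 1$, all implicit constants below are uniform.

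For the self-mapping property, expand $F(L^+ + L^- + v^+ + v^-)$ into trilinear monomials, each factor being either a free solution $L^\epsilon$ or a Duhamel iterate $v^\epsilon$ ($\epsilon \in \{+,-\}$), and apply Proposition \ref{prop:multi_refine} to each monomial: it allows two of the three factors to be measured in $Z'(I)$ and the remaining one in $\cY^{1/2}(I)$. A free-solution factor obeys $\|L^\epsilon\|_{Z'(I)} < \delta$ by hypothesis and $\|L^\epsilon\|_{\cY^{1/2}(I)} \lesssim R$; a Duhamel factor obeys $\|v^\epsilon\|_{Z'(I)} \lesssim \|v^\epsilon\|_{\cX^{1/2}_\pm(I)} \le \delta^{5/3}$ and likewise $\|v^\epsilon\|_{\cY^{1/2}(I)} \lesssim \delta^{5/3}$ on $\cB$. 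As every monomial has at least two factors that we may place in $Z'$, the largest contribution is the purely free one $(L^+ + L^-)^3$, bounded by $\lesssim R\delta^2$, while every monomial containing a Duhamel factor is of strictly higher order in $\delta$; hence $\|\Gamma(v)^\pm\|_{\cX^{1/2}_\pm(I)} \le C_0 R\,\delta^2$ for all $v \in \cB$. Choosing $\delta_0 = \delta_0(R)$ so small that $C_0 R\,\delta_0^{1/3} \le 1$ yields $C_0 R \delta^2 \le \delta^{5/3}$ for $\delta \le \delta_0$, so $\Gamma$ maps $\cB$ into itself.

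For the contraction estimate, write $F(u) - F(u') = (u - u')\bigl(u^2 + u u' + (u')^2\bigr)$; since $u - u' = (v^+ - v'^+) + (v^- - v'^-)$, every trilinear monomial of $F(u) - F(u')$ carries one difference factor $v^\epsilon - v'^\epsilon$ together with two factors drawn from $\{L^\pm, v^\pm, v'^\pm\}$. Placing the difference factor in $Z'(I)$, so that it contributes $\lesssim \|v - v'\|_{\cX^{1/2}(I)}$, and arguing exactly as above, the worst monomial is bounded by $\lesssim R\delta\,\|v - v'\|_{\cX^{1/2}(I)}$; after shrinking $\delta_0$ if necessary (still $\delta_0 \sim R^{-3}$) this gives a contraction constant $\le \tfrac12$. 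The Banach fixed point theorem then produces a unique $v \in \cB$; the function $u = L^+ + L^- + v^+ + v^-$ is the desired strong solution, it belongs to $\cX^{1/2}(I) \hookrightarrow C(I; \cH^{1/2})$ with $u(0) = u_0$, and \eqref{equ:nonliN_1ound} is precisely $\|v^\pm\|_{\cX^{1/2}_\pm(I)} \le \delta^{5/3}$, which holds by construction (the fixed point in fact satisfies the sharper bound $\lesssim R\delta^2$).

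The only real work is the bookkeeping in the second paragraph: one must verify that in every monomial of the cubic expansion at least two factors can be assigned the weak norm $Z'$ with a genuinely small bound, the third costing only a factor $O(R)$ in $\cY^{1/2}$. This is exactly what Proposition \ref{prop:multi_refine} is designed to permit, and it is the reason the scheme closes at the critical regularity, where $\|L^\pm\|_{\cX^{1/2}}$ is merely bounded rather than small: the hypothesis makes the free evolution small in $Z'$, which tames the otherwise dangerous purely-free interaction $(L^+ + L^-)^3$. Everything else is routine, modulo the standard device of taking extensions of the $u^{(i)}$ off $I$ and passing to the infimum in the restriction norms.
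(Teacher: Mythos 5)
Your proof is correct and follows essentially the same route as the paper: a contraction-mapping argument in the adapted spaces whose closure hinges on Proposition \ref{prop:multi_refine} to exploit the $Z'$-smallness of the free evolution. The only difference is cosmetic — the paper iterates on the full solution $u$ in a set $\cS$ carrying simultaneous $\cX^{1/2}$ and $Z'$ bounds and then bootstraps once more to obtain the $R\delta^2$ bound on the Duhamel part, whereas you iterate directly on the Duhamel part $v$ in a $\cX^{1/2}$-ball of radius $\delta^{5/3}$, so the nonlinear bound comes out of the fixed-point set by construction.
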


\begin{rmk}
The choice $5/3$ is arbitrary and in fact the statement holds for any $\alpha$ with $1 < \alpha < 2$, and we merely require $\alpha > 1$ for our applications. This proposition can be thought of as an version of a small data result in the adapted function spaces which does not require that the initial data be small in the $H^{1/2}$ norm.
\end{rmk}

\begin{proof}
The statement about existence follows from a standard fixed point argument. Indeed, Let $R, a > 0$ and consider
\[
\cS = \{ u  \in X^{1/2}(I) \,:\, \|u\|_{X^{1/2}(I)} \leq 4R, \,\, \|u\|_{Z'(I)} \leq 2a\}
\]
and the mapping
\[
\Phi^\pm (u) = e^{ \pm i t \langle \nabla \rangle} u_0^\pm \pm i I^\pm(F(u)).
\]
By Proposition \ref{prop:multi_refine}, 
\begin{align}
&\|\Phi^\pm (u)\|_{\cX_\pm^{1/2}(I)} \leq \| u^\pm_0 \|_{H^{1/2}} + C R a^2 \leq R + CR a^2\\
&\|\Phi^\pm (u) \|_{Z_\pm'(I)} \leq \| e^{\pm i t \langle \nabla \rangle} u^\pm_0 \|_{Z_\pm'(I)} + C R a^2 \leq \delta + CRa^2 ,
\end{align}
and similarly for the difference expression. Choosing $a = 2\delta$ for $0 < \delta \leq \delta_0$ and $\delta_0 = \delta_0(R)$ small enough so that $4CR\delta < 1$, we find that $\Phi = \Phi^+ + \Phi^-$ posesses a unique fixed point $u$ in $\cS$. Finally, to prove \eqref{equ:nonliN_1ound}, we obtain by another application of Proposition \ref{prop:multi_refine} that
\[
\| u^\pm - e^{\pm i t \langle \nabla \rangle} u^\pm_0 \|_{\cX_\pm^{1/2}(I)} \lesssim R \delta^2,
\]
hence taking $\delta_0$ even smaller if necessary, we obtain the statement.
\end{proof}

In light of the fact that the $Z'$ norm controls the $L_{t,x}^4$ norm of solutions, the standard blow-up criterion in Strichartz spaces together with Proposition \ref{prop:xs_solN_1ds} imply that this weaker norm controls the global existence theory.

\medskip
For simplicity, we define the norm
\begin{align}
\label{equ:norm_nonlin}
\|h\|_{N_{\pm} (I)} = \left\| \int_a^t e^{ \pm i t \langle \nabla \rangle} \frac{h(s)}{2 \langle \nabla \rangle} ds \right\|_{\cX_\pm^{1/2}(I)}.
\end{align}

The following is the main result of this section.

\begin{prop}
\label{prop:xs_stability}
Let $I \subset \bR$ a compact time interval and $t_0 \in I$. Let $v$ be a solution defined on $I \times \bT^3$ of the Cauchy problem
\begin{equation*}
\left\{ 
\begin{split}
&v_{tt} - \Delta v + v + F(v) = e \\
&\bigl(v, \partial_tv \bigr)\big|_{t=t_0} = (v_0, v_1) \in  \cH^{1/2}(\bT^3),
\end{split}
\right.
\end{equation*}
and identify the solution $v$ with $(v^+, v^-)$. Suppose that 
\begin{align}
\label{equ:z_bds}
\|v^\pm\|_{Z(I)} + \|v^\pm\|_{L_t^\infty H_x^{1/2}(I \times \bT^3)} \leq K.
\end{align}
Let $(u, \partial_t u) \big|_{t=t_0} = (u_0, u_1) \in \cH^{1/2}(\bT^3)$ and suppose we have the smallness condition 
\begin{align}
\label{equ:small_diff}
\|(v_0 - u_0, v_1 - u_1)\|_{ \cH^{1/2}(\bT^3)} + \|e\|_{N_\pm(I)} \leq \varepsilon < \varepsilon_1  
\end{align}
for some $0 < \varepsilon < \varepsilon_1$ where $\varepsilon_1 \leq 1$ is a small constant $\varepsilon_1 = \varepsilon_1(K, I) > 0$. Then there exists a unique solution $(u, \partial_t u)$ to the cubic nonlinear Klein-Gordon equation on $I \times \bT^3$ with initial data $(u_0, u_1)$ at time $t_0$ and $C \equiv C(K, I) \geq 1$ which satisfies $\|v - u\|_{X^{1/2}(I)} \leq C \,\varepsilon$.
\end{prop}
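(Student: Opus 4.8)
\emph{Proof strategy.} The plan is the standard long-time perturbation scheme (as in \cite{IP12} for the energy-critical NLS) carried out in the adapted spaces, with the failure of divisibility of the $\mathcal X^{1/2}$ norm circumvented by the intermediate norm $Z'$. Set $w=u-v$; if $u$ solves the cubic NLKG with data $(u_0,u_1)$ at $t_0$, then $w$ must solve
\begin{equation*}
\square w + w + \bigl(F(v+w)-F(v)\bigr) = -e,\qquad (w,\partial_t w)\big|_{t=t_0}=(u_0-v_0,\,u_1-v_1),
\end{equation*}
and $F(v+w)-F(v)$ is a sum of trilinear expressions each carrying at least one factor of $w$. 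Passing to the $\pm$ formulation \eqref{equ:soln_kg_sys} and writing this in Duhamel form, constructing $u$ is equivalent to solving a fixed-point problem for $w=(w^+,w^-)$ in $\mathcal X^{1/2}$, which we will do by iterating over a finite partition of $I$ whose size depends only on $K$ and $|I|$.

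First I would partition $I$ at $t_0$ into consecutive subintervals $I_j=[s_j,s_{j+1}]$, $j=0,\dots,J-1$ (the portion of $I$ to the left of $t_0$ being handled identically), with $|I_j|\le 1$ and $\|v^\pm\|_{Z(I_j)}\le\eta$ for a small $\eta=\eta(K)$ to be chosen; since $\|v^\pm\|_{Z(I)}\le K$ the number $J$ of intervals is controlled by $K$ and $|I|$. Before running the difference estimate I would upgrade the hypotheses to a genuine $\mathcal X^{1/2}$ bound for $v$ on each $I_j$: Duhamel together with Lemma~\ref{lem:lin_est} and Proposition~\ref{prop:multi_refine} gives
\begin{equation*}
\|v^\pm\|_{\cX_\pm^{1/2}(I_j)}\ \lesssim\ \|(v,\partial_t v)(s_j)\|_{\cH^{1/2}} + \|v\|_{\cX^{1/2}(I_j)}^{3/2}\|v\|_{Z(I_j)}^{3/2} + \|e\|_{N_\pm(I_j)},
\end{equation*}
where we used $\cX^s\hookrightarrow\cY^s$ and $\|v\|_{Z'(I_j)}\le\|v\|_{Z(I_j)}^{3/4}\|v\|_{\cX^{1/2}(I_j)}^{1/4}$; a continuity/bootstrap argument in the interval then yields $\|v^\pm\|_{\cX_\pm^{1/2}(I_j)}\le 2K$ once $\eta=\eta(K)$ is small enough, and hence also $\|v^\pm\|_{\cY_\pm^{1/2}(I_j)}\le 2K$.

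Next I would build $u$ inductively on $I_0,\dots,I_{J-1}$ exactly in the spirit of Proposition~\ref{equ:local_criteria}, but now with a perturbation and small, nonzero initial mismatch. Suppose $\|(u-v,\partial_t u-\partial_t v)(s_j)\|_{\cH^{1/2}}\le\varepsilon_j$. I set up the contraction for $w$ in the ball $\{\,\|w\|_{\cX^{1/2}(I_j)}\le 2C_0\varepsilon_j\,\}$, where $C_0$ is the implied constant in Lemma~\ref{lem:lin_est} and in the embedding $\cX^{1/2}(I_j)\hookrightarrow L^\infty_t\cH^{1/2}(I_j)$. Each trilinear term of $F(v+w)-F(v)$ is estimated via Proposition~\ref{prop:multi_refine}, distributing the two $Z'$ slots onto $v$-factors whenever possible; a term with exactly one factor of $w$ is then bounded by
\begin{equation*}
\|w\|_{\cY^{1/2}(I_j)}\|v\|_{Z'(I_j)}^2 + \|v\|_{\cY^{1/2}(I_j)}\|w\|_{Z'(I_j)}\|v\|_{Z'(I_j)}\ \lesssim\ \|w\|_{\cX^{1/2}(I_j)}\,(2K)\,\eta^{3/4}(2K)^{1/4},
\end{equation*}
while the terms with two or three factors of $w$ are higher order in $\|w\|_{\cX^{1/2}(I_j)}\le 2C_0\varepsilon_j$ and hence small provided $\varepsilon_j$ is small. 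Combined with the data contribution $\lesssim C_0\varepsilon_j$ and the error contribution $\|e\|_{N_\pm(I_j)}\le\varepsilon\le\varepsilon_j$ (the last inequality holding since $C_1\ge 1$ below), choosing $\eta=\eta(K)$ so that $C(2K)^{5/4}\eta^{3/4}\le\tfrac12$ makes the map a contraction on this ball, giving a unique $u$ on $I_j$ with $\|u-v\|_{\cX^{1/2}(I_j)}\lesssim\varepsilon_j$ and therefore a mismatch $\varepsilon_{j+1}\le C_1\varepsilon_j$ at $s_{j+1}$ with $C_1=C_1(K)$. Iterating over the $J$ intervals gives $\varepsilon_j\le C_1^{\,j}\varepsilon\le C_1^{\,J}\varepsilon$, and since $C_1^{\,J}$ depends only on $K$ and $|I|$, taking $\varepsilon_1=\varepsilon_1(K,I)$ small enough that $C_1^{\,J}\varepsilon_1$ remains in the contraction regime at every step closes the induction and yields a unique solution with $\|v-u\|_{\cX^{1/2}(I)}\le C(K,I)\,\varepsilon$; gluing the pieces across the $s_j$ and invoking the per-interval uniqueness finishes the proof.

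The main obstacle is precisely the one flagged before the statement: the $\mathcal X^{1/2}$ norm (equivalently $U^2$/$V^2$) is not divisible, so one cannot partition $I$ so that $\|v\|_{\cX^{1/2}(I_j)}$ is small. The resolution is to partition only so that the divisible quantity $\|v\|_{Z(I_j)}$ is small and to exploit the precise structure of Proposition~\ref{prop:multi_refine}, which allows two of the three factors in each nonlinear interaction to be placed in the small norm $Z'$ while the third is measured in the merely bounded norm $\cY^{1/2}$ — this is what turns the difference estimate into a genuine contraction at the critical regularity. A secondary point requiring care is the bookkeeping of the geometric growth $C_1^{\,J}$ of the constant over the finitely many steps, which is what forces $\varepsilon_1$ to depend on both $K$ and $|I|$.
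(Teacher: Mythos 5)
Your proposal is correct and follows essentially the same route as the paper's own proof: partition $I$ into subintervals on which the divisible norm $\|v\|_{Z(I_j)}$ is small, use the interpolation definition of $Z'$ together with Proposition~\ref{prop:multi_refine} to convert $Z$-smallness plus $\cX^{1/2}$-boundedness into $Z'$-smallness, run a contraction for the difference on each piece, and track the geometric growth $C_1^J$ of the constant over the $J=J(K,|I|)$ steps. The only cosmetic difference is that the paper carries out the $Z'$-smallness upgrade via a maximal-interval continuity argument on the quantity $h(s)=\|e^{\pm i(t-t_k)\langle\nabla\rangle}v(t_k)\|_{Z'(t_k,t_k+s)}$ rather than your direct Duhamel bootstrap, but this does not change the substance of the argument.
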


\medskip
\begin{rmk}
\label{rmk:bound_requirements}
In particular, \eqref{equ:z_bds} holds if we have $X^{1/2}(I)$ bounds on the solution $v$, and consequently, by Proposition \ref{prop:xs_solN_1ds}, if we have $L_{t,x}^4(I \times \bT^3)$ bounds. Additionally, the computations in Proposition \ref{prop:xs_solN_1ds} also imply that $L_{t,x}^{4/3}(I \times \bT^3)$ bounds on the error imply the $N_\pm(I)$ bounds on the error in \eqref{equ:small_diff}. Hence Proposition \ref{prop:xs_stability} can be seen as a refined version of the long-time stability theory from Appendix \ref{ap:pert}.
\end{rmk}

\medskip
We include a proof of this fact for completeness, although it follows almost identically to the corresponding statement for the NLS in \cite[Section 3]{IP12}. The main idea is to mimic the proof of the standard Strichartz space stability, exploiting the extra properties of the $Z'(I)$ norm. Roughly speaking, we will work on small intervals where $\|v\|_{Z'(I_k)}$ is sufficiently small and then, in spirit, the computations which yield the standard stability theory yield the result. We only have to check that at each step we can guarantee that the assumptions still hold, namely that the difference between the solutions remains sufficiently small. This is possible since the number of steps depends only on $K$ and $\varepsilon_1$, hence we can iterate such an argument to cover the whole interval in order to obtain the result.

\begin{proof}
Without loss of generality, we may assume $|I| \leq 1$.  As in the proof of Proposition \ref{equ:local_criteria}, there exists some $\delta_1(K)$ such that if for some $J \ni t_0$,
\[
\|e^{\pm i (t- t_0) \langle \nabla \rangle} v(t_0)\|_{Z_\pm'(J)} + \|e\|_{N_\pm(J)} \leq \delta_1,
\]
then there exists a unique solution $v$ to \eqref{equ:soln_kg_sys} on $J$ and
\begin{align}
\label{equ:nonlin_bd}
\|v^\pm - e^{\pm i (t- t_0) \langle \nabla \rangle} v(t_0)\|_{X^{1/2}(J)} \leq \|e^{\pm i (t- t_0) \langle \nabla \rangle} v(t_0)\|^{\frac{5}{3}}_{Z_\pm'(J)} + 2 \|e\|_{N_\pm(J)}.
\end{align}
Next we claim that there exists $\varepsilon_1 = \varepsilon_1(K)$ such that if for some $I_k = (t_k, t_{k+1})$ it holds that
\begin{align}
\label{equ:eps_bds}
\|e\|_{N_\pm(I_k)}  \leq \varepsilon_1 \qquad \textup{and} \qquad \|v^\pm\|_{Z(I_k)} \leq \varepsilon \leq \varepsilon_1
\end{align}
then
\begin{align}
\label{equ:eps_bds2}
\|e^{\pm i (t - t_k) \langle \nabla \rangle} v^\pm(t_k) \|_{Z_\pm'(I_k)} \leq C(1 + K)( \varepsilon + \|e\|_{N_\pm(I_k)})^{\frac{3}{4}}\\
\| v^\pm \|_{Z_\pm'(I_k)} \leq C(1 + K)( \varepsilon + \|e\|_{N_\pm(I_k)})^{\frac{3}{4}}.
\end{align}
Indeed, we let $h(s) := \|e^{\pm i (t - t_k) \langle \nabla \rangle} v^\pm(t_k) \|_{Z_\pm'(t_k , t_k + s)} $. Let $J_k = [t_k, t') \subset I_k$ be the largest interval such that $h(s) \leq \delta_1 /2$, for $\delta_1(K)$ as above. Then by Duhamel's formula
\begin{align}
\|e^{\pm i (t - t_k) \langle \nabla \rangle} v^\pm(t_k) \|_{Z(t_k , t_k + s)} &\leq \|v^\pm \|_{Z(t_k , t_k + s)} + \|v^\pm - e^{\pm i (t - t_k) \langle \nabla \rangle} v^\pm(t_k) \|_{\cX_\pm^{1/2}(t_k , t_k + s)} \\
& \leq \varepsilon + h(s)^{\frac{5}{3}} + 2 \|e\|_{N_\pm(I_k)}.
\end{align}
By definition,
\[
h(s) \leq  \|e^{\pm i (t - t_k) \langle \nabla \rangle} v^\pm(t_k) \|^{\frac{3}{4}}_{Z(t_k , t_k + s)} \, \|e^{\pm i (t - t_k) \langle \nabla \rangle} v^\pm(t_k) \|^{\frac{1}{4}}_{\cX_\pm^{1/2}(t_k , t_k + s)} 
\]
hence  by \eqref{equ:nonlin_bd}, the boundedness of the free evolution in $\cX_\pm^{1/2}$ and \eqref{equ:z_bds},
\begin{align}
h(s) & \leq \biggl(\varepsilon + h(s)^{\frac{5}{3}} + 2 \|e\|_{N_\pm(I_k)} \biggr) \, K^{\frac{1}{4}} \\
&\leq C(1 + K)(\varepsilon +\|e\|_{N_\pm(I_k)})^{3/4} + C(1 + K) h(s)^{\frac{5}{4}},
\end{align}
and we can conclude the claim provided $\varepsilon_1$ is chosen sufficiently small. Let now $I_k$ be an interval such that
\begin{equation}
\label{equ:eps_bds3}
\begin{split}
\|e^{\pm i (t - t_k) \langle \nabla \rangle} v^\pm(t_k)\|_{Z_\pm'(I_k)} + \|v^\pm \|_{Z_\pm'(I_k)} \leq \varepsilon \leq \varepsilon_0\\
\|e\|_{N_\pm(I_k)} \leq \varepsilon,
\end{split}
\end{equation}
it holds by the above considerations that $\|v^\pm\|_{\cX_\pm^{1/2}(I_k)} \leq K + 1$.  Fix such an interval and let $u$ be a solution to \eqref{equ:soln_kg_sys} defined on an interval $J_u \ni t_k$ with 
\[
\|(u(t_k) - v(t_k), \partial_t u(t_k) - \partial_t v(t_k) \|_{\cH^{1/2}} \leq \varepsilon_0.
\]
Set $\phi = u - v$, then $\phi$ solves the difference equation
\begin{equation}
\label{equ:cubic_nlkg_chap3_diff}
\left\{\begin{split}
&v_{tt} - \Delta \phi + \phi + (v + \phi)^3 - v^3 - e = 0, \quad u: \bR \times \bT^3 \to \bR  \\
&(\phi, \partial_t \phi)\big|_{t=t_k} = \bigl(u(t_k) - v(t_k), \partial_t u(t_k) - \partial_t v(t_k)\bigr) \in \cH^{1/2}(\bT^3),
\end{split} \right.
\end{equation}
and we can identify $\phi$ with $(\phi^+, \phi^-)$ as before. Let $J_k = [t_k, t_k + s] \cap I_k \cap J_u$ be the maximal interval such that
\[
\|\phi^\pm\|_{Z_\pm'(J_k)} \leq 10 C \varepsilon_0 \leq \frac{1}{10(K+1)}.
\]
Such an interval exists and is non-empty since $s \mapsto \|\phi^\pm\|_{Z_\pm'(t_0, t_0 + s)} $ is a continuous function which vanishes at $s = 0$. Similarly to the argument used in the standard Strichartz stability theory, by Proposition \ref{prop:multi_refine} we can then estimate
\begin{equation}
\label{equ:v_x_bds}
\begin{split}
&\|\phi^\pm\|_{\cX_\pm^{1/2}(J_k)} \\
&\leq \|e^{\pm i (t - t_k) \langle \nabla \rangle} (u^\pm(t_k) - v^\pm(t_k))\|_{\cX_\pm^{1/2}(J_k)} + \|(v + \phi)^3 - v^3 \|_{N_\pm(J_k)} + \|e\|_{N_\pm(J_k)} \\
& \lesssim \|(u^\pm(t_k) - v^\pm(t_k), \partial_t u^\pm(t_k) - \partial_t v^\pm(t_k) \|_{\cH^{1/2}} + C\bigl( \varepsilon_0 \|\phi^\pm\|_{\cX_\pm^{1/2}(J_k)} + \|e\|_{N_\pm(J_k)}\bigr).
\end{split}
\end{equation}
Thus, if $\varepsilon_0$ is sufficiently small, 
\begin{align}
\label{equ:v_bds}
\|\phi^\pm\|_{Z_\pm'(J_k)} \leq C \|\phi^\pm\|_{\cX_\pm^{1/2}(J_k)} \leq 8 C\varepsilon_0,
\end{align}
and in fact $J_k = I_k \cap J_u$. Moreover, $u$ can be extended to all of $I_k$ by the remark after Proposition \ref{equ:local_criteria}, and further \eqref{equ:v_x_bds} and \eqref{equ:v_bds} hold on all of $I_k$. We conclude the proof by splitting $I$ into subintervals such that
\[
\|v^\pm\|_{Z(I_k)} \leq \varepsilon_2, \qquad \|e\|_{N_\pm(I_k)} \leq \kappa \varepsilon_2.
\]
On each interval \eqref{equ:eps_bds} holds, hence by \eqref{equ:eps_bds2}, \eqref{equ:eps_bds3} also holds, and as above we conclude that \eqref{equ:v_x_bds} and \eqref{equ:v_bds} hold. This concludes the proof of the proposition.
\end{proof}

\begin{rmk}
\label{rmk:stab_trunc}
If we consider the nonlinear Klein-Gordon equation with truncated nonlinearity,
\begin{equation}
\left\{\begin{split}
&(u_N)_{tt} - \Delta u_N + u_N + P_{\leq N}(P_{\leq N} u_N)^3 = 0, \quad u: \bR \times \bT^3 \to \bR  \\
&(u_N, \partial_tu_N)\big|_{t=0} = (u_0, u_1) \in \cH^{1/2}(\bT^3),
\end{split} \right.
\end{equation}
then all the estimates from Sections \ref{sec:multi} and \ref{sec:stab} go through with constants uniform in the truncation parameter. Indeed, set $G(u) = F(P_N u)$ and with $v_N = P_{\leq N} v_N$, we can estimate
\begin{align}
\|P_{\leq N} G(u)\|_{N(I)} \leq \sup_{\|v_N\|_{\cY^{1/2}} = 1} \left| \iint P_N G(u) \, v_N \right| & =  \sup_{\|v_N\|_{\cY^{1/2}} = 1} \left| \iint G(u) \, v_N \right|,
\end{align}
and all of the previous multilinear estimates go through as before. Moreover, to repeat the stability argument above, we only need
\[
\|P_{\leq N} v\|_{Z(I)} + \|v\|_{L_t^\infty \cH_x^{1/2}(I \times \bT^3)} \leq K.
\]
Moreover, if one only needs to compare low frequencies, it suffices to require
\[
\|P_{\leq N} (v_0 - u_0, v_1 - u_1)\|_{ \cH^{1/2}(\bT^3)} + \|P_{\leq N} e\|_{N(I)} \leq \varepsilon < \varepsilon_1.
\]
This statement should be compared with Remark \ref{rmk:low_freq_bds} and Lemma \ref{lem:pert_long_trunc}.
\end{rmk}

\section{A low frequency equation}
\label{sec:approx}

The next propositions should be compared with Proposition 5.1 in \cite{CKSTT05}. As we are at the critical level, we cannot hope to achieve the gain of derivatives for the Strichartz estimates, as was obtained in Theorem 4.3 in \cite{CKSTT05}. On the other hand, we can still obtain some decay by exploiting the improved Strichartz estimates from Proposition \ref{prop:multilinear} provided we are able to create a scale separation between low and high-frequencies. In the sequel, where we denote errors of a given order, this is always understood to be in the $N(I)$ norm defined in \eqref{equ:norm_nonlin}. In the sequel we will write $F(u, v , w) = u v w$ as a means to record to various combinations in the nonlinearity.

\begin{prop}
\label{prop:error_eqn}
Let $R, T > 0$ and let $u$ be a solution to the cubic nonlinear Klein-Gordon equation for initial data $(u_0, u_1) \in \textbf{B}_R \subset \cH^{1/2}$ which satisfies
\begin{align}
\label{equ:uniform_l42}
\|u\|_{L_{t,x}^4(([0,T) \times \bT^3)} \leq K.
\end{align}
Let $1 \leq N' \ll N_*$. Then there exists $M \in [N', N_*]$ such that the low frequency component $u_{lo} = P_{\leq M} u$ satisfies the perturbed cubic nonlinear Klein-Gordon equation
\begin{align}
\label{equ:low}
\square u_{lo} + u_{lo} = P_{\leq M} F(u_{lo}, u_{lo}, u_{lo}) + \cO_{K, R, T}\biggl( \Bigl(\log \frac{N_*}{N'}\Bigr)^{-\theta}\biggr).
\end{align}
\end{prop}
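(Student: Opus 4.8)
The plan is to decompose $u = u_{lo} + u_{hi}$ with $u_{lo} = P_{\leq M}u$ and $u_{hi} = P_{>M}u$, apply $P_{\leq M}$ to the equation $\square u + u = F(u,u,u)$, and expand the nonlinearity $F(u,u,u) = (u_{lo}+u_{hi})^3$ into eight (or, up to symmetry, four) pieces. The main term is $P_{\leq M}F(u_{lo},u_{lo},u_{lo})$, and everything with at least one $u_{hi}$ factor must be absorbed into the error. The difficulty is that at the critical regularity we have no direct smallness for $\|u_{hi}\|_{\cH^{1/2}}$ — only that the total $L^4_{t,x}$ norm (hence, via Proposition~\ref{prop:xs_solN_1ds}, the $\cX^{1/2}_T$ norm) is bounded by a constant depending on $K,R$. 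So smallness must come from the frequency gap between $N'$ and $N_*$, not from the size of the high-frequency tail.

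The key device is a pigeonholing argument to choose $M$. Dyadically subdivide the window $[N', N_*]$ into $\sim \log(N_*/N')$ dyadic blocks; since $\|u\|_{\cX^{1/2}_T}^2 = \sum_N \langle N\rangle \|P_N u\|_{\cdots}^2 \lesssim C(K,R)^2$, there must exist a dyadic $M \in [N', N_*]$ for which the ``collar'' piece $P_{M/2 < \cdot \leq 2M}u$ (or an appropriate fattened annulus around frequency $M$) has $\cX^{1/2}$-norm $\lesssim C(K,R)(\log(N_*/N'))^{-1/2}$. This is the standard CKSTT-type frequency-selection step. With such an $M$ fixed, the cut-off $P_{\leq M}$ is effectively ``transparent'' to the bulk of $u_{lo}$ and $u_{hi}$ is genuinely supported above $M$, modulo the small collar term which is itself an error of the desired size.

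Then I would estimate the error terms $P_{\leq M}F$ with at least one high factor in the $N_\pm(I)$ norm using the multilinear machinery — Proposition~\ref{prop:multi_refine} and the refined bilinear estimate Proposition~\ref{prop:multilinear}. The point is that whenever a high-frequency function $u_{hi}$ (frequency $\gtrsim M$) pairs against the output frequency $\lesssim M$ of $P_{\leq M}F$, the bilinear estimate $\|P_O(u_M v_N)\|_{L^2_{t,x}} \lesssim L\|u_M\|\|v_N\|$ with $L = \min \ll \max$ creates exactly the frequency-ratio gain $(M/N_*)^{\delta}$ or a gain in powers of the separating frequency; combined with the pigeonholed smallness of the collar and the a~priori bound $\|u\|_{\cX^{1/2}_T} \lesssim C(K,R)$ on all remaining factors, each such term is $\cO_{K,R,T}((\log(N_*/N'))^{-\theta})$ for some $\theta > 0$ (one can take $\theta = 1/2$, or smaller after interpolating the logarithmic gain against the multilinear estimates). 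One must be slightly careful that the cut-off is the \emph{smooth} projection $P_{\leq M}$ of \eqref{equ:trunc_op}, so $P_{\leq M}u_{lo} \neq u_{lo}$ exactly; the discrepancy $(1 - \psi(|n|^2/M^2))$ is again supported near frequency $M$ and folds into the collar error via the uniform $L^p$ bounds of Lemma~\ref{lem:lp_bds}. The main obstacle, as flagged, is making the pigeonhole yield a quantitative logarithmic gain that survives the multilinear estimates without being eaten by the $N^{1/2}$-type derivative losses; this is resolved precisely because we only ever need low output frequencies $\lesssim M \ll N_*$, so the loss on the low factors is bounded while the high factor supplies the decaying ratio.
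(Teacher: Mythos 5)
Your overall plan — pigeonhole a frequency window, decompose $u = u_{lo} + u_{hi}$, and kill every term in $P_{\leq M}F$ carrying a high factor via the refined bilinear estimate — matches the paper's strategy, but the frequency selection step is under-specified in a way that actually breaks the argument. You pigeonhole over single dyadic blocks (or ``an appropriate fattened annulus''), whereas the paper pigeonholes over super-blocks $I_\alpha = [N'(\log(N_*/N'))^\alpha, N'(\log(N_*/N'))^{\alpha+2}]$, each of which spans $\sim 2\log\log(N_*/N')$ dyadic scales. After discarding the medium piece $u_{med}$ supported in the chosen $I_\alpha$, the pieces $u_{lo}$ (below $M$) and $u_{hi}$ (above $M(\log(N_*/N'))^2$) are separated by a ratio of $(\log(N_*/N'))^2$. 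This separation is not cosmetic: it is the only source of smallness for the remaining error terms, and without it two things go wrong.

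First, with a merely dyadic collar, the term $P_{\leq M} F(u_{lo},u_{lo},u_{hi})$ survives: the convolution constraint only forces $|n_{hi}| \lesssim 3M$, so a high factor at frequency $\sim 2M$ is allowed, and the bilinear estimate $\|P_O(u_L u_H)\|_{L^2_{t,x}} \lesssim L\|\cdot\|\|\cdot\|$ yields no ratio gain when $L$ and $H$ are at adjacent dyadic scales. The paper eliminates this case outright: once the gap exceeds a fixed numerical threshold (the paper takes $(\log(N_*/N'))^2 > 8$), the output frequency of $u_{lo}u_{lo}u_{hi}$ is forced above $M$, so the $P_{\leq M}$ projection annihilates it. Second, even in the $u_{lo}u_{hi}u_{hi}$ and $u_{hi}^3$ cases, the decay factors that come out of the refined bilinear estimates are of the form $(M/N_{hi})^{1-2\delta}$; these are only $\lesssim (\log(N_*/N'))^{-\theta}$ because $N_{hi}/M \gtrsim (\log(N_*/N'))^2$, not because of any intrinsic smallness of $u_{hi}$. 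A dyadic-width pigeonhole does give a collar with small $\cY^{1/2}$ norm, but the collar term is not where the danger lies — the danger is the frequency-adjacent $(lo)^2 hi$ interaction and the absence of ratio decay, and you have not addressed either. To repair the proposal you would need to fix the collar width to a specific poly-logarithmic scale, exploit the resulting gap for the bilinear ratio gains, and observe the convolution vanishing for the $(lo)^2 hi$ case.

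Separately, your concern about $P_{\leq M}u_{lo} \neq u_{lo}$ is a red herring: one applies $P_{\leq M}$ to the full equation, giving $\square u_{lo} + u_{lo} = P_{\leq M}F(u,u,u)$ exactly, and then decomposes $u$ inside $F$ — nothing in the argument needs $P_{\leq M}$ to act as the identity on $u_{lo}$.
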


\begin{proof}
In this proof we allow the value of the parameter $\theta > 0$ to change from line to line, and we allow implicit constants to depend on the various parameters involved. Given dyadic frequencies $N'$, $N_*$, the frequency interval $[N', N_*]$ contains  $O\left(\frac{\log (N_* / N')}{\log\log (N_* / N') }\right)$ intervals of the form 
\[
I_\alpha := [N'(\log (N_* / N'))^{\alpha}, N'(\log (N_* / N'))^{\alpha + 2}] \qquad \textup{for  }\alpha = 2k, k \in \mathbb{N}.
\]
Since $L_{t,x}^4$ bounds imply $\cX^{1/2}$ bounds by Proposition \ref{prop:xs_solN_1ds}, and we have the embedding $\cX^{1/2} \hookrightarrow \cY^{1/2}$, we obtain $\cY^{1/2}$ bounds for solutions of the nonlinear Klein-Gordon given the assumption \eqref{equ:uniform_l42}. Consequently, by \eqref{cor:orthog} and we conclude that there must be some frequency interval $I_\alpha$ where
\begin{align}
\label{equ:med_smallness}
&\|(P_{\leq N' (\log (N_* / N'))^{\alpha + 2}} - P_{\leq N' (\log (N_* / N'))^\alpha}) \,u \|_{\cY_T^{1/2}} 
\lesssim \left(\log \frac{N_*}{N'}\right)^{-\theta} .
\end{align}
Fix this $\alpha$. We introduce the notation $M := N' (\log (N_* / N'))^\alpha$, and  we define
\begin{align}
&u_{lo} = P_{\leq M} , \qquad  u_{med} = \bigl (P_{\leq M (\log (N_* / N'))^{2}} - P_{\leq M} \bigr) u, \qquad  u_{hi} = \bigr (1 - P_{\leq M (\log (N_* / N'))^{2}}\bigr)u.
\end{align}
Let $a,b,c \in \{lo,\, med,\, hi\}$, then we can decompose the nonlinearity as
\begin{align}
P_{\leq M} F(u, u ,u) = P_{\leq M} F(u_{lo}, u_{lo}, u_{lo}) + \sum_{\max(a,b,c)\, \geq\, med} P_{\leq M} F(u_a, u_b, u_c).
\end{align}
By \eqref{equ:med_smallness} and \eqref{equ:ys_est}, any term involving $u_{med}$ will be an error term. Hence, we only need to consider terms with $u_{hi}$ and $u_{lo}$ components. We will use largely the same analysis used to prove Theorem \ref{thm:nonlinear_est}, but in this case, the dual function will always be localized to low frequencies. First we consider
\[
P_{\leq M} F(u_{lo}, u_{hi}, u_{hi}) = \underset{N_3, N_0 \lesssim M}{\sum_{ N_1, N_2} } P_{N_0} P_{\leq M} F(u_{N_3}, u_{N_2}, u_{N_1}),
\]
which we will treat with similar estimates to those used to prove \eqref{equ:multi2}. By the convolution requirement, the term above will vanish unless $N_i \sim N_j$ for some $i \neq j$, hence we may assume without loss of generality that
\[
N_3 \lesssim N_2 \lesssim N_1, \qquad N_1 \sim N_2,
\]
and we bound
\begin{align}
\left| \iint u_{N_3} u_{N_2} u_{N_1} v_{N_0} dx dt \, \right| &\leq \|u_{N_3} u_{N_1} \|_{L_{t,x}^2} \| u_{N_1} v_{N_0}\|_{L_{t,x}^2} \\
& \lesssim \left( \frac{N_1 N_2}{N_3 N_0} \right)^\delta N_3 N_0 \,  \|u_{N_1} \|_{V_{\pm_1}^2}   \|u_{N_2} \|_{V_{\pm_2}^2}\|u_{N_3} \|_{V_{\pm_3}^2} \|v_{N_0} \|_{V_{\pm_0}^2} .
\end{align}
Applying Cauchy-Schwarz in $N_0, N_3 \lesssim M$, we obtain
\begin{align}
&  \sup_{\|w\|_{\cY_\pm^{1/2}} = 1}  \sum_{N_0 \lesssim M}   \underset{N_1, N_2 \sim hi}{ \sum_{N_3 \lesssim \, M } }\left( \frac{N_1 N_2}{N_3 N_0} \right)^\delta N_3 N_0 \,  \|u_{N_1} \|_{V_{\pm_1}^2}   \|u_{N_2} \|_{V_{\pm_2}^2} \|u_{N_3} \|_{V_{\pm_3}^2} \|v_{N_0} \|_{V_{\pm_0}^2}    \\
& \lesssim \|u_3\|_{\cY_{\pm_3}^{1/2}}  \sum_{N_1, N_2 \sim hi }  \,\frac{ M^{1 - 2\delta}}{(N_1 N_2)^{\frac{1}{2} - \delta}} (N_1 N_2)^{\frac{1}{2}}  \|u_{N_1} \|_{V_{\pm_1}^2} \|u_{N_2} \|_{V_{\pm_2}^2}  .
\end{align}
In particular, by the restriction
\begin{align}
\label{equ:freq_sep}
N_2 , N_1 \gtrsim M (\log (N_* / N'))^{2},
\end{align}
we can bound the above multilinear estimate by
\[
 \|u_3\|_{\cY_{\pm_3}^{1/2}}  \sum_{N_1, N_2 \sim hi }  \frac{1}{(\log(N_* / N'))^{2 - 4\delta}} \,(N_1 N_2)^{\frac{1}{2}} \|u_{N_1} \|_{V_{\pm_1}^2} \|u_{N_2} \|_{V_{\pm_2}^2},
\]
and by applying Cauchy-Schwarz in $N_2 \sim N_1$, we see this term is part of the error in \eqref{equ:low}.

\noindent For the term with high-frequencies
\begin{align}
\label{equ:high_high}
P_{\leq M} F(u_{hi}, u_{hi}, u_{hi}) 
\end{align}
we can once again assume without loss of generality that $N_3 \lesssim N_2 \sim N_1$. We obtain 
\begin{align}
\left| \iint u_{N_3} u_{N_2} u_{N_1} v_{N_0} dx dt \, \right| &\leq \|u_{N_3} u_{N_2} \|_{L_{t,x}^2} \| u_{N_1} v_{N_0}\|_{L_{t,x}^2} \\
& \lesssim \left( \frac{N_1 N_2}{N_0 N_3} \right)^\delta N_3\, N_0 \,  \|u_{N_1} \|_{V_{\pm_1}^2}   \|u_{N_2} \|_{V_{\pm_2}^2} \|u_{N_3} \|_{V_{\pm_3}^2} \|v_{N_0} \|_{V_{\pm_0}^2} .
\end{align}
By Cauchy-Schwarz in $N_0$ and $N_3$, we obtain
\begin{align}
& \sup_{\|w\|_{\cY_\pm^{1/2}} = 1}   \sum_{N_0 \lesssim M}  \,\, \sum_{N_1, N_2, N_3 \sim hi}  \left( \frac{N_1 N_2}{N_0 N_3} \right)^\delta N_3\, N_0 \,  \|u_{N_1} \|_{V_{\pm_1}^2}   \|u_{N_2} \|_{V_{\pm_2}^2} \|u_{N_3} \|_{V_{\pm_3}^2} \|v_{N_0} \|_{V_{\pm_0}^2} \\
& \lesssim   \|u_3\|_{\cY_{\pm_3}^{1/2}}  \sum_{N_1, N_2 \sim hi}  \,\frac{ M^{\frac{1}{2} - \delta} }{N_1^{\frac{1}{2} - \delta}} \, (N_1 N_2)^{\frac{1}{2}}   \|u_{N_1} \|_{V_{\pm_1}^2} \|u_{N_2} \|_{V_{\pm_2}^2} ,
\end{align}
and once again by \eqref{equ:freq_sep} and Cauchy-Schwarz in $N_2 \sim N_1$ we conclude that such expressions contribute to the error term in \eqref{equ:low}. Finally, we note that by the convolution requirement, the term
\[
P_{\leq M} F(u_{lo}, u_{lo}, u_{hi})
\]
vanishes provided $\log(N_* / N')^2 > 8$, which concludes the proof.
\end{proof}

\begin{rmk}
The key difference between the proofs of Propositions \ref{prop:error_eqn} and \ref{thm:multilinear_ests} is that the low frequency is always bounded by $M$ in Propositions \ref{prop:error_eqn}, hence when we apply Cauchy-Schwarz in the low frequency, we only lose a factor of $M$, instead of the next largest frequency. This fact, together with the manufactured frequency separation, enables us to achieve the necessary decay.
\end{rmk}

Finally we arrive at the proof of the main result of this section.

\begin{proof}[Proof of Theorem \ref{thm:local_compare}]
Let $u$ and $\widetilde{u}$ denote the solutions to \eqref{equ:cubic_nlkg} with initial data $(u_0, u_1)$ and $(\widetilde{u}_0, \widetilde{u}_1)$, respectively. Let $K > 0$ be such that
 \[
\|u\|_{L_{t,x}^4([0,T] \times \bT^3)} \,\, + \,\, \|\widetilde{u}\|_{L_{t,x}^4([0,T] \times \bT^3)} \leq K.
 \] 
 Using the arguments above, we can find $M$ such that $u_{lo} = P_{\leq M} u_{lo}$ such that Proposition \ref{prop:error_eqn}, the low frequency component satisfies the equation
\begin{align}
\square \,u_{lo} + u_{lo} = P_{\leq M} F(u_{lo}, u_{lo}, u_{lo}) + \cO_{K, R, T}( (\log (N_* / N'))^{-\theta})
\end{align}
and similarly for $\widetilde{u}_{lo} = P_{\leq M} \widetilde{u}_{lo}$ for the same cut-off $M$, with slightly different error terms. Since $(u_{lo}  - \widetilde{u}_{lo} ) \big|_{t= 0 } = 0$, let $N_*$ be chosen sufficiently large depending on $R, K$ and $T$ so that the smallness requirement of Proposition \ref{prop:xs_stability} is satisfied. Then the result follows from the stability theory, Remark \ref{rmk:bound_requirements} and the fact that the bounds for the equation with truncated nonlinearity are uniform in $N$.
\end{proof}

\section{Probabilistic bounds for the nonlinear component of the flow}
\label{sec:compactness}

\medskip
In this section, we will show boundedness for the nonlinear component of the cubic nonlinear Klein-Gordon equation on the subsets $\Sigma_\lambda \subset \Sigma$. In the sequel we let $F(u) = u^3$. We will begin with the proof of a local boundedness property for the nonlinearity $F(u)$. The argument is based on Strichartz estimates together with the improved averaging effects for the free evolution of initial data $(u_0, u_1) \in \Sigma_\lambda$, where $\Sigma_\lambda$ was defined in \eqref{equ:sigma_lambda}, as well as the uniform bounds on the nonlinear component of those global solutions from Proposition \ref{prop:unif_global2} and Proposition \ref{prop:unif_trunc}. 

\medskip
We wish to obtain bounds on $\|F[u(t)] \|_{X^{s_2 -1,- \frac{1}{2}+, \delta}}$ for solutions to the cubic nonlinear Klein-Gordon equation \eqref{equ:cubic_nlkg} with initial data $(u_0, u_1) \in \Sigma_\lambda$. We will perform the estimates with $b = \frac{1}{2}+$ and hence, we need to estimate the expression
\begin{align}
\label{equ:nonlinearity_estimate}
\left[ \sum_{n} \int d\tau \frac{|\widehat F(n, \tau) |^2}{\langle n\rangle^{2(1- s_2)} \langle |\tau| - \langle n\rangle\rangle^{2(1 - b)}} \right]^{\frac{1}{2}}
\end{align}
for $F(u) = u^3$ and $b > 1/2$. If we expand $F(u)$ for $u(t) = S(t)(u_0, u_1) + w(t)$, then we need to consider terms of the form $u^{(1)} u^{(2)} u^{(3)}$ for $u^{(i)}$ either
\begin{enumerate}
\item[(I)\,] the free evolution $S(t)(u_0, u_1)$ of initial data $(u_0, u_1) \in \Sigma_\lambda \subset \cH^{1/2}$, or
\item[(II)\:] the nonlinear component, $w(t)$, of a solution to \eqref{equ:cubic_nlkg} with initial data $(u_0, u_1) \in \Sigma_\lambda$.
\end{enumerate}
We will refer to these as type (I) or type (II) functions. We define
\[
c_i(n_i, \tau_i) = \langle n_i \rangle^{s_1} \langle| \tau_i| - \langle n_i\rangle \rangle^{b} |\widehat{u}^{(i)} (n_i, \tau_i) |,
\]
then $\|c\|_{L_\tau^2 \ell_n^2  (\mathbb{R} \times \bT^3)} = \|u^{(i)}\|_{X^{s_1, b}  (\mathbb{R} \times \bT^3)}$.  By duality, \eqref{equ:nonlinearity_estimate} can be estimated by
\begin{align}
\label{equ:duality}
\sum_{n = n_1 + n_2 + n_3} \int_{ \tau = \tau_1 + \tau_2 + \tau_3} \,\prod_{i=1}^3 \frac{c_i(n_i, \tau_i)}{\langle n_i \rangle^{s_1} \langle |\tau_i| - \langle n_i \rangle \rangle^{b}}\frac{v(n, \tau)}{\langle n \rangle^{1-s_2} \langle|\tau|- \langle n\rangle\rangle^{1- b} } \,d\tau
\end{align}
where $\|v\|_{L_\tau^2 \ell_n^2  (\mathbb{R} \times \bT^3)} \leq 1$. We remark that this notation should not be confused with the initial data for \eqref{equ:cubic_nlkg}, and it will be clear from the context which we are considering.

\medskip
We restrict the $n_i$ and $n$ to dyadic regions $|n_i| \sim N_i$ and $|n| \sim N$. We will implicitly insert  a time cut-off with each function but we will omit the notation, since, in the usual way we can take extensions of the $u_i$ and then take infimums. The ordering of the size of the frequencies will not play a role in this argument. We do not repeat these considerations. Letting
\begin{align}
\label{equ:viwi}
\widehat U_{N_i}(n_i, \tau_i) = \frac{c_i(n_i, \tau_i)}{\langle |\tau_i| - \langle n_i \rangle\rangle^{b}} \chi_{\{|n_i| \sim N_i\}}, \qquad \widehat V_N(n, \tau) = \frac{v(n, \tau)}{\langle |\tau| - \langle n\rangle \rangle^{1- b}} \chi_{\{|n| \sim N\}}
\end{align}
we will need to estimate expressions of the form
\begin{align}
\label{equ:local_est_expression}
(N_1 N_2 & N_3)^{-s_1} N^{-(1-s_2)} \int_{\bR_t} \int_{\bT^3_x} \,\, \prod_{i=1}^3 U_{N_i} \cdot V_N \, dx\, dt.
\end{align}
We will use expressions \eqref{equ:duality} and \eqref{equ:local_est_expression} as starting points in proofs of the subsequent propositions. In the sequel, we will always take the constant $\gamma = s_1$ in our definition of $\Sigma$.

\subsection{Boundedness of the flow map} 

\begin{prop}[Local boundedness]
\label{prop:bddness}
Consider the cubic nonlinear Klein-Gordon equation \eqref{equ:cubic_nlkg}. Then there exists $s_1 < \frac{1}{2} < s_2$ with $s_1, s_2$ sufficiently close to $1/2$ such that for any $\lambda, R, T > 0$, for every $(u_0, u_1) \in \Sigma_\lambda \cap \textbf{B}_R$ and for any interval $I \subset [0,T]$ with $|I| = \delta$, the nonlinearity satisfies the bound
\begin{equation}
\label{equ:local_bds}
\begin{split}
\|&F(u) \|_{X^{s_2 -1, - \frac{1}{2}+, \delta}(I \times \bT^3)} \leq C (\lambda,R,T) \, \delta^c \,\Bigl( 1 +  \|u\|^{9/4}_{X^{s_1, \frac{1}{2}+, \delta}(I \times \bT^3)}   \Bigr).
\end{split}
\end{equation}
where $(u, \partial_t u)$ is the global solution to the cubic nonlinear Klein-Gordon equation \eqref{equ:cubic_nlkg} with initial data $(u_0, u_1)$.
\end{prop}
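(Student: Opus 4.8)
The plan is to estimate the cubic expression $F(u)=u^3$ by expanding $u = S(t)(u_0,u_1) + w$ and treating the resulting eight terms $u^{(1)}u^{(2)}u^{(3)}$, where each factor is of type (I) (a free evolution with data in $\Sigma_\lambda \cap \textbf{B}_R$) or type (II) (a nonlinear component $w$ of a solution). The basic object to bound is the $X^{s_2-1,-\frac12+,\delta}$ norm of $F(u)$, which by duality reduces to controlling the multilinear expression \eqref{equ:duality}, or equivalently the frequency-localized form \eqref{equ:local_est_expression}. First I would extend all functions from $I$ to $\bR$ (taking infima over extensions at the end), insert sharp time cutoffs of length $\delta$, and dyadically decompose all frequencies $|n_i|\sim N_i$, $|n|\sim N$. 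The key point is that the convolution constraint $n = n_1+n_2+n_3$ forces two of the frequencies to be comparable, so there is always a "high-high" interaction to exploit via bilinear Strichartz, and I would organize the sum according to which pair is comparable and which frequency is the output.

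Next I would handle the two function types differently. For a type (II) factor $w$, the $\cH^1$-bounds from Proposition \ref{prop:unif_global2} (and Proposition \ref{prop:unif_trunc}) combined with the definition of $\Sigma_\lambda$ give $X^{1,b}$-type control, so such factors carry an extra derivative's worth of regularity relative to the critical exponent; the $\delta^c$ gain comes from the standard time-localization lemma for $X^{s,b}$ spaces (restricting to $|I|=\delta$ costs a negative power of the high frequency in the $b$-index but gains $\delta^c$ — one pays with the $\frac{1}{2}+$ versus $\frac{1}{2}$ discrepancy). For a type (I) factor $S(t)(u_0,u_1)$, I would use the improved space-time integrability built into $\Sigma_\lambda$: the $L^6_{t,x}$ bound \eqref{equ:lp_bds} on $(1-\Delta)^{\gamma/2}S(t)(u_0,u_1)$ with $\gamma = s_1$, and the local $L^\infty_x$ integrability from $\Theta_2$. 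These play the role that Strichartz estimates at regularity $s_1<\frac12$ would play but are unavailable deterministically — this is precisely the mechanism that lets us work below the critical regularity. The interpolated Strichartz bound \eqref{equ:interp} from Proposition \ref{prop:Strichartz} lets me put frequency-localized pieces of the $U_{N_i}$ into $L^r$ for $2\le r\le 4$ with the sharp $N^{\theta/2}$ loss, and I would balance these $L^r$ placements across the four functions (three $U_{N_i}$'s and the dual $V_N$) using Hölder so that the total derivative count matches: $3s_1$ from the inputs plus $(1-s_2)$ deficit on the output, against the $\frac12$-per-pair losses from the bilinear/Strichartz estimates. Choosing $s_1<\frac12<s_2$ both sufficiently close to $\frac12$ makes the net power of each $N_i$ negative (this is where the smallness $s_2-s_1$ enters), so the dyadic sums converge geometrically; the worst case is three type-(I) factors, and there the $L^6_{t,x}$ and $L^\infty$ bounds from $\Sigma_\lambda$ must absorb the entire loss, producing the constant $C(\lambda,R,T)$.

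The exponent $9/4$ on the $X^{s_1,\frac12+,\delta}$ norm and the "$1+$" structure arise as follows: each type-(II) factor contributes (after using the a priori $\cH^1$-bound, which is a constant $C(\lambda,T)$, and interpolating against its own $X^{s_1,b}$ norm to recover the scaling) a fractional power of $\|u\|_{X^{s_1,b,\delta}}$ strictly less than $1$, while each type-(I) factor contributes a constant depending on $\lambda,R,T$; summing the homogeneities over the eight terms and taking the worst one gives total degree $\le 9/4$, with the pure type-(I) term contributing degree $0$ (hence the additive "$1$"). I would also need the elementary bound relating $\|w\|_{X^{1,b,\delta}}$ to $\|u\|_{X^{s_1,b,\delta}}$ via Duhamel and the energy bound, or alternatively phrase everything directly in terms of $u$ using $w = u - S(t)(u_0,u_1)$ and $\|S(t)(u_0,u_1)\|_{X^{s_1,b,\delta}}\lesssim \|(u_0,u_1)\|_{\cH^{s_1}} \le R$ plus the $\Sigma_\lambda$-structure. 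The main obstacle I anticipate is the bookkeeping in the mixed terms — ensuring that in every one of the eight cases the combination of (a) which two frequencies are high-high, (b) which factors are type (I) versus (II), and (c) how to distribute $L^6$, $L^4$, $L^\infty$, and $L^2$ norms among the four functions — simultaneously closes the derivative count \emph{and} leaves a strictly negative power of the maximal frequency for summation. In particular the case where the output frequency $N$ is the largest (so the dual function $V_N$ carries high frequency) is delicate, since then the bilinear gain must come from pairing $V_N$ with a type-(I) factor and one must check that the $L^6$-type control on $S(t)(u_0,u_1)$ survives the frequency localization with the right power of $\gamma = s_1$; this is the step I would write out most carefully.
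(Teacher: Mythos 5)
Your proposal captures the overall architecture of the paper's proof correctly: expand $u = S(t)(u_0,u_1)+w$, reduce to the dual form \eqref{equ:duality}, do a case analysis by the number of type-(I) versus type-(II) factors, use the $L^6_{t,x}$ and $L^\infty_x$ integrability built into $\Sigma_\lambda$ for the free evolutions, use interpolated Strichartz \eqref{equ:interp} for the type-(II) factors, and get the $9/4$ exponent from fractional interpolation with the worst case being three type-(II) factors at homogeneity $3/4$ each. The "$1+$" from the all-type-(I) term and the $\delta^c$ gain from the $b$-index gap are also right. However, two points diverge from the paper and one is a genuine gap.

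First, the emphasis on bilinear Strichartz and on organizing by high-high frequency pairings is a misdirection for this proposition. The paper's proof of \eqref{equ:local_bds} does not use the convolution constraint $n=n_1+n_2+n_3$ at all beyond the fact that it defines the sum; the dyadic sums in each case converge simply because $s_1$ is close enough to $1/2$ that the net exponent on each of $N_1,N_2,N_3,N$ is strictly negative, e.g.\ $(N_1N_2N_3)^{\frac{21}{60}-\frac34 s_1}N^{-(1-\theta_2/2-s_2)}$ in Case (A). No high-high gain is needed. The bilinear refinements (Propositions~\ref{prop:refined_l2}, \ref{prop:multilinear}) are deployed later, in the $U^p/V^p$ framework of Sections~\ref{sec:multi}--\ref{sec:approx}, where they are essential for the stability theory; pulling them into this $X^{s,b}$ argument would add complications, particularly because the type-(I) factors need the probabilistic $L^6$ bound rather than a deterministic bilinear Strichartz gain, and mixing the two would require its own verification.

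Second, and this is the substantive gap: you assert that the $\cH^1$ bounds from Proposition~\ref{prop:unif_global2} "give $X^{1,b}$-type control" on the type-(II) factors $w$. This is not correct — the almost sure global well-posedness theorem delivers $L^\infty_t \cH^1_x$ bounds on $(w,\partial_t w)$, and $L^\infty_t H^1_x$ does not imply $X^{1,b}$ for $b>\frac12$ (the implication goes the other way). Deriving an $X^{1,b}$ bound on $w$ would require running Duhamel at regularity $1$ and controlling $\|F(u)\|_{X^{0,-\frac12+}}$, which is exactly the kind of nonlinear estimate one is trying to establish here and would be circular. The paper avoids this entirely: it splits each $U_{N_i}$ factor as $U_{N_i}^{\alpha}U_{N_i}^{1-\alpha}$ with $\alpha=\frac14$, puts the $\alpha$-power into an $L^p$ space controlled via Sobolev embedding by $\|u^{(i)}\|^{1/4}_{L^\infty_t H^1_x}$ (where the $\Sigma_\lambda$ bound applies directly), and the $(1-\alpha)$-power into $L^{(1-\alpha)5}$ controlled by the interpolated Strichartz estimate at regularity $s_1$ via $\|c_i\|^{3/4}_{L^2}=\|u^{(i)}\|^{3/4}_{X^{s_1,b}}$. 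You in fact describe precisely this interpolation two paragraphs later when explaining the $9/4$ count, so the instinct is there — but the "$X^{1,b}$" phrasing is wrong and, if taken literally as the mechanism, the argument would not close.
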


\begin{proof}
For any solution $u(t) = S(t)(u_0, u_1) + w(t)$, our computations will yield \eqref{equ:local_bds} with the nonlinear component of the solutions, $w$, on the right-hand side instead of $u$. Now, for any $s, b \in \bR$ and any interval $I \subset [0,T]$ with $|I| = \delta$ and $\inf I = t_0$, and $\eta(t)$ a Schwartz time-cutoff adapted to that interval we claim that we it sufficed to obtain \eqref{equ:local_bds} with the nonlinear components of the solutions on the right-hand side. Indeed,
\begin{align*}
\| \eta(t) w \|_{X^{s, b}(I \times \bT^3)} &= \| \eta(t)(u - S(t)(u_0, u_1) )\|_{X^{s, b}(I \times \bT^3)} \\&
\lesssim \|\eta(t) S(t) (u_0, u_1)\|_{X^{s, b}(I \times \bT^3)} + \|\eta(t) u\|_{X^{s, b}(I \times \bT^3)} \\
& \lesssim \| (u(t_0), \partial_t u(t_0))\|_{\cH^{s}} + \|\eta(t) u\|_{X^{s, b}(I \times \bT^3)} .
\end{align*}
Since the free evolution is bounded in $\cH^s$, we have
\[
\| (u(t_0), \partial_t u(t_0))\|_{\cH^{s}} \lesssim \|(u_0, u_1) \|_{\cH^s} + \|(w, \partial_t w) \|_{L^\infty\cH^s([0,T] \times \bT^3)},
\]
and since the terms on the right-hand side of \eqref{equ:local_bds} are computed with $s = s_1 < 1/2$, then by the choice of $\Sigma_\lambda$, we have that for any such $I \subset [0,T]$,
\[
\| \eta(t) w \|_{X^{s, b}(I \times \bT^3)} \lesssim \|\eta(t) u\|_{X^{s, b}(I \times \bT^3)}  + C(\lambda, R, T)
\]
which will yield \eqref{equ:local_bds}. The key point here is that on any interval $[0,T]$, the choice of $\Sigma_\lambda$ yields uniform control on bounds of the Sobolev norm of solutions. 
We will not repeat these considerations. 

\medskip
We analyze the different combinations of $u_i $ systematically. The argument only depends on the number of $u_i$ which are of type (I) or type (II), so we will only present one combination from each case.

\noindent $\bullet$\textbf{ Case (A): All $u^{(i)}$ of type (II)}. Since
\[
\| U_{N_i}^\alpha U_{N_i}^{1-\alpha} \|_{L^{4+ \varepsilon_2}} \lesssim \| U_{N_i}\|_{L^{p\alpha}}^\alpha \|U_{N_i}\|_{L^{(1-\alpha)5}}^{1-\alpha} 
\]
for $p = \frac{(4+\varepsilon_2)5}{1 - \varepsilon_2}$, we use H\"older's inequality to estimate \eqref{equ:duality} by
\begin{align*}
(N_1 N_2 & N_3)^{-s_1} N^{-(1-s_2)} \int_{\bR_t} \int_{\bT^3_x} \,\, \prod_{i=1}^3 U_{N_i} \cdot V_N \, dx\, dt \\
& \lesssim (N_1 N_2 N_3)^{-s_1} N^{-(1-s_2)} \prod_{i=1}^3 \| U_{N_i}\|_{L^{p\alpha}}^\alpha \|U_{N_i}\|_{L^{(1-\alpha)5}}^{1-\alpha}  \|V_N\|_{L^{4-3\varepsilon_1}} .
\end{align*}
Taking $\alpha = 1/4 $, we have $(1-\alpha) 5 < 4$ and for $\varepsilon_2$ sufficiently small, $p \alpha < 6$.  Provided
\begin{align}
\label{equ:rho_cond}
 1 - b >  \frac{4 - 6 \varepsilon_1 }{ 2(4 - 3 \varepsilon_1)} =: \frac{\theta_2}{2},  
\end{align}
 
we can bound
\[
\frac{1}{\langle |\tau| - \langle n \rangle\rangle^{1- b}} = \frac{1}{\langle |\tau| - \langle n \rangle\rangle^{1- b - \frac{\theta_2}{2}+}} \frac{1}{\langle |\tau| - \langle n \rangle\rangle^{\frac{\theta_2}{2} + }} \lesssim \frac{1}{\langle |\tau| - \langle n \rangle\rangle^{\frac{\theta_2}{2} + }}
\]
and we can apply Strichartz estimates \eqref{equ:interp} with $r = \frac{15}{4}$ and $\theta_1 = 14/15$ for the $U_{N_i}$ and Strichartz estimates with $r = 4 - 3 \varepsilon_1$ and $\theta_2 = \frac{4 - 6 \varepsilon_1 }{ 4 - 3 \varepsilon_1}$ for $V_N$. By Sobolev embedding, accounting for the $N_i^{s_1}$ factors in the expressions for the $U_{N_i}$, we obtain 
\[
\lesssim (N_1 N_2 N_3)^{\frac{21}{60}  - \frac{3}{4}s_1 } N^{-(1 -\frac{1}{2}\cdot  \frac{4 - 6 \varepsilon_1 }{ 4 - 3 \varepsilon_1} - s_2)} \prod_{i=1}^3\|u^{(i)}\|^{1/4}_{H^{1}(\bT^3)}  \prod_{i=1}^3 \|c_i\|^{3/4}_{L^2} .
\]
The expression on the right-hand side is summable for dyadic values of $N_i$ and $N$. By the definition of $\Sigma_\lambda$ \eqref{equ:sigma_lambda}, we obtain
 
\begin{align}
\label{equ:local_bds1}
\eqref{equ:duality}  \lesssim (\lambda + T)^{\frac{3}{4} +} \prod_{i=1}^3 \|u^{(i)}\|^{\frac{3}{4} }_{X^{s_1 , b,\, \delta}}.
\end{align}

\medskip
\medskip
\noindent $\bullet$\textbf{ Case (B):  $u^{(1)}$ of type (I) and $u^{(2)}, u^{(3)}$ of type (II)}.
In this case, we use H\"older's inequality and Strichartz estimates \eqref{equ:interp} with $r = \frac{18}{5}$ and $\theta = \frac{8}{9} $, and provided
\[
1 - b > \frac{4}{9},
\]
which holds if \eqref{equ:rho_cond} holds, we can bound
\begin{align*}
\eqref{equ:duality}& \lesssim (N_1 N_2 N_3)^{-s_1} N^{-(1-s_2)} \int U_{N_1} \,U_{N_2} \, U_{N_3}  \cdot V_N \, dx dt \\
& \lesssim (N_1N_2 N_3)^{-s_1} N^{-(1-s_2)} \|U_{N_1}\|_{L^6}  \|U_{N_2}\|_{L^{\frac{18}{5}}} \|U_{N_3}\|_{L^{\frac{18}{5}}} \|V_N\|_{L^{\frac{18}{5}}}\\
& \lesssim (N_1)^{- s_1} (N_2 N_3)^{ \frac{4}{9} - s_1}\, N^{-(\frac{5}{9} -s_2)}\, \|U_{N_1}\|_{L^6} \,  \|u^{(2)}\|_{X^{s_1 , b}} \|u^{(3)}\|_{X^{s_1, b}} .
\end{align*}
Noting how we defined $U_{N_1}$, we use the fact that $s_1 < \frac{1}{2}$ and that $(u_0, u_1) \in \Sigma_\lambda$, to estimate that piece, exploiting the uniform boundedness of the frequency projections in $L_x^p$ spaces. Once again, we obtain an estimate which summable for dyadic $N$ and $N_i$, yielding
\begin{align}
\label{equ:local_bds2}
\eqref{equ:duality} \lesssim \lambda \, \|u^{(2)}\|_{X^{s_1 ,b, \delta}} \|u^{(3)}\|_{X^{s_1,b, \delta}} .
\end{align}

\medskip
\medskip
\noindent $\bullet$\textbf{ Case (C): $u^{(1)}, u^{(2)}$ of type (I) and $u^{(3)}$ of type (II)}.
In this case, we use H\"older's inequality and Strichartz estimates \eqref{equ:interp} with $r = 3$ and $\theta = \frac{2}{3}$ and
\[
1- b > \frac{1}{3},
\]
which holds if \eqref{equ:rho_cond} holds and estimate
\begin{align*}
\eqref{equ:duality}& \lesssim (N_1 N_2N_3)^{-s_1} N^{-(1-s_2)} \int U_{N_1} \,U_{N_2} \, U_{N_3}  \cdot V_N \, dx dt \\
& \lesssim (N_1 N_2N_3)^{-s_1} N^{-(1-s_2)} \|U_{N_1}\|_{L^{6}} \|U_{N_2}\|_{L^{6}}  \|U_{N_3}\|_{L^{3}}  \|V_N\|_{L^{3}}\\
& \lesssim (N_1 N_2)^{-s_1} (N_3 )^{\frac{1}{3} - s_1}\, N^{-(\frac{2}{3} -s_2 -)} \,  \|U_{N_1}\|_{L^{6}} \|U_{N_2}\|_{L^{6}}\, \|u^{(3)}\|_{X^{s_1 , b}}   
\end{align*}
which is again summable for dyadic $N$ and $N_i$, yielding
\begin{align}
\label{equ:local_bds3}
\eqref{equ:duality} \lesssim \lambda^2 \,  \|u^{(3)}\|_{X^{s_1, b}} .
\end{align}

\medskip
\medskip
\noindent $\bullet$\textbf{ Case (D): All $u^{(i)}$ of type (I)} .
In this case we estimate
\begin{align*}
\eqref{equ:duality}& \lesssim (N_1 N_2N_3)^{-s_1} N^{-(1-s_2)} \int U_{N_1} \,U_{N_2} \, U_{N_3}  \cdot V_N \, dx dt \\
& \lesssim (N_1 N_2N_3)^{-s_1} N^{-(1-s_2)} \|U_{N_1}\|_{L^{6}} \|U_{N_2}\|_{L^{6}}  \|U_{N_3}\|_{L^{6}}  \|V_N\|_{L^{2}}\\
& \lesssim (N_1 N_2N_3)^{-s_1}\, N^{-(1 -s_2 )} \,  \|U_{N_1}\|_{L^{6}} \|U_{N_2}\|_{L^{6}}\, \|U_{N_3}\|_{L^{6}}   %
\end{align*}
which is again summable for dyadic $N$ and $N_i$, yielding
\begin{align}
\label{equ:local_bds4}
\eqref{equ:duality} \lesssim \lambda^3.
\end{align}
Finally, we note that for $- \frac{1}{2} < - (1 - b)$, we have
\begin{align}
\label{equ:time_loc}
\|F(u) \|_{X^{s_2 -1,  -(1 - b),\: \delta}} \lesssim \delta^{\varepsilon} \, \|F(u) \|_{X^{s_2 -1, - (1 - b) + \varepsilon,\, \delta}}
\end{align}
where the implicit constant depends only on $\varepsilon$. To obtain the small time factor in the estimates, we can perform the previous estimates replacing $b$ with $b + \varepsilon$ on the $V_N$ factor for some small $\varepsilon > 0$. Provided $b = \frac{1}{2}+$ is chosen sufficiently close to $\frac{1}{2}$ and $\varepsilon > 0$ is sufficiently small so as to ensure that \eqref{equ:rho_cond} continues to hold for $b + \varepsilon$, we obtain the desired estimate. We will not repeat this consideration in the following Propositions.

\medskip
Combining \eqref{equ:local_bds1}, \eqref{equ:local_bds2}, \eqref{equ:local_bds3},  \eqref{equ:local_bds4} and \eqref{equ:time_loc} yields 
\begin{align}
\|&F(u) \|_{X^{s_2 -1, - \frac{1}{2}+, \delta}}  \leq C (\lambda, R) \, \delta^c \,\Bigl( 1 +   \|u\|^{9/4}_{X^{s_1, \frac{1}{2}+, \delta}}   \Bigr). \qedhere
\end{align}
\end{proof}

\begin{rmk}
\label{rmk:trunc_bds}
If we consider $(u_0, u_1) \in \Sigma_\lambda \cap \textbf{B}_R$ and instead we look at solutions to the cubic nonlinear Klein-Gordon equation with truncated nonlinearity, $u_N$, then we obtain the same bounds
\begin{equation*}
\label{equ:local_bds_trunc}
\begin{split}
\| &F(u_N) \|_{X^{s_2 -1, \frac{1}{2}+, \delta}} \leq C (\lambda, R, T) \, \delta^c \,\Bigl( 1  +  \|u_N\|^{9/4}_{X^{s_1, \frac{1}{2}+, \delta}}   \Bigr),
\end{split}
\end{equation*}
and similarly for $P_N u_N$. Indeed, by the choice of $\Sigma_\lambda$, the nonlinear component of the solution $w_N$ will satisfy the same bounds as the nonlinear component, $w$, of the solution to the full equation. Since the linear components of $u$ and $u_N$ are the same, certainly we have the same bounds on $S(t)(u_0, u_1)$ and we can repeat the arguments in the previous proof to obtain \eqref{equ:local_bds_trunc}. 
\end{rmk}

\subsection{Boundedness of the flow with truncated nonlinearity}

We defined the truncated nonlinearity $F_N(u_N) = P_N [ (P_Nu_N)^3]$. A direct consequence of Proposition \ref{prop:unif_trunc}, Proposition \ref{prop:bddness} and our choice of $\Sigma_\lambda$ is the following local boundedness result for the truncated nonlinear Klein-Gordon equation. It is important to note that the bounds we obtain are uniform in the truncation parameter.

\begin{prop}[Local boundedness for the truncated equation]
\label{prop:trunc_bddness}
Consider the cubic nonlinear Klein-Gordon equation with truncated nonlinearity \eqref{equ:cubic_nlkg_trun}. Then there exists $s_1 < \frac{1}{2} < s_2$ with $s_1, s_2$ sufficiently close to $1/2$ such that for any $\lambda, R, T > 0$, for every $(u_0, u_1) \in \Sigma_\lambda \cap \textbf{B}_R$ and for any interval $I \subset [0,T]$ with $|I| = \delta$, the truncated nonlinearity satisfies the bound
\begin{equation}
\label{equ:local_bds10}
\begin{split}
\| &F_N(u_N) \|_{X^{s_2 -1 , -\frac{1}{2}+, \delta}(I \times \bT^3)} \leq C (\lambda, R,T) \, \delta^c \,\Bigl( 1+  \|u_N\|^{9/4}_{X^{s_1, \frac{1}{2}+, \delta}(I \times \bT^3)}   \Bigr).
\end{split}
\end{equation}
where $(u_N, \partial_t u_N)$ is the global solution to \eqref{equ:cubic_nlkg_trun} with initial data $(u_0, u_1)$.
\end{prop}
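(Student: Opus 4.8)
The plan is to deduce Proposition~\ref{prop:trunc_bddness} directly from the proof of Proposition~\ref{prop:bddness}, exploiting the structural similarity between the two equations on the set $\Sigma_\lambda$. First I would decompose a solution $u_N$ to the truncated equation \eqref{equ:cubic_nlkg_trun} as $u_N(t) = S(t)(u_0, u_1) + w_N(t)$, just as in the untruncated case, where $w_N$ is the nonlinear component. The key observation is that the free evolution $S(t)(u_0,u_1)$ appearing here is \emph{exactly the same} as the one appearing in the decomposition of the full solution $u$; in particular the type (I) functions are identical in both problems, and the definition of $\Sigma_\lambda$ (specifically the set $M_\lambda$, see \eqref{equ:lp_bds}) provides exactly the same local-in-time $L^p$ and $W^{\gamma,p}$ bounds on these type (I) pieces. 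For the type (II) functions, I would invoke Proposition~\ref{prop:unif_trunc} together with the sets $H_\lambda$ and $K_\lambda$ in the definition \eqref{equ:sigma_lambda} of $\Sigma_\lambda$: for $(u_0,u_1)\in\Sigma_\lambda\cap\textbf{B}_R$, the nonlinear components $w_N$ and their $L^4$ norms satisfy the same uniform-in-$N$ bounds $\|(w_N,\partial_t w_N)\|_{\cH^1}\le C(\lambda+|t|)^{1+\varepsilon}$ and $\|P_N u_N\|_{L^4}\le C(\lambda+|t|)^{1/2+\varepsilon}$ that $w$ and $u$ satisfy in Proposition~\ref{prop:unif_global2}.

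The next step is the reduction, identical to the one carried out at the start of the proof of Proposition~\ref{prop:bddness}: it suffices to prove the bound \eqref{equ:local_bds10} with the nonlinear component $w_N$ replacing $u_N$ on the right-hand side, since on any interval $I\subset[0,T]$ one has
\[
\|\eta(t) w_N\|_{X^{s,b}(I\times\bT^3)} \lesssim \|(u_0,u_1)\|_{\cH^s} + \|(w_N,\partial_t w_N)\|_{L^\infty\cH^s([0,T]\times\bT^3)} + \|\eta(t) u_N\|_{X^{s,b}(I\times\bT^3)},
\]
and the first two terms are controlled by $C(\lambda,R,T)$ uniformly in $N$ because $s_1<1/2$ and by the definition of $\Sigma_\lambda$. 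Then I would expand $F_N(u_N) = P_N[(P_N u_N)^3]$ and run the same four-case analysis (Cases (A)--(D), according to the number of type (I) versus type (II) factors) used in Proposition~\ref{prop:bddness}. The only new feature is the presence of the projection operators $P_N$: the outer $P_N$ can be discarded for free in the duality expression \eqref{equ:duality} since $\|P_N v\|_{L^2}\le \|v\|_{L^2}$ by Lemma~\ref{lem:lp_bds}, and the inner $P_N$'s acting on each factor $u_N$ are likewise harmless because the Strichartz-type estimates \eqref{equ:interp} and the $L^p_x$ bounds from the definition of $\Sigma_\lambda$ are all invoked after frequency-localization to dyadic blocks $|n_i|\sim N_i$, and the smoothed multipliers $\psi(|n|^2/N^2)$ are bounded by $1$ on each such block. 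Hence every estimate in Cases (A)--(D) goes through verbatim, with identical powers of $N_i$, $N$, and identical summability, yielding the same bounds \eqref{equ:local_bds1}--\eqref{equ:local_bds4} with $\lambda$-dependence and the small-time factor $\delta^c$ coming as before from \eqref{equ:time_loc}. The analogous statement for $P_N u_N$ in place of $u_N$ follows the same way, as already noted in Remark~\ref{rmk:trunc_bds}.

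The main obstacle — and it is a mild one — is bookkeeping rather than mathematics: one must check carefully that the uniform-in-$N$ character of all constants is genuinely preserved. The two places this matters are (i) the bounds on $w_N$, which are uniform in $N$ only because Proposition~\ref{prop:unif_trunc} (the truncated analogue of the Burq--Tzvetkov global theory) delivers uniform-in-$N$ bounds on the nonlinear component for data in $\Sigma$, and (ii) the $L^p\to L^p$ bounds on $P_N$, which are uniform in $N$ precisely because we use the \emph{smoothed} projection from \eqref{equ:trunc_op} and Lemma~\ref{lem:lp_bds}, not the sharp cutoff $\Pi_N$. Once these two uniformities are in hand, there is nothing in the case analysis that distinguishes the truncated equation from the full one, so the proof is essentially a remark. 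I would therefore write it as: ``This follows by repeating the proof of Proposition~\ref{prop:bddness}, using Proposition~\ref{prop:unif_trunc} in place of Proposition~\ref{prop:unif_global2} to control the type (II) factors, observing that the type (I) factors are unchanged, and discarding the projection operators $P_N$ using their uniform $L^p$ bounds from Lemma~\ref{lem:lp_bds}; all constants remain uniform in $N$.''
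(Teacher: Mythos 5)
Your proposal is correct and follows essentially the same route as the paper, which dispatches Proposition~\ref{prop:trunc_bddness} as a direct consequence of Proposition~\ref{prop:bddness}, Proposition~\ref{prop:unif_trunc}, and the construction of $\Sigma_\lambda$, with the key observations spelled out in Remark~\ref{rmk:trunc_bds}: the linear (type~(I)) pieces of $u$ and $u_N$ coincide, the nonlinear pieces $w_N$ inherit the same uniform-in-$N$ bounds via $H_\lambda$, $K_\lambda$, and Proposition~\ref{prop:unif_trunc}, and the projections $P_N$ are harmless by the uniform $L^p$ bounds of Lemma~\ref{lem:lp_bds} together with the trivial $X^{s,b}$-boundedness of Fourier multipliers with symbol bounded by $1$. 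Your write-up makes these points explicit in the same order and invokes the same lemmas, so it matches the intended argument.
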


\subsection{Continuity estimates for the flow map}
We need the following continuity-type estimate when we compare the full nonlinearity on solutions of the full flow to solutions of the truncated equation.
\begin{prop}
\label{prop:fin_bddness}
Consider the cubic nonlinear Klein-Gordon equation \eqref{equ:cubic_nlkg} and the cubic nonlinear Klein-Gordon equation with truncated nonlinearity \eqref{equ:cubic_nlkg_trun}.  Then there exists $s_1 < \frac{1}{2} < s_2$ with $s_1, s_2$ sufficiently close to $1/2$ such that for any $\lambda, R, T > 0$, for every $(u_0, u_1) \in \Sigma_\lambda \cap \textbf{B}_R$ and for any interval $I \subset [0,T]$ with $|I| = \delta$, the nonlinearity satisfies the bound
\begin{equation*}
\begin{split}
& \|F(u)-  F(u_N) \|_{X^{-\frac{1}{2}, - \frac{1}{2}+,\delta}(I \times \bT^3)} \\
&\leq c(\lambda, R, T) \, \delta^c \, \| u - u_N \|_{X^{\frac{1}{2}, \frac{1}{2}+,\delta}(I \times \bT^3)} \Bigl(1 +  \|u\|^{6/4}_{X^{s_1, \frac{1}{2}+,\delta}(I \times \bT^3)} +  \|u_N\|^{6/4}_{X^{s_1, \frac{1}{2}+,\delta}(I \times \bT^3)} \Bigr),
\end{split}
\end{equation*}
where $(u, \partial_t u)$ and $(u_N, \partial_t u_N)$ to the full and truncated equations, respectively, with data $(u_0, u_1)$.
\end{prop}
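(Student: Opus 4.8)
The plan is to reduce the statement to a multilinear estimate by means of the algebraic identity
\[
F(u) - F(u_N) = u^3 - u_N^3 = (u - u_N)\bigl(u^2 + u u_N + u_N^2\bigr),
\]
so that it suffices to bound in $X^{-\frac12, -\frac12+, \delta}(I \times \bT^3)$ each trilinear expression $(u - u_N)\,a\,b$ with $a, b \in \{u, u_N\}$. As in the proof of Proposition \ref{prop:bddness}, I would take extensions to $\bR$, insert Schwartz time cut-offs adapted to $I$, carry out the estimates on $\bR \times \bT^3$, and take infima over extensions; the factor $\delta^c$ is produced at the very end by the time-localization inequality \eqref{equ:time_loc} after replacing $b$ by $b + \varepsilon$ on the dual factor. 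By duality, the $X^{-\frac12,-\frac12+,\delta}$ norm of $(u-u_N)\,a\,b$ is controlled by
\[
\sum_{n = n_1+n_2+n_3}\int_{\tau=\tau_1+\tau_2+\tau_3}\frac{\widetilde{c}_1(n_1,\tau_1)}{\langle n_1\rangle^{1/2}\langle|\tau_1|-\langle n_1\rangle\rangle^{b}}\,\prod_{i=2}^{3}\frac{c_i(n_i,\tau_i)}{\langle n_i\rangle^{s_1}\langle|\tau_i|-\langle n_i\rangle\rangle^{b}}\,\frac{v(n,\tau)}{\langle n\rangle^{1/2}\langle|\tau|-\langle n\rangle\rangle^{1-b}}\,d\tau,
\]
where $\widetilde{c}_1 = \langle n_1\rangle^{1/2}\langle|\tau_1|-\langle n_1\rangle\rangle^{b}|\widehat{(u-u_N)}(n_1,\tau_1)|$ so that $\|\widetilde{c}_1\|_{L^2_\tau\ell^2_n} = \|u-u_N\|_{X^{1/2,b}}$, the $c_i$ encode $a$ and $b$ at regularity $s_1$ as in Proposition \ref{prop:bddness}, and $\|v\|_{L^2_\tau\ell^2_n}\le 1$.

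Next I would run the same case analysis as in the proof of Proposition \ref{prop:bddness}: decompose each of the two factors $a,b\in\{u,u_N\}$ as $S(t)(u_0,u_1)$ plus its nonlinear component, sorting them into type (I) (the free evolution, controlled through the definition of $\Sigma_\lambda$ and Corollaries \ref{cor:averaging_effects} and \ref{cor:asbdd}) and type (II) (the nonlinear components $w$ or $w_N$, which lie in $\cH^1$ and obey the uniform bounds of Propositions \ref{prop:unif_global2} and \ref{prop:unif_trunc}; by the choice of $\Sigma_\lambda$, the bounds for $w_N$ coincide with those for $w$ and are uniform in $N$). The factor $u-u_N$ is never split: it is estimated directly via the interpolated Strichartz inequality \eqref{equ:interp}, and one only needs to observe that at dyadic frequency $N_1$ this costs $N_1^{\theta/2}$ against the weight $N_1^{-1/2}$, which is favorable as soon as the H\"older exponent assigned to this slot is taken strictly below $4$ — precisely the role played by the margins $\varepsilon_1,\varepsilon_2$ in Proposition \ref{prop:bddness}. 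In each of the four cases (zero, one, or two of $\{a,b\}$ of type (I)), the remaining integral is handled by H\"older in space-time, \eqref{equ:interp} on the oscillatory factors, and Sobolev embedding on the type (II) factors, after which the resulting powers of $N,N_1,N_2,N_3$ must be checked to be summable. Each type (II) factor contributes a bound of the form $\|w\|_{\cH^1}^{1/4}\|w\|_{X^{s_1,b}}^{3/4}\lesssim C(\lambda,R,T)\,\|w\|_{X^{s_1,b}}^{3/4}$ (and likewise with $w_N$), while each type (I) factor contributes only a power of $\lambda$; since at most two such factors occur, the product of the nonlinear-component contributions is $\lesssim C(\lambda,R,T)\bigl(\|u\|_{X^{s_1,b,\delta}}^{6/4}+\|u_N\|_{X^{s_1,b,\delta}}^{6/4}\bigr)$ after Young's inequality on the cross term, which is the asserted shape.

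The main obstacle is the bookkeeping required to close the dyadic summation at the \emph{critical} target regularity $s_2=\tfrac12$ (dual exponent $\tfrac12$), where there is no spare $\varepsilon$ of room of the kind available in Proposition \ref{prop:bddness} with $s_2>\tfrac12$. The resolution is that the difference factor is placed at the full regularity $\tfrac12$ rather than at $s_1$, and the resulting gain of $\tfrac12-s_1>0$ on that one slot, together with the fact that the target $X^{-\frac12,\cdot}$ is weaker than $X^{s_2-1,\cdot}$, compensates exactly; one must nonetheless verify, especially in the case where both $a,b$ are of type (I) and no Sobolev factor is available to absorb excess powers, that choosing the H\"older exponents on the three oscillatory factors and the dual factor with a small margin (as with $\varepsilon_1$) keeps every geometric series convergent. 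Assembling the estimates from the four cases and appealing to \eqref{equ:time_loc} for the $\delta^c$ gain yields the stated bound. Finally, the analogous statement with $F_N$ on either argument, and with $P_N u_N$ in place of $u_N$, follows verbatim since, by Remark \ref{rmk:trunc_bds}, all the relevant bounds for the truncated equation agree with those for the full equation and are uniform in the truncation parameter.
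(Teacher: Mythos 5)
Your proposal matches the paper's proof in substance. The paper uses the pointwise inequality $|u^3 - u_N^3| \lesssim |u - u_N|(|u|^2 + |u_N|^2)$ rather than your algebraic identity $u^3 - u_N^3 = (u-u_N)(u^2 + uu_N + u_N^2)$, but this is an immaterial cosmetic difference, and the paper then sets up exactly your duality expression (with $u_1 = u - u_N$ weighted at regularity $\tfrac12$, the other two slots at $s_1$, the dual factor at weight $\langle n\rangle^{1/2}$), keeps the difference factor whole while running the same type~(I)/type~(II) case split as Proposition~\ref{prop:bddness}, and produces the $\delta^c$ factor via \eqref{equ:time_loc}. Your explicit accounting — two type~(II) slots each giving a $\|\cdot\|^{3/4}_{X^{s_1,b,\delta}}$ factor, assembled by Young's inequality into the $\|u\|^{6/4} + \|u_N\|^{6/4}$ shape, with the "$1+$" coming from the all-type~(I) case where only powers of $\lambda$ appear — fills in bookkeeping that the paper leaves implicit (it displays the bound as a product and only shows one of the four cases explicitly), and your observations about why the estimates close at the critical dual weight $\langle n\rangle^{1/2}$ are consistent with the exponent arithmetic in the paper's displayed case. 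This is the same argument, somewhat more carefully explained.
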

\begin{proof}
As in the proof of the Proposition \ref{prop:bddness}, in light of \eqref{equ:time_loc} we will take $b = \frac{1}{2}+$ for $b$ sufficiently close to $\frac{1}{2}$ so that the time localization yields the desired $\delta^c$ factor. We first note that 
\[
| (u)^3 -  (u_N)^3 | \lesssim  | u - u_N | \bigl( |u|^2 + |u_N|^2 \bigr),
\]
hence these estimates are similar to those in Proposition \ref{prop:bddness} but we will always estimate $u_1 = | u - u_N |$ in $X^{\frac{1}{2}, \frac{1}{2} + , \,\delta}$. More precisely, once again we estimate the expression
\begin{align}
\label{equ:duality_2} \qquad
\sum_{n = n_1 + n_2 + n_3} \int_{ \tau = \tau_1 + \tau_2 + \tau_3} d\tau \frac{c_1(n_1, \tau_1)}{\langle n_1 \rangle^{\frac{1}{2}} \langle |\tau_1| - \langle n_1 \rangle \rangle^{b}} \,\prod_{i=2}^3 \frac{c_i(n_i, \tau_i)}{\langle n_i \rangle^{s_1} \langle |\tau_i| - \langle n_i\rangle \rangle^{\frac{1}{2}+}}\frac{v(n, \tau)}{\langle n\rangle^{\frac{1}{2}} \langle|\tau|- \langle n\rangle\rangle^{1 - b} } \, \quad 
\end{align}
where $\|v\|_{L_\tau^2 \ell_n^2 ( \mathbb{R} \times \bT^3)}  \leq 1$ and as before the functions $u$ or $u_N$ in the expression for $c$ are either of type (I) or type (II), with $u_1$ always of type (II).

\medskip
We define $U_{N_i}$ and $V_N$ as in \eqref{equ:viwi}. The key difference between this proof and the proof of Proposition \ref{prop:bddness} is that in each case, we will estimate $u_1$ in $X^{\frac{1}{2} , \frac{1}{2}+, \delta}$ instead of using $X^{s_1 , \frac{1}{2}+, \delta}$ which we use for the other functions.

\medskip
\noindent $\bullet$\textbf{ Case (A): All $u^{(i)}$ of type (I)}.
Once again, we recall that since
\[
\| U_{N_i}^\alpha U_{N_i}^{1-\alpha} \|_{L^{4+ \varepsilon_2}} \lesssim \| U_{N_i}\|_{L^{p\alpha}}^\alpha \|U_{N_i}\|_{L^{(1-\alpha)5}}^{1-\alpha} 
\]
for $p = \frac{(4+\varepsilon_2)5}{1 - \varepsilon_2}$, we use H\"older's inequality to estimate \eqref{equ:duality_2} by
\begin{align*}
N_1^{- \frac{1}{2}} (N_2 & N_3)^{-s_1} N^{-\frac{1}{2}} \int_{\bR_t} \int_{\bT^3_x} \,\, \prod_{i=1}^3 U_{N_i} \cdot V_N \, dx\, dt \\
& \lesssim N_1^{-\frac{1}{2}} (N_1 N_2 N_3)^{-s_1} N^{-\frac{1}{2}} \|U_{N_1}\|_{L^{4-\varepsilon_1}} \prod_{i=2}^3 \| U_{N_i}\|_{L^{p\alpha}}^\alpha \|U_{N_i}\|_{L^{(1-\alpha)5}}^{1-\alpha}  \|V_N\|_{L^{4-\varepsilon_1}} .
\end{align*}
Taking $\alpha = 1/4 $, we have $(1-\alpha) 5 < 4$ and for $\varepsilon_2$ sufficiently small, $p \alpha < 6$. By Sobolev embedding and Strichartz estimates \eqref{equ:interp} with $r = \frac{15}{4}$ and $\theta_1 = 14/15$ for the $U_{N_2}, U_{N_3}$ and by Strichartz estimates with $r = 4 -  \varepsilon_1$ and $\theta_2 = \frac{4 - 2 \varepsilon_1 }{ 4 -  \varepsilon_1}$ for $U_{N_1}$ and $V_N$ we obtain
\[
\eqref{equ:duality_2} \lesssim (N_1N)^{-(1 - \frac{1}{2} \cdot \frac{4 - 2 \varepsilon_1 }{ 4 -  \varepsilon_1}) } (N_2 N_3)^{\frac{21}{60} - \frac{3}{4}s_1 } \|U_{N_1}\|_{L^{4-\varepsilon_1}} \prod_{i=2}^3\|u^{(i)}\|^{1/4}_{H^{1}(\bT^3)}  \prod_{i=2}^3 \|c_i\|^{3/4}_{L^2} .
\]
The expression on the right-hand side of the inequality is summable for dyadic values of $N_i$ and $N$ provided $\varepsilon_1 > \theta$. By the definition of $\Sigma_\lambda$ \eqref{equ:sigma_lambda}, we obtain
\[
\eqref{equ:duality_2} \lesssim (\lambda + T)^{\frac{1}{2} +} \|u^{(1)}\|_{X^{\frac{1}{2} , \frac{1}{2}+, \delta}} \prod_{i=2}^3 \|u^{(i)}\|^{\frac{3}{4} }_{X^{s_1 , \frac{1}{2}+, \delta}}.
\]
The other cases follow analogously.
\end{proof}

Finally, the last continuity type estimate we will need demonstrates that if one of the functions in the multilinear estimates satisfies $\Pi_N u^{(i)} = 0$, that is if it is only supported on high frequencies, then one gains some additional decay in $N$.

\begin{prop}
\label{prop:fin_bddness_n}
Consider the cubic nonlinear Klein-Gordon equation \eqref{equ:cubic_nlkg}. Then there exists $ s_2 > \frac{1}{2}$ with $ s_2$ sufficiently close to $1/2$ and $\theta > 0$ such that for any $\lambda, R, T > 0$, for every $(u_0, u_1) \in \Sigma_\lambda \cap \textbf{B}_R$ and for any interval $I \subset [0,T]$ with $|I| = \delta$, the nonlinearity satisfies the bound
\begin{equation*}
\begin{split}
\|F(u) -  F(P_N u) &\|_{X^{s_2, -\frac{1}{2}+,\delta}(I \times \bT^3)} \leq C(\lambda, R,T) \, \delta^c  \, N^{-\theta} \Bigl(1 +   \|u\|^{9/4}_{X^{\frac{1}{2}, \frac{1}{2}+,\delta}(I \times \bT^3)} \Bigr).
\end{split}
\end{equation*}
where $(u, \partial_t u)$ is the global solution to the cubic nonlinear Klein-Gordon equation \eqref{equ:cubic_nlkg} with initial data $(u_0, u_1)$.
\end{prop}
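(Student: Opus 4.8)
The plan is to reduce the estimate to the multilinear machinery of Propositions \ref{prop:bddness} and \ref{prop:fin_bddness} after isolating a factor supported at high frequencies. First I would use the algebraic identity, writing $g := (1-P_N)u$,
\[
F(u) - F(P_N u) \;=\; u^3 - (P_N u)^3 \;=\; g\,\bigl(u^2 + u\,P_N u + (P_N u)^2\bigr),
\]
so that $F(u)-F(P_N u)$ is a sum of trilinear expressions $u^{(1)}u^{(2)}u^{(3)}$ in each of which at least one factor equals $g=(1-P_N)u$ and is therefore supported in $\{|n|\gtrsim N\}$. In every such term I decompose the remaining factors as in Proposition \ref{prop:bddness} into a type (I) piece $S(t)(u_0,u_1)$ plus a type (II) piece; and I note that $(1-P_N)S(t)(u_0,u_1)=S(t)\bigl((1-P_N)(u_0,u_1)\bigr)$ is still the free evolution of $\cH^{1/2}$ data, so since $P_N$ is bounded on $L^p$ and on $W^{s,p}$ uniformly in $N$ (Lemma \ref{lem:lp_bds}) it still satisfies the $L^6_{t,x}$- and $L^\infty_{t,x}$-smoothing bounds built into $\Sigma_\lambda$, with constants uniform in $N$, and $(1-P_N)w$ inherits the $\cH^1$-bound of the nonlinear component $w$. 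Thus in each term at least one factor carries the extra integrability gained on $\Sigma_\lambda$, and at least one factor lives at frequencies $\gtrsim N$.

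Next I would dualise exactly as in \eqref{equ:duality}--\eqref{equ:local_est_expression}: with $b=\tfrac12+$, bounding $\|F(u)-F(P_N u)\|_{X^{s_2,-\frac12+,\delta}}$ amounts to summing over dyadic $N_1,N_2,N_3,N$ the quantities $N^{s_2}\bigl|\iint u^{(1)}_{N_1}u^{(2)}_{N_2}u^{(3)}_{N_3}\,V_N\,dx\,dt\bigr|$, where $V_N$ is the frequency-$N$ piece of a dual function normalised in $X^{-s_2,\frac12-}$, the $u^{(i)}$ are normalised in $X^{1/2,b}$ (the $\langle n_i\rangle^{1/2}$ weights cancelling against the $(N_1N_2N_3)^{-1/2}$ prefactor), one of the $u^{(i)}$ has $|n_i|\gtrsim N$, and the convolution constraint forces $N\lesssim\max_i N_i$. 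I then run the case analysis according to how many of the three factors are of type (I) or type (II) and which one is the high-frequency factor, mimicking the proofs of Propositions \ref{prop:bddness} and \ref{prop:fin_bddness}. Any term containing a type (II) factor is controlled through its $\cH^1$-regularity, which supplies a full extra derivative and in particular dominates the weight $N^{s_2}$ carried by the largest frequency; and the gain $N^{-\theta}$ is extracted from the high-frequency factor by estimating it with a slightly sub-maximal Strichartz exponent ($L^{4-\varepsilon}_{t,x}$ rather than $L^{4}_{t,x}$), which leaves a surplus $N_i^{-\theta}$ in the dyadic sum that, since $N_i\gtrsim N$, is $\lesssim N^{-\theta}$ --- this is precisely the mechanism played by the parameter $\varepsilon_1$ in the proof of Proposition \ref{prop:fin_bddness}. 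The factor $\delta^c$ comes from \eqref{equ:time_loc} by taking $b$ slightly above $\tfrac12$ on $V_N$, all $\Sigma_\lambda$-norms are absorbed into $C(\lambda,R,T)$, and what remains is the stated power $\|u\|^{9/4}_{X^{1/2,\frac12+,\delta}}$ (one $\tfrac34$-power of the $X^{1/2}$-norm per factor estimated through $X^{1/2}$, as in Proposition \ref{prop:bddness}).

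The hard part will be the case where all three factors are of type (I): a free evolution, times a free evolution, times the high-frequency free evolution $(1-P_N)S(t)(u_0,u_1)$, possibly with this last factor being the largest. Here the target space $X^{s_2}$ with $s_2>\tfrac12$ is a full derivative stronger than the space $X^{s_2-1}$ that appears in Proposition \ref{prop:bddness}, so the crude H\"older bound $L^6_x\!\cdot\!L^6_x\!\cdot\!L^6_x\!\cdot\!L^2_x$ used there no longer sums against the weight $N^{s_2}$, since the $L^6_{t,x}$-smoothing of a free evolution supplies only $s_1<\tfrac12$ derivatives. To close this case I would bring in the genuinely bilinear information, applying the refined bilinear Strichartz estimates of Propositions \ref{prop:refined_l2} and \ref{prop:multilinear} to the product of two free evolutions in order to convert frequency separation into decay, together with the $L^\infty_{t,x}$-integrability of the free evolution at $\gamma<\tfrac12$ derivatives (which holds on $\Sigma_\lambda$ by the remark following Corollary \ref{cor:averaging_effects} and does not affect the measure bound of Proposition \ref{prop:lambda_props}); here the freedom to take $s_1$ and $s_2$ as close to $\tfrac12$ as we wish is essential, since it makes quantities such as $s_2-s_1$ and $s_2-\gamma$ small enough for the dyadic sums over the top frequency to converge. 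The remaining mixed cases interpolate between this one and the type (II) cases and are strictly easier.
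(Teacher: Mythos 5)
You have the right algebraic decomposition, $F(u)-F(P_Nu)=(1-P_N)u\,\bigl(u^2+u\,P_Nu+(P_Nu)^2\bigr)$, and the right core idea that the $N^{-\theta}$ decay must be harvested from the high-frequency factor $(1-P_N)u$; both match the paper. The problem is that you have misread the target space, and your entire third paragraph is a detour built on that misreading.

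The exponent $X^{s_2,-\frac12+,\delta}$ in the statement is a slip for $X^{s_2-1,-\frac12+,\delta}$, exactly parallel to Proposition \ref{prop:bddness} and exactly as the result is invoked in Section \ref{sec:decomp}, where the bound obtained is recorded as $\|\Delta_2\|_{X^{s_2-1,-\frac12+,\delta}}$. The paper's dualisation \eqref{equ:duality2} pairs against a dual function carrying a $\langle n\rangle^{1/2}$ weight in the denominator, i.e.\ against $X^{1/2,\frac12-}$, which is the dual of $X^{-1/2,-\frac12+}$; and the Duhamel map $I\colon X^{s-1,-\frac12+}\to X^{s,\frac12+}$ fixes the nonlinearity target at one derivative below the regularity of the solution. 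Your own third paragraph actually contains the reason that $X^{s_2,-\frac12+}$ with $s_2>\frac12$ would be impossible: the factor $(1-P_N)u$ has only $\cH^{1/2}$ regularity, so at output frequencies $\sim N$ the nonlinearity cannot be smoother than that input. Once the target is corrected, the all-type-(I) case is not hard at all, and none of your proposed refinements --- bilinear Strichartz, adding $L^\infty$-smoothing at $\gamma<\frac12$ to $\Sigma_\lambda$ --- is needed. The paper normalises the high-frequency factor in $X^{s_1,\frac12+,\delta}$ with $s_1<\frac12$ (rather than $X^{1/2,\frac12+}$ as in your dualisation), puts it and the dual function in $L^3_{t,x}$ via \eqref{equ:interp} and the remaining two free evolutions in $L^6_{t,x}$ via the $\Sigma_\lambda$ bound, and the dyadic exponents $N_1^{-s_1+1/3}(N_2N_3)^{-s_1}N^{-1/6}$ simply sum; the $N^{-\theta}$ is extracted in a single line from $\|(1-P_N)S(t)(u_0,u_1)\|_{X^{s_1,\frac12+,\delta}}\lesssim N^{s_1-\frac12}\|(u_0,u_1)\|_{\cH^{1/2}}$. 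The ``surplus in the dyadic sum restricted to $N_i\gtrsim N$'' mechanism you describe is the same phenomenon block by block; it works, but it forces you to rerun every dyadic sum with the constraint inserted, whereas the paper's formulation is a one-line post-processing of an estimate it already has.
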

\begin{proof}
Once again, we use the inequality.
\[
|F(u) -  F(P_N u)| \lesssim  | (I - P_N) u| \bigl( |u|^2 + |P_N u|^2 \bigr),
\]
Let $s_1$ be as in Proposition \ref{prop:bddness}, and we repeat those arguments but we will always estimate the high-frequency term, $(I - P_N) u$, in $X^{s_1, \frac{1}{2} + , \,\delta}$, even for the linear evolution. For instance, consider the case where all the $u^{(i)}$ are of type (I). We once again estimate the expression
\begin{align}
\label{equ:duality3}
\sum_{n = n_1 + n_2 + n_3} \int_{ \tau = \tau_1 + \tau_2 + \tau_3}  
\prod_{i=1}^3 \frac{c_i(n_i, \tau_i)}{\langle n_i \rangle^{s_1} \langle |\tau_i| -\langle n_i\rangle\rangle^{\frac{1}{2}+}}\frac{d(n, \tau)}{\langle n\rangle^{\frac{1}{2}} \langle|\tau|- \langle n\rangle\rangle^{\frac{1}{2}-} } \,d\tau
\end{align}
and with the definitions for $U_{N_i}$ and $V_N$ as in \eqref{equ:viwi}, we obtain
\begin{align*}
\eqref{equ:duality3} & \lesssim (N_1 N_2N_3)^{-s_1} N^{- \frac{1}{2}} \int U_{N_1} \,U_{N_2} \, U_{N_3}  \cdot V_N \, dx dt \\
& \lesssim (N_1 N_2N_3)^{-s_1} N^{-\frac{1}{2}} \|U_{N_1}\|_{L^{3}} \|U_{N_2}\|_{L^{6}}  \|U_{N_3}\|_{L^{6}}  \|V_N\|_{L^{3}}\\
& \lesssim N_1^{ - s_1 + \frac{1}{3} } (N_2 N_3)^{-s_1}  N^{-\frac{1}{6} } \,  \|u^{(1)} \|_{X^{s_1, \frac{1}{2}+, \delta}} \|U_{N_2}\|_{L^{6}}\, \|U_{N_3}\|_{L^{6}} 
\end{align*}
which is summable for dyadic $N$ and $N_i$. By Strichartz estimates, recalling that we set
\[
u^{(1)} = (1 - P_N) S(t) (u_0, u_1),
 \]
we obtain
\begin{align*}
\|u^{(1)} \|_{X^{s_1, \frac{1}{2}+, \delta}} &= \|(1 - P_N) S(t) (u_0, u_1) \|_{X^{s_1, \frac{1}{2} + , \delta}} \\
& \lesssim N^{-\theta} \| S(t) (u_0, u_1) \|_{X^{\frac{1}{2}, \frac{1}{2} + , \delta}} \lesssim N^{-\theta} \|(u_0, u_1)\|_{\cH^{1/2}(\bT^3)},
\end{align*}
which yields the desired estimate. The other cases follow analogously to the previous propositions, with the modification that when we take $u^{(1)} = (1 - P_N) u $ we obtain
\begin{align*}
\|u^{(1)} \|_{X^{s_1, \frac{1}{2}+, \delta}}  \lesssim N^{-\theta} \| u \|_{X^{\frac{1}{2}, \frac{1}{2} + , \delta}},
\end{align*}
which yields the result.
\end{proof}

\section{Probabilistic approximation of the flow of the NLKG}
\label{sec:decomp}
This section is devoted to the proof of the approximation of the flow map for the cubic nonlinear Klein-Gordon equation by the flow of the nonlinear Klein-Gordon equation with truncated nonlinearity. We will use here the probabilistic boundedness estimates from Section \ref{sec:compactness}.

\begin{prop}
\label{prop:approx}
Let $\Phi$ denote the flow of the cubic nonlinear Klein-Gordon equation \eqref{equ:cubic_nlkg}, and $\Phi_N$ the flow of the cubic nonlinear Klein-Gordon equation with truncated nonlinearity \eqref{equ:cubic_nlkg_trun}. Fix $R, T, \lambda > 0$. Then for every $(u_0, u_1) \in \Sigma_{\lambda} \cap \textbf{B}_R $,
\[
\sup_{t \in [0,T]} \| \Phi(t)(u_0, u_1) -  \Phi_N(t) (u_0, u_1)\|_{\cH_x^{1/2}(\bT^3)} \leq C(\lambda, T, R) \, \varepsilon_1(N)
\]
with $\varepsilon_1(N) \to 0 $ as $N \to \infty$. 
\end{prop}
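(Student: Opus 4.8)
The plan is to reduce to the nonlinear components of the two flows, establish uniform-in-$N$ bounds in $X^{s,b,\delta}$ on short intervals of a \emph{fixed} length, run a contraction-type difference estimate on each such interval, and then iterate over a partition of $[0,T]$ whose cardinality does not depend on $N$. First I would write $u(t) = S(t)(u_0,u_1) + w(t)$ and $u_N(t) = S(t)(u_0,u_1) + w_N(t)$ as in Propositions~\ref{prop:unif_global2} and~\ref{prop:unif_trunc}; the free evolutions cancel, so $\Phi(t)(u_0,u_1) - \Phi_N(t)(u_0,u_1) = (w - w_N,\, \partial_t w - \partial_t w_N)(t)$, and since $X^{1/2,1/2+,\delta}(I) \hookrightarrow C(I;\cH^{1/2})$ it suffices to bound $\|w - w_N\|_{X^{1/2,1/2+,\delta}(I)}$ on each interval of a partition of $[0,T]$ into subintervals of a fixed length $\delta$. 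Here $w - w_N$ solves $\square(w - w_N) + (w - w_N) = -\bigl(F(u) - F_N(u_N)\bigr)$ with $F(u) = u^3$, $F_N(u_N) = P_N[(P_N u_N)^3]$, and zero data at $t = 0$.

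Next I would record the uniform a priori bounds. On $\Sigma_\lambda \cap \textbf{B}_R$, integrating the pointwise estimates of Propositions~\ref{prop:unif_global2} and~\ref{prop:unif_trunc} (and using $M \le \lambda$) gives $\|u\|_{L^4_{t,x}([0,T]\times\bT^3)} + \|u_N\|_{L^4_{t,x}} + \|P_N u_N\|_{L^4_{t,x}} \le C(\lambda,R,T)$ together with uniform $\cH^{1/2}$-control on $[0,T]$. Combining the standard linear $X^{s,b}$ estimate for the Duhamel formula with Proposition~\ref{prop:bddness} (and the embedding $X^{s_2-1,-1/2+} \hookrightarrow X^{s_1-1,-1/2+}$, valid since $s_2 > s_1$), a standard continuity argument on a short time-step yields $\delta = \delta(\lambda,R,T) > 0$ such that $\|u\|_{X^{s_1,1/2+,\delta}(I)} \le C(\lambda,R,T)$ on every subinterval $I \subset [0,T]$ of length $\delta$; Proposition~\ref{prop:trunc_bddness} and the boundedness of $P_N$ on $X^{s,b}$ give the analogous bound for $u_N$ and $P_N u_N$, uniformly in $N$. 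Feeding these $X^{s_1}$-bounds back into the Duhamel estimate together with Proposition~\ref{prop:bddness} and $X^{s_2-1,-1/2+}\hookrightarrow X^{-1/2,-1/2+}$, and using the uniform bound on $\|(u(t_k),\partial_t u(t_k))\|_{\cH^{1/2}}$, upgrades these to $\|u\|_{X^{1/2,1/2+,\delta}(I)},\ \|u_N\|_{X^{1/2,1/2+,\delta}(I)} \le C(\lambda,R,T)$, again uniformly in $N$.

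Then I would carry out the difference estimate on a fixed subinterval $I = [t_k,t_{k+1}]$. Setting $a_k := \|(w - w_N,\, \partial_t w - \partial_t w_N)(t_k)\|_{\cH^{1/2}}$, the linear $X^{s,b}$ estimate gives
\[
\|w - w_N\|_{X^{1/2,1/2+,\delta}(I)} \lesssim a_k + \delta^c \, \|F(u) - F_N(u_N)\|_{X^{-1/2,-1/2+,\delta}(I)},
\]
and I would split, using $u_N = S(t)(u_0,u_1)+w_N$ and $F_N(u_N) = P_N[(P_N u_N)^3]$,
\[
F(u) - F_N(u_N) = \bigl(F(u) - F(u_N)\bigr) + \bigl(F(u_N) - F(P_N u_N)\bigr) + (I - P_N)\bigl[(P_N u_N)^3\bigr].
\]
Proposition~\ref{prop:fin_bddness} bounds the first bracket in $X^{-1/2,-1/2+,\delta}(I)$ by $\delta^c C(\lambda,R,T)\|u - u_N\|_{X^{1/2,1/2+,\delta}(I)} = \delta^c C(\lambda,R,T)\|w - w_N\|_{X^{1/2,1/2+,\delta}(I)}$, which is absorbed into the left-hand side once $\delta$ is small; Proposition~\ref{prop:fin_bddness_n}, whose proof applies verbatim with $u$ replaced by $u_N$, bounds the second bracket by $\delta^c N^{-\theta} C(\lambda,R,T)$ after the embedding $X^{s_2,-1/2+} \hookrightarrow X^{-1/2,-1/2+}$; and for the third bracket, $(P_N u_N)^3$ is Fourier supported in $|n| \lesssim N$ while $I - P_N$ retains only $|n| \gtrsim N$, so on the support $\langle n \rangle \sim N$ and
\[
\|(I - P_N)\bigl[(P_N u_N)^3\bigr]\|_{X^{-1/2,-1/2+,\delta}(I)} \lesssim N^{-(s_2 - 1/2)} \, \|(P_N u_N)^3\|_{X^{s_2-1,-1/2+,\delta}(I)} \lesssim N^{-\theta} \, C(\lambda,R,T)
\]
by Proposition~\ref{prop:bddness} and Remark~\ref{rmk:trunc_bds}. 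Choosing $\delta = \delta(\lambda,R,T)$ small enough that the absorbed term has coefficient $< 1/2$ gives $a_{k+1} \le 2C(\lambda,R,T)\,a_k + C(\lambda,R,T)\,N^{-\theta}$. Since $\delta$ is fixed, $[0,T]$ is covered by $K_0 = \lceil T/\delta\rceil = K_0(\lambda,R,T)$ such intervals, and as $a_0 = 0$ the recursion gives $a_k \le C(\lambda,R,T)\,N^{-\theta}$ for all $k \le K_0$; hence $\sup_{t \in [0,T]} \|\Phi(t)(u_0,u_1) - \Phi_N(t)(u_0,u_1)\|_{\cH^{1/2}} \le C(\lambda,T,R)\,\varepsilon_1(N)$ with $\varepsilon_1(N) := N^{-\theta} \to 0$.

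The analytic substance of the argument is entirely contained in the multilinear and probabilistic estimates of Section~\ref{sec:compactness}; at the level of this proposition the one point requiring care is that the time-step $\delta$, and hence the number of iteration steps $K_0$, must be chosen \emph{independently of $N$}, so that the truncation errors of size $N^{-\theta}$ do not accumulate — this is precisely what the uniform-in-$N$ bounds of Proposition~\ref{prop:unif_trunc} and the uniform $\cH^{1/2}$-control on $\Sigma_\lambda \cap \textbf{B}_R$ provide. A secondary delicate point is the continuity argument in the second step: since $s_1 < \tfrac12$ lies below the critical regularity, the $X^{s_1,1/2+,\delta}$ bounds on $u$ and $u_N$ are not obtained from a fixed-point scheme but are consequences of the a priori $\Sigma_\lambda$ bounds of Propositions~\ref{prop:unif_global2} and~\ref{prop:unif_trunc} together with Proposition~\ref{prop:bddness}.
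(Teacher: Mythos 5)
Your proposal follows the paper's proof almost step for step: the same reduction to the nonlinear components, the same bootstrap for uniform $X^{s_1,\frac12+,\delta}$ and then $X^{\frac12,\frac12+,\delta}$ bounds on $N$-independent subintervals via Propositions \ref{prop:bddness} and \ref{prop:trunc_bddness}, the same three-term decomposition $F(u)-F_N(u_N)=\Delta_1+\Delta_2+\Delta_3$ controlled by Propositions \ref{prop:fin_bddness}, \ref{prop:fin_bddness_n} and Remark \ref{rmk:trunc_bds}, and the same iteration with a step count depending only on $\lambda,R,T$. The one small inaccuracy is the aside invoking $L^4_{t,x}$ bounds from Propositions \ref{prop:unif_global2} and \ref{prop:unif_trunc}: the multilinear estimates of Section \ref{sec:compactness} actually draw on the $L^6$ control of the free evolution (the $M_\lambda$ condition) and the uniform $\cH^1$ bounds on $w$, $w_N$, not the $L^4$ Strichartz norm — but since this plays no role in your argument it does not affect its validity.
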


\begin{proof}
Fix $R, T, \lambda > 0$ and let $\Sigma_{\lambda}$ be as defined in \eqref{equ:sigma_lambda}. We need to estimate the difference $\Phi - \Phi_N$ for initial data $(u_0, u_1) \in \Sigma_{\lambda} \cap \textbf{B}_R$. Fix such a $(u_0, u_1)$ and let $u(t)$ and $u_N(t)$ denote the corresponding solutions to the full and truncated equations, respectively. By the choice of $\Sigma_\lambda$,
\begin{equation}
\label{equ:global_lambda bounds}
\begin{split}
\|(1 - \Delta)^{s_1 / 2} S(t) (u_0, u_1) \|_{L^6([0,T]\,;\, L^6 (\bT^3))} &<  C_T \lambda\\
  \| (w, \partial_t w) \|_{ L^\infty([0,T]; \cH^{1}(\bT^3))} &< C \,(\lambda + T)^{1+ }, \\
 \| (w_N, \partial_t w_N) \|_{ L^\infty([0,T]; \cH^{1}(\bT^3))} &< C \,(\lambda + T)^{1+ } ,
\end{split}
\end{equation}
where, as usual, $w(t)$ and $w_N(t)$ are the nonlinear components of the global solutions $u(t)$ and $u_N(t)$, respectively. Note that for any subinterval $I \subset [0,T]$, these bounds hold uniformly. Furthermore, for $|I| = \delta$, Proposition \ref{prop:bddness}, and the inhomogeneous estimate for the Klein-Gordon equation yield
\begin{align*}
\| w \|_{ X^{s_2, \frac{1}{2}+ ,\delta} } 
& \lesssim \|F(u) \|_{ X^{s_2 -1, - \frac{1}{2}+ ,\delta} } \leq C(\lambda,R,T) \,\delta^c\, \bigl( 1 +   \|u\|^{9/4}_{X^{s_1, \frac{1}{2}+, \delta}}  \bigr),
\end{align*}
hence if $\inf I = t_0$, by Lemma \ref{lem:free_solns} we obtain
\begin{align*}
\| u \|_{ X^{s_1, \frac{1}{2}+, \delta} } & \leq \|S(t)(u_0, u_1)\|_{X^{s_1, \frac{1}{2}+, \delta}} +  \|w \|_{X^{s_2, \frac{1}{2}+ ,\delta} } \\
& \lesssim \|(u(t_0), \partial_tu(t_0)) \|_{\cH^{s_1}} +  \|F(u) \|_{ X^{s_2 -1, - \frac{1}{2}+ ,\delta} } .
\end{align*}
By the uniform bounds \eqref{equ:global_lambda bounds} and the boundedness of the free evolution on $\cH^s$,
\[
\sup_{t_0 \in [0,T]} \|(u(t_0), \partial_tu(t_0) ) \|_{\cH^{s_1}} \leq C(\lambda, R, T),
\]
hence
\[
\| u \|_{ X^{s_1, \frac{1}{2}+, \delta} }  \leq C(\lambda, R, T) +  C(\lambda, R, T) \,\delta^c\,  \bigl( 1 + \|u\|^{9/4}_{X^{s_1, \frac{1}{2}+, \delta}}  \bigr).
\]
Thus, taking $\delta \equiv \delta(\lambda, R, T) > 0$ sufficiently small we obtain that
\begin{align}
\label{equ:unif_soln_bds}
\|\Phi(t)(u_0, u_1) \|_{X^{s_1, \frac{1}{2}+, \delta} (I \times \bT^3)} 
\leq C(\lambda, T, R).
\end{align}
By Proposition \ref{prop:trunc_bddness}, this argument yields the same result for $\Phi_N(t)$ with the same choice of $\delta > 0$. Note, too, that we can obtain the same bounds with $s_1$ replaced by $\frac{1}{2}$. Now, define 
\[
(\phi, f_t) : = (u - u_N, \partial_t u - \partial_t u_N)
\]
then we have
\[
\partial_{tt} \phi -  \Delta \phi + \phi = \Bigl( F(u)  - F(u_N) + F(u_N) -  F(P_N u_N) + F(P_N u_N) - P_N F (P_N u_N)   \Bigr).
\]
Set
\begin{align*}
\Delta_1 &:=  F(u)  - F(u_N),  \qquad \Delta_2 := F(u_N) - F(P_N u_N), \qquad \Delta_3 := (I - P_N) F(P_N u_N),
\end{align*}
and fix an interval $I \subset [0,T]$ with $|I| = \delta$. We will estimate
\begin{align*}
 \|\Delta_1 \|_{X^{-\frac{1}{2}, - \frac{1}{2} +, \delta}(I \times \bT^3)} + \|\Delta_2 \|_{X^{-\frac{1}{2}, - \frac{1}{2} +, \delta}(I \times \bT^3)} + \|\Delta_3 \|_{X^{-\frac{1}{2}, - \frac{1}{2} +, \delta}(I \times \bT^3)}.
\end{align*}
By Proposition \ref{prop:fin_bddness}, we can bound $\Delta_1$ by
\begin{align*}
&\|\Delta_1  \|_{X^{-\frac{1}{2}, - \frac{1}{2} +, \delta}}  \leq 
C( \lambda, R, T) \,\delta^c \,  \| u - u_N \|_{X^{\frac{1}{2}, \frac{1}{2} +, \delta}}  \left(1  + \|u\|^{6/4}_{X^{s_1, \frac{1}{2}+, \delta}} +  \|u_N\|^{6/4}_{X^{s_1, -\frac{1}{2}+,\delta}}\right).
\end{align*}
For the second term, Proposition \ref{prop:fin_bddness_n} and Remark \ref{rmk:trunc_bds} yields
\begin{align*}
 \|\Delta_2  \|_{X^{s_2 -1, - \frac{1}{2} +, \delta}} 
 & \leq C(\lambda, R, T)\, \delta^c \,  N^{-\theta}  \Bigl(1   + \|u_N\|^{9/4}_{X^{\frac{1}{2}, \frac{1}{2}+, \delta}}  \Bigr) .
\end{align*}
Finally, by Remark \ref{rmk:trunc_bds} we have
\begin{align*}
 \| \Delta_3 \|_{X^{-\frac{1}{2}, - \frac{1}{2} +, \delta}} & \leq N^{-\theta}\, \| F(P_N u_N)  \|_{X^{s_2 -1, - \frac{1}{2} +, \delta}} \\
 & \leq C(\lambda, R, T) \,\delta^c  \,N^{-\theta} \Bigl(1 +  \|u_N\|^{9/4}_{X^{s_1, \frac{1}{2}+, \delta}}  \Bigr).
\end{align*}
In the second and third terms, we used the observation that for any $N \in \bN$, $s \in \bR$, we have
\[
\|P_N v\|_{X^{s, \frac{1}{2}+}} \leq \|v\|_{X^{s, \frac{1}{2}+}}
\]
for any $v$ such that the right-hand side is finite. Now let $I  = [0,\delta]$. Then because $u- u_N$ has zero initial data, the inhomogeneous estimate yields
\begin{align}
\label{equ:duhamel_exp}
\| \phi \|_{X^{\frac{1}{2}, \frac{1}{2}+, \delta} } \lesssim  \|\Delta_1 \|_{X^{-\frac{1}{2}, - \frac{1}{2} +, \delta}} + \|\Delta_2 \|_{X^{-\frac{1}{2}, - \frac{1}{2} +, \delta}} + \|\Delta_3 \|_{X^{-\frac{1}{2}, - \frac{1}{2} +, \delta}},
\end{align}
and together with \eqref{equ:unif_soln_bds} and the similar result for $\Phi_N$, we can bound \eqref{equ:duhamel_exp} by
\begin{align}
\label{equ:phi_bd}
\| \phi \|_{X^{\frac{1}{2}, \frac{1}{2}+, \delta} } \leq C(\lambda, T, R) \, \delta^c \, \|\phi\|_{X^{\frac{1}{2}, \frac{1}{2}+, \delta} } + C(\lambda, T, R)\, \delta^c \, N^{-\theta},
\end{align}
and similarly for the time derivative component.  Hence, for $\delta>0$ sufficiently small, 
\begin{align}
\label{equ:phi_bd2}
\|(\phi, \partial_t \phi) \|_{L_t^\infty \cH_x^{1/2}(I \times \bT^3)} \leq C_1(\lambda, R, T) \, \varepsilon_1(N),
\end{align}
with $\lim_{N \to \infty} \varepsilon_1(N) = 0$. On the next subinterval, $I_2 = [\delta, 2\delta]$ we bound
\[
\|(\phi, \partial_t \phi) \|_{L_t^\infty \cH_x^{1/2}( I_2 \times \bT^3)} \lesssim C_1(\lambda, R, T) \, \varepsilon_1(N) + \left\| \int_\delta^t S(t - s) \left[ F(u)- F_N (u_N) \right] \right\|_{L_t^\infty \cH_x^{1/2}}
\]
and once again by the inhomogeneous estimate
\[
\| \phi \|_{X^{\frac{1}{2}, \frac{1}{2}+, \delta} (I_2 \times \bT^3)} \lesssim C_1(\lambda, R, T) \, \varepsilon_1(N)+  \|\left[ F(u)- F_N (u_N)\right] \|_{X^{-\frac{1}{2}, -\frac{1}{2} + , \delta} (I_2 \times \bT^3)}.
\]
Applying the above argument,  we obtain that
\begin{align}
\label{equ:phi_bd3}
\|(\phi, \partial_t \phi) \|_{L_t^\infty \cH_x^{1/2}( I_2 \times \bT^3)} \lesssim C_2 (\lambda, R, T) \, \varepsilon_1(N).
\end{align}
At each stage the coefficient of the nonlinear component is independent of the step number, the constants in \eqref{equ:phi_bd2} are independent of the subinterval and the bounds \eqref{equ:unif_soln_bds} are uniform, hence we can choose $\delta > 0$ sufficiently small to obtain the analogue of \eqref{equ:phi_bd3} at each stage uniformly for all subintervals. While the bound that we obtain grows with each iteration since the constant for the initial data is compounded, the number of steps is controlled by $\lambda, R$ and $T$, which yields the desired result.
\end{proof}

\begin{rmk}
When the estimate is performed on the time derivative, the time localization may increase the left-hand side of \eqref{equ:phi_bd} up to a factor of $\delta^{\frac{1}{2} - b}$ for $b = \frac{1}{2}+$ as above. However, we recall that in Section \ref{sec:compactness}, we the exponent $c$ which we obtain on $\delta$ is some fixed, small constant independent of $b$. By taking $b$ sufficiently close to $\frac{1}{2}$, we still obtain the necessary $\delta$ factor provided we ensure that
\[
\frac{1}{2} - b + c > 0.
\]
\end{rmk}

\begin{rmk}
\label{rmk:no_sobolev}
The term $\Delta_2$ is a key reason why this argument will not work with probabilistic energy estimates alone, as in \cite{BT4}, say. Indeed, this term requires us to bound the nonlinearity by a weaker norm and it does not seem possible to close the Gronwall argument if one needs to derive a bound with respect to some norm below $\cH^1$. Similarly, one encounters similar problems if one attemps to carry out the arguments in the standard Strichartz spaces.
\end{rmk}

\section{Approximation results for the flow}
\label{sec:conv}
The goal of this section is to prove the approximation results presented in the introduction. 

\subsection{Proof of Theorem \ref{thm:trunc_approx}} 
One key component in our argument is the critical stability theory which allows us to upgrade the sets of large measure where the approximation holds to open sets, or at least to general initial data in $\Pi_{2N} \textbf{B}_R$ as is required for Theorem \ref{thm:trunc_approx}. Stability arguments first appeared in the context of the three-dimensional energy critical nonlinear Schr\"odinger equation in \cite{CKSTT08}, see also \cite{TV}. The corresponding results for the nonlinear Klein-Gordon equation follow in a similar manner from the Strichartz estimates of Proposition \ref{prop:strichartz2}, and we present the proofs in Appendix \ref{ap:pert}. One crucial component is that we obtain stability results which are uniform in the truncation parameter for the nonlinear Klein-Gordon equation with truncated nonlinearity. 

\medskip
We will no recall the statement of our approximation theorem.

\begin{customthm}{1.5}
Let $\Phi$ denote the flow of the cubic nonlinear Klein-Gordon equation \eqref{equ:cubic_nlkg} and $\Phi_N$ the flow of \eqref{equ:cubic_nlkg_trun}. Fix $R > 0$, $u_* \in \cH^{1/2}$ and $N', N \in \bN$ with $N'$ sufficiently large, depending on $u_*$. Then there exists a constant $\varepsilon_0(u_*, R) > 0$ such that for all $0 < \varepsilon \leq \varepsilon_0$, there exists $\sigma \equiv \sigma( R, \varepsilon, N')$ such that for any $(u_0, u_1) \in \textbf{B}_R(u_*)$,
\begin{align}
 \sup_{ t \in [0,\sigma]} \| \Phi(t) \Pi_{N'} (u_0, u_1) -  \Phi_N(t) \Pi_{N'} (u_0, u_1) \|_{\cH_x^{1/2}} \leq C(R, u_*) \left[ \, \varepsilon_1(N) + \varepsilon \,\right] \qquad
\end{align}
with $\lim_{N \to \infty} \varepsilon_1(N) = 0 $.
\end{customthm}

\begin{proof}[Proof of Theorem \ref{thm:trunc_approx}]
Fix $R > 0$, $I = [0, \sigma]$ for some $0 < \sigma \leq  1$ to be fixed, and let $\lambda > 0$ be sufficiently large so that we can find $(v_0, v_1)\in \Sigma_{\lambda} \cap \textbf{B}_R(u_*)$. The intersection is non-empty for some $\lambda > 0$ by density and for all $N' \in \bN$ it holds that $P_{N'}(v_0,v_1) \in \Sigma_{\lambda}$ since \eqref{equ:sigma_lambda} and \eqref{equ:en_def} are invariant under smooth projections. Thus, there exists some constant $K_1 > 0$ such that the corresponding global solutions $v$ and $v_N$ to equations \eqref{equ:cubic_nlkg} and \eqref{equ:cubic_nlkg_trun} with initial data $P_{N'} (v_0 ,v_1)$ satisfy
 \begin{align*}
\|v\|_{L^4(I \times \bT^3)} \leq K_1 \quad \textup{and} \quad \|v_N \|_{L^4(I \times \bT^3)}  \leq K_1.
\end{align*}
Let $(u_0, u_1) \in \textbf{B}_R$, and let $N'$ be sufficiently large so that
\[
\|\Pi_{ \geq N'} \, u_* \|_{\cH^{1/2}} < R.
\]
 
Then
 \begin{equation}
\label{equ:trivial_stab}
\begin{split}
 &\|S(t)(P_{N'} (v_0, v_1) - \Pi_{N'}(u_0, u_1) )\|_{L^4_{t,x}(I \times \bT^3)} \\
 &\leq |I|^{1/4} \sup_{t \in I}  \|S(t) (P_{N'} (v_0, v_1) - \Pi_{N'}(u_0, u_1) ) \|_{L^4_{x}(\bT^3)} \\
 &\leq |I|^{1/4} \, (N')^{1/2} \, \Bigl[ \|(P_{N'} (v_0, v_1) - P_{N'} u_* ) \|_{H_x^{1/2}(\bT^3)}  + \|(P_{N'} u_* - \Pi_{N'}(u_0, u_1) ) \|_{H_x^{1/2}(\bT^3)} \Bigr]\\
 & \lesssim |I|^{1/4}\, (N')^{1/2} \, R.
\end{split}
 \end{equation}
Let $\rho_1 = \rho_1(K_1)$ be as in the stability lemma and recall that we can choose $\rho_1$ uniformly for all $\sigma \leq 1$. Let $0 < \varepsilon_0 < \rho_1$, then setting 
 
\[
\sigma \simeq (N')^{-2} R^{-4} \, \varepsilon_0,
\] 
the smallness condition \eqref{equ:cond1} of Lemma \ref{lem:pert_long} is met and we conclude that for $t \in I$, solutions 
\begin{align*}
u(t) & := \Phi(t) \Pi_{N'}(u_0, u_1) \\
u_N(t) & := \Phi_N(t) \Pi_{N'}(v_0, v_1) 
\end{align*}
exist to equations \eqref{equ:cubic_nlkg} and \eqref{equ:cubic_nlkg_trun}, respectively. Moreover, we conclude from \eqref{equ:diff_bds22} that
\begin{align*}
\|F(u) - F(v) \|_{L^{4/3}(I \times \bT^3)} & \leq C(K_1) \, \varepsilon_0.
\end{align*}
Hence, by Duhamel's formula and Strichartz estimates, the nonlinear components  $\widetilde{\Phi}$ of the solutions, which was defined in \eqref{equ:nonlin_com}, satisfy
\begin{align}
\label{equ:conv_bd}
\sup_{t \in I} \| \widetilde{\Phi}(t)\Pi_{N'}(v_0, v_1) - \widetilde{\Phi}(t)\Pi_{N'} (u_0, u_1) \|_{ \cH_x^{1/2}(\bT^3)} &\lesssim  \| F(u) - F(v) \|_{L_{t,x}^{4/3 } (I \times \bT^3)}  \leq C(K_1) \, \varepsilon_0,
\end{align}
and similarly for $\Phi_N$. We can estimate \eqref{equ:open_conv} using the triangle inequality by
\begin{align}
\label{equ:triangle}
\begin{split}
&\sup_{t \in I_0} \| \Phi(t)\Pi_{N'} (u_0, u_1) - \Phi_N(t)\Pi_{N'} (u_0, u_1) \|_{ \cH_x^{1/2}(\bT^3)} \\
& =\sup_{t \in I_0} \| \widetilde{\Phi}(t)\Pi_{N'}(u_0, u_1) - \widetilde{\Phi}_N(t)\Pi_{N'} (u_0, u_1) \|_{ \cH_x^{1/2}(\bT^3)} \\
&\leq \sup_{t \in I_0} \| \widetilde{\Phi}(t)P_{N'} (v_0, v_1) - \widetilde{\Phi}(t)\Pi_{N'} (u_0, u_1)  \|_{ \cH_x^{1/2}(\bT^3)} \\
& \hspace{12mm} +  \sup_{t \in I_0} \| \widetilde{\Phi}(t)P_{N'}(v_0, v_1) - \widetilde{\Phi}_N(t)P_{N'} (v_0, v_1)\|_{ \cH_x^{1/2}(\bT^3)} \\
& \hspace{24mm}+ \sup_{t \in I_0} \| \widetilde{\Phi}_N(t) P_{N'} (v_0, v_1) - \widetilde{\Phi}_N(t)\Pi_{N'}  (u_0, u_1) \|_{ \cH_x^{1/2}(\bT^3)} 
\end{split}
\end{align}
and hence we obtain that for all $(u_0, u_1) \in\,  \textbf{B}_R(u_*)$,
\begin{align*}
 \sup_{t \in [0,\sigma]}&  \| \Phi(t)\Pi_{N'} (u_0, u_1) - \Phi_N(t)\Pi_{N'} (u_0, u_1) \|_{ \cH_x^{1/2}(\bT^3)} <  C(R, u_*) \left[ \, \varepsilon_1(N) + \varepsilon_0 \,\right]. \qedhere
\end{align*}
\end{proof}

\subsection{Proof of Theorem \ref{thm:weaknon-squeezing}} 
\label{sec:non-squeeze_prob}
We define the $\rho$-fattening of $\Sigma_{\lambda}$ by
\begin{align}
\label{equ:sigma_lambda_rho}
\Sigma_{\lambda, \,\rho} := \bigcup_{u \in \Sigma_\lambda} \textbf{B}_\rho(u).
\end{align}
A first step will be to use the critical local theory from Appendix \ref{ap:pert} to conclude that there exists some $\rho \equiv \rho(\lambda, T)$ such that we obtain the same convergence result with uniform bounds on this open set. 

\medskip
We turn to the proof Theorem \ref{thm:weaknon-squeezing}. We consider initial data $(u_0, u_1) \in \Sigma_{\lambda, \rho} \cap \textbf{B}_R$ for some sufficiently small $\rho > 0$ for which we can obtain uniform bounds on the corresponding solutions by the stability theory. We begin by proving the following approximation result. Using this result, we prove Theorem \ref{thm:weaknon-squeezing} similarly to Theorem \ref{thm:non-squeezing}. 
\begin{thm}
\label{thm:approx_prop}
Let $\Phi$ denote the flow of the cubic nonlinear Klein-Gordon equation \eqref{equ:cubic_nlkg} and $\Phi_N$ the flow of the cubic nonlinear Klein-Gordon equation with truncated nonlinearity \eqref{equ:cubic_nlkg_trun}. Fix $T, R > 0$ and let $(u_0, u_0) \in \Sigma_{\lambda, \rho} \cap \textbf{B}_R $, then for all $\varepsilon > 0$ and any $N' \in \bN$,
\[
\sup_{ t \in [0,T]} \|P_{N'} \left(\Phi(t)(u_0, u_1) -  \Phi_N(t) (u_0, u_1) \right)\|_{\cH_x^{1/2}} < \varepsilon
\]
for $N = N ( N', \varepsilon, \lambda, R, T)  \gg N'$ sufficiently large. 
\end{thm}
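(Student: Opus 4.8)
The plan is to deduce Theorem \ref{thm:approx_prop} from Proposition \ref{prop:approx} together with the critical stability theory, in exact parallel to the proof of Theorem \ref{thm:trunc_approx}, the only new point being that here we measure the difference only at low frequencies $P_{N'}$ and we work on the $\rho$-fattened set $\Sigma_{\lambda,\rho}$ rather than on the finite-dimensional slice $\Pi_{N'}\textbf{B}_R(u_*)$. First I would fix $T,R,\lambda,\rho>0$, an initial datum $(u_0,u_1)\in\Sigma_{\lambda,\rho}\cap\textbf{B}_R$, and by definition of the fattening \eqref{equ:sigma_lambda_rho} choose $(v_0,v_1)\in\Sigma_\lambda$ with $\|(u_0,u_1)-(v_0,v_1)\|_{\cH^{1/2}}<\rho$; note $(v_0,v_1)\in\textbf{B}_{R+\rho}$. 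Proposition \ref{prop:approx} applied to $(v_0,v_1)$ (with $R$ replaced by $R+\rho$) gives
\[
\sup_{t\in[0,T]}\|\Phi(t)(v_0,v_1)-\Phi_N(t)(v_0,v_1)\|_{\cH^{1/2}}\le C(\lambda,T,R+\rho)\,\varepsilon_1(N),
\]
with $\varepsilon_1(N)\to0$. So the task reduces to controlling, uniformly for $(u_0,u_1)\in\Sigma_{\lambda,\rho}\cap\textbf{B}_R$, the two differences $\Phi(t)(u_0,u_1)-\Phi(t)(v_0,v_1)$ and $\Phi_N(t)(u_0,u_1)-\Phi_N(t)(v_0,v_1)$, and then choosing $\rho$ small so that these are $<\varepsilon/3$ and $N$ large so that the middle term is $<\varepsilon/3$.

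The two ``closeness in initial data'' estimates are where the stability theory enters. Since $(v_0,v_1)\in\Sigma_\lambda$, the corresponding solutions $v$ and $v_N$ satisfy uniform $L^4_{t,x}([0,T]\times\bT^3)$ bounds by Proposition \ref{prop:unif_global2}, Proposition \ref{prop:unif_trunc} and the definition \eqref{equ:sigma_lambda} of $\Sigma_\lambda$ (here one uses $\|v\|_{L^4}\lesssim\|v\|_{L^\infty_t\cH^{1/2}}\cdot$ in the weaker $L^4$ sense controlled by the Strichartz norm, or more directly the $L^4$ bounds recorded in Propositions \ref{prop:unif_global2} and \ref{prop:unif_trunc}); call the common bound $K=K(\lambda,R,T)$. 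Then I would split $[0,T]$ into $O_{K,T}(1)$ subintervals on which the relevant Strichartz norms are small, and on each apply the long-time perturbation lemma from Appendix \ref{ap:pert} (the uniform-in-$N$ version for $\Phi_N$) with perturbed datum $(u_0,u_1)$, error $e=0$, and initial-data discrepancy $\le\rho$. Iterating over the finitely many subintervals yields
\[
\sup_{t\in[0,T]}\|\Phi(t)(u_0,u_1)-\Phi(t)(v_0,v_1)\|_{\cH^{1/2}}+\sup_{t\in[0,T]}\|\Phi_N(t)(u_0,u_1)-\Phi_N(t)(v_0,v_1)\|_{\cH^{1/2}}\le C(\lambda,R,T)\,\rho.
\]
Absorbing the $P_{N'}$ projection (which is bounded on $\cH^{1/2}$) and combining the three bounds via the triangle inequality,
\[
\sup_{t\in[0,T]}\|P_{N'}(\Phi(t)(u_0,u_1)-\Phi_N(t)(u_0,u_1))\|_{\cH^{1/2}}\le C(\lambda,R,T)\bigl(\rho+\varepsilon_1(N)\bigr),
\]
so choosing $\rho$ first, then $N$, makes the right side $<\varepsilon$; this also dictates that $\rho$ be taken as part of the construction of the open set $U_\varepsilon$ in the sense of Section \ref{sec:conv}, and that $N=N(N',\varepsilon,\lambda,R,T)\gg N'$.

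The main obstacle is the uniformity of the perturbation argument in the truncation parameter $N$: one needs the long-time stability lemma of Appendix \ref{ap:pert} to hold for the truncated flow $\Phi_N$ with constants and subinterval-count independent of $N$, so that the iteration over $[0,T]$ does not degrade as $N\to\infty$. This is exactly the uniform-in-$N$ stability theory alluded to in Remark \ref{rmk:stab_trunc} and in the paragraph opening Section \ref{sec:conv}; the point is that $\|P_{\leq N}v_N\|_{L^4_{t,x}}$ is controlled uniformly (Proposition \ref{prop:unif_trunc}) and the truncated nonlinearity obeys the same multilinear estimates, so the standard Strichartz-based perturbation scheme (Proposition \ref{prop:strichartz2} and the argument of \cite{CKSTT08,TV}) goes through verbatim with $N$-independent constants. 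A secondary, more routine point is bookkeeping the dependence of $\rho$ on $\lambda$ and $T$ (not on $N'$ or $\varepsilon$ in the wrong way) so that the resulting sets $\Sigma_{\lambda,\rho}$ and the eventual open sets $U_\varepsilon$ have the claimed measure lower bounds via Proposition \ref{prop:lambda_props}; once $\rho$ is fixed in terms of $\lambda,R,T$ and the target $\varepsilon$, everything else is a finite iteration.
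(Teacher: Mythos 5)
Your plan differs from the paper's proof in a way that leaves a genuine gap. You decompose via the nearby point $(v_0,v_1)\in\Sigma_\lambda$, i.e.
\[
\Phi - \Phi_N
= \bigl(\Phi(u_0,u_1)-\Phi(v_0,v_1)\bigr)
+ \bigl(\Phi(v_0,v_1)-\Phi_N(v_0,v_1)\bigr)
+ \bigl(\Phi_N(v_0,v_1)-\Phi_N(u_0,u_1)\bigr),
\]
and bound the first and last by the stability theory. But stability gives $\lesssim C(\lambda,R,T)\,\rho$, where $\rho$ measures the $\cH^{1/2}$ discrepancy of the initial data, and this discrepancy is not frequency-localized; projecting by $P_{N'}$ does not improve it. Since $\rho$ is a \emph{fixed} parameter of $\Sigma_{\lambda,\rho}$ (chosen once, depending on $\lambda$ and $T$, to make the uniform stability bounds hold — note $N=N(N',\varepsilon,\lambda,R,T)$ is declared \emph{not} to depend on $\rho$), your final estimate $C(\rho+\varepsilon_1(N))$ does not tend to $0$ as $N\to\infty$. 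So you cannot conclude $<\varepsilon$ for arbitrary $\varepsilon>0$. You seem to notice this (``dictates that $\rho$ be taken as part of the construction of $U_\varepsilon$''), but that amounts to proving a different, weaker statement, and it is not what Theorem \ref{thm:approx_prop} asserts.

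The paper instead triangulates around $P_{N_*}(u_0,u_1)$ for a new large parameter $N_*$, not around $(v_0,v_1)$. This buys two things your decomposition cannot. First, $P_{N_*}(u_0,u_1)$ is smooth, hence lies in $\Sigma_{\lambda'}\cap\textbf{B}_R$ for some $\lambda'=\lambda'(R,K,N_*)$ by energy conservation and Sobolev embedding, so Proposition \ref{prop:approx} applies to it directly (with an $N_*$-dependent constant). Second, and crucially, the discrepancy $(u_0,u_1)-P_{N_*}(u_0,u_1)$ lives only at frequencies $\gtrsim N_*$, so one can invoke Theorem \ref{thm:local_compare} (together with Remark \ref{rmk:stab_trunc} for $\Phi_N$) to bound
\[
\sup_{t\in[0,T]}\|P_{N'}\bigl(\Phi(t)(u_0,u_1)-\Phi(t)P_{N_*}(u_0,u_1)\bigr)\|_{\cH^{1/2}}
\lesssim \Bigl(\log\tfrac{N_*}{N'}\Bigr)^{-\theta},
\]
and similarly for $\Phi_N$, with constants depending only on $\lambda,R,T$ and not on $N_*$. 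The role of $(v_0,v_1)\in\Sigma_\lambda$ and the $\rho$-fattening in the paper's argument is only to guarantee, via the long-time stability theory, that the solutions from $(u_0,u_1)$ and $P_{N_*}(u_0,u_1)$ exist on $[0,T]$ with uniform $L^4_{t,x}$ and $L^\infty_t\cH^{1/2}_x$ bounds depending on $\lambda,R,T$. One then chooses $N_*$ large to kill the $\log$-terms and afterwards $N$ large to kill the $\varepsilon_1(N)$ term. The ingredient you are missing is precisely Theorem \ref{thm:local_compare}: that is what converts ``initial-data perturbation at high frequency'' into a decaying bound on the low-frequency part of the solution, which no $\cH^{1/2}$-stability estimate alone can provide.
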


\begin{proof}
Suppose that $(u_0, u_1) \in \textbf{B}_\rho(v_0, v_1)$ for some fixed $(v_0, v_1) \in \Sigma_\lambda$. Since $P_{N_*}(u_0, u_1) \in  \textbf{B}_\rho P_{N_*}(v_0, v_1)$ for all $N_* \in \bN$, the stability theory yields solutions on $[0,T]$ given by
\begin{align}
u &:= \Phi(t) (u_0, u_1), \qquad \qquad u_N := \Phi_N(t) (u_0, u_1) \\
\widetilde{u} &:= \Phi(t) P_{N_*} (u_0, u_1), \,\,\quad \quad \widetilde{u}_N := \Phi_N(t) P_{N_*}(u_0, u_1)
\end{align}
which satisfy uniform $L_{t,x}^4([0,T] \times \bT^3)$ and $L_t^\infty \cH_x^{1/2}([0,T] \times \bT^3)$ bounds depending only on $\lambda, R, T$. We apply the triangle inequality
\begin{align}
\sup_{ t \in [0,T]} \|P_{N'} \bigl(\Phi(t)(u_0, u_1) -&  \Phi_N(t) (u_0, u_1) \bigr)\|_{\cH_x^{1/2}} \\
& \lesssim \sup_{ t \in [0,T]} \|P_{N'} \left(\Phi(t)(u_0, u_1) -  \Phi(t) P_{N_*} (u_0, u_1) \right)\|_{\cH_x^{1/2}} \\
& \hspace{14mm} \sup_{ t \in [0,T]} \|P_{N'} \left(\Phi(t)P_{N_*} (u_0, u_1) -  \Phi_N(t) P_{N_*} (u_0, u_1) \right)\|_{\cH_x^{1/2}} \\
& \hspace{24mm} \sup_{ t \in [0,T]} \|P_{N'} \left(\Phi_N(t)(u_0, u_1) -  \Phi_N(t) P_{N_*}(u_0, u_1) \right)\|_{\cH_x^{1/2}},
\end{align}
and we estimate each term separately. For the second term, we observe that $P_{N_*}(u_0, u_1)$ is smooth, hence $P_{N_*} (u_0, u_1) \in \Sigma_\lambda \cap \,\textbf{B}_R$ for $\lambda = \lambda (R, K, N_*)$ by energy conservation and Sobolev embedding. Thus Proposition \ref{prop:approx} yields the bound
\[
\sup_{ t \in [0,T]} \|P_{N'} \left(\Phi(t)P_{N_*} (u_0, u_1) -  \Phi_N(t) P_{N_*} (u_0, u_1) \right)\|_{\cH_x^{1/2}} \lesssim C(N_*, R, T) \, \varepsilon_1(N).
\]
By Theorem \ref{thm:local_compare} and Remark \ref{rmk:stab_trunc}, we can bound the first and last terms by
\[
\sup_{ t \in [0,T]} \|P_{N'} \left(\Phi(t)(u_0, u_1) -  \Phi(t) P_{N_*} (u_0, u_1) \right)\|_{\cH_x^{1/2}} \lesssim \left( \log
 \frac{N_*}{N'} \right)^{-\theta}
\]
\[
\sup_{ t \in [0,T]} \|P_{N'} \left(\Phi_N(t)(u_0, u_1) -  \Phi_N(t) P_{N_*}(u_0, u_1) \right)\|_{\cH_x^{1/2}} \lesssim   \left( \log \frac{N_*}{N'} \right)^{-\theta},
\]
where the implicit constants depend on $\lambda, R$, and $T > 0$. Thus for fixed $N' \in \bN$, choosing $N_*$ sufficiently large, and subsequently $N$ sufficiently large yields the result.
\end{proof}

Now we prove the following statement, from which we obtain Theorem \ref{thm:weaknon-squeezing} readily given the bounds on the measure of the subsets $\Sigma_{\lambda}$ from Proposition \ref{prop:lambda_props}.

\begin{thm}
Let $\Phi$ denote the flow of the cubic nonlinear Klein-Gordon equation \eqref{equ:cubic_nlkg}. Fix $T, R > 0$, $k_0 \in \bZ^3$, $z \in \bC$, and $u_* \in \cH^{1/2}(\bT^3)$ and let $\lambda > 0$ be such that $\Sigma_{\lambda} \cap \textup{\textbf{B}}_R(u_*) \neq \varnothing$. Then for all $0 < \rho  < \rho_1(\lambda, T)$ sufficiently small,
\[
\Phi(T)\bigl(\Sigma_{\lambda, \rho} \cap \textup{\textbf{B}}_R(u_*) \bigr) \not \subseteq \textup{\textbf{C}}_{r}(z; k_0)
\]
for all $r > 0$ with $\pi r^2 <  \textup{cap}(\Sigma_{\lambda, \, \rho} \cap \textup{\textbf{B}}_R(u_*))$.
\end{thm}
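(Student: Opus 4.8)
The plan is to argue by contradiction: assume $\Phi(T)$ does squeeze $\Sigma_{\lambda,\rho}\cap\textbf{B}_R(u_*)$ into the cylinder, use the approximation Theorem \ref{thm:approx_prop} to transfer this squeezing to the truncated flow $\Phi_N(T)$ (only the single Fourier mode $k_0$, a low frequency, needs to be controlled), and then contradict the capacity-preservation of $\Phi_N(T)$ from Proposition \ref{prop:preserve_capacity_trunc} together with the normalization \eqref{equ:normalize} of the capacity on cylinders. Thus the infinite dimensional symplectic capacity enters in an essential way.

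First I would fix the setup. Write $\cO:=\Sigma_{\lambda,\rho}\cap\textbf{B}_R(u_*)$. Since $\Sigma_{\lambda,\rho}$ is open (a union of balls, by \eqref{equ:sigma_lambda_rho}) and $\Sigma_\lambda\cap\textbf{B}_R(u_*)\neq\varnothing$, the set $\cO$ is a nonempty open subset of $\cH^{1/2}(\bT^3)$, bounded since $\cO\subset\textbf{B}_{R+\rho+\|u_*\|_{\cH^{1/2}}}$; hence by the non-triviality axiom of Definition \ref{def:cap}, $c_0:=\textup{cap}(\cO)\in(0,\infty)$. Suppose for contradiction that $\Phi(T)(\cO)\subseteq\textbf{C}_r(z;k_0)$ with $\pi r^2<c_0$. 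Choose $\varepsilon>0$ so small that $\pi(r+2\varepsilon)^2<c_0$, and choose $N'\in\bN$ depending only on $k_0$ and the cutoff $\psi$ in \eqref{equ:trunc_op} so that the smooth projection $P_{N'}$ acts as the identity on the $k_0$-th Fourier mode.

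Next I would invoke Theorem \ref{thm:approx_prop}. Enlarging $R$ to $R':=R+\|u_*\|_{\cH^{1/2}}$ so that $\cO\subset\Sigma_{\lambda,\rho}\cap\textbf{B}_{R'}$, and using $0<\rho<\rho_1(\lambda,T)$, Theorem \ref{thm:approx_prop} produces $N=N(N',\varepsilon,\lambda,R',T)$ with
\[
\sup_{(u_0,u_1)\in\cO}\ \sup_{t\in[0,T]}\ \bigl\|P_{N'}\bigl(\Phi(t)(u_0,u_1)-\Phi_N(t)(u_0,u_1)\bigr)\bigr\|_{\cH^{1/2}}<\varepsilon.
\]
Since the $\cH^{1/2}$ norm of a pair dominates its weighted Fourier mass at the single frequency $k_0$, and $P_{N'}$ is the identity there, for every $(u_0,u_1)\in\cO$, writing $v=\Phi(T)(u_0,u_1)$ and $v_N=\Phi_N(T)(u_0,u_1)$, the weighted $\ell^2$ distance in the $k_0$-mode between $v$ and $v_N$ appearing in \eqref{equ:cylinder} is $<\varepsilon$. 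Combining this with $v\in\textbf{C}_r(z;k_0)$ and the triangle inequality in that weighted $\ell^2$ norm yields $v_N\in\textbf{C}_{r+\varepsilon}(z;k_0)$, i.e. $\Phi_N(T)(\cO)\subseteq\textbf{C}_{r+\varepsilon}(z;k_0)$.

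Finally I would close the argument using the capacity. As $\Phi_N(T)$ is a homeomorphism of $\cH^{1/2}$ (being a symplectomorphism, cf. the proof of Proposition \ref{prop:preserve_capacity_trunc}), the image $\Phi_N(T)(\cO)$ is again bounded, nonempty and open, and Proposition \ref{prop:preserve_capacity_trunc} gives $\textup{cap}(\Phi_N(T)(\cO))=\textup{cap}(\cO)=c_0$. Enlarging the closed cylinder $\textbf{C}_{r+\varepsilon}(z;k_0)$ to the open cylinder of radius $r+2\varepsilon$ and applying monotonicity (Definition \ref{def:cap}) together with the normalization \eqref{equ:normalize},
\[
c_0=\textup{cap}(\Phi_N(T)(\cO))\leq\textup{cap}(\textbf{C}_{r+2\varepsilon}(z;k_0))=\pi(r+2\varepsilon)^2<c_0,
\]
a contradiction; hence $\Phi(T)(\cO)\not\subseteq\textbf{C}_r(z;k_0)$ for every $r$ with $\pi r^2<\textup{cap}(\cO)$. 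The only genuine difficulty — and it is already absorbed into Theorem \ref{thm:approx_prop} — is making the low-frequency approximation of $\Phi$ by $\Phi_N$ \emph{uniform over the entire open set} $\cO$, which is precisely why one passes to the $\rho$-fattening $\Sigma_{\lambda,\rho}$ and imposes $\rho<\rho_1(\lambda,T)$, so that the critical stability theory of Section \ref{sec:stab} supplies uniform Strichartz bounds for all solutions with data in $\cO$. The remaining steps (matching $N'$ to $k_0$, the triangle inequality in the cylinder norm, and the open-versus-closed-cylinder bookkeeping in the monotonicity step) are routine.
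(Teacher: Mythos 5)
Your proposal is correct and follows essentially the same route as the paper: fix $N'$ large enough relative to $k_0$, apply Theorem \ref{thm:approx_prop} on $\Sigma_{\lambda,\rho}\cap\textbf{B}_{\|u_*\|+R}$ to get the uniform low-frequency approximation of $\Phi(T)$ by $\Phi_N(T)$, invoke Proposition \ref{prop:preserve_capacity_trunc} together with the capacity normalization and monotonicity to rule out squeezing of $\Phi_N(T)(\cO)$ into a slightly larger cylinder, and transfer back to $\Phi(T)$ by the triangle inequality in the $k_0$-mode. The paper phrases the final step directly (produce a datum with $\Phi_N(T)$-image outside $\textbf{C}_{r+\varepsilon}$ and conclude $\Phi(T)$-image lies outside $\textbf{C}_r$) while you argue by contradiction, but the two are logically equivalent and use the identical ingredients.
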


\begin{proof}
Fix $N' \in \bN$ with $N' > |k_0|$ and let $R_1:= \|u_*\| + R$. Then by Theorem \ref{thm:approx_prop}, we can find $N \in \bN$ sufficiently large so that for any $(u_0, u_1) \in \Sigma_{\lambda, \rho} \cap \textbf{B}_{R_1}$ we have
\begin{align}
\label{equ:lambda_conv}
\sup_{ t \in [0,T]} \|P_{N'} \left(\Phi(t)(u_0, u_1) -  \Phi_N(t) (u_0, u_1) \right)\|_{\cH_x^{1/2}} < \varepsilon.
\end{align}
Since $\Phi_N$ preserves capacities by Proposition \ref{prop:preserve_capacity_trunc}, we have the equality
\[
c\bigl(\Phi_N(t) (\Sigma_{\lambda, \rho} \cap \textbf{B}_R(u_*)) \bigr) = c\bigl(\Sigma_{\lambda, \rho} \cap \textbf{B}_R(u_*)\bigr)
\]
for all $t \in \bR$.  Thus for $z= (z_0, z_0)$ we can find some $(u_0, u_1) \in \Sigma_{\lambda, \rho} \cap \textbf{B}_R(u_*)$ such that
\[
\left( \langle k_0 \rangle | \widehat{\Phi_N(T) (u_0, u_1) }(k_0) - z_0 |^2 + \langle k_0 \rangle^{-1}| \widehat{\partial_t \Phi_N(T) (u_0, u_1) }(k_0) - z_0 |^2 \right)^{1/2} > r + \varepsilon,
\]
and since $(u_0, u_1) \in \Sigma_{\lambda, \rho} \cap \textbf{B}_{R_1}$, we conclude by the triangle inequality and \eqref{equ:lambda_conv} that
\[
\left( \langle k_0 \rangle | \widehat{\Phi(T) (u_0, u_1) }(k_0) - z_0 |^2 + \langle k_0 \rangle^{-1}| \widehat{\partial_t \Phi(T) (u_0, u_1) }(k_0) - z_0 |^2 \right)^{1/2} > r,
\]
which completes the proof.
\end{proof}

\subsection{Proof of Theorem \ref{thm:approx_conditional}}
The goal of this subsection is to prove the conditional global result and the small-data non-squeezing result. The following result demonstrates that at low frequencies, the truncated flow is a good approximated to the full equation.
The proof follows from the same arguments used to prove Theorem \ref{thm:local_compare}, which is unsurprising given that Theorem \ref{thm:local_compare} essentially yields a decoupling between low and high frequencies. In this setting, we do not rely on the probabilistic estimates from Proposition \ref{prop:approx}, however, we are only able to compare the low frequency components of the corresponding solutions. The following proposition immediately yields the large data portion of Theorem \ref{thm:approx_conditional}. 

\begin{prop}
\label{prop:convergence for truncated flow_later}
Fix $T, R > 0$ and $u_* \in \cH^{1/2}(\bT^3)$. Suppose there exists some $K > 0$ such that for all $(u_0, u_1) \in \textbf{B}_R(u_*)$, the corresponding solutions $u$  to \eqref{equ:cubic_nlkg} and $u_N$ to \eqref{equ:cubic_nlkg_trun} exist on $[0,T]$ and satisfy
\[
\|u \|_{L_{t,x}^{4}([0,T] \times \bT^3)} \,\, + \,\,  \sup_N \|P_N u_N\|_{L_{t,x}^{4}([0,T] \times \bT^3)} \leq K.
\]
Let $\Phi$ and $\Phi_N$ denote the flows of the cubic nonlinear Klein-Gordon equation with full \eqref{equ:cubic_nlkg} and truncated \eqref{equ:cubic_nlkg_trun} nonlinearities, respectively. Then for any $N' \in \bN$, and sufficiently large $N$ depending on $R, T$ and $K$,
\[
\sup_{ t \in [0,T]} \|P_{N'} \left(\Phi(t)(u_0, u_1) -  \Phi_N(t) (u_0, u_1) \right)\|_{\cH_x^{1/2}} \lesssim \left( \log\frac{N}{N'} \right)^{-\theta},
\]
with implicit constants depending on $R, T$, and $K$.
\end{prop}

\begin{proof}
Let $u$ and $u_N$ to the Cauchy problem \eqref{equ:cubic_nlkg} and the truncated equation \eqref{equ:cubic_nlkg_trun} respectively on $[0,T)$. By Proposition \ref{prop:error_eqn}, there exists some $M \in [N', N]$
\begin{align}
\square \,u_{lo} + u_{lo} = P_{ M} F(u_{lo}, u_{lo}, u_{lo}) + \cO_{K, R, T}( (\log (N / N'))^{-\theta})
\end{align}
for $u_{lo} = P_M u$. By the same reasoning,
\begin{align}
\square \,u_{N,\,lo} + u_{N,\,lo} = P_{ M} F(u_{N,\,lo}, u_{N,\,lo}, u_{N,\,lo}) + \cO_{K, R, T}( (\log (N / N'))^{-\theta})
\end{align}
with the same frequency cut-off, as in the proof of Theorem \ref{thm:local_compare}, and again with a slightly different error term. Since $u$ and $u_N$ have the same initial data, the arguments used to prove Theorem \ref{thm:local_compare} yield the result.
\end{proof}

\begin{rmk}
Note that although any initial data in $P_{N'} \textbf{B}_R$ gives rise to global solutions of the relevant Cauchy problems, Proposition \ref{prop:convergence for truncated flow_later} is insufficient to prove Theorem \ref{thm:trunc_approx} since the implicit constants would depend on the truncation parameter and thus we could not guarantee convergence uniformly.
\end{rmk}

To conclude, we present the following lemma which yields our small data result.

\begin{lem}
\label{lem:trunc_bds_small}
 Fix $T > 0$ and let $\Phi_N$ denote the flow of the cubic nonlinear Klein-Gordon equation with truncated nonlinearity \eqref{equ:cubic_nlkg_trun}. There exists some sufficiently small absolute constant $\rho_0 = \rho_0(T)$ such that for any $0 < \rho < \rho_0$ and for all $(u_0, u_1) \in \textbf{B}_\rho \subset \cH^{1/2}(\bT^3)$ there exists a unique solution $u_N := \Phi_N(u_0, u_1)$ on $[0,T]$ which satisfies
\[
\|u_N\|_{L_{t,x}^4([0,T) \times \bT^3) } \lesssim \rho.
\]
\end{lem}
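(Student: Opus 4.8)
The statement is the standard small-data global well-posedness claim for the truncated equation \eqref{equ:cubic_nlkg_trun}, with a uniform-in-$N$ bound on the $L^4_{t,x}$ norm of the solution, which by Remark \ref{rmk:stab_trunc} can be carried out entirely with the multilinear machinery of Section \ref{sec:multi}. The plan is to run a contraction mapping argument for the first-order system \eqref{equ:soln_kg_sys} with truncated nonlinearity in the adapted spaces $\cX^{1/2}_{\pm}([0,T))$, exploiting that on a bounded time interval $[0,T)$ the free evolution of small data is small in the relevant norm.

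First I would fix $T>0$ and recall from Lemma \ref{lem:lin_est} that $\|e^{\pm i t\langle\nabla\rangle}u_0^\pm\|_{\cX^{1/2}_\pm([0,T))}\le \|u_0^\pm\|_{H^{1/2}}\lesssim \rho$. Consider the map $\Phi^\pm(u)=e^{\pm i t\langle\nabla\rangle}u_0^\pm \pm i\,I^\pm(P_N F(P_N u))$ on the ball $\cS=\{u\in \cX^{1/2}([0,T)):\|u\|_{\cX^{1/2}([0,T))}\le 2C_0\rho\}$, where $C_0$ is the implied constant in Lemma \ref{lem:lin_est}. By Remark \ref{rmk:stab_trunc} (the duality reduction $\|P_N G(u)\|_{N(I)}\le \sup_{\|v_N\|_{\cY^{1/2}}=1}|\iint G(u)v_N|$, so the projections may be discarded) together with the cubic estimate of Theorem \ref{thm:nonlinear_est}, one gets $\|I^\pm(P_N F(P_N u))\|_{\cX^{1/2}_\pm([0,T))}\le C(T)\|u\|_{\cX^{1/2}([0,T))}^3$ with $C(T)$ independent of $N$. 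Hence on $\cS$, $\|\Phi^\pm(u)\|_{\cX^{1/2}_\pm}\le C_0\rho + C(T)(2C_0\rho)^3$, and the analogous difference estimate gives a contraction factor $\lesssim C(T)\rho^2$; choosing $\rho_0=\rho_0(T)$ small enough that $C(T)(2C_0\rho_0)^2<\tfrac14$ makes $\Phi=\Phi^++\Phi^-$ a contraction on $\cS$, producing a unique fixed point $u_N=(u_N^+,u_N^-)$ with $\|u_N\|_{\cX^{1/2}([0,T))}\lesssim \rho$. Since bounds in $\cX^{1/2}([0,T))$ imply the same bounds in $L^\infty_t\cH^{1/2}_x$, and since $\|u_N^\pm\|_{L^4_{t,x}([0,T)\times\bT^3)}\lesssim\|u_N^\pm\|_{\cX^{1/2}_\pm([0,T))}$ by Corollary \ref{cor:strichartz} and Corollary \ref{cor:orthog}, we conclude $\|u_N\|_{L^4_{t,x}([0,T)\times\bT^3)}\lesssim\rho$, with all constants uniform in $N$.

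Finally I would note that this fixed point is the flow $\Phi_N(u_0,u_1)$: the contraction solution satisfies the Duhamel formulation \eqref{equ:duhamel} with truncated nonlinearity, which is equivalent to \eqref{equ:cubic_nlkg_trun}, and uniqueness in $\cX^{1/2}([0,T))$ plus the blow-up criterion (the $L^4_{t,x}$/ equivalently $Z$-norm controls global existence, as recorded after Proposition \ref{equ:local_criteria} and via Remark \ref{rmk:stab_trunc}) identify it with the global flow restricted to $[0,T]$. There is no serious obstacle here; the only point requiring a line of care is that the implied constants in Theorem \ref{thm:nonlinear_est} and Corollary \ref{cor:strichartz} depend on $T$ through finite-speed-of-propagation localization but \emph{not} on the truncation parameter $N$, which is exactly the content of Remark \ref{rmk:stab_trunc}, so $\rho_0$ may be taken to depend on $T$ alone.
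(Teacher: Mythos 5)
Your proof is correct, but it takes a genuinely different route from the paper. The paper's proof bypasses any direct analysis of the truncated equation: it invokes small-data theory for the \emph{full} equation \eqref{equ:cubic_nlkg} on $[0,T]$ (the Lindblad--Sogge theory mentioned in the introduction) to produce a solution $u$ with $\|u\|_{L^4_{t,x}} \lesssim \rho$, and then compares $u$ to $u_N$ directly in the Strichartz space $L^4_{t,x}$ via the telescoping decomposition
\[
F(u) - P_N F(P_N u_N) = \bigl[F(u) - P_N F(u)\bigr] + \bigl[P_N F(u) - P_N F(P_N u)\bigr] + \bigl[P_N F(P_N u) - P_N F(P_N u_N)\bigr],
\]
using the uniform $L^p$ boundedness of $P_N$ (Lemma \ref{lem:lp_bds}), the inhomogeneous Strichartz estimate (Proposition \ref{prop:strichartz2}), and the fact that $u$ and $u_N$ share initial data, so that the difference equation has zero Cauchy data; a continuity argument in $\|u-u_N\|_{L^4_{t,x}}$ then closes. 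This argument is purely Lebesgue-scale and never touches the $U^p/V^p$ framework. Your proof instead builds the truncated solution from scratch by contraction in $\cX^{1/2}([0,T))$, using Lemma \ref{lem:lin_est}, Theorem \ref{thm:nonlinear_est}, and the $N$-independence furnished by the duality reduction of Remark \ref{rmk:stab_trunc}, and then extracts the $L^4_{t,x}$ bound via the embedding chain $\|u^\pm_N\|_{L^4_{t,x}} \lesssim \|u^\pm_N\|_{Z} \lesssim \|u^\pm_N\|_{\cY^{1/2}} \lesssim \|u^\pm_N\|_{\cX^{1/2}}$. Both are sound; the paper's version is shorter and more elementary but treats small-data theory for the critical NLKG as a black box, while yours is self-contained within the Section~\ref{sec:multi} machinery and incidentally gives the stronger $\cX^{1/2}$ control (not needed for the lemma). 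One minor imprecision in your write-up: the smallness of the free evolution in $\cX^{1/2}$ from Lemma \ref{lem:lin_est} holds on \emph{any} interval, not just bounded ones; the $T$-dependence of $\rho_0$ enters solely through the constant $C(T)$ in Theorem \ref{thm:nonlinear_est} (via Strichartz localization), which you in fact acknowledge correctly at the end.
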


\begin{proof}
Fix $T > 0$ By the small data theory, we know that there exists some $\rho_1(T) > 0$ sufficiently small so that for any $0 < \rho < \rho_1$, and for all $(u_0, u_1) \in \textbf{B}_\rho$, a unique solution $u := \Phi(u_0, u_1)$ exists to the cubic nonlinear Klein-Gordon equation \eqref{equ:cubic_nlkg}, which satisfies
\[
\|u \|_{L_{t,x}^4([0,T) \times \bT^3) } \lesssim \rho.
\]
Let $F(u) = u^3$, then we can expand
\begin{align}
 &F(u) - P_N F(P_N u_N) \\
 &= F(u) -  P_N F(u)  + P_N F(u) - P_N F(P_N u) + P_N F(P_Nu) - P_N F(P_Nu_N),
\end{align}
hence by the boundedness of the smooth projections and Strichartz estimates we obtain
\begin{align}
\|u - u_N \|_{L_{t,x}^4} 
& \lesssim \|u\|^3_{L_{t,x}^{4}} +  \| F( P_Nu) - F( P_Nu_N)\|_{L_{t,x}^{4/3}} \\
& \lesssim \|u\|^3_{L_{t,x}^{4}} +  \| u - u_N\|^3_{L_{t,x}^{4}} + \|u\|^2_{L_{t,x}^{4}} \| u - u_N\|_{L_{t,x}^{4}}  ,
\end{align}
where the implicit constants may depend on time. By taking $\rho_0 = \rho_0(T) > 0$ smaller if necessary, we obtain the desired result by a standard continuity argument. 
\end{proof}

\appendix

\section{Adapted function spaces}
\label{sec:harmonic}

\subsection{Adapted Function spaces}
\subsubsection{$X^{s,b}$ spaces}

For a good overview of these spaces, see Chapter 2.6 in \cite{Taononlin}. For completeness, we recall the definition of $X^{s,b}(\bR \times \bT^3)$ spaces, with norm
\begin{align}
\label{equ:xsb_def}
\|u\|_{X^{s,b}(\bR \times \bT^3)} = \|\langle n \rangle^s \langle |\tau| - \langle n \rangle \rangle^b \widehat u(n,\tau) \|_{L^2_\tau \ell_n^2}.
\end{align}
We will also work with the local-in-time restriction spaces $X^{s,b,\delta}$, which are defined by the norm
\begin{align}
\|u\|_{X^{s,b,\delta}} = \inf \bigl\{\|\widetilde{u}\|_{X^{s,b}(\bR \times \bT^3)} : \widetilde{u} |_{[-\delta, \delta]} = u \bigr\}.
\end{align}
We have the obvious inclusions
\[
X^{s',b'} \subseteq X^{s, b}
\]
for $s \leq s'$ and $b \leq b'$. We remark that these spaces are not invariant under conjugation or modulation but they are invariant under translation. 

\medskip
Heuristically, these spaces measure how far a given function is from being a free solution. Additionally, free solutions lie in $X^{s,b}$ provided we localize in time. 
\begin{lem}
\label{lem:free_solns}
Let $f \in H^s$ for $s \in \bR$ and let $S(t)$ denote the free evolution for the Klein-Gordon equation. Then for any Schwartz time cutoff $\eta \in S_x(\bR)$, 
\[
\|\eta(t) S(t) f \|_{X^{s,b}(\bR \times \bT^3)} + \|\eta(t) \partial_t S(t) f \|_{X^{s-1,b}(\bR \times \bT^3)} \leq c(\eta, b) \|f\|_{\cH^s(\bT^3)}.
\]
\end{lem}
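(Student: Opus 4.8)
The plan is to reduce the estimate to the two half-wave evolutions $e^{\pm i t \langle \nabla \rangle}$ and a single-frequency computation. Writing $f = (f_0, f_1) \in \cH^s = H^s \times H^{s-1}$, I would first decompose
\[
S(t)f = \tfrac12\bigl(e^{it\langle \nabla \rangle} + e^{-it\langle \nabla \rangle}\bigr)f_0 + \tfrac{1}{2i}\bigl(e^{it\langle \nabla \rangle} - e^{-it\langle \nabla \rangle}\bigr)\langle \nabla \rangle^{-1}f_1,
\]
\[
\partial_t S(t)f = -\tfrac{1}{2i}\langle \nabla \rangle\bigl(e^{it\langle \nabla \rangle} - e^{-it\langle \nabla \rangle}\bigr)f_0 + \tfrac12\bigl(e^{it\langle \nabla \rangle} + e^{-it\langle \nabla \rangle}\bigr)f_1 .
\]
By the triangle inequality in $X^{s,b}$, resp.\ $X^{s-1,b}$, together with the fact that $\langle \nabla \rangle^{\pm 1}$ maps $H^\sigma \to H^{\sigma \mp 1}$ isometrically, it then suffices to establish the single estimate
\[
\|\eta(t)\,e^{\pm i t \langle \nabla \rangle} g\|_{X^{\sigma,b}(\bR\times\bT^3)} \le c(\eta,b)\,\|g\|_{H^\sigma(\bT^3)}, \qquad \sigma \in \bR,
\]
and apply it with $(\sigma,g) = (s,f_0)$ and $(s,\langle \nabla \rangle^{-1}f_1)$ for the first term, and with $(\sigma,g)=(s-1,\langle \nabla \rangle f_0)$ and $(s-1,f_1)$ for the second; summing and using $\|f\|_{\cH^s}^2 = \|f_0\|_{H^s}^2 + \|f_1\|_{H^{s-1}}^2$ gives the claim.

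For the single estimate, I would expand $g = \sum_{n\in\bZ^3}\widehat g(n)e^{in\cdot x}$, so that the space-time Fourier transform of $\eta(t)\,e^{\pm i t \langle \nabla \rangle}g$ is $\widehat g(n)\,\widehat\eta(\tau \mp \langle n\rangle)$, whence by definition of the norm
\[
\|\eta(t)\,e^{\pm i t \langle \nabla \rangle}g\|_{X^{\sigma,b}}^2 = \sum_{n}\langle n\rangle^{2\sigma}|\widehat g(n)|^2 \int_\bR \langle|\tau|-\langle n\rangle\rangle^{2b}\,|\widehat\eta(\tau\mp\langle n\rangle)|^2\,d\tau .
\]
The key point is the elementary bound $\langle|\tau|-\langle n\rangle\rangle \le \langle \tau \mp \langle n\rangle\rangle$, valid for all $\tau\in\bR$ and all $n$: for $\tau \ge 0$ one has $\bigl||\tau|-\langle n\rangle\bigr| = |\tau - \langle n\rangle| \le |\tau \mp \langle n\rangle|$, and for $\tau < 0$ one has $\bigl||\tau|-\langle n\rangle\bigr| = |\tau+\langle n\rangle| \le |\tau\mp\langle n\rangle|$. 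After the change of variables $\rho = \tau\mp\langle n\rangle$, the inner integral is therefore bounded uniformly in $n$ by $\int_\bR\langle\rho\rangle^{2b}|\widehat\eta(\rho)|^2\,d\rho =: c(\eta,b)^2$, which is finite since $\eta$ is Schwartz (so $\widehat\eta$ is Schwartz, and $c(\eta,b)$ depends only on finitely many Schwartz seminorms of $\eta$ and on $b$). Substituting back yields $\|\eta(t)\,e^{\pm i t \langle \nabla \rangle}g\|_{X^{\sigma,b}}^2 \le c(\eta,b)^2\sum_n\langle n\rangle^{2\sigma}|\widehat g(n)|^2 = c(\eta,b)^2\|g\|_{H^\sigma}^2$, as required.

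There is no serious obstacle in this argument; the only point demanding a moment's care is that the Klein–Gordon dispersion relation is $|\tau| = \langle n\rangle$ rather than $\tau = \langle n\rangle$, so the modulation weight $\langle|\tau|-\langle n\rangle\rangle$ is not literally $\langle\tau\mp\langle n\rangle\rangle$; this is handled without any loss by the elementary inequality above. Everything else is bookkeeping: tracking which Sobolev index is paired with $\langle \nabla \rangle^{\pm1}$ and collecting the finitely many resulting terms.
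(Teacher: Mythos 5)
Your argument is correct and is the standard proof of this well-known fact, which the paper states without proof (it is essentially the Klein--Gordon analogue of Lemma~2.8/2.11 in the cited Tao reference): the half-wave decomposition, the reduction to the single-frequency estimate for $\eta(t)e^{\pm it\langle\nabla\rangle}g$, and the pointwise inequality $\bigl||\tau|-\langle n\rangle\bigr|\le|\tau\mp\langle n\rangle|$ (which correctly handles the two-sheeted symbol $|\tau|=\langle n\rangle$) are exactly the right ingredients. The one implicit assumption is $b\ge 0$, needed when you raise that pointwise inequality to the power $2b$; for $b<0$ one simply bounds $\langle|\tau|-\langle n\rangle\rangle^{2b}\le 1$ and uses $\widehat\eta\in L^2$, and since the paper only ever invokes the lemma with $b=\tfrac12+>0$ this is immaterial.
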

We note that it does not hold that free solutions lie in $X^{s,b}$ globally hence these spaces are really only suitable for local theory. An important property of these spaces is the so-called transfer principle which allows one to convert bounds for free solutions into bounds for $X^{s,b}$ functions
\begin{lem}[Lemma 2.9, \cite{Taononlin}]
Let $\Lambda = \bR$ or $\bT$ and let $L = i P(\nabla /i)$ for some polynomial $P: \Lambda^d \to \bR$, and let $s \in \bR$ and let $Y$ be a Banach space of functions on $\bR \times \Lambda^d$ such that
\[
\|e^{i \tau_0} e^{tL} f \|_Y \lesssim \|f\|_{H_\cX^s(\Lambda^d)}
\]
for all $f \in H_\cX^s(\Lambda^d)$ and $\tau_0 \in \bR$. Then for $b > \frac{1}{2}$
\[
\|u\|_Y \lesssim_b \|u\|_{X^{s, b}(\bR \times \Lambda^d)}.
\]
\end{lem}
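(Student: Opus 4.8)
The plan is to represent an arbitrary element $u\in X^{s,b}$ as a continuous superposition of time‑modulated free evolutions, apply the hypothesis on $Y$ to each constituent, and then pay for the superposition by a Cauchy--Schwarz estimate in the modulation variable that converges exactly because $b>\tfrac12$. The argument is insensitive to whether $\Lambda^d$ is $\bT^d$ or $\bR^d$ (replace sums over $n\in\bZ^d$ by integrals over $\bR^d$), so I would write it for $\Lambda=\bT$. First I would fix notation: let $P(n)\in\bR$ be the symbol of $-iL$, so $e^{tL}$ acts in Fourier variables as multiplication by $e^{itP(n)}$, and recall
\[
\|u\|_{X^{s,b}}^2=\sum_{n}\langle n\rangle^{2s}\int_{\bR}\langle\tau-P(n)\rangle^{2b}\,|\widetilde u(n,\tau)|^2\,d\tau ,
\]
with $\widetilde u$ the space--time Fourier transform. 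For $\sigma\in\bR$ define the spatial function $f_\sigma$ by $\widehat{f_\sigma}(n):=\widetilde u(n,P(n)+\sigma)$, so that $\|f_\sigma\|_{H^s}^2=\sum_n\langle n\rangle^{2s}|\widetilde u(n,P(n)+\sigma)|^2$. Inverting the $\tau$–Fourier transform and changing variables $\tau\mapsto P(n)+\sigma$ in each frequency gives the representation
\[
u(t,x)=\int_{\bR}e^{it\sigma}\,\bigl(e^{tL}f_\sigma\bigr)(x)\,d\sigma ,
\]
which I would verify first for Schwartz $u$ (where Fubini in $n,\sigma$ is immediate) and then extend by density once the desired bound is established.

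Next comes the estimate itself. Minkowski's integral inequality and the hypothesis applied to each $f_\sigma$ (absorbing the scalar modulation) give
\[
\|u\|_Y\le\int_{\bR}\bigl\|e^{it\sigma}e^{tL}f_\sigma\bigr\|_Y\,d\sigma\lesssim\int_{\bR}\|f_\sigma\|_{H^s}\,d\sigma .
\]
Inserting $1=\langle\sigma\rangle^{-b}\langle\sigma\rangle^{b}$ and applying Cauchy--Schwarz in $\sigma$,
\[
\int_{\bR}\|f_\sigma\|_{H^s}\,d\sigma\le\Bigl(\int_{\bR}\langle\sigma\rangle^{-2b}\,d\sigma\Bigr)^{1/2}\Bigl(\int_{\bR}\langle\sigma\rangle^{2b}\|f_\sigma\|_{H^s}^2\,d\sigma\Bigr)^{1/2}=:c_b\Bigl(\int_{\bR}\langle\sigma\rangle^{2b}\|f_\sigma\|_{H^s}^2\,d\sigma\Bigr)^{1/2},
\]
where $c_b<\infty$ precisely because $b>\tfrac12$. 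Expanding $\|f_\sigma\|_{H^s}^2$, exchanging the sum and the $\sigma$–integral by Tonelli, and undoing the change of variables (so $\langle\sigma\rangle=\langle\tau-P(n)\rangle$) identifies the remaining factor as $\|u\|_{X^{s,b}}$, and combining the displays yields $\|u\|_Y\lesssim_b\|u\|_{X^{s,b}}$.

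For the form of $X^{s,b}$ actually used in this paper, whose weight is $\langle|\tau|-\langle n\rangle\rangle^{b}$ and which is adapted to the two half-waves $L_\pm=\pm i\langle\nabla\rangle$, I would split $u=u_++u_-$ according to the sign of the time frequency $\tau$, apply the statement above to each of $L_\pm$, and use that on $\{\pm\tau>0\}$ one has $\langle\tau\mp\langle n\rangle\rangle=\langle|\tau|-\langle n\rangle\rangle$, so that $\|u_\pm\|_{X^{s,b}_\pm}\le\|u\|_{X^{s,b}}$. The half-wave hypothesis on $Y$ follows from the one for $S(t)$, since $e^{\pm it\langle\nabla\rangle}f=S(t)(f,\pm i\langle\nabla\rangle f)$ and $\|(f,\pm i\langle\nabla\rangle f)\|_{\cH^s}\sim\|f\|_{H^s}$.

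There is no genuine obstacle here: the proof is an exercise in the Fourier representation and one Cauchy--Schwarz. The only points requiring care are the density argument legitimizing the superposition identity and the interchanges of integration, and keeping track of where $b>\tfrac12$ enters — namely in the convergence of $\int_{\bR}\langle\sigma\rangle^{-2b}\,d\sigma$, which is the classical threshold below which the transfer principle fails.
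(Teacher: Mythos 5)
Your proof is correct and is exactly the standard argument for the transfer principle: write $u$ as a superposition $u(t)=\int_{\bR}e^{it\sigma}e^{tL}f_{\sigma}\,d\sigma$ with $\widehat{f_\sigma}(n)=\widetilde u(n,P(n)+\sigma)$, apply Minkowski and the hypothesis on $Y$, and then use Cauchy--Schwarz with the weight $\langle\sigma\rangle^{\pm b}$, where $b>\tfrac12$ is precisely what makes $\int\langle\sigma\rangle^{-2b}\,d\sigma$ finite. The paper does not reprove this lemma --- it cites it directly from \cite{Taononlin} --- and your argument reproduces the proof found there. Your closing remark about reducing the paper's $\langle|\tau|-\langle n\rangle\rangle$ weight to the two half-waves $e^{\pm it\langle\nabla\rangle}$ is a sensible clarification, since the paper's $X^{s,b}$ convention does not literally match the single-propagator form in the lemma's hypothesis; one small point worth flagging is that the modulation factor in the paper's statement reads $e^{i\tau_0}$, which is a harmless typographical slip for $e^{it\tau_0}$ (a constant scalar would make the hypothesis vacuous), and your proof correctly uses the intended $e^{it\sigma}$.
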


Letting $Y$ be the Strichartz spaces from Proposition \ref{prop:strichartz2}, we obtain the following corollary immediately.

\begin{cor}
Let $(q,r)$ be Strichartz admissible pairs and let $b > \frac{1}{2}$. Then
\[
\|u\|_{L_t^qL_x^r} \lesssim \|u\|_{X^{0,b}}.
\]
\end{cor}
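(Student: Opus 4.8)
The final statement to prove is the corollary asserting that for a Strichartz-admissible pair $(q,r)$ and $b > \tfrac12$, one has $\|u\|_{L_t^q L_x^r} \lesssim \|u\|_{X^{0,b}}$.

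\smallskip

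The plan is to deduce this directly from the transfer principle (the preceding Lemma 2.9 of \cite{Taononlin}) together with the Strichartz estimate of Proposition \ref{prop:strichartz2}. First I would identify the relevant linear propagator: for the Klein-Gordon equation the natural operator is $L = i\langle \nabla \rangle$, so that $e^{tL} f = e^{it\langle\nabla\rangle} f$ is the (half-wave) free evolution, and $P(\xi) = \sqrt{1+|\xi|^2}$ plays the role of the polynomial symbol in the statement of the transfer principle (strictly speaking it is not polynomial, but the transfer principle only uses that $e^{tL}$ is a one-parameter family of isometries on each $H^s$; alternatively one writes the Klein-Gordon flow as a $2\times 2$ system in the variables $u^\pm$ and applies the principle componentwise, exactly as set up around \eqref{equ:duhamel}). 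Next I would take $Y = L_t^q L_x^r(\mathbb{R}\times\mathbb{T}^3)$ and verify the hypothesis of the transfer principle, namely the fixed-time bound
\[
\|e^{i\tau_0} e^{tL} f\|_{L_t^q L_x^r} \lesssim \|f\|_{H^0(\mathbb{T}^3)} = \|f\|_{L^2}.
\]
Since multiplication by the unimodular constant $e^{i\tau_0}$ does not change any norm, this is precisely the homogeneous Strichartz estimate for the free Klein-Gordon evolution at regularity $\gamma = 0$, which is contained in Proposition \ref{prop:strichartz2} with $F \equiv 0$ and $(u_0,u_1)$ chosen so that $S(t)(u_0,u_1)$ reproduces $e^{it\langle\nabla\rangle}f$ (e.g.\ $u_0 = f$, $u_1 = i\langle\nabla\rangle f$, or more cleanly the $u^+$ component alone). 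One should note that at $\gamma = 0$ the admissibility condition $\tfrac1q + \tfrac3r = \tfrac32$ together with $\tfrac1q + \tfrac1r \le \tfrac12$ is exactly the range in the Definition preceding Proposition \ref{prop:strichartz2}, so the hypothesis is met for precisely the pairs $(q,r)$ in the statement.

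\smallskip

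With the fixed-time bound in hand, the transfer principle immediately gives, for any $b > \tfrac12$,
\[
\|u\|_{L_t^q L_x^r} \lesssim_b \|u\|_{X^{0,b}(\mathbb{R}\times\mathbb{T}^3)},
\]
which is the claim. If one prefers to avoid invoking the transfer principle as a black box, the same conclusion follows by the standard $X^{s,b}$ argument: write $u(t) = \int e^{it\sigma} \big(e^{-it\langle\nabla\rangle} \widehat{u}(\sigma)\big)\, d\sigma$ after a Fourier transform in time adapted to the modulation weight, use Minkowski's inequality in $\sigma$ to move the $L_t^q L_x^r$ norm inside, apply the fixed-time Strichartz bound to each modulated free solution, and then use Cauchy-Schwarz in $\sigma$ together with $\int \langle \sigma \rangle^{-2b}\, d\sigma < \infty$ (which is exactly where $b > \tfrac12$ enters) to recombine into the $X^{0,b}$ norm.

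\smallskip

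There is essentially no serious obstacle here: the statement is a routine corollary, and the only points requiring a word of care are (i) matching the Klein-Gordon half-wave propagator $e^{it\langle\nabla\rangle}$ to the hypotheses of the transfer principle, which is handled by passing to the first-order system in $u^\pm$ as is done throughout Section \ref{sec:prelim}, and (ii) checking that the admissibility ranges line up, which they do by construction. The main (very mild) subtlety is simply that the transfer principle as quoted is phrased for polynomial symbols, whereas $\langle\xi\rangle$ is not polynomial; the resolution is that the proof of the transfer principle only uses unitarity of $e^{tL}$ on Sobolev spaces, a property $e^{it\langle\nabla\rangle}$ clearly possesses, so the statement applies verbatim.
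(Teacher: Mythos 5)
Your argument is correct and matches the paper's own (very terse) proof: the paper simply takes $Y=L_t^qL_x^r$ in the transfer principle and invokes the fixed-time Strichartz estimate of Proposition \ref{prop:strichartz2}, which is exactly your main line of reasoning. Your added remarks about the non-polynomial symbol $\langle\xi\rangle$ and the alternative direct Cauchy–Schwarz-in-modulation argument are sound but not necessary; the paper treats the corollary as immediate.
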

For $b > \frac{1}{2}$, $X^{s,b}$ embeds into $C_t H^s_x$ (for both the full and restricted spaces on the appropriate domains). This embedding fails at the endpoint $b= \frac{1}{2}$ and should be thought of analogously to the failure of the endpoint Sobolev embedding $L^\infty \not \subseteq H^{\frac{n}{2}}$. It is precisely at the endpoint $b= \frac{1}{2}$ that these spaces respect the scaling of $C_t H_x^s$ and for critical problems where scale invariance is an issue, one no longer has the appropriate control in order to close the contraction mapping argument. It is possible to remedy this problem by including a Besov space type refinement, however we will focus instead on $U^p$ and $V^p$ spaces.

\subsubsection{$U^p$ and $V^p$ Function spaces}
\label{sec:up_vp}

In this appendix, we introduce the basic facts we will need about the $U^p$ and $V^p$ spaces. We follow the exposition in \cite{HHK}. Consider partitions given by a strictly increasing finite sequence $-\infty < t_0 < t_2 < \ldots t_{K} \leq \infty$. If $t_K = \infty$ we use the convention $v(t_{K}) := 0$ for all functions $v: \bR \to H$. We will usually be working on bounded intervals $I \subset \bR$. A step functions associated to a partition is a function which is constant on each open sub-interval of the partition. In the sequel, we let $\cB$ denote an arbitrary Banach space.

\begin{defn}[$U^p$ spaces]
\label{defn:up}
Let $1 \leq p < \infty$. Consider a partition $\{t_0, \ldots, t_K\}$ and let $(\varphi_k)_{k=0}^{K-1} \subseteq \cB$ with $\sum_{ k =0}^{K-1} \|\varphi_k\|_{L^2}^p = 1$. We define a $U^p$ atom to be a function
\[
a = \sum_{k=1}^{K} \mathbbm{1}_{[t_{k-1}, t_{k})} \varphi_{k-1}
\]
and we define the atomic space $U^p(\bR, \cB)$ to be the set of all functions $u: \bR \to \cB$ such that
\[
 u = \sum_{j=1}^\infty \lambda_j a_j,
\]
for $a_j$ $U^p$ atoms, and $\{\lambda_j\} \in \ell^1(\bC)$, endowed with the norm
\[
\|u\|_{U^p} := \inf \left\{ \sum_{j=1}^\infty |\lambda_j|, \,u = \sum_{j=1}^\infty \lambda_j a_j \,: \, a_j \textup{ is a } U^p\textup{ atom} \right\} .
\]
\end{defn}
\begin{rmk}
This yields a Banach space which satisfies the embeddings
\[
U^p(\bR, \cB) \hookrightarrow U^q(\bR , \cB) \hookrightarrow L^\infty(\bR, \cB)
\]
for $1 \leq p < q < \infty$. Furthermore, every $u \in U^p$ is right-continuous and $\lim_{t \to -\infty} u(t) = 0$.

\end{rmk}

\begin{defn}[$V^p$ spaces]
\label{defn:vp}
Let $1 \leq p < \infty$. We define $V^p(\bR, \cB)$ as the space of all $\cB$ valued functions, $v$, such that the norm 
\[
\|v\|_{V^p(\bR, \cB)} = \sup_{\textup{partitions }} \left( \sum_{i=1}^K \|v(t_{i}) - v(t_{i-1})\|^p_{\cB} \right)^{1/p}< \infty
\]
with the convention $v(\infty) = 0$. We let $V_- (\bR , \cB)$ denote the subspace of all functions satisfying $\lim_{t \to - \infty} v(t) = 0$ and we let $V_{rc}^p(\bR , \cB) $ denote the subspace of all right continuous functions in $V_- (\bR , \cB)$, endowing both these subspaces with the above norm.
\end{defn}

\begin{rmk}
\label{rmk:embedding}
Note that for $1 \leq p < \infty$ we have the embeddings
\[
U^p(\bR , \cB) \hookrightarrow V_{rc}^p(\bR , \cB) \hookrightarrow  L^\infty(\bR, \cB) \qquad \textup{and} \qquad V^p(\bR , \cB) \hookrightarrow V^q(\bR , \cB).
\]
If further $1 < p < q < \infty$, then
\begin{align}
\label{equ:v_in_u}
V_{rc}^p(\bR , \cB) \hookrightarrow  U^q(\bR , \cB).
\end{align}
\end{rmk}

A crucial property of the $U^p$ and $V^p$ spaces is the following duality relation.

\begin{thm}
\label{thm:duality}
Let $1 < p < \infty$ and $\frac{1}{p} + \frac{1}{p'} = 1$. Then
\[
(U^p(\bR, \cB))^* = V^{p'}(\bR, \cB^*),
\]
that is, there is a bounded bilinear form 
\[
T: V^{p'}(\bR, \cB^*) \to (U^p(\bR,\cB))^*, \qquad T(v) := B( \cdot, v)
\]
which is an isometric isomorphism.
\end{thm}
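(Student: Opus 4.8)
The plan is to follow the by-now standard argument of Hadac, Herr and Koch, of which this is essentially \cite[Theorem~2.8]{HHK} (together with the later erratum correcting a minor point); I outline the main steps. First I would make the bilinear form explicit on step functions: given a partition $\{t_0,\dots,t_K\}$, a step function $u$ with $u\equiv u_{k-1}$ on $[t_{k-1},t_k)$, and any $v\colon\bR\to\cB^*$, set $B(u,v):=\sum_{k=1}^{K}\langle u_{k-1},\,v(t_k)-v(t_{k-1})\rangle$ with the convention $v(\infty)=0$. Passing to common refinements shows this is independent of the representing partition, so $B$ is a well-defined bilinear pairing on (step functions)$\times V^{p'}$.

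Second, I would prove the fundamental bound $|B(u,v)|\le\|u\|_{U^p}\|v\|_{V^{p'}}$. By the atomic definition of the $U^p$ norm it suffices to take $u=a$ a $U^p$ atom, $a=\sum_k\mathbbm{1}_{[t_{k-1},t_k)}\varphi_{k-1}$ with $\sum_k\|\varphi_{k-1}\|_{\cB}^p=1$; then H\"older's inequality in the index $k$ with exponents $p,p'$ gives
\[
|B(a,v)|\le\sum_k\|\varphi_{k-1}\|_{\cB}\,\|v(t_k)-v(t_{k-1})\|_{\cB^*}\le\Bigl(\sum_k\|\varphi_{k-1}\|_{\cB}^p\Bigr)^{\!1/p}\Bigl(\sum_k\|v(t_k)-v(t_{k-1})\|_{\cB^*}^{p'}\Bigr)^{\!1/p'}\!\!\!\le\|v\|_{V^{p'}}.
\]
Hence $T(v):=B(\cdot,v)$ extends to an element of $(U^p)^*$ with $\|T(v)\|\le\|v\|_{V^{p'}}$, and $T$ is injective: if $B(\cdot,v)\equiv 0$ then testing against $u=\mathbbm{1}_{[s,t)}\phi$ for $\phi\in\cB$ forces $v(t)=v(s)$ for all $s<t$, so $v\equiv 0$ by the vanishing condition at $-\infty$.

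Third --- and this is the crux --- I would establish surjectivity of $T$ together with the matching lower bound $\|T(v)\|_{(U^p)^*}\ge\|v\|_{V^{p'}}$, so that $T$ is an isometric isomorphism. Given $L\in(U^p(\bR,\cB))^*$, define $v\colon\bR\to\cB^*$ by $\langle\phi,v(t)\rangle:=L(\mathbbm{1}_{[t,\infty)}\phi)$ for $\phi\in\cB$; this makes sense since $\|\mathbbm{1}_{[t,\infty)}\phi\|_{U^p}=\|\phi\|_{\cB}$, and one checks $B(u,v)=L(u)$ on step functions, hence on all of $U^p$ by density. To bound $\|v\|_{V^{p'}}$, fix a partition, pick near-maximizers $\phi_k\in\cB$ with $\|\phi_k\|_{\cB}=1$ and $\langle\phi_k,v(t_k)-v(t_{k-1})\rangle$ nearly equal to $\|v(t_k)-v(t_{k-1})\|_{\cB^*}$, and rescale $\varphi_{k-1}=c_k\phi_k$ with nonnegative scalars $c_k$ chosen by $\ell^p$--$\ell^{p'}$ duality so that $\sum_k c_k^p=1$ while $\sum_k c_k\|v(t_k)-v(t_{k-1})\|_{\cB^*}$ equals $\bigl(\sum_k\|v(t_k)-v(t_{k-1})\|_{\cB^*}^{p'}\bigr)^{1/p'}$; evaluating $L$ on the resulting atom gives $\|L\|\gtrsim\bigl(\sum_k\|v(t_k)-v(t_{k-1})\|_{\cB^*}^{p'}\bigr)^{1/p'}$ up to $\varepsilon$, and taking the supremum over partitions yields $\|v\|_{V^{p'}}\le\|L\|$. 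A final bookkeeping point is to replace $v$ by its right-continuous modification to land in $V_{rc}^{p'}(\bR,\cB^*)$, using that $B$ only sees the jumps of $v$. The main obstacle is exactly this third step: organizing the partition-by-partition recovery of $v$ from $L$ and checking that the atoms built via $\ell^p$--$\ell^{p'}$ duality see the full $V^{p'}$ norm; the remaining steps are essentially routine.
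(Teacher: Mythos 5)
Your proposal is correct in substance and follows exactly the Hadac--Herr--Koch argument for \cite[Theorem~2.8]{HHK}, which is the source the paper itself cites (and does not reprove). Two small bookkeeping slips are worth fixing. First, with the conventions as you set them up --- $B(u,v)=\sum_{k}\langle u_{k-1},v(t_k)-v(t_{k-1})\rangle$ and $\langle\phi,v(t)\rangle:=L(\mathbbm{1}_{[t,\infty)}\phi)$ --- one in fact gets $L(u)=-B(u,v)$ on step functions (write $\mathbbm{1}_{[t_{k-1},t_k)}=\mathbbm{1}_{[t_{k-1},\infty)}-\mathbbm{1}_{[t_k,\infty)}$ and telescope), so you should either insert a minus sign into the definition of $v$ or flip the sign convention for $B$; this does not affect the isometry. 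Second, your injectivity argument invokes "the vanishing condition at $-\infty$," but the normalization that is actually built into the $V^{p}$ definition here is the convention $v(\infty)=0$; testing against $\mathbbm{1}_{[s,t)}\phi$ gives $v(s)=v(t)$ for all $s<t$, and then letting $t\to\infty$ and using $v(\infty)=0$ forces $v\equiv 0$.
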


\begin{prop}[Proposition 2.9, \cite{HHK}]
\label{prop:duality}
For $1 < p < \infty$, let $u \in U^p$ be continuous and $v, v* \in V^{p'}$ for $\frac{1}{p} + \frac{1}{p'} = 1$ such that $v(s) = v^*(s)$ except for at most countably many points. Then
\[
B(u,v) = B(u, v^*).
\]
\end{prop}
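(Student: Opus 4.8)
The plan is to reduce the identity to the explicit formula defining $B$ on step functions, and then to play the continuity of $u$ off against the countability of the exceptional set. Recall that for a step function
\[
a = \sum_{k=1}^{K} \mathbbm{1}_{[t_{k-1}, t_k)}\, \varphi_{k-1}
\]
adapted to a partition $t_0 < t_1 < \cdots < t_K$ (with the convention $w(t_K)=0$ when $t_K=+\infty$) and any $w\colon \bR\to\cB^{*}$, the bilinear form is given by a finite sum
\[
B(a,w) = \sum_{k=1}^{K} \langle w(t_k) - w(t_{k-1}),\, \varphi_{k-1}\rangle,
\]
which depends on $w$ only through its values $w(t_0),\dots,w(t_K)$ at the partition points; the form on all of $U^p\times V^{p'}$ furnished by Theorem \ref{thm:duality} is the bounded extension of this, satisfying $|B(a,w)|\le \|a\|_{U^p}\|w\|_{V^{p'}}$.

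The one nontrivial input I would use is the following fact, which is part of the construction of $B$ on continuous elements carried out in \cite[Section~2]{HHK}: if $u\in U^p$ is continuous, then for every sequence of partitions $\mathbf t^{(n)}=\{t_0^{(n)}<\cdots<t_{K_n}^{(n)}\}$ with mesh tending to $0$ and with $t_0^{(n)}\to-\infty$, $t_{K_n}^{(n)}\to+\infty$, the associated step functions $u^{(n)} := \sum_{k}\mathbbm{1}_{[t_{k-1}^{(n)}, t_k^{(n)})}\, u(t_{k-1}^{(n)})$ satisfy $B(u^{(n)},w)\to B(u,w)$ for every $w\in V^{p'}$, the limit being independent of the chosen partition sequence. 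Continuity of $u$, together with $u(t)\to 0$ as $t\to-\infty$ and the boundedness of $u$, is exactly what makes these Riemann--Stieltjes-type sums converge; I would import this from \cite{HHK} rather than reprove it.

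Granting this, the argument is immediate. Let $S := \{\, s\in\bR : v(s)\neq v^{*}(s)\,\}$, which is at most countable, so $\bR\setminus S$ is dense in $\bR$. Choose partitions $\mathbf t^{(n)}$ as above with the extra requirement that every partition point lies in $\bR\setminus S$; this is possible precisely because $\bR\setminus S$ is dense, so one can still arrange mesh $\to 0$ and endpoints escaping to $\pm\infty$. By the displayed formula, $B(u^{(n)},v)=B(u^{(n)},v^{*})$ for every $n$, since both expressions evaluate $v$, resp.\ $v^{*}$, only at the points of $\mathbf t^{(n)}$, where $v=v^{*}$ (and the point $+\infty$, if present, contributes $0$ to both by convention). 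Letting $n\to\infty$ and applying the limit fact on both sides gives $B(u,v)=\lim_n B(u^{(n)},v)=\lim_n B(u^{(n)},v^{*})=B(u,v^{*})$, as desired.

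The main obstacle --- indeed the only nonroutine point --- is the limit fact of the second paragraph. Because $1/p+1/p'=1$ is the borderline case for Young-type integration, convergence of these Riemann--Stieltjes sums is not a consequence of bounded $p$- and $p'$-variation alone; it genuinely uses that $u$ lies in $U^p$ (not merely that it is continuous of bounded $p$-variation), that is, the atomic structure of $U^p$. This is established in \cite{HHK} in the course of proving Theorem \ref{thm:duality}, and once it is available the density-plus-countability step above closes the proof with no further work.
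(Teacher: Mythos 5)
The paper offers no proof of this proposition: it is quoted verbatim from \cite{HHK} (as is Proposition \ref{prop:duality2}, whose ``proof'' is just the citation), so there is no in-paper argument to compare yours against. Judged on its own terms, your reduction is correct and is essentially the argument of \cite{HHK}: on a step function $B(\cdot,w)$ sees $w$ only at the partition points, the complement of the countable exceptional set is dense so the partitions can be chosen to avoid it, and one passes to the limit. You have also correctly located the entire difficulty in the imported convergence statement, and your closing remark is the right diagnosis: continuity plus finite $p$- and $p'$-variation alone sits exactly at the failure threshold of Young integration, and it is the atomic structure of $U^p$ that rescues the convergence --- this is why the hypothesis is $u\in U^p$ continuous rather than merely $u\in V^p$ continuous.

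One caution about how you phrase the key lemma. The discretizations $u^{(n)}$ do \emph{not} converge to $u$ in $U^p$ --- their $U^p$ norms are of order $\bigl(\sum_k \|u(t^{(n)}_{k-1})\|^p\bigr)^{1/p}$ and typically blow up as the mesh shrinks --- so the limit $B(u^{(n)},w)\to B(u,w)$ cannot be extracted from the bound $|B(u,w)|\le \|u\|_{U^p}\|w\|_{V^{p'}}$ together with ``bounded extension from atoms.'' In \cite{HHK} the form $B(u,v)$ is constructed as the limit of the partition sums $B_t(u,v)$ over the net of partitions ordered by refinement, and the content of the lemma you import is that, for continuous $u$, this net limit coincides with the limit along any sequence of partitions whose mesh tends to zero on compacta and whose endpoints escape to $\pm\infty$ (a comparison via common refinements, using uniform continuity of $u$ and the uniform bound on increments of $w$). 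Your write-up is consistent with this, but the words ``bounded extension'' in your first paragraph should not be read as supplying that limit; with that understanding, the density-plus-countability step closes the proof as you say.
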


\begin{prop}
\label{prop:duality2}
Let $1 < p < \infty$, $u \in V_-^1(\cB)$ be absolutely continuous on compact intervals and $v \in V^{p'}(\cB^*)$ for $\frac{1}{p} + \frac{1}{p'} = 1$. Then,
\[
B(u,v) = - \int_{-\infty}^\infty \langle v(t), u'(t) \rangle_{\cB^*,\, \cB} \,dt.
\]
In particular $B(u,v) = B(u, \widetilde{v})$ if $v(t) = \widetilde{v}(t)$ almost everywhere. Consequently, $v$ can be replaced by its right-continuous version.
\end{prop}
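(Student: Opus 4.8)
The plan is to obtain the integral formula by reducing to step functions, for which the bilinear form $B$ of Theorem~\ref{thm:duality} is a finite sum, and then passing to a limit along a sequence of refining partitions; the last two assertions will then be immediate consequences. First I would record the elementary consequences of the hypotheses: since $u$ is absolutely continuous on compact intervals and $u \in V_-^1(\cB)$, it is globally continuous and of bounded variation, $u(-\infty)=0$ (and $u(\infty)=0$ under the standing convention), $u' \in L^1(\bR;\cB)$ with $\|u'\|_{L^1}\lesssim \|u\|_{V^1}$, and $u \in U^1(\bR,\cB)$ --- a continuous function of bounded variation vanishing at $-\infty$ belongs to $U^1$, cf.\ \cite{HHK}. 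I would then fix partitions $\mathfrak{t}_n=\{t_0^{(n)}<\dots<t_{K_n}^{(n)}\}$ with mesh tending to $0$ and endpoints exhausting $\bR$, set $u_n := \sum_{k}\mathbbm{1}_{[t_{k-1}^{(n)},t_k^{(n)})}\,u(t_{k-1}^{(n)})$, and use two standard facts from \cite{HHK}: that $u_n \to u$ in $U^1(\bR,\cB)$ (using continuity and bounded variation of $u$), so that $B(u_n,v)\to B(u,v)$ by boundedness of $B$; and that on step functions $B(u_n,v) = \sum_{k}\langle u(t_{k-1}^{(n)}),\, v(t_k^{(n)})-v(t_{k-1}^{(n)})\rangle_{\cB,\cB^*}$.

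Next I would run an Abel summation on this finite sum. Since $u(t_0^{(n)})\to 0$ and the convention forces the contribution at the right endpoint to vanish in the limit, the boundary terms disappear and, writing $u(t_k^{(n)})-u(t_{k-1}^{(n)})=\int_{t_{k-1}^{(n)}}^{t_k^{(n)}}u'$, one gets
\[
B(u_n,v) = -\sum_{k}\big\langle u(t_k^{(n)})-u(t_{k-1}^{(n)}),\, v(t_k^{(n)})\big\rangle + o(1) = -\int_{\bR}\big\langle u'(s),\, v_n(s)\big\rangle\, ds + o(1),
\]
where $v_n$ is the piecewise-constant function equal to $v(t_k^{(n)})$ on $(t_{k-1}^{(n)},t_k^{(n)}]$. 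Because $V^{p'}\hookrightarrow L^\infty$ gives $\|v_n\|_{L^\infty}\leq \|v\|_{V^{p'}}$ and because a function in $V^{p'}$ has at most countably many discontinuities, $v_n(s)\to v(s)$ for a.e.\ $s$; as $u'\in L^1$, dominated convergence yields $\int_{\bR}\langle u',v_n\rangle \to \int_{\bR}\langle u',v\rangle$. Matching the two evaluations of $\lim_n B(u_n,v)$ then gives $B(u,v)=-\int_{-\infty}^{\infty}\langle v(t),u'(t)\rangle_{\cB^*,\cB}\,dt$. The remaining assertions follow formally: the right-hand side sees $v$ only through its a.e.\ equivalence class, so $B(u,v)=B(u,\widetilde v)$ whenever $v=\widetilde v$ a.e., and in particular $v$ may be replaced by its right-continuous modification, which differs from it only on the (countable) jump set; this also recovers Proposition~\ref{prop:duality} in the present setting.

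I expect the main obstacle to be the approximation step rather than the algebra. Proving $u_n \to u$ in the $U^1$ norm requires the quantitative comparison between $\|\cdot\|_{U^1}$ and total variation for continuous functions, applied to $u-u_n$, together with a tail estimate near $\pm\infty$ to cope with the fact that $u$ need not be compactly supported; and the dominated-convergence step has to be handled with some care because $v$ is only of bounded variation, not continuous, so along the partition one only controls the one-sided limits of $v$ and must argue that these coincide with $v$ outside a null set. Everything else --- the explicit form of $B$ on step functions, the summation by parts, and the boundedness and density properties of $U^p$ and $V^p$ --- is standard and available from Theorem~\ref{thm:duality}, Proposition~\ref{prop:duality}, and \cite{HHK}.
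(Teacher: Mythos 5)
The paper's ``proof'' of Proposition~\ref{prop:duality2} is simply a citation to \cite{HHK}, so your proposal is a genuine reconstruction, and the overall skeleton---reduce to step functions, evaluate $B$ explicitly on them, integrate by parts via Abel summation, pass to the limit by dominated convergence, and read off the last two assertions from the a.e.\ invariance of the right-hand side---is the right one. The Abel summation and the dominated-convergence step (using that $V^{p'}$ functions have at most countably many discontinuities and are bounded in $L^\infty$, while $u'\in L^1$) are both correctly handled.

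There is, however, a real gap in the approximation step: the claim that a continuous function of bounded variation vanishing at $-\infty$ belongs to $U^1$, and that $u_n \to u$ in $U^1$, is false in general. Indeed, if $u = \sum_j \lambda_j a_j$ with $\sum_j|\lambda_j| < \infty$ and $a_j$ step atoms, then by summing the jumps and using $\|S_N - u\|_{V^1} \lesssim \|S_N - u\|_{U^1}\to 0$ for the partial sums $S_N$, one finds $u(t) = \sum_{s\leq t}\bigl(u(s)-u(s^-)\bigr)$; so $U^1$ consists only of pure-jump functions, and any nonconstant continuous $u$ (e.g.\ a tent function, which is AC and in $V_-^1$) lies in $V^1_{rc,-}\setminus U^1$. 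Your proposed route to $U^1$-convergence---``comparison between $\|\cdot\|_{U^1}$ and total variation''---cannot succeed either, since step functions are not dense in $V^1$ and $\|u-u_n\|_{V^1}\not\to 0$ for continuous $u$. The fix is to work in $U^p$ for the given $p>1$, which is all the bilinear form $B$ requires: since $u$ is right-continuous, $u \in V^1_{rc,-}\hookrightarrow V^r_{rc,-}\hookrightarrow U^p$ for any $1<r<p$, and $u_n\to u$ in $U^p$ follows from the interpolation bound $\|u-u_n\|_{V^r}\lesssim \|u-u_n\|_{V^1}^{1/r}\|u-u_n\|_{L^\infty}^{1-1/r}\to 0$ (the $V^1$-norm stays bounded while the $L^\infty$-norm vanishes by uniform continuity plus the tail conditions at $\pm\infty$), and then from $V^r_{rc,-}\hookrightarrow U^p$. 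With $B(u_n,v)\to B(u,v)$ established this way, the rest of your argument closes.
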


\begin{proof}
See \cite{HHK}
\end{proof}

\begin{rmk}[Remark 2.11 in \cite{HHK}]
\label{rmk:duality_improv}
Let $1 < p < \infty$ and $u \in U^p$. Then for $\frac{1}{p} + \frac{1}{p'} = 1$ one clearly has
\[
\|u\|_{U^p} = \sup_{v \in V^{p'} \,:\, \|v\|_{V^{p'} = 1}} \bigl|B(u,v)\bigr|.
\]
However, in light of Proposition \ref{prop:duality}, one can restrict to taking a supremum over right-continuous functions, which we will do in the estimates in Section \ref{sec:multi}. The bilinear form above should be thought of as the corresponding the Stieltjes integral
\[
\int f dg = \sum_{i=1}^n f(t_i) \bigl( g(t_{i+1}) - g(t_i) \bigr).
\]
\end{rmk}

We define variants of the $U^p$ and $V^p$ spaces adapted to the linear propagator for the Klein-Gordon equation $e^{\pm it \langle \nabla \rangle}$. In the sequel we will suppress the notational dependence on $I \subset \bR $ and the Banach space $\cB$.
\begin{defn}
Defnote by $U^p_\pm$ the space $U^p$ adapted to the linear propagator equipped with the norm
\[
\|u\|_{U_\pm^p} = \|e^{\pm it \langle \nabla \rangle} u \|_{U^p}
\]
and similarly for the $V^p$ spaces.
\end{defn}

\begin{rmk}
The space $U_\pm^p$ is again an atomic space with atoms $\widetilde{a} = e^{it \langle \nabla \rangle} a$ for a $U^p$ atom, a. Further, in the case that $\cB = H^s$, we compare the above definition to that of the $X^{s,b}$ spaces associated to the Klein-Gordon equation given by $\|u\|_{X^{s,b}} = \|e^{-it \langle \nabla \rangle} u\|_{H_t^b H_\cX^s}$.
\end{rmk}

\medskip
A useful feature of the $U^p$ and $V^p$ spaces is that they satisfy a transfer principle, namely one can transfer multilinear estimates for free solutions to estimates for $U_\pm^2$ functions.

\begin{prop}[Transfer principle, Proposition 2.19 \cite{HHK}]
\label{prop:transfer}
Let $T_0 : L_x^2 \times \ldots \times L_x^2 \to L^1_{loc}$ be an $m$-linear operator. Suppose that for some $1 \leq q, r \leq \infty$
\[
\|T_0(S(t) \phi_1, \ldots, S(t) \phi_m) \|_{L_{t}^q L_x^r} \lesssim \prod_{i=1}^m \|\phi_i \|_{L_x^2}
\]
then there exists an extension $T_0: U_\pm^q \times \ldots \times U_\pm^q \to L_t^q L_x^r$ with
\[
\|T(u_1, \ldots u_m) \|_{L_t^q L_x^r} \lesssim \prod_{i=1}^m \|u_i \|_{U_\pm^q}
\]
which agrees with $T_0$ for almost every $t \in \mathbb{R}$.
\end{prop}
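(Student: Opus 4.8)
The plan is to deduce the proposition from its hypothesis by exploiting the atomic structure of $U_\pm^q$ together with $m$-linearity, following the standard argument of \cite{HHK}. Recall that a $U_\pm^q$ atom is a function which, on each interval $[t_{k-1},t_k)$ of some finite partition, coincides with a free solution $e^{\pm it\langle\nabla\rangle}\varphi_{k-1}$, the profiles obeying the normalization $\sum_k\|\varphi_{k-1}\|_{L_x^2}^q=1$. Given $u_i\in U_\pm^q$, I would choose atomic decompositions $u_i=\sum_{j}\lambda_{i,j}a_{i,j}$ with $\sum_j|\lambda_{i,j}|\le(1+\varepsilon)\|u_i\|_{U_\pm^q}$ and define
\[
T(u_1,\dots,u_m):=\sum_{j_1,\dots,j_m}\lambda_{1,j_1}\cdots\lambda_{m,j_m}\,T_0(a_{1,j_1},\dots,a_{m,j_m}),
\]
where $T_0$ is applied pointwise in $t$, i.e. $T_0(f_1,\dots,f_m)(t):=T_0(f_1(t),\dots,f_m(t))$. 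By multilinearity this is the only reasonable candidate; the substance of the statement is the norm bound together with the almost everywhere agreement with $T_0$, the latter being the only step that needs genuine care.

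The analytic heart is the bound $\|T_0(a_1,\dots,a_m)\|_{L_t^qL_x^r}\lesssim1$ for any tuple of atoms $a_1,\dots,a_m$. I would form the common refinement of the $m$ underlying partitions; its cells $Q$ are in bijection with those $m$-tuples $(k_1,\dots,k_m)$ of partition intervals whose intersection is nonempty, each such tuple occurring at most once. On a cell $Q\leftrightarrow(k_1,\dots,k_m)$ one has, by multilinearity and pointwise evaluation, $T_0(a_1,\dots,a_m)=T_0\big(e^{\pm it\langle\nabla\rangle}\varphi_1^{(k_1)},\dots,e^{\pm it\langle\nabla\rangle}\varphi_m^{(k_m)}\big)$, so that for $q<\infty$ (the endpoint being analogous, with $\sup$ in place of $\sum$)
\begin{align*}
\|T_0(a_1,\dots,a_m)\|_{L_t^qL_x^r}^q
&=\sum_{(k_1,\dots,k_m)}\big\|T_0\big(e^{\pm it\langle\nabla\rangle}\varphi_1^{(k_1)},\dots,e^{\pm it\langle\nabla\rangle}\varphi_m^{(k_m)}\big)\big\|_{L_t^q(Q)L_x^r}^q\\
&\le\sum_{(k_1,\dots,k_m)}\big\|T_0\big(e^{\pm it\langle\nabla\rangle}\varphi_1^{(k_1)},\dots,e^{\pm it\langle\nabla\rangle}\varphi_m^{(k_m)}\big)\big\|_{L_t^q(\bR)L_x^r}^q\\
&\lesssim\sum_{(k_1,\dots,k_m)}\prod_{i=1}^m\|\varphi_i^{(k_i)}\|_{L_x^2}^q
=\prod_{i=1}^m\Big(\sum_{k_i}\|\varphi_i^{(k_i)}\|_{L_x^2}^q\Big)=1,
\end{align*}
where the hypothesis is used in the third line and the atom normalization in the last; the product factorizes precisely because the summation index is an $m$-tuple and the summand splits as a product over $i$. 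Inserting this into the definition gives $\|T(u_1,\dots,u_m)\|_{L_t^qL_x^r}\le\sum_{j_1,\dots,j_m}|\lambda_{1,j_1}|\cdots|\lambda_{m,j_m}|\,\|T_0(a_{1,j_1},\dots,a_{m,j_m})\|_{L_t^qL_x^r}\lesssim\prod_{i=1}^m\sum_{j_i}|\lambda_{i,j_i}|\lesssim\prod_{i=1}^m\|u_i\|_{U_\pm^q}$ after letting $\varepsilon\to0$; absolute convergence of the defining series in $L_t^qL_x^r$ falls out of the same computation.

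Finally one must check that $T$ is well defined — independent of the chosen atomic decompositions — and that it agrees with $T_0$ almost everywhere whenever $T_0(u_1(t),\dots,u_m(t))$ already makes sense. I would first observe that finite linear combinations of atoms are dense in $U_\pm^q$ (partial sums of an atomic decomposition converge in $U_\pm^q$), that on such inputs the defining series is finite and $T(u_1,\dots,u_m)=T_0(u_1,\dots,u_m)$ literally, pointwise in $t$, and that the estimate above shows $T_0$ is bounded on this subclass; then a general tuple is approximated in $U_\pm^q$ by finite atomic sums, the multilinear bound forces $T_0$ of the approximants to converge in $L_t^qL_x^r$ to a limit depending only on the $u_i$, and this limit is what we take as $T(u_1,\dots,u_m)$. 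Passing to a subsequence converging almost everywhere identifies this limit with the pointwise value $T_0(u_1(t),\dots,u_m(t))$ for a.e.\ $t$, which simultaneously yields the claimed a.e.\ agreement. The main obstacle, such as it is, lies in this last step: reconciling the pointwise-in-$t$ meaning of $T_0$ on time-dependent inputs with $U_\pm^q$-limits and extracting the almost-everywhere identification, while keeping in mind that $T_0$ is only assumed to land in $L^1_{\mathrm{loc}}$, so an a.e.\ statement is the best available; by contrast the quantitative core — the refine-and-factorize estimate — is short and entirely mechanical.
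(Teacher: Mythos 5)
The paper does not prove this proposition itself: it is quoted verbatim as Proposition 2.19 of \cite{HHK}, and your argument reproduces the proof from that reference — atomic decomposition, passage to the common refinement of the $m$ partitions, and factorization of the resulting sum over $m$-tuples of profiles against the atom normalizations — so it is correct and takes the same route as the cited source. The only cosmetic slip is the final ``$=$'' in your display, which should be a ``$\le$'': the sum runs only over the nonempty cells of the refinement, a subset of all $m$-tuples $(k_1,\dots,k_m)$, and it is the larger unrestricted sum that factorizes as $\prod_i \sum_{k_i}\|\varphi_i^{(k_i)}\|_{L_x^2}^q = 1$.
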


\begin{rmk}
\label{rmk:log_loss}
The transfer principle will be the key tool which allows us to use the Strichartz estimates available for free solutions of the Klein-Gordon equation to derive bounds in $U^p$ and $V^p$ spaces. Because of the duality relation, typically one needs to put one function in $V^p$ when performing multilinear estimates. In these cases, the following proposition demonstrates that this is possible if one allows for a logarithmic loss.
\end{rmk}

\begin{prop}
\label{prop:log_loss}
Let $q_1, \ldots q_m > 2$ where $m=1,2$ or $3$ and let $E$ be a Banach space and $T: U^{q_1} \times \ldots U^{q_m} \to E$ be a bounded $m$-linear operator with
\[
\|T(u_1, \ldots u_m) \|_{E} \leq C \prod_{i=1}^m \|u_i\|_{U_\pm^{q_i}}.
\]
Suppose further theses exists some $0 < C_2 \leq C$ such that the estimate
\[
\|T(u_1, \ldots u_m) \|_{E} \leq C_2 \prod_{i=1}^m \|u_i\|_{U_\pm^{2}}
\]
holds. Then $T$ satisfies
\[
\|T(u_1, \ldots u_m) \|_{E} \leq C_2 \left( \log \frac{C}{C_2} + 1 \right)^m \prod_{i=1}^m \|u_i\|_{V^{2}}, \qquad u_i \in V_{rc, \,\pm}^p.
\]
\end{prop}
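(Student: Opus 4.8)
The statement to prove (Proposition \ref{prop:log_loss}) is a standard interpolation-type lemma: an $m$-linear operator bounded on $U^2$-spaces with a good constant $C_2$ and on $U^{q_i}$-spaces with a possibly worse constant $C$ can be extended to $V^2_{rc}$-spaces at the cost of only a logarithmic power $\left(\log\frac{C}{C_2}+1\right)^m$. This is exactly Proposition 2.20 in Hadac–Herr–Koch \cite{HHK}, so my plan is to reproduce that argument.

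\begin{proof}[Proof plan]
The idea is to decompose each input $u_i \in V^2_{rc,\pm}$ at a frequency-independent \emph{level}, using the fact (Remark \ref{rmk:embedding}, specifically \eqref{equ:v_in_u}) that $V^2_{rc} \hookrightarrow U^{q}$ for every $q > 2$ together with a quantitative version of this embedding. First I would normalize so that $\|u_i\|_{V^2_{rc,\pm}} = 1$ for each $i$. The key decomposition is: for each $u_i$ and each integer $k \geq 0$, split $u_i = u_i^{(k)} + (u_i - u_i^{(k)})$ where $u_i^{(k)}$ collects the ``large jumps'' of $u_i$ (there are at most $2^{2k}$ of them since $\|u_i\|_{V^2}=1$, so $u_i^{(k)}$ has a $U^2$-norm of size $\lesssim 2^k$), and $u_i - u_i^{(k)}$ consists of small jumps, which can be controlled in $U^{q_i}$ with norm $\lesssim 2^{-\epsilon k}$ for some $\epsilon = \epsilon(q_i) > 0$ (this uses that $\left(\sum |\text{small jump}|^2\right)^{1/2} \leq 1$ forces $\left(\sum|\text{small jump}|^{q_i}\right)^{1/q_i}$ to decay geometrically). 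Concretely one partitions the real line at the times where $u_i$ has a jump exceeding $2^{-k}$, writes $u_i^{(k)}$ as the associated step function, and checks $\|u_i^{(k)}\|_{U^2_\pm} \lesssim 2^k$ and $\|u_i - u_i^{(k)}\|_{U^{q_i}_\pm} \lesssim 2^{(1 - q_i/2)k/q_i} \cdot (\text{const})$, while $u_i^{(k)} \to u_i$ appropriately as $k \to \infty$.

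Next I would expand $T(u_1,\dots,u_m)$ telescopically: writing $u_i = \sum_{k \geq 0}(u_i^{(k+1)} - u_i^{(k)}) + u_i^{(0)}$, or more efficiently using a single threshold $K$ to be optimized, $u_i = u_i^{(K)} + (u_i - u_i^{(K)})$, and multilinearity gives $T(u_1,\dots,u_m)$ as a sum of $2^m$ terms, each of which has some factors of type $u_i^{(K)}$ (estimated in $U^2$, contributing $C_2$ and a factor $\lesssim 2^K$ each) and some of type $u_i - u_i^{(K)}$ (estimated via the $U^{q_i}$ bound with constant $C$, contributing $C$ and a factor $\lesssim 2^{-\epsilon K}$ each). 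The pure term with all factors $u_i^{(K)}$ gives $\lesssim C_2 \, 2^{mK}$, which is bad; the mixed and pure-remainder terms give at worst $\lesssim C \, 2^{-\epsilon K \cdot (\#\text{remainder factors})}$. Summing the telescoping series in $K$ (rather than using a single $K$) is the clean way: one bounds $\|T(u_1,\dots,u_m)\|_E$ by a sum over multi-indices $(k_1,\dots,k_m)$ of terms $\min(C_2 \prod 2^{k_i}, C \prod 2^{-\epsilon \min(k_i,\cdot)})$-type quantities, and the geometric-series bookkeeping produces precisely $C_2(\log(C/C_2) + 1)^m$: roughly, each of the $m$ coordinates contributes a factor $\log(C/C_2)+1$ because the crossover between the $U^2$ regime and the $U^{q_i}$ regime happens at level $k_i \sim \log(C/C_2)$.

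The main obstacle, and the only place requiring care, is the bookkeeping of the $m$-fold geometric sum so that the logarithm appears to exactly the $m$-th power and not, say, with spurious polynomial factors — one must be careful to sum the ``good'' decaying directions fully while cutting off the ``bad'' growing directions at the crossover level, and this is cleanest when done one variable at a time by induction on $m$ (fixing all but one input and applying the $m=1$ case of the estimate to the resulting linear map, whose two operator norms are the partially-evaluated constants). I would therefore structure the proof as: (i) prove the quantitative $V^2 \hookrightarrow U^q$ splitting lemma; (ii) prove the $m=1$ case directly by the threshold argument above; (iii) deduce general $m$ by iterating in each slot, noting that after fixing $u_2,\dots,u_m$ the linear operator $u_1 \mapsto T(u_1,u_2,\dots,u_m)$ satisfies a $U^2$-bound with constant $C_2 \prod_{i\geq 2}\|u_i\|_{U^2_\pm}$ and a $U^{q_1}$-bound with constant $C\prod_{i \geq 2}\|u_i\|_{U^{q_i}_\pm}$, and one must track these constants through the $m$ iterations to reassemble $C_2(\log(C/C_2)+1)^m \prod \|u_i\|_{V^2_\pm}$. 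Finally, the restriction to right-continuous elements of $V^2$ is exactly so that each $u_i$ has a well-defined value at jump times and the step-function approximants $u_i^{(k)}$ are meaningful; on $V^2_{rc,\pm}$ everything is consistent with the duality/transfer machinery (Theorem \ref{thm:duality}, Proposition \ref{prop:transfer}) already recorded.
\end{proof}
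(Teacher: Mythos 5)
The paper does not actually prove this proposition — it is stated as a known fact, being (the multilinear iteration of) Proposition 2.20 in Hadac--Herr--Koch \cite{HHK}, and the whole function-space appendix is explicitly following that reference. You correctly identify this, and your sketch reproduces the HHK strategy: a quantitative $V^2\hookrightarrow U^q$ decomposition, a telescoping sum across dyadic jump-thresholds, and iteration over the $m$ slots via the $m=1$ linear case. One point is worth flagging, because the version of the estimate you first write down would not close. You state that the threshold approximant satisfies $\|u_i^{(k)}\|_{U^2}\lesssim 2^k$; if that is the bound one actually uses, then even the telescoping sum over multi-indices with a factor $C_2\prod 2^{k_i}$ diverges exponentially in each slot, not logarithmically, and the single-threshold optimization gives a geometric-mean bound $\sim C_2^{\theta}C^{1-\theta}$ rather than $C_2\log(C/C_2)$. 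The mechanism that actually produces a logarithm is that the telescoping \emph{increments} $u_i^{(k+1)}-u_i^{(k)}$ are \emph{uniformly} bounded in $U^2$: there are $\lesssim 2^{2k}$ new partition points at level $k$ and each new step value differs from the coarser approximant by $\lesssim 2^{-k}$, so the $\ell^2$ of the step values is $O(1)$ independent of $k$. That is what makes $\|u_i^{(N)}\|_{U^2}\lesssim N$ rather than $\lesssim 2^{N}$, whence $\|Tv\|\lesssim C_2 N + C\,2^{-\alpha N}$ for $m=1$ and $N\sim \alpha^{-1}\log(C/C_2)$ gives the claim. Your closing remark — do it one variable at a time via the $m=1$ case, tracking the partially-evaluated operator norms — is indeed the cleanest route and hides exactly this correction, so the plan as a whole is sound; just be sure the written-out argument uses the linear-in-$N$ bound on $\|u_i^{(N)}\|_{U^2}$ (from the uniform increment bound or, equivalently, from \cite[Lemma 2.5 / Cor.\ 2.6]{HHK}) rather than the exponential one.
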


\section{Standard stability arguments}
\label{ap:pert}

This appendix is devoted the proofs of the critical stability lemmata for the cubic nonlinear Klein-Gordon equation. As usual, these statements are proved first for solutions which have sufficiently small Strichartz bounds, we will call these short-time stability arguments. In order to conclude the statement for arbitrarily large bounds, one needs to divide a given time interval into subintervals such that the norm of the solution is sufficiently small on each subinterval, then the statement follows from an iteration argument. We include these proofs as we would like to make explicit the dependence of the constants on the various parameters involved. We also point out that the dependence on the time interval in the following estimates arises solely due to localizing Strichartz estimates and thus can be taken uniformly for all $I \subset [0,1]$. In the sequel we let $F(u) := u^3$ and $F_N(u_N) := P_N (P_N u)^3$.

\subsection{Stability theory for NLKG}
\begin{lem}[Short-time stability]
\label{lem:pert_short}
Let $I \subset \bR$ a compact time interval and $t_0 \in I$. Let $v$ be a solution defined on $I \times \bT^3$ of the Cauchy problem
\begin{equation*}
\left\{ 
\begin{split}
&v_{tt} - \Delta v + v + F(v) = e \\
&\bigl(v, \partial_tv \bigr)\big|_{t=t_0} = (v_0, v_1) \in  \cH^{1/2}(\bT^3).
\end{split}
\right.
\end{equation*}
Let $(u, \partial_t u) \big|_{t=t_0} = (u_0, u_1) \in \cH^{1/2}(\bT^3)$ be such that
\[
\|(v_0 - u_0, v_1 - u_1)\|_{ \cH^{1/2}(\bT^3)} \leq R_1
\]
for some $R_1 > 0$. Suppose also that we have the smallness conditions
\begin{align}
\|v\|_{L^4(I \times \bT^3)} &\leq \rho_0 \\
\|S(t-t_0)(v_0 - u_0, v_1 - u_1) \|_{L^4(I \times \bT^3)} &\leq \rho \label{equ:l4}\\
\|e\|_{L_t^{\tilde{q}'} L_x^{\tilde{r}'}(I \times \bT^3)} &\leq \rho,
\end{align}
for some $0 < \rho < \rho_0(R_1)$ a small constant and $(\tilde{q}', \tilde{r}')$ a conjugate admissible pair. Then there exists a unique solution $(u(t), \partial_t u(t))$ to the cubic nonlinear Klein-Gordon equation on $I \times \bT^3$ with initial data $(u_0, u_1)$ at time $t_0$ and $C \equiv C(I) \geq 1$ which satisfies
\begin{align}
 \|v - u\|_{L^4(I \times \bT^3)} &\leq C \,\rho \label{equ:diff_bds0}\\
 \|F(v) - F(u)\|_{L^{4/3}(I \times \bT^3)} &\leq C \,\rho \label{equ:diff_bds11}  \\
\| (v- u, \partial_t u - \partial_tv) \|_{L_t^\infty  \cH_x^{1/2} (I \times \bT^3)} +  \|v - u\|_{L_{t,x}^{q,r}(I \times \bT^3)} &\leq C R_1 \label{equ:diff_bds1} 
\end{align}
for all admissible pairs $(q,r)$. Furthermore, the dependence of the constant $C$ on time arises only from the constant in the localized Strichartz estimates.
\end{lem}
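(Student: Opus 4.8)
The plan is a standard contraction-mapping-plus-bootstrap argument built on the Strichartz estimates of Proposition \ref{prop:strichartz2}, with careful tracking of the smallness parameters. We may assume $|I| \le 1$, so that all the Strichartz constants below are uniform. Set $w := u - v$; subtracting the equation for $u$ (with zero right-hand side) from that for $v$ gives
\[
w_{tt} - \Delta w + w = -\bigl(F(v+w) - F(v)\bigr) - e, \qquad (w, \partial_t w)\big|_{t=t_0} = (u_0 - v_0,\, u_1 - v_1),
\]
so that by Duhamel's formula
\[
w(t) = S(t - t_0)(u_0 - v_0,\, u_1 - v_1) - \int_{t_0}^t \frac{\sin\bigl((t-s)\langle \nabla \rangle\bigr)}{\langle \nabla \rangle}\Bigl[\bigl(F(v+w) - F(v)\bigr)(s) + e(s)\Bigr]\, ds.
\]
The pair $(4,4)$ is $\tfrac12$-admissible and $(4/3,4/3)$ is a conjugate admissible pair, so Proposition \ref{prop:strichartz2} bounds $\|w\|_{L^4(I \times \bT^3)}$ by $\|S(t-t_0)(u_0 - v_0, u_1 - v_1)\|_{L^4} + \|F(v+w) - F(v)\|_{L^{4/3}} + \|e\|_{L^{\tilde{q}'}_t L^{\tilde{r}'}_x}$, and by \eqref{equ:l4} and the hypothesis on $e$ the first and third terms are $\lesssim \rho$. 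Using the pointwise identity $F(v+w) - F(v) = 3v^2 w + 3 v w^2 + w^3$ together with H\"older's inequality,
\[
\|F(v+w) - F(v)\|_{L^{4/3}(I \times \bT^3)} \lesssim \|v\|_{L^4}^2 \|w\|_{L^4} + \|v\|_{L^4}\|w\|_{L^4}^2 + \|w\|_{L^4}^3 \lesssim \rho_0^2 \|w\|_{L^4} + \|w\|_{L^4}^2 + \|w\|_{L^4}^3.
\]

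I would then run a contraction in the complete metric space $X_A := \{ w \in L^4(I \times \bT^3) : \|w\|_{L^4} \le A\rho\}$, where $A$ is a fixed multiple of the Strichartz constant. First choose $\rho_0 = \rho_0(R_1)$ small enough that the linear-in-$w$ contribution $\rho_0^2\|w\|_{L^4}$ is absorbed into the left-hand side; then choose $\rho < \rho_0$ small enough that, on $X_A$, the quadratic and cubic terms $\|w\|_{L^4}^2 + \|w\|_{L^4}^3$ are strictly lower order. A routine estimate of the difference of two Duhamel iterates (again placing only the $L^4$ and $L^{4/3}$ norms) shows the Duhamel map is a contraction on $X_A$; its unique fixed point gives the solution $u := v + w$ and the bound \eqref{equ:diff_bds0}. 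Substituting $\|w\|_{L^4} \lesssim \rho$ back into the nonlinear estimate above yields \eqref{equ:diff_bds11}.

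For \eqref{equ:diff_bds1}, apply Proposition \ref{prop:strichartz2} to $w$ once more, now placing the $L_t^\infty \cH_x^{1/2}$ and $L_t^q L_x^r$ norms on the left, to obtain
\[
\|(w, \partial_t w)\|_{L_t^\infty \cH_x^{1/2}(I \times \bT^3)} + \|w\|_{L_t^q L_x^r(I \times \bT^3)} \lesssim \|(u_0 - v_0, u_1 - v_1)\|_{\cH^{1/2}} + \|F(v+w) - F(v)\|_{L^{4/3}} + \|e\|_{L^{\tilde{q}'}_t L^{\tilde{r}'}_x} \lesssim R_1 + \rho \lesssim R_1,
\]
using $\rho < \rho_0(R_1)$; this also upgrades $u$ to a genuine strong solution in $C_t \cH_x^{1/2}$ and gives uniqueness against any other solution with the same data. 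Since every Strichartz bound invoked is the localized one on $I \subset [0,1]$, all constants depend on the time interval only through the localization constants, which is the last assertion. The only delicate point — and hence the "main obstacle" — is the order of quantifiers: because $v$ is small only in the weak sense $\|v\|_{L^4} \le \rho_0$, the term $3v^2 w$ must be absorbed into the left-hand side by fixing $\rho_0$ \emph{before} the genuinely perturbative parameter $\rho$ (which handles $e$, the difference of data, and the higher-order terms) is chosen; none of this is analytically deep, but it must be arranged correctly.
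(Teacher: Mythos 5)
Your argument is correct and mirrors the paper's proof: both set up the equation for the difference, estimate $\|F(\phi+v)-F(v)\|_{L^{4/3}}$ by H\"older in terms of $\rho_0$ and $\|\phi\|_{L^4}$, close the $L^4$ bound by a smallness argument (your explicit contraction versus the paper's continuity bootstrap), and then feed the result back into Strichartz to obtain the $L_t^\infty\cH^{1/2}$ and general admissible bounds. The only differences are cosmetic: a sign convention ($w = u-v$ versus $\phi = v-u$) and your choice to make the fixed-point argument explicit.
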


\begin{rmk}
By Strichartz estimates, assumption \eqref{equ:l4} is redundant if $R_1 = O(\rho)$.
\end{rmk}

\begin{proof}
Without loss of generality, let $t_0 = \inf I$ and let $\phi := v- u$, then $\phi$ satisfies the Cauchy problem
\begin{equation*}
\label{equ:nlw}
\left\{ 
\begin{split}
&\phi_{tt} - \Delta \phi + \phi + F(v) - F(\phi + v) = e \\
&\bigl(\phi, \partial_t\phi \bigr)\big|_{t=t_0} = (v_0 - u_0, v_1 - u_1) .
\end{split}
\right.
\end{equation*}
By Strichartz estimates, H\"older's inequality and the assumptions above,
\begin{align*}
 \|\phi\|_{L^4(I\times \bT^3)} & \leq C \Bigl(\|S(t)(v_0 - u_0, v_1 - u_1)\|_{L^4(I \times \bT^3)} + \| F(\phi+v) - F(v)\|_{L^{4/3}_{t,x}(I \times \bT^3)} + \|e\|_{L_t^{\tilde{q}'} L_x^{\tilde{r}'}(I \times \bT^3)} \Bigr)\\
 & \leq  C \Bigl( 2 \rho +  (\rho_0)^2 \, \|\phi\|_{L^4(I\times \bT^3)} +  \|\phi\|^3_{L^4(I\times \bT^3)} \Bigr)
\end{align*}
hence a continuity argument yields \eqref{equ:diff_bds0} provided $\rho_0$ is sufficiently small. 
Consequently
\begin{align*}
 \| F(\phi+v) - F(v)\|_{L^{4/3}_{t,x}(I \times \bT^3)} \leq C \rho
\end{align*}
for such $\rho_0$. From \eqref{equ:diff_bds0} we have
\begin{align*}
\|(\phi,& \partial_t \phi)\|_{L_t^\infty  \cH_x^{1/2} (I \times \bT^3)} + \|\phi\|_{L_{t,x}^{q,r}(I \times \bT^3)} +\|\phi\|_{L^4(I\times \bT^3)} \\
 & \leq  C \Bigl(R_1 + \| F(\phi+v) - F(v)\|_{L^{4/3}_{t,x}(I \times \bT^3)} + \| e\|_{L_t^{\tilde{q}'} L_x^{\tilde{r}'}(I \times \bT^3)}  \Bigr)  \\
& \leq  C \Bigl(R_1 +  \rho + C \rho  \Bigr),
\end{align*}
hence we obtain \eqref{equ:diff_bds1} provided $\rho_0 \equiv \rho_0( R_1)$ is chosen sufficiently small.
\end{proof}

\begin{lem}[Long-time stability]
\label{lem:pert_long}
Let $I \subset \bR$ a compact time interval and $t_0 \in I$. Let $v$ be a solution defined on $I \times \bT^3$ of the Cauchy problem
\begin{equation*}
\left\{ 
\begin{split}
&v_{tt} - \Delta v + v + F(v) = e \\
&\bigl(v, \partial_tv \bigr)\big|_{t=t_0} = (v_0, v_1) \in  \cH^{1/2}(\bT^3).
\end{split}
\right.
\end{equation*}
Suppose that
\[
\|v\|_{L^4(I \times \bT^3)} \leq L
\]
for some constant $L > 0$. Let $t_0 \in I$ and let $(u, \partial_t u) \big|_{t=t_0} = (u_0, u_1) \in \cH^{1/2}(\bT^3)$ be such that
\[
\|(v_0 - u_0, v_1 - u_1)\|_{ \cH^{1/2}(\bT^3)} \leq R_1
\]
for some $R_1 > 0$. Suppose also that we have the smallness conditions
\begin{equation}
\label{equ:cond1} 
\begin{split}
\|S(t-t_0)(v_0 - u_0, v_1 - u_1) \|_{L^4(I \times \bT^3)} &\leq \rho  \\
\|e\|_{L_t^{\tilde{q}'} L_x^{\tilde{r}'}(I \times \bT^3)} &\leq \rho,
\end{split}
\end{equation}
for some $0 < \rho < \rho_1(R_1 , I, L)$ a small constant and $(\tilde{q}', \tilde{r}')$ a conjugate admissible pair. Then there exists a unique solution $(u, \partial_t u)$ to the cubic nonlinear Klein-Gordon equation on $I \times \bT^3$ with initial data $(u_0, u_1)$ at time $t_0$ and $C \equiv C( I, L) \geq 1$ which satisfies
\begin{equation}
\label{equ:diff_bds22} 
\begin{split}
 \|v - u\|_{L^4(I \times \bT^3)} & \leq C \,\rho \\ 
  \|F(v) - F(u)\|_{L^{4/3}(I \times \bT^3)} &\leq C \,\rho \\
\| (v- u, \partial_t u - \partial_tv) \|_{L_t^\infty  \cH_x^{1/2} (I \times \bT^3)} + \|v - u\|_{L_{t,x}^{q,r}(I \times \bT^3)}  &\leq  C R_1 
\end{split}
\end{equation}
admissible pairs $(q,r)$. Moreover, the dependence of the constants $C$ and $\rho_1$ on time arise solely from the constant in the time localized Strichartz inequality.
\end{lem}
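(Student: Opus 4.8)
The plan is to upgrade the short-time statement, Lemma \ref{lem:pert_short}, to the long-time one by iterating it over a partition of $I$ into finitely many subintervals on which the Strichartz norm of $v$ is small. First I would use $\|v\|_{L^4(I\times\bT^3)} \le L$ and the absolute continuity of the Lebesgue integral to write $I = \bigcup_{j=0}^{J-1} I_j$, $I_j = [t_j,t_{j+1}]$ consecutive with $t_0 = \inf I$, so that $\|v\|_{L^4(I_j\times\bT^3)} \le \eta_0$ on each $I_j$, where $\eta_0$ is a fixed small constant sufficient for the continuity argument in the proof of Lemma \ref{lem:pert_short}; greedily one may take $J \lesssim (L/\eta_0)^4 + 1$, so the number of subintervals, and hence every constant produced below, depends only on $L$ (and the fixed parameters), not on $R_1$.

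Next I would run an induction on $j$, the inductive statement at stage $j$ being that a solution $u$ to the cubic NLKG exists on $[t_0,t_j]$ with
\begin{align*}
\|v-u\|_{L^4([t_0,t_j]\times\bT^3)} + \|F(v)-F(u)\|_{L^{4/3}([t_0,t_j]\times\bT^3)} &\le C_j\,\rho,\\
\|(v-u,\partial_tv-\partial_tu)\|_{L_t^\infty\cH_x^{1/2}([t_0,t_j]\times\bT^3)} &\le C_j\,R_1,
\end{align*}
with the $C_j$ growing at most geometrically, $C_{j+1}\le\Lambda C_j$, where $\Lambda=\Lambda(I)$ comes only from the localized Strichartz constant. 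To pass from stage $j$ to $j+1$ I would apply Lemma \ref{lem:pert_short} on $I_j$ with data $(u(t_j),\partial_t u(t_j))$ at time $t_j$: the $\cH^{1/2}$-distance of these data is $\le C_j R_1$ by the inductive hypothesis, and the free-evolution smallness condition is verified through Duhamel's formula, writing $S(t-t_j)\bigl(v(t_j)-u(t_j),\partial_t v(t_j)-\partial_t u(t_j)\bigr)$ as $S(t-t_0)(v_0-u_0,v_1-u_1)$ plus the Duhamel contribution of $F(v)-F(u)-e$ over $[t_0,t_j]$, so that Strichartz and the inductive $L^{4/3}$-control over the earlier subintervals give
\[
\|S(t-t_j)(v(t_j)-u(t_j),\partial_t v(t_j)-\partial_t u(t_j))\|_{L^4(I_j\times\bT^3)} \lesssim \rho + C\sum_{i<j}C_i\rho + \rho \le C_j'\,\rho,
\]
while the error condition is inherited by restriction. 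Lemma \ref{lem:pert_short} then furnishes the solution on $[t_0,t_{j+1}]$, and summing the $L^4$ and $L^{4/3}$ contributions over $I_0,\dots,I_j$ and the new one yields the updated estimates with $C_{j+1}\le\Lambda C_j$; the energy-type bound is updated in the same way.

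Finally I would close the argument by choosing $\rho_1 = \rho_1(R_1,I,L)$ small enough that, at each of the $J = J(L)$ stages, the accumulated quantities $C_j'\rho$ and $C_j\rho$ remain below the $R_1$-dependent smallness thresholds of Lemma \ref{lem:pert_short} evaluated at radius $C_j R_1 \le \Lambda^J R_1$; this is possible because $J$ and $\Lambda^J$ depend only on $I$ and $L$. Setting $C := C_J = C(I,L)$ then gives \eqref{equ:diff_bds22} on all of $I$, and uniformity in time for $I \subset [0,1]$ is automatic since every constant traces back only to the localized Strichartz estimate of Proposition \ref{prop:strichartz2}. I expect the main obstacle to be exactly this bookkeeping: one must verify that at every stage the perturbed free-evolution term on $I_j$ genuinely stays $O(\rho)$ in $L^4$ despite the geometric growth of the accumulated $\cH^{1/2}$-distance across earlier subintervals — which is what forces $\rho_1$, but not the final constant $C$, to depend on $R_1$.
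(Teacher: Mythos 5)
Your proposal is essentially the paper's proof: both partition $I$ into $J \lesssim (1+L/\rho_0)^4$ subintervals on which $\|v\|_{L^4(I_j)}$ is small, iterate Lemma~\ref{lem:pert_short}, and — crucially — verify the $L^4$ smallness of the free evolution at each subsequent $t_j$ by rewriting $S(t-t_j)\bigl(v(t_j)-u(t_j)\bigr)$ via Duhamel from $t_0$ and invoking the accumulated $L^{4/3}$ control of $F(v)-F(u)-e$. The one small bookkeeping difference is that the paper keeps the $\cH^{1/2}$ distance at time $t_j$ \emph{additively} controlled as $R_1 + \sum_{k<j}C(k)\rho + \rho \leq 2R_1$ (again by the global Duhamel formula), and correspondingly fixes $\rho_0 \equiv \rho_0(2R_1)$ once at the outset, whereas you allow it to grow geometrically to $C_j R_1 \leq \Lambda^J R_1$ and absorb the growth into the threshold $\rho_1$; both close since $J$, $\Lambda^J$ depend only on $I$ and $L$, but the paper's additive bound is tighter and avoids having to re-tune $\rho_0$ at each stage.
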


\begin{proof}
Fix $I \subset \bR$ and let $\rho_0 \equiv \rho_0(2 R_1) > 0$ be as in Lemma \ref{lem:pert_short}. This will allow for some growth in the argument. We divide the time interval $I$ into $J \sim \left( 1 + \frac{L}{\rho_0} \right)^4$ subintervals $I_j = [t_j, t_{j+1}]$ such that $\|v\|_{L^4(I_j)} \leq \rho_0$ letting $\phi := u - v$, we can apply the previous lemma on the first interval, yielding
 
\begin{align*}
\|v - u\|_{L^4(I_0 \times \bT^3)} &\leq C \rho \\
\|F(v) - F(u)\|_{L^{4/3}(I_0 \times \bT^3)} &\leq C \rho \\
\|(v - u, \partial_t v - \partial_t u) \|_{ L^\infty_t \cH_x^{1/2}(I_0 \times \bT^3)} + \|v - u\|_{L_{t,x}^{q,r}(I_0 \times \bT^3)} &\leq CR_1.
\end{align*}
We would like to apply this argument iteratively to claim that
 
\begin{align*}
\|v - u\|_{L^4(I_j \times \bT^3)} &\leq  C(j) \rho \\
\|F(v) - F(u)\|_{L^{4/3}(I_j \times \bT^3)} &\leq C(j) \rho \\
\|(v - u, \partial_t v - \partial_t u) \|_{ L^\infty_t \cH_x^{1/2}(I_j \times \bT^3)} + \|v - u\|_{L_{t,x}^{q,r}(I_j \times \bT^3)} &\leq C(j) R_1.
\end{align*}
In order to do this, we need to ensure that for each $t_j$ we have 
\begin{align}
\label{equ:guarantee1}
\|(v(t_j) - u(t_j), \partial_t v(t_j) - \partial_t u(t_j))\|_{ \cH^{1/2}(\bT^3)} &\leq 2 R_1  \\
\label{equ:guarantee2}
\|S(t - t_j)(u(t_j) - v(t_j) ) \|_{L^4(I_j \times \bT^3)} &< C(I, L) \, \rho
\end{align}
We prove these statements by induction. For \eqref{equ:guarantee1} we use Strichartz estimates and we bound
 
\begin{align*}
&\|(\phi(t_j) , \partial_t \phi(t_j))\|_{\cH_x^{1/2} (\bT^3)}  \\
&\leq \|(\phi(t_0), \partial_t \phi(t_0))\|_{ \cH_x^{1/2} (\bT^3)} +\| F(\phi+v) - F(v)\|_{L^{4/3}_{t,x}([0,t_{j-1}] \times \bT^3)}  +  \|e\|_{L_t^{\tilde{q}'} L_x^{\tilde{r}'}([0,t_{j-1}] \times \bT^3)}\\ 
& \leq R_1 + \sum_{k = 0}^{j-1} C(k) \rho  +  \rho 
\end{align*}
and similarly for \eqref{equ:guarantee2}, we have
\begin{align}
&\|S(t - t_j)(u(t_j) - v(t_j)) \|_{L^4(I_j \times \bT^3)} \\
 & \lesssim \|S(t - t_j)(u(t_0) - v(t_0)) \|_{L^4([t_0, t_j) \times \bT^3)}  + \|F(u) - F(v)\|_{L_t^{\tilde{q}'} L_x^{\tilde{r}'}([t_0, t_j) \times \bT^3)} +\|e\|_{L_t^{\tilde{q}'} L_x^{\tilde{r}'}([t_0, t_j) \times \bT^3)} \\
 & \lesssim  2 \rho + \sum_{k = 0}^{j-1} C(k) \rho ,
\end{align}
so the conclusion follows by choosing $\rho_1(R_1, I,L)$ sufficiently small.
\end{proof}

\subsection{Stability theory for the truncated NLKG}

\begin{lem}[Short-time stability for the truncated equation]
\label{lem:pert_short_trunc}
Let $I \subset \bR$ a compact time interval and $t_0 \in I$. Let $v_N$ be a solution defined on $I \times \bT^3$ of the Cauchy problem
\begin{equation*}
\left\{ 
\begin{split}
&(v_N)_{tt} - \Delta v_N + v_N + P_N(P_Nv_N)^3 = e \\
&\bigl(v_N, \partial_tv_N \bigr)\big|_{t=t_0}  = (v_0, v_1) \in  \cH^{1/2}(\bT^3).
\end{split}
\right.
\end{equation*}
Let $t_0 \in I$ and let $(u_N, \partial_t u_N) \big|_{t=t_0} = (u_0, u_1) \in \cH^{1/2}(\bT^3)$ be such that
\[
\|(v_0 - u_0, v_1 - u_1)\|_{ \cH^{1/2}(\bT^3)} \leq R_1
\]
for some $R_1 > 0$. Suppose also that we have the smallness conditions
\begin{align*}
\|P_N \, v_N\|_{L^4(I \times \bT^3)} &\leq \rho_0 \\
\|S(t-t_0)(v_0 - u_0, v_1 - u_1) \|_{L^4(I \times \bT^3)} &\leq \rho \\
\|e\|_{L_t^{\tilde{q}'} L_x^{\tilde{r}'}(I \times \bT^3)} &\leq \rho,
\end{align*}
for some $0 < \rho < \rho_0(R_1)$ a small constant and $(\tilde{q}', \tilde{r}')$ a conjugate admissible pair. Then there exists a unique solution $(u_N, \partial_t u_N)$ to the truncated cubic nonlinear Klein-Gordon equation on $I \times \bT^3$ with initial data $(u_0, u_1)$ at time $t_0$ and $C \equiv C(I) \geq 1$ which satisfies
\begin{align}
 \|v_N - u_N\|_{L^4(I \times \bT^3)} &\leq C \,\rho \label{equ:diff_bds4}\\
  \|F_N(v_N) - F_N(u_N)\|_{L^{4/3}(I \times \bT^3)} &\leq C \,\rho \label{equ:diff_bds44}\\
\| (v_N- u_N, \partial_t u_N - \partial_tv_N) \|_{L_t^\infty  \cH_x^{1/2} (I \times \bT^3)} + \|v_N - u_N\|_{L_{t,x}^{q,r}(I \times \bT^3)} &\leq C\, R_1  \label{equ:diff_bds5}
\end{align}
for any admissible pair $(q,r)$. Furthermore, the dependence of $C$ on time arises only from the constant in the localized Strichartz estimates.
\end{lem}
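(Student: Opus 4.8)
I plan to run the proof of Lemma~\ref{lem:pert_short} essentially verbatim, with the cubic nonlinearity $F$ replaced by the truncated nonlinearity $F_N(u)=P_N[(P_Nu)^3]$, exploiting that the smoothed projection $P_N$ is bounded on $L^p(\bT^3)$ uniformly in $N$ (Lemma~\ref{lem:lp_bds}) so that every constant produced is independent of the truncation parameter. Without loss of generality take $t_0=\inf I$ and set $\phi:=v_N-u_N$; subtracting the two equations, $\phi$ solves the difference Cauchy problem
\[
\square\phi+\phi+\bigl(F_N(v_N)-F_N(v_N-\phi)\bigr)=e,\qquad (\phi,\partial_t\phi)\big|_{t=t_0}=(v_0-u_0,v_1-u_1).
\]

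First I would control the nonlinear difference. Using the pointwise bound $|(P_Nv_N)^3-(P_Nu_N)^3|\lesssim |P_N\phi|\,(|P_Nv_N|^2+|P_N\phi|^2)$, the uniform $L^{4/3}$-boundedness of $P_N$, and then the uniform $L^4$-boundedness of $P_N$ to replace $\|P_N\phi\|_{L^4(I)}$ by $\|\phi\|_{L^4(I)}$ (and likewise $\|P_Nv_N\|_{L^4(I)}\le\rho_0$ by hypothesis), one gets
\[
\|F_N(v_N)-F_N(u_N)\|_{L^{4/3}_{t,x}(I)}\lesssim \|\phi\|_{L^4(I)}\bigl(\rho_0^2+\|\phi\|_{L^4(I)}^2\bigr).
\]
Plugging the Duhamel formula for $\phi$ into the Strichartz estimates of Proposition~\ref{prop:strichartz2}, together with the three smallness hypotheses on $\|S(t-t_0)(v_0-u_0,v_1-u_1)\|_{L^4(I)}$ and $\|e\|_{L_t^{\tilde q'}L_x^{\tilde r'}(I)}$, yields $\|\phi\|_{L^4(I)}\le C(2\rho+\rho_0^2\|\phi\|_{L^4(I)}+\|\phi\|_{L^4(I)}^3)$; a standard continuity argument, valid once $\rho_0=\rho_0(R_1)$ is chosen small, then gives \eqref{equ:diff_bds4}. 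Reinserting \eqref{equ:diff_bds4} into the nonlinear estimate above gives \eqref{equ:diff_bds44}, and a final pass through the Strichartz estimates in the remaining admissible norms — bounding the initial data term by $R_1$ and the nonlinear difference and forcing term by $C\rho$ — gives \eqref{equ:diff_bds5}. Existence and uniqueness of $u_N$ on $I$ follow from the local well-posedness of \eqref{equ:cubic_nlkg_trun} by the same contraction argument as for \eqref{equ:cubic_nlkg}, which is again uniform in $N$; the dependence of $C$ on $I$ enters only through the implicit constant in the time-localized Strichartz inequalities.

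I do not anticipate a real obstacle: the only point demanding care is to invoke Lemma~\ref{lem:lp_bds} every time a $P_N$ appears — once for the outer projection and twice for the inner projections in $F_N$ — so that no $N$-dependence leaks into $C$ or $\rho_0$, which is exactly why the smoothed projection rather than the sharp one was used in defining \eqref{equ:cubic_nlkg_trun}. With this bookkeeping, the argument is line-for-line that of Lemma~\ref{lem:pert_short}, and in the same way it will bootstrap (dividing $I$ into subintervals on which $\|P_Nv_N\|_{L^4}\le\rho_0$) to the long-time truncated stability statement.
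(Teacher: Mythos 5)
Your proof is correct and follows the paper's own argument essentially line for line: subtract the equations for $v_N$ and $u_N$, estimate the truncated nonlinear difference in $L^{4/3}_{t,x}$ using the uniform $L^p$-boundedness of $P_N$ (Lemma~\ref{lem:lp_bds}) together with the smallness hypotheses, close via Strichartz and a continuity argument to obtain \eqref{equ:diff_bds4}, then feed that back in to get \eqref{equ:diff_bds44} and \eqref{equ:diff_bds5}. The only difference is that you spell out the pointwise bound on the cubic difference and the bookkeeping of where $P_N$ enters, which the paper leaves implicit.
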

\begin{proof}
Without loss of generality, let $t_0 = \inf I$. Let $\phi_N = v_N - u_N$, then $\phi_N$ satisfies the Cauchy problem
\begin{equation*}
\label{equ:nlw_trunc_diff}
\left\{ 
\begin{split}
&(\phi_N)_{tt} - \Delta \phi_N + \phi_N + F_N(v_N) - F_N(\phi_N + v_N) = e \\
&\bigl(\phi_N, \partial_t \phi_N \bigr) \big|_{t= 0 }= (v_0 - u_0, v_1 - u_1) .
\end{split}
\right.
\end{equation*}
By Strichartz estimates, H\"older's inequality, the boundedness of $P_N$ and the assumptions above,
\begin{align*}
 &\|\phi_N\|_{L^4(I\times \bT^3)} \\
 & \leq C \Bigl(\|S(t)(v_0 - u_0, v_1 - u_1)\|_{L^4(I \times \bT^3)} + \|F_N(v_N) - F_N(\phi_N + v_N)\|_{L^{4/3}_{t,x}(I \times \bT^3)} + \| e\|_{L_t^{\tilde{q}'} L_x^{\tilde{r}'}(I \times \bT^3)} \Bigr) \\
 & \leq  C \Bigl( 2 \rho +  (\rho_0)^2 \, \|\phi_N\|_{L^4(I\times \bT^3)} +  \|\phi_N\|^3_{L^4(I\times \bT^3)} \Bigr),
\end{align*}
hence a continuity argument yields \eqref{equ:diff_bds4} provided $\rho_0$ is sufficiently small. Similarly
\begin{align*}
 \|(&\phi_N, \partial_t \phi_N)\|_{L_t^\infty  \cH_x^{1/2} (I \times \bT^3)}  + \|\phi_N\|_{L_t^qL_x^r(I \times \bT^3))} + \|\phi_N\|_{L^4(I \times \bT^3)} \\
& \leq C \Bigl( \|(v_0 - u_0, v_1 - u_1)\|_{ \cH^{1/2}(\bT^3)}  +  \|F_N(v_N) - F_N(\phi_N + v_N)\|_{L^{4/3}_{t,x}(I \times \bT^3)} + \| e\|_{L_t^{\tilde{q}'} L_x^{\tilde{r}'}(I \times \bT^3)} \Bigr)\\
& \leq C \Bigl( R_1  +  (\rho_0)^2 \, \|\phi_N\|_{L^4(I\times \bT^3)} + \|\phi_N\|^3_{L^4(I\times \bT^3)} + \rho\, \Bigr).
\end{align*}
We conclude \eqref{equ:diff_bds5} by a continuity argument for $\rho_0 \equiv \rho_0(R_1) > 0$ sufficiently small.
\end{proof}

\begin{rmk}
\label{rmk:low_freq_bds}
If one only requires bounds on the low frequency component $P_N(u_N - v_N)$, then from the proof of the previous Lemma, it is clear that it suffices to assume that
\begin{align}
\|S(t-t_0)P_N (v_0 - u_0, v_1 - u_1) \|_{L^4(I \times \bT^3)} &\leq \rho \\
\|P_N e\|_{L_t^{\tilde{q}'} L_x^{\tilde{r}'}(I \times \bT^3)} &\leq \rho.
\end{align}
\end{rmk}

Lemma \ref{lem:pert_short_trunc} and the same proof as in Lemma \ref{lem:pert_long} yields the long-time stability argument for the truncated equation with bounds uniform in the truncation parameter.

\begin{lem}[Long-time stability]
\label{lem:pert_long_trunc}
Let $I \subset \bR$ a compact time interval and $t_0 \in I$. Let $v_N$ be a solution defined on $I \times \bT^3$ of the Cauchy problem
\begin{equation*}
\left\{ 
\begin{split}
&(v_N)_{tt} - \Delta v_N + v_N + F_N(v_N) = e \\
&\bigl(v_N, \partial_tv_N \bigr)\big|_{t=t_0}  = (v_0, v_1) \in  \cH^{1/2}(\bT^3).
\end{split}
\right.
\end{equation*}
Suppose that
\[
\|P_N \,v_N \|_{L^4(I \times \bT^3)} \leq L
\]
for some constant $L > 0$. Let $(u_N, \partial_t u_N) \big|_{t=t_0} = (u_0, u_1) \in \cH^{1/2}(\bT^3)$ be such that
\[
\|(v_0 - u_0, v_1 - u_1)\|_{ \cH^{1/2}(\bT^3)} \leq R_1
\]
for some $R_1 > 0$. Suppose also that we have the smallness conditions
\begin{align*}
\|S(t-t_0)(v_0 - u_0, v_1 - u_1) \|_{L^4(I \times \bT^3)} &\leq \rho \\
\|e\|_{L_t^{\tilde{q}'} L_x^{\tilde{r}'}(I \times \bT^3)} &\leq \rho,
\end{align*}
for some $0 < \rho < \rho_1(R_1 , I, L)$ a small constant and any $(\tilde{q}', \tilde{r}')$ a conjugate admissible pair. Then there exists a unique solution $(u_N, \partial_t u_N)$ to the truncated cubic nonlinear Klein-Gordon equation on $I \times \bT^3$ with initial data $(u_0, u_1)$ at time $t_0$ and $C \equiv C(I) \geq 1$ which satisfies
\begin{align*}
 \|v_N - u_N\|_{L^4(I \times \bT^3)} &\leq C \, \rho \\
 \|(v_N)^3 - (u_N)^3\|_{L^{4/3}(I \times \bT^3)} &\leq C \,\rho \\
\| (v_N- u_N, \partial_t u_N - \partial_tv_N) \|_{L_t^\infty  \cH_x^{1/2} (I \times \bT^3)} + \|v_N - u_N\|_{L_{t,x}^{q,r}(I \times \bT^3)} &\leq C\, R_1  .
\end{align*}
\end{lem}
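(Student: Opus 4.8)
The plan is to run the same iteration scheme as in the proof of Lemma \ref{lem:pert_long}, using the short-time truncated stability statement Lemma \ref{lem:pert_short_trunc} as the building block in place of Lemma \ref{lem:pert_short}, and to verify at each stage that every constant and every smallness threshold can be taken independently of the truncation parameter $N$. The one structural point that makes this possible is that $N$ enters the short-time argument only through the projections $P_N$ in the nonlinearity $F_N$, and by Lemma \ref{lem:lp_bds} these are bounded on every $L^p$ with a constant independent of $N$; hence the constant $C(I)$ and the threshold $\rho_0(R_1)$ furnished by Lemma \ref{lem:pert_short_trunc} are themselves $N$-independent, and so is everything built from them.

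First I would fix $I$ and set $\rho_0 = \rho_0(2R_1)$ as in Lemma \ref{lem:pert_short_trunc}, the factor of two leaving room for the $\cH^{1/2}$-difference to grow along the iteration. Using the hypothesis $\|P_N v_N\|_{L^4(I \times \bT^3)} \leq L$, I would split $I$ into $J \sim (1 + L/\rho_0)^4$ consecutive subintervals $I_j = [t_j, t_{j+1}]$ on each of which $\|P_N v_N\|_{L^4(I_j \times \bT^3)} \leq \rho_0$; crucially $J$ depends only on $L$ and $R_1$, not on $N$. On $I_0$ the hypotheses of Lemma \ref{lem:pert_short_trunc} hold directly from the smallness assumptions of the lemma, producing a solution $u_N$ on $I_0$ with $\|v_N - u_N\|_{L^4(I_0)} \lesssim \rho$, $\|F_N(v_N) - F_N(u_N)\|_{L^{4/3}(I_0)} \lesssim \rho$, and $\|(v_N - u_N, \partial_t v_N - \partial_t u_N)\|_{L_t^\infty \cH_x^{1/2}(I_0)} + \|v_N - u_N\|_{L_{t,x}^{q,r}(I_0)} \lesssim R_1$, all with the $N$-independent short-time constant $C(I)$. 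Then I would iterate over $j = 1, \dots, J-1$: to apply Lemma \ref{lem:pert_short_trunc} on $I_j$ I must verify $\|(v_N(t_j) - u_N(t_j), \partial_t v_N(t_j) - \partial_t u_N(t_j))\|_{\cH^{1/2}} \leq 2R_1$ and $\|S(t - t_j)(u_N(t_j) - v_N(t_j))\|_{L^4(I_j \times \bT^3)} \leq \rho_0$. Both follow by induction on $j$: writing the difference equation for $\phi_N = u_N - v_N$ and applying the Strichartz estimates of Proposition \ref{prop:strichartz2} on $[t_0, t_j]$, the left-hand sides are bounded by the initial difference $R_1$, plus $\|e\|_{L_t^{\tilde q'}L_x^{\tilde r'}} \leq \rho$, plus the accumulated nonlinear differences $\sum_{k=0}^{j-1} \|F_N(v_N) - F_N(u_N)\|_{L^{4/3}(I_k)}$, each term of which is controlled by the short-time conclusion on $I_k$ and hence by $C(k)\rho$ for an $N$-independent $C(k)$. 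Since there are at most $J = J(L,R_1)$ steps and each per-step constant is the same $N$-independent $C(I)$ (up to the absorbed factor $C(I,L)$ coming from feeding $C(I,L)\rho$ rather than $\rho$ into the next short-time application), the total accumulation is bounded by a constant $C(I,L)$; choosing $\rho_1(R_1, I, L)$ small enough that $C(I,L)\rho_1 < \min(R_1, \rho_0)$ closes the induction and yields the stated bounds on all of $I$ with constants independent of $N$.

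The hard part here — and it is really bookkeeping rather than a genuine analytic obstacle — is tracking that the per-step constant remains the fixed $N$-independent $C(I)$ at every stage (so that the accumulated $\sum_k C(k)\rho$ is governed by the step count $J(L,R_1)$ alone) and that the growth of the $\cH^{1/2}$-difference from $R_1$ toward at most $2R_1$ never exhausts the margin built into the choice $\rho_0 = \rho_0(2R_1)$; both are dealt with exactly as in the proof of Lemma \ref{lem:pert_long}, the $N$-uniformity being automatic from Lemma \ref{lem:lp_bds}. Finally I would note, as in Remark \ref{rmk:low_freq_bds}, that if one only needs control of the low-frequency component $P_N(u_N - v_N)$, the same argument goes through verbatim under the correspondingly weakened smallness hypotheses on $P_N(v_0 - u_0, v_1 - u_1)$ and $P_N e$.
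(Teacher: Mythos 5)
Your proposal is correct and follows exactly the route the paper intends: the paper itself states that Lemma \ref{lem:pert_long_trunc} follows from Lemma \ref{lem:pert_short_trunc} by the same iteration argument as in Lemma \ref{lem:pert_long}, and you carry this out, correctly identifying that the $N$-uniformity of all constants traces back to the uniform $L^p$-boundedness of $P_N$ from Lemma \ref{lem:lp_bds} and is already built into the short-time constants. Your bookkeeping of the growth of the $\cH^{1/2}$-difference (allowing for the doubling to $2R_1$ by choosing $\rho_0 = \rho_0(2R_1)$) and of the accumulated nonlinear differences over the $J(L,R_1)$ subintervals matches the paper's proof of Lemma \ref{lem:pert_long}.
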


\bibliographystyle{myamsplain}
\bibliography{refs}

\end{document}